\newtheorem{Theorem}{Theorem}[section]
\newtheorem{Lemma}[Theorem]{Lemma}
\newtheorem{Definition}[Theorem]{Definition}
\newtheorem{Corollary}[Theorem]{Corollary}
\newtheorem{Proposition}[Theorem]{Proposition}
\newtheorem{Example}[Theorem]{Example}
\newtheorem{Remark}[Theorem]{Remark}
\newtheorem{Conjecture}[Theorem]{Conjecture}
\newtheorem{Problem}[Theorem]{Problem}
\newtheorem{Construction}[Theorem]{Construction}
\newtheorem*{Theorem*}{Theorem}
\newtheorem{Condition}[Theorem]{Condition}
\title [Recurrence formula, positivity and polytope basis in cluster algebras ]
{Recurrence formula, positivity and polytope basis in  cluster algebras via Newton polytopes}
\author{Fang Li}\author{Jie Pan}
\address{Fang Li
\newline Department of Mathematics, Zhejiang University (Yuquan Campus), Hangzhou, Zhejiang 310027, P. R. China}
\email{fangli@zju.edu.cn}
\address{Jie Pan
\newline Department of Mathematics, Zhejiang University (Yuquan Campus), Hangzhou, Zhejiang 310027, P. R. China}
\email{11635015@zju.edu.cn}
\thanks{\textit{Mathematics Subject Classification(2020): 13F60, 52B20}}
\thanks{\textit{Keywords}: cluster algebra, Newton-polytope, polytope basis, recurrence formula, $F$-polynomial, $g$-vector.}
\newenvironment{Proof}[1][Proof]{\begin{trivlist}
\item[\hskip \labelsep {\bfseries #1}]}{\flushright
$\Box$\end{trivlist}}
\date{version of \today}
\newcommand{\lra}{\longrightarrow}
\newcommand{\ra}{\rightarrow}
\newcommand{\sdp}{\times\kern-.2em\vrule height1.1ex depth-.05ex}
\newcommand{\epi}{\lra \kern-.8em\ra}
\renewcommand{\P}{{\mathbb P}}
\newcommand{\Q}{{\mathbb Q}}
\newcommand{\N}{{\mathbb N}}
\newcommand{\R}{{\mathbb R}}
\newcommand{\F}{{\mathcal F}}
\newcommand{\A}{{\mathcal A}}
\newcommand{\T}{{\mathbb T}}
\newcommand{\Z}{{\mathbb Z}}
\renewcommand{\'}{{^{\prime}}}
\begin{document}

\renewcommand{\thefootnote}{\alph{footnote}}
\maketitle
\bigskip
 \begin{abstract}
  In this paper, we study the Newton polytopes of $F$-polynomials in a totally sign-skew-symmetric cluster algebra $\mathcal A$ and generalize them to a larger set consisting of polytopes $N_{h}$ associated to vectors $h\in\Z^{n}$ as well as $\widehat{\mathcal{P}}$ consisting of polytope functions $\rho_{h}$ corresponding to $N_{h}$.

  The main contribution contains that
  (i)\; obtaining a {\em recurrence construction} of the Laurent expression of a cluster variable in a cluster from its $g$-vector;
  (ii)\; proving the subset $\mathcal{P}$ of $\widehat{\mathcal{P}}$ consisting of Laurent polynomials in $\widehat{\mathcal{P}}$ is a strongly positive $\Z Trop(Y)$-basis for $\mathcal{U}(\A)$ consisting of certain universally indecomposable Laurent polynomials when $\A$ is a cluster algebra with principal coefficients. For a cluster algebra $\mathcal A$ over arbitrary semifield $\mathbb P$ in general, $\mathcal{P}$ is a strongly positive $\Z\P$-basis for a subalgebra  $\mathcal{I_P(A)}$ (called the intermediate cluster algebra of $\mathcal A$) of $\mathcal{U(A)}$.  We call $\mathcal P$ the {\em polytope basis};
  (iii)\; constructing some explicit maps among corresponding $F$-polynomials, $g$-vectors, $d$-vectors and cluster variables to characterize their relationship.

  As an application of (i), we give an affirmation to the positivity conjecture of cluster variables in a totally sign-skew-symmetric cluster algebra, which in particular  provides a new method different from that given by Gross-Hacking-Keel-Kontsevich in \cite{GHKK} to present the positivity of cluster variables in the skew-symmetrizable case. As another application, a conjecture on Newton polytopes posed by Fei is answered affirmatively.

  For (ii), we know that in rank 2 case, $\mathcal{P}$ coincides with the greedy basis introduced by Lee-Li-Zelevinsky in \cite{LLZ}. Hence, we can regard the polytope basis $\mathcal{P}$ as a natural generalization of the greedy basis in general rank.

  As an application of (iii),  the positivity of denominator vectors associated to non-initial cluster variables, which was a conjecture raised in \cite{FZ4}, is proved in a totally sign-skew-symmetric cluster algebra.
\end{abstract}

\tableofcontents

\section{Introduction and preliminaries}
\subsection{Introduction}\quad

Cluster algebras are first constructed by Fomin and Zelevinsky in \cite{FZ1}. Generally speaking, it is a commutative algebra with so-called exchange relations given by an extra combinatorial structure. Later, researchers found many relationships from the theory of cluster algebras to other topics, such as Lie theory, quantum groups, representation theory, Riemann surfaces with triangulation, number theory, tropical geometry and Grassmanian theory as well as many interesting properties. The most significant properties among them are the Laurent phenomenon and positivity of varieties which claim that each cluster variables can be expressed as a Laurent polynomial in any cluster over $\N\P$. However, the calculation of the Laurent expression of a cluster variable in a given cluster is in general difficult. One of our aims in this paper is to provide recurrence formulas as a program to make the above calculation easier.

Two bases related to an upper cluster algebra $\mathcal{U}(\A)$, called the greedy basis and the theta basis respectively, are constructed in \cite{LLZ} and \cite{GHKK}, which both contain coefficient free cluster monomials. It is known that each element in the above two bases satisfies the Laurent phenomenon and positivity, that is, its expression in every cluster is a Laurent polynomial over $\N\P$. So in some sense such element can be seen as a generalization of cluster monomials. Moreover, the constant coefficients of the Laurent expression  in the initial cluster are related to counting of some combinatorial objects. However, the greedy basis is only constructed for rank $2$ case, while the theta basis relies on the cluster scattering diagram. Another goal of this paper is to directly construct a basis of $\mathcal{U}(\A)$ consisting of some universally indecomposable Laurent polynomials as a generalization of cluster monomials in general case. In order to achieve it, one useful tool we will apply is the Nowton polytopes of $F$-polynomials associated to cluster variables.

In \cite{F}, Jiarui Fei defined the Newton polytope of an $F$-polynomial associated to representations of a finite-dimensional basic algebra, as well as showed some interesting combinatorial properties of such Newton polytopes. On the other hand, the authors of \cite{LLZ} and \cite{LLS} focused on Newton polytopes of cluster variables in  cluster algebras of rank $2$ and rank $3$ respectively. By definitions, up to a translation, the Newton polytope of a cluster variable can be obtained from that of the related $F$-polynomial by a transformation induced by its exchange matrix $B$ since $\hat{y}_{j;t}=\prod\limits_{i=1}^{m}x_{i;t}^{b_{ji}}$ in the case of geometric type. However, on the other hand, when $B$ is not invertible, it is not apparent to obtain the latter from the former. In this a aspect, it seems that the Nowton polytopes of $F$-polynomials keeps more information. So in this paper, {\bf we mainly focus on the Nowton polytopes of $F$-polynomials in initial $Y$-variables associated to cluster variables.}

Based on the study of the Newton polytope $N_{l;t}$ of $F_{l;t}$, we introduce the polytope $N_{h}$ associated to a vector $h\in\Z^n$ and the polytope functions $\rho_{h}$ as a generalization of $N_{l;t}$ and $x_{l;t}$ respectively. Then the properties of $N_{h}$ and $\rho_{h}$ naturally induce those of $N_{l;t}$ and $x_{l;t}$. Moreover, it will also be proved that the polytope functions compose a strongly positive basis of $\mathcal{U}(\A)$ for a cluster algebras with principal coefficients as well as certain cluster algebra over arbitrary semifield. As applications, several conjectures are confirmed for TSSS cluster algebras, including the positivity conjectures of cluster variables and of $d$-vectors respectively.

\subsection{Notions and notations on cluster algebras and Laurent polynomials}\quad

In this section, we recall some preliminaries of cluster algebras, $F$-polynomials and $d$-vectors mainly based on \cite{FZ4}.

We would like to introduce the following notations for convenience: for any $n\in\N$, $x\in\Z$,
\[[1,n]=\{1,2,\cdots,n\},\;\;\;\;\;\;\;
sgn(x)=\left\{\begin{array}{cc}
                  0 & x=0 \\
                  \frac{|x|}{x} & otherwise
                \end{array}\right.,\;\;\;\;\;\;\;
[x]_{+}=max\{x,0\}.\]
And for a vector $\alpha=(\alpha_{1},\cdots,\alpha_{r})\in\Z^{r}$, $[\alpha]_{+}=([\alpha_{1}]_{+},\cdots,[\alpha_{r}]_{+})$. We always represent the elements in $\R^{n}$ as row vectors unless otherwise specified.

An $n\times n$ integer matrix $B=(b_{ij})$ is called {\bf sign-skew-symmetric} if either $b_{ij}=b_{ji}=0$ or $b_{ij}b_{ji}<0$ for any $i,j\in[1,n]$. A \textbf{skew-symmetric} matrix is a sign-skew-symmetric matrix with $b_{ij}=-b_{ji}$ for any $i,j\in[1,n]$. Moreover, a \textbf{skew-symmetrizable} matrix is a sign-skew-symmetric matrix such that there is a positive diagonal integer matrix $D$ satisfying that $DB$ is skew-symmetric.

For a sign-skew-symmetric matrix $B$, we define another $n\times n$ matrix $B'=(b_{ij}')$ satisfying that for any $k, i,j\in[1,n]$,
  \begin{equation}\label{equation: mutation of B}
    b_{ij}'=\left\{\begin{array}{ll}
                             -b_{ij} & i=k\text{ or }j=k; \\
                             b_{ij}+sgn(b_{ik})[b_{ik}b_{kj}]_{+} & otherwise.
                           \end{array}\right.
  \end{equation}
We call the formula (\ref{equation: mutation of B}) {\bf the exchange relation for sign-skew-symmetric matrices}. Denote by $B'=\mu_k(B)$ the mutation of $B$ in direction $k$.

For $k_1, k_2\in[1,n]$, if $B'=\mu_{k_1}(B)$ is also sign-skew-symmetric, then we can mutate $B'$ in direction $k_2$ to obtain $B''=\mu_{k_2}\mu_{k_1}(B)$.

\begin{Definition}
  For a sign-skew-symmetric matrix $B$,  if $B^{(i)}=\mu_{k_i}\cdots\mu_{k_1}(B)$ are always sign-skew-symmetric for all $i\in[1,s]$ and any sequences of mutations $\mu_{k_1}, \cdots, \mu_{k_s}$, then $B$ is called a {\bf totally sign-skew-symmetric matrix}.
\end{Definition}

The notion of totally sign-skew-symmetric matrices was introduced in \cite{BFZ}. It is well-known that skew-symmetric and skew-symmetrizable matrices are totally sign-skew-symmetric matrices. An example of a $3\times 3$ sign-skew-symmetric matrix which is not skew-symmetrizable was given in \cite{BFZ}. In that paper, Berenstein etc. conjectured that acyclic sign-skew-symmetric matrices are always totally sign-skew-symmetric. In \cite{HL}, Ming Huang and Fang Li proved this conjecture.

Hence, on sign-skew-symmetric matrices, one of the most important remaining problems is the condition under which sign-skew-symmetric matrices are total. In this paper, we always assume the involved sign-skew-symmetric matrices are totally sign-skew-symmetric.

For convenience, we will denote a totally sign-skew-symmetric matrix (respectively, cluster algebra defined subsequently) briefly as a {\bf TSSS matrix} (respectively, {\bf TSSS cluster algebra}).

Let $(\P,\oplus,\cdot)$ be a semifield, i.e., a free abelian multiplicative group endowed with a binary operation of (auxiliary) addition $\oplus$ which is commutative, associative and distributive with respect to the multiplication in $\P$. And $\F$ is the field of rational functions in $n$ independent variables with coefficients in $\Q\P$.
\begin{Definition}
  A \textbf{seed} in $\F$ is a triple $\Sigma=(X,Y,B)$ such that
  \begin{itemize}
    \item $X=(x_{1},x_{2},\cdots,x_{n})$ is an $n$-tuple whose components form a free generating set of $\F$;
    \item $Y=(y_{1},y_{2},\cdots,y_{n})$ is an $n$-tuple of elements in $\P$;
    \item $B$ is an $n\times n$ totally sign-skew-symmetric integer matrix.
  \end{itemize}
\end{Definition}
$X$ defined above is called a \textbf{cluster} with \textbf{cluster variables} $x_{i}$, $y_{i}$ is called a \textbf{$Y$-variable} and $B$ is called an \textbf{exchange matrix}.

\begin{Definition}
  For any seed $\Sigma=(X,Y,B)$ in $\F$ and $k\in[1,n]$, $\Sigma\'=(X\',Y\',B\')$ is obtained from $\Sigma$ by \textbf{mutation} in direction $k$ if
  \begin{equation}\label{equation: mutation of x}
            x_{j}\'=\left\{\begin{array}{ll}
                             \frac{y_{k}\prod\limits_{i=1}^{n}x_{i}^{[b_{ik}]_{+}}+\prod\limits_{i=1}^{n}x_{i}^{[-b_{ik}]_{+}}}{(y_{k}\oplus 1)x_{k}} & \text{if}\; j=k;\\
                             x_{j} & \text{otherwise}.
                           \end{array}\right.
  \end{equation}
  \begin{equation}\label{equation: mutation of y}
    y_{j}\'=\left\{\begin{array}{ll}
                             y_{k}^{-1} & \text{if}\; j=k; \\
                             y_{j}y_{k}^{[b_{kj}]_{+}}(y_{k}\oplus 1)^{-b_{kj}} & \text{otherwise}.
                           \end{array}\right.
  \end{equation}
  and $B\'=\mu_{k}(B)$. In this case, we write $\Sigma\'=\mu_{k}(\Sigma)$.
\end{Definition}
It can be easily checked that $\Sigma\'$ is a seed and the seed mutation $\mu_{k}$ ia an involution.
\begin{Definition}
  Let $\T_{n}$ be the $n$-regular tree whose $n$ edges emanating from the same vertex are labeled bijectively by $[1,n]$. We assign a seed to each vertex of $\T_{n}$ such that if two vertices are connected by an edge labeled $k$, then the seeds assigned to them are obtained from each other by the mutation in direction $k$. This assignment is called a \textbf{cluster pattern}.
\end{Definition}
In this paper, the seed assigned to a vertex $t$ is denoted by $\Sigma_{t}=(X_{t},Y_{t},B_{t})$ with
\[X_{t}=(x_{1;t},x_{2;t}\cdots,x_{n;t}),\quad Y_{t}=(y_{1;t},y_{2;t}\cdots,y_{n;t})\quad and\quad B_{t}=(b_{ij}^{t})_{i,j\in[1,n]}.\]

Now we are ready to introduce the definition of cluster algebras.
\begin{Definition}
  Given a cluster pattern, let $\mathcal{S}=\{x_{i;t}\in\F\mid i\in[1,n],t\in\T_{n}\}$. The ({\bf totally sign-skew-symmetric}) \textbf{cluster algebra} $\mathcal{A}$ associated with the given cluster pattern is the $\Z\P$-subalgebra of $\F$ generated by $\mathcal{S}$.
\end{Definition}

If there is a skew-symmetrizable (respectively, skew-symmetric) exchange matrix in a cluster algebra $\A$, then all exchange matrices of $\A$ are skew-symmetrizable (respectively, skew-symmetric). So, in this case we call $\A$ a {\bf skew-symmetrizable} (respectively, {\bf skew-symmetric}) {\bf cluster algebra}.

In this paper, when saying a cluster algebra,  we always mean a TSSS cluster algebra.

It can be seen from the definition that the cluster algebra $\A$ is related to the choice of semifield $\P$. There are two special semifields which play important
roles.
\begin{Definition}
    (i)\; The universal semifield $\Q_{sf}(u_{1},u_{2},\cdots,u_{l})$ is the semifield of all rational functions which have subtraction-free rational expressions in independent variables $u_{1},u_{2},\cdots,u_{l}$, with usual multiplication and addition.

    (ii)\; The tropical semifield $Trop(u_{1},u_{2},\cdots,u_{l})$ is the free abelian multiplicative group generated by $u_{1},u_{2},\cdots,u_{l}$ with addition defined by
        $\prod\limits_{j=1}^{l}u_{j}^{a_{j}}\oplus\prod\limits_{j=1}^{l}u_{j}^{b_{j}}=\prod\limits_{j=1}^{l}u_{j}^{min(a_{j},b_{j})}.$
  \end{Definition}
In particular, we say a cluster algebra $\A$ is of \textbf{geometry type} if $\P$ is a tropical semifield. In this case, we can also denote $\P$ as $Trop(x_{n+1},x_{n+2},\cdots,x_{m})$. Then according to the definition, $y_{j;t}$ is a Laurent monomial of $x_{n+1},x_{n+2},\cdots,x_{m}$ for any $j\in[1,n],t\in\T_{n}$. Hence we can define $b_{ij}^{t}$ for $i\in[n,m],j\in[1,n]$ satisfying
\[y_{j;t}=\prod\limits_{i=n+1}^{m}x_{i}^{b_{ij}^{t}}.\]
Let $\tilde{B}_{t}$ be the $m\times n$ matrix $\tilde{B}_{t}=(b_{ij}^{t})_{i\in[1,m],j\in[1,n]}$ and $\tilde{X}_{t}=(x_{1;t},\cdots,x_{n;t},x_{n+1},\cdots,x_{m})$. Then the seed assigned to $t$ can be represented as $(\tilde{X}_{t},\tilde{B}_{t})$. The mutation formulas are the same for $\tilde{B}$ while those of $\tilde{X}$ at direction $k$ become
\[x_{j}'=\left\{\begin{array}{ll}
                             \frac{\prod\limits_{i=1}^{m}x_{i}^{[b_{ik}]_{+}}+\prod\limits_{i=1}^{m}x_{i}^{[-b_{ik}]_{+}}}{x_{k}} & j=k;\\
                             x_{j} &  otherwise.
                           \end{array}\right.\]

\begin{Definition}
  A cluster algebra is said to have {\bf  principal coefficients} at a vertex $t_{0}$ if $\P=Trop(y_{1},y_{2},\cdots,y_{n})$ and $Y_{t_{0}}=(y_{1},y_{2},\cdots,y_{n})$.
\end{Definition}
Hence a cluster algebra having principal coefficients at some vertex is of geometric type. Then if we use $(\tilde{X},\tilde{B})$ to represent a seed, the definition is equivalent to that there is a seed $(\tilde{X}_{t_{0}},\tilde{B}_{t_{0}})$ at vertex $t_{0}$ satisfying $\tilde{B}_{t_{0}}=\left(\begin{array}{c}
                                                                                                                                                  B_{t_{0}} \\
                                                                                                                                                  I
                                                                                                                                               \end{array}\right),$
where $I$ is a $n\times n$ identity matrix.

Given a cluster algebra $\A$ with initial seed $\Sigma_{t_{0}}=(X_{t_{0}},Y_{t_{0}},B_{t_{0}})$, we denote by $\A_{prin}$ the cluster algebra with principal coefficients associated to $B_{t_{0}}$, which is called the {\bf principal coefficients cluster algebra corresponding to} $\A$ since it is unique up to cluster isomorphisms.

The \textbf{Laurent phenomenon}, given in \cite{FZ1, FZ2}, is the most fundamental result in cluster theory, which says that for a cluster algebra $\A$ and its fixed seed $(X,Y,B)$, every cluster variable of $\A$ is a Laurent polynomial over $\Z\P$ in cluster variables in $X$.

Thus for a seed $(X_{t_{0}},Y_{t_{0}},B_{t_{0}})$ and any cluster variable $x_{l;t}$ in $\A$, we can express it as a Laurent polynomial in cluster $X_{t_{0}}$:
\[x_{l;t}=\frac{P_{l;t}^{t_{0}}}{\prod\limits_{i=1}^{n}x_{i;t_{0}}^{d_{i}^{t_{0}}(x_{l;t})}}.\]
such that $P_{l;t}^{t_{0}}$ is a polynomial with no non-constant monomial factor. Here and in the following, we call $P_{l;t}^{t_{0}}$ the {\bf absolute numerator} of $x_{l;t}$ with respect to $X_{t_{0}}$. The denominator vector $d_{l;t}^{t_{0}}=(d_{1}^{t_{0}}(x_{l;t}),d_{2}^{t_{0}}(x_{l;t}),\cdots,d_{n}^{t_{0}}(x_{l;t}))$ is called the \textbf{$d$-vector} of $x_{l;t}$ with respect to the cluster $X_{t_{0}}$. Moreover, if $\A$ has principal coefficients at $t_{0}$, then $P_{l;t}^{t_{0}}$ belongs to $\Z[x_{1,t_{0}},\cdots,x_{n,t_{0}};y_{1,t_{0}},\cdots,y_{n,t_{0}}]$. $F_{l;t}^{t_{0}}=P_{l;t}^{t_{0}}|_{x_{i;t_{0}}\rightarrow1,\forall i\in[1,n]}$ is a polynomial in $y_{1,t_{0}},\cdots,y_{n,t_{0}}$ called the \textbf{$F$-polynomial} of $x_{l;t}$ with respect to $X_{t_{0}}$. Under the canonical $\Z^{n}$-grading given by $deg(x_{i;t_0})=e_{i},deg(y_{i;t_0})=-(b_{i}^{t_{0}})^{\top}$ for any $i\in[1,n]$ where $e_{1},\cdots,e_{n}$ are standard basis in $\Z^{n}$, and $b_{i}^{t_{0}}$ is the $i$-th column of $B_{t_{0}}$, the Laurent expression of $x_{l;t}$ in $X_{t_{0}}$ is homogeneous with degree $g_{l;t}^{t_{0}}$, which is called the \textbf{$g$-vector}\footnote{Usually, $d$-vectors and $g$-vectors are written as column vectors. But in this paper, because we will write the coordinates specifically in some discussion, it is more convenient for us to use row vectors. So for the sake of consistency we always write vectors in $\R^{n}$, including $d$-vectors and $g$-vectors, as row vectors.} of $x_{l;t}$ corresponding to $X_{t_{0}}$. Or generally in a cluster algebra of geometric type, $g$-vectors can also be defined recurrently as follows: $g_{j;t_{0}}^{t_{0}}=e_{j}$, and
\[g_{j;t\'}^{t_{0}}=\left\{\begin{array}{ll}
                     -g_{k;t}^{t_{0}}+\sum\limits_{i=1}^{n}[b_{ik}^{t}]_{+}g_{i;t}^{t_{0}}-\sum\limits_{i=1}^{n}[b_{n+i;k}^{t}]_{+}(b_{i}^{t_{0}})^{\top} & \text{if }j=k; \\
                     g_{j;t}^{t_{0}} & otherwise.
                   \end{array}\right.\]
when $t$ and $t\'$ are connected by an edge labeled $k$ in $\T_{n}$.

Next Theorem shows the importance of principal coefficients case in the study of cluster algebras.
\begin{Theorem}\label{expression of a cluster variable}
  \cite{FZ4}For any cluster algebra $\A$ and any vertices $t$ and $t\'$ in $\T_{n}$, the cluster variable $x_{l;t}$ can be expressed as
  \begin{equation*}
    x_{l;t}=\frac{F_{l;t}^{t\'}|_{\F}(\hat{y}_{1;t\'},\cdots,\hat{y}_{n;t\'})}{F_{l;t}^{t\'}|_{\P}(y_{1;t\'},\cdots,y_{n;t\'})}\prod\limits_{i=1}^{n}x_{i;t\'}^{g_{i}},
  \end{equation*}
  where
 \begin{equation}\label{hatandg}
  \hat{y}_{j;t\'}=y_{j;t\'}\prod\limits_{i=1}^{n}x_{i;t\'}^{b\'_{ij}}\;\;\;\; and\;\;\;\; g_{l;t}^{t\'}=(g_{1},\cdots,g_{n}).
  \end{equation}
\end{Theorem}

We denote $\hat{Y}_{t}=\{\hat{y}_{1;t},\cdots,\hat{y}_{n;t}\}$ for any $t\in\T_{n}$.

When $\A$ is a cluster algebra of geometric type with initial seed $\Sigma_{t_{0}}=(\tilde{X}_{t_{0}},\tilde{B}_{t_{0}})$, we also denote its seed as $\Sigma_{t}=(X_{t},X^{fr}_{t},\tilde{B}_{t})$ for any $t\in\T_{n}$, where $X_{t}=\{x_{1;t},\cdots,x_{n;t}\}$ and $X^{fr}_{t}=\{x_{n+1},\cdots,x_{m}\}$ to distinguish two kinds of variables.

%

\begin{Definition}
  For any seed $\Sigma_{t}$ associated to $t\in\T_{n}$, we denote by $\mathcal{U}(\Sigma_{t})$ the $\Z\P$-subalgebra of $\F$ given by
  \begin{equation*}
    \mathcal{U}(\Sigma_{t})=\Z\P[X_{t}^{\pm 1}]\cap\Z\P[X_{t_{1}}^{\pm 1}]\cap\cdots\cap\Z\P[X_{t_{n}}^{\pm 1}],
  \end{equation*}
  where $t_{i}$ is the vertex connected to $t$ by an edge labeled $i$ in $\T_{n}$ for any $i\in[1,n]$. $\mathcal{U}(\Sigma_{t})$ is called the {\bf upper bound} associated with the seed $\Sigma_{t}$. And $\mathcal{U}(\A)=\bigcap\limits_{t\in\T_{n}}\mathcal{U}(\Sigma_{t})$ is called the \textbf{upper cluster algebra} associated to $\A$.
\end{Definition}

In this paper, we will construct $\rho_{h}\in\N[[\hat{Y}]]X^{h}$ for each $h\in\Z^{n}$, that is, $\rho_{h}$ is of the form $\sum\limits_{p\in\N^{n}}a_{p}\hat{Y}^{p}X^{h}$ and $\rho_{h}|_{x_{i}\rightarrow1,\forall i\in[1,n]}$ is a power series of $Y$, where $a_p\in\N$. In order to emphasis that we regard $X$ as variables while $Y$ as coefficients, we will slightly abuse the notation to denote $\rho_{h}\in\N[Y][[X^{\pm1}]]$ and call it a {\bf formal Laurent polynomial} in $X$ with coefficients in $\N[Y]$.

For any $t,t\'\in\T_{n}$ connected by an edge labeled $k$ and any homogeneous Laurent polynomial
$$f\in\Z[Y_{t\'}][X_{t\'}^{\pm 1}]\cap\Z[Y_{t}][X_{t}^{\pm 1}]\subseteq\Z Trop(Y_{t\'})[X_{t\'}^{\pm 1}]$$
with grading $h$, we naturally have $f=F|_{\F}(\hat{Y}_{t\'})X_{t'}^{h}$, where $F$ is obtained from the Laurent expression of $f$ in $X_{t\'}$ by specilizing $x_{i;t\'}$ to 1 for any $i\in[1,n]$. We modify it into the Laurent polynomial
\begin{equation}\label{modify}
\frac{F|_{\F}(\hat{Y}_{t\'})}{y_{k;t\'}^{[h_{k}]_{+}}}X_{t\'}^{h},
\end{equation}
and
\begin{equation}\label{L(f)}
 \text{\em denote by}\; L^{t}(f)\; \text{\em the Laurent expression of this modified form (\ref{modify}) in}\; X_{t}
\end{equation}
with coefficients in $Y_{t}$ belonging to $\Z Trop(Y_{t})$ (So $\oplus$ is now defined in $\Z Trop(Y_{t})$ rather than in $\Z Trop(Y_{t\'})$. Hence $1\oplus y_{k;t}$ now equals $1$ rather than $y_{k;t}$). The motivation of such definition is as follows.

Applying $L^{t}$ on $f$ changes the semifield from $Trop(Y_{t\'})$ to $Trop(Y_{t})$, and compensating for it with dividing $y_{k;t\'}^{[h_{k}]_{+}}$ as in (\ref{modify}),  we will show in Remark \ref{remark after the theorem} that $L^{t}(\rho_{h}^{t\'})$ equals the Laurent expression of $\frac{F_{h}^{t\'}|_{\F}(\hat{Y}_{t\'})}{F_{h}^{t\'}|_{Trop(Y_{t})}(Y_{t\'})}X_{t\'}^{h}$ in $X_{t}$ with coefficients in $Y_{t}$ belonging to $\Z Trop(Y_{t})$, which realizes $\rho_{h}^{t\'}$ and $L^{t}$ as a generalization of a coefficient free cluster monomial and its mutation respectively in some aspect and thus justifies
the compensation. On the other hand, later we will use $L^{t}$ to introduce a strong restriction on the support of homogeneous Laurent polynomial (see $\mathcal{U}^{+}(\Sigma_{t})$ or $\widehat{\mathcal{U}}^{+}(\Sigma_{t})$) so as to make $\rho_{h}$ which we are going to construct a special element under this restriction.

Then we can define $L^{t;\gamma}(f)=L^{t}\circ L^{t^{(1)}}\circ\cdots\circ L^{t^{(s)}}(f)$ for any path $\gamma=t-t^{(1)}-\cdots-t^{(s)}-t\'$ in $\T_{n}$ if $L^{t}\circ L^{t^{(1)}}\circ\cdots\circ L^{t^{(j)}}(f)\in\Z[Y_{t^{(j)}}][X_{t^{(j)}}^{\pm 1}]$ for $j\in[0,s]$.

Later we will show that $L^{t;\gamma}(f)$ only depends on the endpoints $t$ and $t\'$ in Remark \ref{remark after the theorem}, so we usually omit the path in the superscript. Naturally, $L^t$ also makes sense for formal Laurent polynomials.

For any $t\in\T_{n}$ denote
\[\mathcal{U}_{\geqslant0}(\Sigma_{t})=\N\P[X_{t}^{\pm 1}]\cap\N\P[X_{t_{1}}^{\pm 1}]\cap\cdots\cap\N\P[X_{t_{n}}^{\pm 1}]\subseteq\mathcal{U}(\Sigma_{t}),\]
\[\mathcal{U}^{+}(\Sigma_{t})=\{f\in \mathcal{U}_{\geqslant0}(\Sigma_{t})\;|\; L^{t}(f)\in\N[Y_{t}][X_{t}^{\pm 1}]\text{ and }L^{t_{i}}(f)\in\N[Y_{t_{i}}][X_{t_{i}}^{\pm 1}],\forall i\in[1,n]\}\]
and
$$\mathcal{U}^{+}_{\geqslant0}(\Sigma_{t})= \mathcal{U}^{+}(\Sigma_{t})\cap\mathcal{U}_{\geqslant0}(\Sigma_{t_{1}})\cap\cdots\cap\mathcal{U}_{\geqslant0}(\Sigma_{t_{n}}),$$
where $t_{i}\in\T_{n}$ is the vertex connected to $t$ by an edge labeled $i$. We say an element in $\mathcal{U}^{+}_{\geqslant0}(\Sigma_{t})$ to be {\bf indecomposable} if it can not be written as a sum of two nonzero elements in $\mathcal{U}^{+}_{\geqslant0}(\Sigma_{t})$.

And we add the hat ``\;$\widehat\quad\;$'' to represent their completion. That is, denote
\[\widehat{\mathcal{U}}_{\geqslant0}(\Sigma_{t})=\N\P[[X_{t}^{\pm 1}]]\cap\N\P[[X_{t_{1}}^{\pm 1}]]\cap\cdots\cap\N\P[[X_{t_{n}}^{\pm 1}]],\]
\[\widehat{\mathcal{U}}^{+}(\Sigma_{t})=\{f\in\widehat{\mathcal{U}}_{\geqslant0}(\Sigma_{t})|L^{t}(f)\in\N[Y_{t}][[X_{t}^{\pm 1}]]\text{ and }L^{t_{i}}(f)\in\N[Y_{t_{i}}][[X_{t_{i}}^{\pm 1}]],\forall i\in[1,n]\}\]
and
$$\widehat{\mathcal{U}}^{+}_{\geqslant0}(\Sigma_{t})= \widehat{\mathcal{U}}^{+}(\Sigma_{t})\cap\widehat{\mathcal{U}}_{\geqslant0}(\Sigma_{t_{1}})\cap\cdots\cap\widehat{\mathcal{U}}_{\geqslant0}(\Sigma_{t_{n}}),$$
where $t_{i}\in\T_{n}$ is the vertex connected to $t$ by an edge labeled $i$.

In the sequel, for a cluster algebra $\A$, we will always {\em denote by $t_0$ the vertex of the initial seed} unless otherwise specified. And when a vertex is not written explicitly, we always mean the initial vertex $t_0$. For example, we use $X$, $x_{i}$, $P_{l;t}$ to denote $X_{t_{0}}$, $x_{i;t_{0}}$, $P_{l;t}^{t_{0}}$ respectively. For any cluster $X_{t}$ and any vector $\alpha=(\alpha_{1},\cdots,\alpha_{n})\in \Z^n$, we denote $X_{t}^{\alpha}=\prod\limits_{i=1}^{n}x_{i;t}^{\alpha_{i}}$.
\vspace{2mm}

Let $\A$ be a cluster algebra over $\P$ with initial seed $(X,Y,B)$.

For any $k\in[1,n]$, $t\in\T_{n}$ and Laurent polynomial $P\in\Z\P[X_t^{\pm 1}]$, we will always denote by $$M_{k;t}=x_{k;t}\mu_{k}(x_{k;t})$$ the \textbf{exchange binomial} in direction $k$ at $t$ and by $deg_{x_{k;t}}(P)$ the $x_{k;t}$-\textbf{degree} of $P$. Trivially, $M_{k;t}$ is a polynomial in $\Z\P[x_{1;t},\cdots,x_{k-1;t},x_{k+1;t},\cdots,x_{n;t}]$.

A \textbf{cluster monomial} in $\A$ is a monomial in a cluster $X_{t}$ for some $t\in\T_{n}$. In the following, when we mention a cluster monomial, it is of the form $aY_{t}^{p}X_{t}^{q}$ with $a\in\Z,p,q\in\Z^n$ and $t\in\T_{n}$. For such a cluster monomial $f=aY^{p}X^{q}$, we call $a$ (respectively, $aY^{p}$) the {\bf constant coefficient} (respectively, {\bf coefficient}) of $f$ and say $f$ is {\bf constant coefficient free} (respectively, {\bf coefficient free}) if $a=1$ (respectively,$aY^{p}=1$). Similarly, we define a \textbf{cluster polynomial} to be a polynomial in a cluster $X_{t}$.

\begin{Definition}
  (i)\;  For a Laurent polynomial $P\in\Z\P[X^{\pm 1}]$ and a constant coefficient free Laurent monomial $p=Y^{\alpha}X^{\beta}$ with $\alpha,\beta\in\Z^{n}$, we denote by $co_{p}(P)$ the constant coefficient of $p$ in $P$.

  (ii)\;  For any Laurent polynomial $P$, $P\'$ is called a \textbf{summand} of $P$ if for any Laurent monomial $p$ with constant coefficient $1$, either $0\leqslant co_{p}(P\')\leqslant co_{p}(P)$ or $co_{p}(P)\leqslant co_{p}(P\')\leqslant 0$. $P\'$ is called a \textbf{monomial summand} of $P$ if it is moreover a Laurent monomial.
\end{Definition}

For a variable $x$, we say a polynomial $P$ is  {\bf $x$-homogeneous} if $deg_{x}(p)$ are the same for all monomial summands $p$ of $P$.

\begin{Definition}
  For any $t\in\T_{n}$, $k\in[1,n]$ and $x_{k;t}$-homogeneous polynomial $P$ in $X_{t}$ with exchange binomial $M_{k;t}$, denote by $\widetilde{deg}_{k}^{t}(P):=deg_{x_{k;t}}(P)+max\{s\in\N:\; M_{k;t}^{s}|P\text{ in }\Z\P[X_{t}^{\pm 1}]\}$ the \textbf{general degree} of $P$ in $x_{k;t}$.   Moreover, for any polynomial $P=\sum\limits_{i}P_{i}$, where $P_{i}$ is a $x_{k;t}$-homogeneous polynomial in $X_{t}$ satisfying $deg_{x_k}(P_i)\neq deg_{x_k}(P_j)$ when $i\neq j$, define $\widetilde{deg}_{k}^{t}(P):=\min\limits_{i}\{\widetilde{deg}_{k}^{t}(P_{i})\}$.
\end{Definition}
According to the mutation formula (\ref{equation: mutation of x}), $\widetilde{deg}_{k}^{t}(P)$ is the maximal integer $a$ such that $\frac{P}{x_{k;t}^{a}}$ can be expressed as a Laurent polynomial in $X_{t_{k}}$, where $t_{k}\in\T_{n}$ is the vertex connected to $t$ by an edge labeled $k$.

Following the definitions in \cite{LLZ}, a Laurent polynomial $p$ in $X$ is called \textbf{universally positive} if $p\in\N\P[X_{t}^{\pm 1}]$ for any $t\in\T_{n}$. And a universally positive Laurent  polynomial is said to be \textbf{universally indecomposable} if it cannot be expressed as a sum of two nonzero universally positive Laurent  polynomials. Universal indecomposability can be regarded as the ``minimalism'' in the set of universally positive Laurent polynomials. Since the above two definitions are given for all $t\in T_{n}$, they are naturally mutation invariants.

A $\Z\P$-basis $\{\alpha_{s}\}_{s\in I}$ of $\mathcal{U(A)}$ is called \textbf{strongly positive} if for any $i,j\in I$, $\alpha_{i}\alpha_{j}=\sum\limits_{s\in I}a_{ij}^{s}\alpha_{s}$, where $a_{ij}^{s}\in\N\P$ for any $s\in I$.

\subsection{Notions and notations on polytopes}\quad

Next we briefly introduce some concepts and notations about polytopes mainly from \cite{Z}, which will be used in this paper with slight modification.

In this paper, unless otherwise specified, we always fix the following notations and notions: \\
(i) Denote by $z_{1},\cdots,z_{n}$ the coordinates of $\R^{n}$;
\\
(ii) {\bf Points} imply lattice points in $\Z^{n}$;
\\
(iii) {\bf Polytopes} imply those whose vertices are lattice points;
\\
(iv)  The {\bf partial order} ``$\leqslant$" in $\Z^{n}$ is defined as $a\leqslant b$ for any $a=(a_1, \cdots, a_n), b=(b_1, \cdots, b_n)\in\Z^{n}$ if $a_{i}\leqslant b_{i}$ for all $i\in[1,n]$.

\begin{Definition}\label{convex}
  (i)\;The \textbf{convex hull} of a finite set $V=\{\alpha_{1},\cdots,\alpha_{r}\}\subseteq \R^{n}$ is
  \[conv(V)=\{\sum\limits_{i=1}^{r}a_{i}\alpha_{i}\; |\; a_{i}\geqslant 0,\sum\limits_{i=1}^{r}a_{i}=1\},\]
  while the \textbf{affine hull} of $V$ is
  \[\text{\bf aff}(V)=\{\sum\limits_{i=1}^{r}a_{i}\alpha_{i}\; |\; a_i\in\R, \sum\limits_{i=1}^{r}a_{i}=1\}.\]

  (ii)\;An (unweighted) \textbf{polytope} is the convex hull of a certain finite set of points in  $\R^{n}$ for some $n\in \N$, or equivalently, a polytope is a bounded intersection of finitely many closed halfspaces in $\R^{n}$ for $n\in\N$. The \textbf{dimension of a polytope} is the dimension of its affine hull.

  (iii)\;Let $N\subseteq \R^{n}$ be a polytope. For some chosen $w\in\R^{n}$ and $c\in\R$, a linear inequality $wp^{\top}\leqslant c$ is called {\bf valid} for $N$ if it is satisfied for all points $p\in N$. A \textbf{face} of $N$ is a set of the form
  \begin{equation}\label{face}
  S = N\cap \{p\in\R^{n}\; |\; wp^{\top} = c\},
  \end{equation}
  where $wp^{\top}\leqslant c$ is a valid inequality for $N$. The {\bf dimension of a face} is the dimension of its affine hull.

  (iv)\;The \textbf{vertices}, \textbf{edges} and \textbf{facets} of a polytope $N$ are its faces with dimension $0$, $1$, and $dim N - 1$ respectively. Denote by $V(N)$ and $E(N)$ the set consisting of vertices and edges of $N$ respectively.

  (v)\;The {\bf sum} $N+N\'$ of two polytopes $N$ and $N\'$ is the convex hull of $N\cup N\'$.

  (vi)\;The {\bf Minkowski sum} $N\oplus N\'$ of two polytopes $N$ and $N\'$ is the polytope consisting of all points $p+q$ for points $p\in N$ and $q\in N\'$.
\end{Definition}

Here we modify the original definition of polytopes by associating weight to each lattice point in it.
\begin{Definition}\label{weight}
  For a polytope $N$, the {\bf weight} of a point $p\in N$ is the integer placed on this point, denoted as $co_p(N)$, or simply $co_{p}$ when the polytope $N$ is known clearly. A polytope $N$ equipped with weights is called a {\bf weighted polytope} if $N=conv(supp(N))$, where $supp(N)=\{p\in N|co_{p}(N)\neq0\}$ is called the support of $N$.
\end{Definition}

Two summations introduced in Definition \ref{convex} (v) and (vi) can be extended to polytopes with weights. For two polytopes $N$ and $N\'$, we define the weights of $N+N\'$ as follows:
\begin{equation}\label{cosum}
co_p(N+N\')=co_p(N)+co_{p}(N\')\triangleq\left\{\begin{array}{lr}
                        co_{p}(N)+co_{p}(N\')  & \text{if}\; p\in N\cap N\'; \\
                         co_p(N) & \text{if}\; p\in N\backslash N\';\\
                         co_p(N\') & \text{if}\; p\in N\'\backslash N;\\
                         0 & \text{if}\; p\not\in N\cup N\'.
                       \end{array}\right.
\end{equation}
Then $N+N\'$ is the polytope $conv(supp(N+N\'))$ equipped with the above weights. This summation is induced from that of Laurent polynomials with respect to the correspondence between polytopes and Laurent polynomials. Note that in general, $conv(supp(N+N\'))$ does not equal to $conv(N\cup N\')$. While for the Minkowski sum $\oplus$, let
\begin{equation}\label{minsum}
co_{q}(N\oplus N\')=\sum\limits_{p+p\'=q}co_{p}(N)co_{p\'}(N\')
\end{equation}
for any point $q\in N\oplus N\'$. This is induced from the multiplication of Laurent polynomials. It can easily verified that both summations are commutative and associative.

\begin{Example}
  Let $p=(0,0)$, and $q=(1,0)$ in $\R^2$. Define polytopes $N$, $N'$ and $N''$ to be the segment connecting $p$ and $q$ equipped with weights
  $$co_{p}(N)=co_{p}(N')=co_{q}(N)=1,\quad co_{p}(N'')=2,\quad co_{q}(N\')=-1 \text{ and }co_{q}(N'')=0.$$
  Then by the definition of weighted polytopes, it can be check directly $N$ and $N'$ are weighted polytopes but $N''$ is not. Instead the weighted polytope $N+N'$ should be the single point $p$ equipped with weight 2. Since $supp(N+N\')=\{p\}$ while $N\cup N'$ equals he segment connecting $p$ and $q$, $conv(supp(N+N\'))\neq conv(N\cup N\')$.
\end{Example}

For convenience we assume that for a polytope $N$,
\begin{equation*}
  co_{p}(N)=0\;\;\; \text{if}\;\; p\notin N.
\end{equation*}

In the sequel, since all polytopes concerned about are weighted, we will omit the word ``weighted'' and simply say as polytopes. Also, the sum $N+N'$ and the Minkowski sum $N\oplus N'$ always mean those of weighed polytopes according to the formulae (\ref{cosum}) and (\ref{minsum}) respectively.

Under the meaning of the sum $``+"$, we define the subtraction $N-N\'$ to be the polytope $N^{\prime\prime}$ such that $N=N\'+N^{\prime\prime}$. It is not hard to see such $N^{\prime\prime}$ is well-defined and unique.

For any polytope $N$ in $\R^{n}$ and $w\in\Z^{n}$, we denote by $N[w]$ the polytope obtained from $N$ by a translation along $w$.

\begin{Definition}
  For two polytopes $N$ and $N\'$ in $\R^{n}$, $N\'$ is a {\bf sub-polytope} of $N$ if there is $w\in\Z^{n}$ such that $0\leqslant co_{p}(N\'[w])\leqslant co_{p}(N)$ or $0\geqslant co_{p}(N\'[w])\geqslant co_{p}(N)$ for any point $p\in N$, which is equivalent to that the $Y$-polynomial corresponding to $N\'$ is a summand of that corresponding to $N$ up to multiplying a Laurent monomial in $Y$. In this case, denote $N\'\leqslant N$. This relation $\leqslant$ defines a partial order in the set of polytopes.
\end{Definition}

In the above definition, we always have $dim(N\')\leqslant dim(N)$ for a sub-polytope $N\'$ of $N$, where the strict inequality may hold.

It is easy to see when the weights are all non-negative for polytopes $N$ and $N\'$, they are both sub-polytopes of $N+N\'$ and $N\oplus N\'$.

Given a polytope $N$ with a vertex $p$, a \textbf{lattice generating set} of $N$ based on $p$ is a minimal set of vectors $V=\{v_1,\cdots,v_r\}$ satisfying that for any $i\in[1,n]$, there are two points $q_i$ and $q_i\'$ with non-zero weights on an edge of $N$ such that $q_i\'-q_i=v_i$ and for any point $p\'\in N$ with non-zero weight, there is unique $a_i\in\N$ such that
\[p\'=p+\sum\limits_{i=1}^{r}a_iv_i.\]
Denote
$$ldim(N)=\min\{|V|\mid V\text{ is a lattice generating set of }N\}.$$
Apparently, $ldim(N)\geqslant dim(N)$.

Given a polytope $N$ with a point $v$ in it and a sequence $i_{1},\cdots,i_{r}\in[1,n]$, define $\{i_{1},\cdots,i_{r}\}$-\textbf{section} at $v$ of $N$ to be the convex hull of lattice points in $N$ whose $j$-th coordinates are equal to that of $v$ respectively for any $j\in[1,n]\setminus\{i_{1},\cdots,i_{r}\}$.

For any two points $p, q$, denote by $l(\overline{pq})$ the length of the segment $\overline{pq}$ connecting $p$ and $q$.

\begin{Definition}
(i)\; A map $\eta:\R^r\rightarrow\R^s$ is called \textbf{affine} if $\eta(p)=Ap+w$ for any $p\in\R^r$, where $A\in Mat_{s\times r}(\R)$ and $w\in\R^s$.

(ii)\; A \textbf{projection} $\tau$ of two polytopes $N$ and $N\'$ is a restriction of an affine map $\eta:\R^{dim(N)}\rightarrow\R^{dim(N\')}$ satisfying that $\tau(N)=N\'$ and the weights associated to $p$ in $N$ and to $\tau(p)$ in $N\'$ are the same for any (not necessary lattice) points $p\in N$, where the weights of non-lattice points are set to be zero. A projection is an {\bf isomorphism} if it is bijective.
\end{Definition}

An {\bf isomorphism $\tau$ of two (weighted) polytopes} $N$ and $N\'$ is a bijection of two sets:
\[\tau:\quad\{\text{(not necessary lattice) points in } N\}\quad  \longrightarrow  \quad\{\text{(not necessary lattice) points in } N\'\}\]
satisfying that $\tau(ap+bq)=a\tau(p)+b\tau(q)$ for any (not necessary lattice) points $p,q\in N$ and any $a,b\in\R_{\geqslant 0}$ with $a+b=1$, and the weights associated to $p$ and $\tau(p)$ respectively are the same, where the weights of non-lattice points are set to be zero.

Assume the dimensions of $N$ and $N\'$ are $r$ and $s$ respectively. Then they can be embedded in $\R^r$ and $\R^s$ respectively. For each $i\in[1,r]$, there are two different (not necessary lattice) points $p,p\'\in N$ such that $p-p\'=l(\overline{pp\'})e_{i}$. Then a linear map $\tilde\tau$ is induced by a projection $\tau: N\rightarrow N\'$ as
\begin{equation}\label{tildetau}
  \begin{array}{ccc}
    \tilde{\tau}:\quad\R^{r}\quad&\longrightarrow&\quad\R^{s} \\
    \qquad\qquad e_{i}\qquad & \mapsto & \qquad \frac{\tau(p)-\tau(p\')}{l(\overline{pp\'})}
  \end{array}
\end{equation}
In the later discussion, $N\'$ is often a face of some polytope with higher dimension $n$, so we usually slightly abuse the notation to use $\tilde{\tau}$ as the linear map:
$$\tilde{\tau}: \; \R^{r}\quad\longrightarrow\quad\R^{s}\quad\hookrightarrow\quad\R^{n}.$$
A polytope projection is called \textbf{non-negative} if $\tilde{\tau}(e_i)$ is a non-negative vector for any $i$.

Following Definition \ref{weight}, we can obtain the correspondence from polytopes to Laurent polynomials in the following way:

(i)\; To a Laurent monomial $a_{v}Y^{v}$ in $y_{1},\cdots,y_{n}$, where $a_{v}\neq 0\in\Z,v\in\Z^{n}$, we associate a vector $v$ together with integer $a_v$. Hence a Laurent polynomial $f(Y)=\sum\limits_{v\in\Z^{n}}a_{v}Y^{v}$ with $a_{v}\neq 0\in\Z$ corresponds to a set consisting of vectors $v$, which is called the \textbf{support} of $f(Y)$, together with integers $a_{v}\neq 0$.

(ii)\; Each integer vector $v$ of dimension $n$ corresponds to a lattice point in $\R^{n}$. Denote by $N$ the convex hull of lattice points corresponding to
the above vectors $v$ from $f(Y)$ with integers $a_{v}$ placed at lattice points. Then, we set up the following bijection:

$$\upsilon:\;\; \{\text{Laurent polynomials}\; f(Y) \}\longleftrightarrow \{\text{weighted polytopes}\; N\}$$
\\
In particular, polynomials in $Y$ correspond to polytopes lying in the non-negative quadrant.

For a principal coefficients cluster algebra of rank $n$ with initial cluster $X$, when a vector $h\in\Z^{n}$ is given, the above bijection $\upsilon$ induces a bijection $\tilde{\upsilon}$ from homogeneous Laurent polynomials $f(\hat{Y})X^{h}$ of degree $h$ to the weighted polytopes $N$ which corresponds to $f(Y)=f(\hat{Y})X^{h}|_{x_{i}\rightarrow 1,\forall i}$. That is,
\begin{equation}\label{polypoly}
  \tilde{\upsilon}(f(\hat{Y})X^{h})=\upsilon(f(Y))=N.
\end{equation}

In the sequel, we will call $N$ {\bf the Newton polytope} corresponding to the Laurent polynomial $f(Y)$ or to the Laurent polynomial $f(\hat{Y})X^{h}$ (with respect to $h$). For convenience, when discussing a polytope $N$, we sometimes use point $p$ to represent its corresponding Laurent monomial $\hat{Y}^pX^h$.

Denote by $N_{l;t}$ the Newton polytope of $F_{l;t}$ associated to the cluster variable $x_{l;t}$ for any $l\in[1,n]$ and $t\in\T_n$, that is,
$$\tilde{\upsilon}(x_{l;t})=\upsilon(F_{l;t})=N_{l;t}.$$

The support of a Laurnt polynomial $f(Y)$ is called {\bf saturated} if any lattice point in the Newton polytope $N$ corresponds to a nonzero monomial summand of $f(Y)$, i.e., if the weight of any lattice point in $N$ is nonzero.

Because of this correspondence, we can deal with weighted polytopes when discussing Laurent polynomials, which contains all elements in a cluster algebra according to Laurent phenomenon. In this paper, {\em our strategy is to  reduce problems into a simpler case by dividing polytopes into a sum of sub-polytopes inductively}.

\subsection{Main contents}\quad

The paper is organized as follows.

In Section 2, we introduce some results in cluster algebras and take primary discussion.

In Section 3, we first construct $N_{h}$ and $\rho_{h}$ for every $h\in\Z^{2}$ and show that $\{\rho_{h}|h\in\Z^{2}\}$ coincides with the greedy basis (Proposition \ref{polytope basis equals greedy basis}). Then we furthermore define essential skeleton in any rank as the generalization of that in the case of rank 2 and provide a program to construct $N_{h}$ as well as $\rho_{h}$ for any integer vector $h\in\Z^n$ (Construction \ref{construction}). After that, we have the following theorem.
\vspace{2mm}\\
$\spadesuit$ \textbf{(Theorem \ref{properties in case tsss})}\;
  Let $\A$ be a TSSS cluster algebra having principal coefficients and $h\in\Z^{n}$. Then,

  (i)\;For any $i\in[1,n]$, there is a decomposition
  \[x_{i}\rho_{h}=\sum\limits_{w,\alpha} c_{w,\alpha}Y^{w}\rho_{\alpha},\]
  where $w\in\N^{n},\alpha\in\Z^{n}$ and $c_{w,\alpha}\in\N$.

  (ii)\;The polytope function $\rho_{h}$ is the unique indecomposable formal Laurent polynomial in $X$ in $\widehat{\mathcal{U}}_{\geqslant0}(\Sigma_{t_{0}})$ which has $X^{h}$ as a summand and whose support is contained in $supp(N_{h})$.

  (iii)\; For any $h\in\Z^{n}$ and any $k\in[1,n]$, there is
  \[h^{t_{k}}=h-2h_{k}e_{k}+h_{k}[(b_{k})^{\top}]_{+}+[-h_{k}]_{+}(b_{k})^{\top}\]
  such that $L^{t_{k}}(\rho_{h})=\rho^{t_{k}}_{h^{t_{k}}}$, where $t_{k}\in\T_{n}$ is the vertex connected to $t_{0}$ by an edge labeled $k$ and $h_{k}$ is the $k$-th entry of $h$.

  (iv)\;For any $p,p\'\in N_{h}$, if the segment $l$ connecting $p$ and $p\'$ is parallel to the $k$-th coordinate axis for some $k\in[1,n]$ and $m_{k}(p),m_{k}(p\')>0$, then $m_{k}(p^{\prime\prime})>0$ for any point $p^{\prime\prime}\in l$.

  (v)\; Let $S$ be an $r$-dimensional face of $N_{h}$ for $h\in\Z^{n}$ such that $\rho_{h}\in \mathcal{U}_{\geqslant0}(\Sigma)$. Then there is a vector $h\'\in\Z^{ldim(S)}$ and a cluster algebra $\A\'$ with principal coefficients of rank $ldim(S)$ and a non-negative polytope projection $\tau: N_{h\'}|_{\A\'}\rightarrow S$. In particular, $\tau$ is an isomorphism when $ldim(S)=r$.
\vspace{1mm}

In Section 4, we explain that since each cluster variable $x_{l;t}$ equals $\rho_{g_{l;t}}$, cluster variables and the Newton polytopes inherit all properties shown in the last section. In particular, the definitions of $N_{h}$ and $\rho_{h}$ present a recursive way to calculate the Laurent expression of any cluster variable in a given cluster from its $g$-vector.
\vspace{2mm}\\
$\spadesuit$ {\bf (Theorem \ref{from general to cluster variables})}\;
(\textbf{Recurrence formula})  Let $\A$ be a TSSS cluster algebra having principal coefficients, then $x_{l;t}$ as well as $N_{l;t}$ can be calculated via a recurrence formula induced from the constructions of $N_h$ and $\rho_h$ for $h\in\Z^n$.
\vspace{1mm}

Based on Laurent phenomenon, in \cite{FZ1}, the  positivity conjecture for cluster variables is suggested, that is,
\begin{Conjecture}[\cite{FZ1}]\label{positive property}
  Every cluster variable of a cluster algebra $\A$  is a Laurent polynomial in cluster variables from an initial cluster $X$ with positive coefficients.
\end{Conjecture}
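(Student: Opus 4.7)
The plan is to deduce positivity directly from the Recurrence formula (Theorem \ref{from general to cluster variables}) together with the structural properties of polytope functions established in Theorem \ref{properties in case tsss}. The key identification is that every cluster variable satisfies $x_{l;t}=\rho_{g_{l;t}}$, so its Laurent expansion in the initial cluster $X$ already takes the explicit form
\[ x_{l;t} \;=\; X^{g_{l;t}}\sum_{p\in N_{g_{l;t}}} co_{p}(N_{l;t})\,\hat{Y}^{p}, \]
and $X^{g_{l;t}}\hat{Y}^{p}$ is a genuine Laurent monomial over $\N\P$ in the initial cluster. Hence the conjecture reduces to showing $co_{p}(N_{l;t})\in\N$ for every lattice point $p\in N_{g_{l;t}}$.

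First I would establish non-negativity of the $co_{p}$ by induction on the rank $n$ of $\A$, i.e.\ on the dimension of $N_{g_{l;t}}$. The base case of small rank is already known (for rank $2$ via Proposition \ref{polytope basis equals greedy basis} identifying $\mathcal{P}$ with the greedy basis of \cite{LLZ}). For the inductive step, Theorem \ref{properties in case tsss}(v) exhibits each proper face $S$ of $N_{h}^{t_{0}}$ as the polytope $N_{h'}|_{\A\'}$ of a pure sub-cluster algebra $\A\'$ with principal coefficients of strictly smaller rank, via an isomorphism whose induced linear map $\tilde{\tau}$ sends each standard basis vector into $\N^{n}$. Since the recurrence in Theorem \ref{from general to cluster variables} assembles $co_{p}(N_{l;t})$ from contributions $co_{p}(N_{\alpha_{j}}[w_{j}])$ attached to strata supported on such faces, the inductive hypothesis forces each summand to lie in $\N$, and hence so does their total.

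To secure positivity in an \emph{arbitrary} initial cluster rather than only in a distinguished $X$, I would iterate the mutation formula $L^{t_{k}}(\rho_{h}^{t_{0}})=\rho_{h^{t_{k}}}^{t_{k}}$ from Theorem \ref{properties in case tsss}(ii) along a path from $t_{0}$ to $t$ in $\T_{n}$. After each step, the cluster variable is again a polytope function associated to a new vector $h^{t_{k}}$ in the mutated seed, whose Laurent expansion has non-negative coefficients by the same induction. Finally, the passage from principal coefficients to an arbitrary semifield $\P$ is handled by the standard separation/specialization formulas, which preserve non-negativity of integer coefficients; this yields Conjecture \ref{positive property} in full generality for totally sign-skew-symmetric $\A$ and, as a by-product, gives a proof of positivity in the skew-symmetrizable case that is independent of the scattering diagram machinery of \cite{GHKK}.

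The main obstacle I anticipate is verifying that the face-by-face decomposition of Theorem \ref{properties in case tsss}(v) genuinely recovers the full coefficient $co_{p}(N_{l;t})$ as a sum of coefficients computed on the lower-dimensional faces, with neither cancellations nor missing strata. Because the totally sign-skew-symmetric hypothesis lies outside the reach of scattering-diagram techniques, every step of the bookkeeping of the strata $U_{g_{l;t}}^{r}$, the auxiliary polytopes $N_{\alpha_{j}}[w_{j}]$, and the compatibility of the map $\tilde{\tau}$ with the stratification must be carried out purely intrinsically. Showing that the recursion is simultaneously well-defined and exhaustive at each step, so that the inductive hypothesis applies term-by-term, is the delicate combinatorial point on which the argument ultimately rests.
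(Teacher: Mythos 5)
Your proposal follows essentially the same route as the paper: identify $x_{l;t}$ with $\rho_{g_{l;t}}$, use the recurrence formula to express each weight $co_{p}(N_{l;t})$ as a sum of weights of lower-complexity polytopes, terminate at the rank-$2$ base case where the weights are explicit modified binomial coefficients (formula (\ref{coefrank2})) and hence non-negative, and pass to arbitrary coefficients by the separation formula of Theorem \ref{expression of a cluster variable}. One step needs correcting, however. You hang the inductive step on Theorem \ref{properties in case tsss}(v) and speak of the strata being ``supported on such faces''; they are not. The strata $U_{h}^{s}$ live in the hyperplane sections $z_{r}=s$ of $N_{h}$, which for intermediate values of $s$ are not faces of $N_{h}$ at all, so the face-isomorphism statement of part (v) is not what drives the reduction. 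What actually makes the rank induction bite is that, by Construction \ref{construction}, every polytope $N_{\alpha_{j}}[w_{j}]$ occurring in a stratum is by definition a translate of $\gamma_{r;\cdot}$ applied to a polytope of the rank-$(n-1)$ cluster algebra $\A_{r}$ obtained by deleting the $r$-th row and column of $B$; the non-negativity of its weights comes from the inductive hypothesis applied to $\A_{r}$, with no appeal to the face structure. Relatedly, a single induction on the rank does not by itself make the recursion well-founded: the decomposition $\rho_{\pi_{r}(h)}x_{i}=\rho_{\pi_{r}(h)+e_{i}}+\sum_{j}c_{w_{j},\alpha_{j}}Y^{w_{j}}\rho_{\alpha_{j}}$ that seeds $U_{h}^{0}$, and the existence and minimality of the subsequent strata, are themselves part of what is being proved, which is why the paper runs a double induction, on the rank and simultaneously on the partial order of polytopes $N_{h'}<N_{h}$. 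With the inductive hook replaced by the stratification and the double induction, your argument coincides with the paper's.
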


So far, the recent advance on the positivity conjecture is a proof in skew-symmetrizable case given in \cite{GHKK}. For totally sign-skew-symmetric cluster algebras, it was only proved in acyclic case in \cite{HL}.

As a harvest of this polytope method, a natural conclusion  of Theorem \ref{from general to cluster variables} is the following corollary, which actually completely confirms  Conjecture \ref{positive property} in the most general case:\\\\
$\spadesuit$ {\bf (Corollary \ref{TSSS positivity})}\;
 The positivity conjecture for cluster variables holds for TSSS cluster algebras.
\vspace{1mm}

Moreover, as a class of special elements in $\mathcal{P}$, they admit extra properties as the following theorem claims.
\vspace{2mm}\\
$\spadesuit$ {\bf (Theorem \ref{properties for cluster variable case})}\;
Let $\A$ be a TSSS cluster algebra having principal coefficients, $l\in[1,n],t\in\T_{n}$. Then the support of $F$-polynomial $F_{l;t}$ is saturated and for any $p\in N_{l;t}$, $co_{p}(N_{l;t})=1$ if and only if $p\in V(N_{l;t})$.
\vspace{2mm}

As a conclusion, in Corollary \ref{answer to fei}, we provide a positive answer to Conjecture \ref{F} posed in \cite{F} by Jiarui Fei.

In Section 5 we present another application.
\vspace{2mm}\\
$\spadesuit$ {\bf (Theorem \ref{positivity of d-vectors})}\; The positivity conjecture of $d$-vectors of non-initial cluster variables holds. More precisely, $d$-vector of a cluster variable can be expressed as a vector composed by general degrees of the absolute numerator of this cluster variable.

\noindent$\spadesuit$ {\bf (Corollary \ref{$F$-polynomial uniquely determines cluater variable})}\;
   Let $\A$ be a TSSS cluster algebra. Then a non-initial cluster variable is uniquely determined by its corresponding $F$-polynomial.
\vspace{2mm}

Moreover, in summary, we set the following relationship:

\vspace{2mm}
\noindent$\spadesuit$ {\bf (Theorem \ref{maps from $F$-polynomials} and Theorem \ref{maps from $g$-vectors})}\;
   For a cluster algebra with principal coefficients, there are some bijections among non-initial $F$-polynomials, $g$-vectors and cluster variables as well as surjections from non-initial $F$-polynomials, $g$-vectors or cluster variables to $d$-vectors.
\vspace{2mm}

In Section 6, we show that $\mathcal{P}$ is a strongly positive basis of the upper cluster algebra $\mathcal{U}(\A)$.
\vspace{2mm}\\
$\spadesuit$ {\bf (Theorem \ref{positive} and Theorem \ref{positive over arbitray semifield})}\;
   (i)\;For a TSSS cluster algebra $\A$ with principal coefficients, the set  $\mathcal{P}=\{\rho_{h}\in\N Trop(Y)[X^{\pm 1}]|h\in\Z^{n}\}$ is a strongly positive $\Z Trop(Y)$-basis for the upper cluster algebra $\mathcal{U}(\A)$ which we call the {\bf polytope basis}.

   (ii)\;Let $\A$ be a cluster algebra over a semifield $\P$. Then $\mathcal{P}$ is a strongly positive $\Z\P$-basis for the intermediate cluster algebra $\mathcal{I_P(A)}$ (see its definition in Page 60).\vspace{2mm}

In Section 7, when $\A$ is in particular skew-symmetrizable, we can calculate the cluster algebra associated to each face $S$ by the following result:
\vspace{2mm}\\
$\spadesuit$ {\bf (Theorem \ref{properties of N_h for skew-symmetrizable})}\;
In Theorem \ref{properties in case tsss} (v), if $\A$ is a skew-symmetrizable cluster algebra with principal coefficients whose initial exchange matrix is $B$, and denote by $B\'$ the initial exchange matrix of the cluster algebra $\A\'$, then the relation between $B\'$ and $B$ is showed by an equation.
\vspace{2mm}

\section{Some needful conclusions}
Then we introduce some important conclusions in cluster algebras and several lemmas for further discussion in the next sections.

In order to prove Laurent phenomenon of a cluster algebra, it is first proved in \cite{BFZ} that
\begin{Theorem}\label{theorem of upper cluster algebras} \cite{BFZ}
  For any vertices $t,t\'\in\T_{n}$ connected by an edge labeled $k\in[1,n]$, assume $M_{i;s}$ and $M_{j;s}$ are coprime for any $i\neq j\in[1,n]$,$s=t\text{ or }t\'$. Then their corresponding upper bounds coincide, that is, $\mathcal{U}(\Sigma_{t})=\mathcal{U}(\Sigma_{t\'})$.
\end{Theorem}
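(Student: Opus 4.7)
The plan is to compare the two upper bounds directly from the definition, exploiting that the clusters $X_t$ and $X_{t'}$ differ only in the $k$-th entry. Both $\mathcal{U}(\Sigma_t)$ and $\mathcal{U}(\Sigma_{t'})$ are intersections of $n+1$ Laurent polynomial rings indexed by a seed and its $n$ neighbours. One of those rings, namely $\Z\P[X_t^{\pm 1}] \cap \Z\P[X_{t'}^{\pm 1}]$, appears in both intersections (as the $\mu_k$-term on each side), so the task reduces to comparing, for each $i \neq k$, the factor $\Z\P[X_{\mu_i(t)}^{\pm 1}]$ appearing in $\mathcal{U}(\Sigma_t)$ with the factor $\Z\P[X_{\mu_i(t')}^{\pm 1}]$ appearing in $\mathcal{U}(\Sigma_{t'})$.

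First I would introduce the auxiliary ring $\mathcal{V}_i(s) := \Z\P[X_s^{\pm 1}] \cap \Z\P[X_{\mu_i(s)}^{\pm 1}]$ and give an explicit description of its elements. Using $x_{i;s}\,\mu_i(x_{i;s}) = M_{i;s}$, an element $f \in \Z\P[X_s^{\pm 1}]$ lies in $\mathcal{V}_i(s)$ if and only if, when expanded as a Laurent polynomial in $x_{i;s}$ over $\Z\P[(X_s \setminus \{x_{i;s}\})^{\pm 1}]$, every coefficient of a negative power $x_{i;s}^{-m}$ is divisible by $M_{i;s}^{m}$. This reformulation converts the containment problem into a concrete polynomial divisibility condition, which is far easier to track under the substitutions induced by mutations.

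Second, given $f \in \mathcal{U}(\Sigma_t)$, I would verify that $f$ lies in each factor of $\mathcal{U}(\Sigma_{t'})$ by the following strategy. For the shared factor $\Z\P[X_{t'}^{\pm 1}]$, use the exchange relation on $x_{k;t}$ and check directly. For a factor $\Z\P[X_{\mu_i(t')}^{\pm 1}]$ with $i \neq k$, one must show that after the substitution $x_{k;t} \mapsto M_{k;t}\,x_{k;t'}^{-1}$, the transformed Laurent polynomial in $X_{t'}$ still satisfies the $M_{i;t'}$-divisibility condition in direction $i$ of $\Sigma_{t'}$. The coprimality of $M_{i;t}$ and $M_{k;t}$ (the hypothesis at $s=t$) is used to disentangle the divisibility conditions in the two different directions that both hold in $\Z\P[X_t^{\pm 1}]$, and the coprimality at $s=t'$ is then invoked to conclude the surviving divisibility condition after substitution. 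By the symmetry $t \leftrightarrow t'$, establishing one inclusion suffices.

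The main obstacle is the polynomial bookkeeping in the substitution step: one must verify carefully that clearing a negative power of $x_{k;t}$ by multiplying through by $x_{k;t}^{m}$ and replacing it with $(M_{k;t}/x_{k;t'})^{m}$ does not destroy divisibility by $M_{i;t'}$ in direction $i$. This is precisely where the coprimality hypothesis at both endpoints of the edge is indispensable: without it, the two divisibility conditions cannot be separated, and the substitution could introduce spurious denominators in the mutated cluster, forcing $f$ out of $\Z\P[X_{\mu_i(t')}^{\pm 1}]$.
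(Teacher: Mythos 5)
The paper does not actually prove this statement; it quotes it from [BFZ] (Berenstein--Fomin--Zelevinsky, Theorem 1.5 there), so the only meaningful benchmark is the original argument. Your outline is that argument: your description of $\mathcal{V}_i(s)$ via divisibility of the coefficient of $x_{i;s}^{-m}$ by $M_{i;s}^{m}$ is BFZ's Lemma 4.2, the observation that $\Z\P[X_t^{\pm 1}]\cap\Z\P[X_{t'}^{\pm 1}]$ is a common factor of both intersections is how they set up the comparison, and the "disentangling" of the divisibility conditions in two directions via coprimality is their Lemma 4.3. One small simplification you missed: the factor $\Z\P[X_{t'}^{\pm 1}]$ requires no checking at all, since $\mu_k(t)=t'$ means it is literally one of the $n+1$ rings whose intersection defines $\mathcal{U}(\Sigma_t)$.

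The genuine gap is in your final step, which you name but do not execute. After clearing $x_{k;t}^{-m}$ via $x_{k;t}\mapsto M_{k;t}\,x_{k;t'}^{-1}$, you must verify the divisibility condition in direction $i$ \emph{with respect to the new exchange binomial} $M_{i;t'}$, and $M_{i;t'}$ is genuinely different from $M_{i;t}$: the entries $b_{ji}$ and the coefficient $y_{i}$ both change under mutation at $k$ (formulas (2.1) and (2.3) of the paper). The heart of the proof is an explicit identity relating $M_{i;t}$ (rewritten in $X_{t'}$) to $M_{i;t'}$, with a case split on $\mathrm{sgn}(b_{ki}^{t})$ according to whether $x_{k;t}$ occurs in $M_{i;t}$; coprimality alone does not produce this identity. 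In BFZ this occupies the reduction to the rank-2 situation and the computation showing $\Z\P[x_k,x_k',x_i,x_i',\dots]=\Z\P[x_k,x_k',x_i,x_i'',\dots]$, where $x_i'$ and $x_i''$ are the direction-$i$ exchanges taken at $t$ and at $t'$ respectively. Until that computation is done, your plan establishes the framework but not the theorem.
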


In particular, when $\A$ is a cluster algebra having principal coefficients, $M_{i;t}$ and $M_{j;t}$ are coprime for any $i\neq j\in[1,n]$, $t\in\T_{n}$. So we can get from Theorem \ref{theorem of upper cluster algebras} that $\mathcal{U}(\Sigma_{t})=\mathcal{U}(\Sigma_{t\'})$ for any $t,t\'\in\T_{n}$.

In this paper, we will use $A\mid B$ to imply that a Laurent polynomial $A$ can divide another Laurent polynomial $B$. $P|_{a\rightarrow b}$ means all $a$ in a Laurent polynomial $P$ is replaced by $b$.

\begin{Lemma}\label{generalized Laurent}
  In a cluster algebra $\A$ with principal coefficients, let $P^{t}$ be a polynomial over $\Z\P$ in $X_{t}$ and $\alpha=\frac{P^{t}}{X_{t}^{d^{t}}}$ be a Laurent polynomial in $X_{t}$ with $d^t\in \N^{n}$, where $t\in\T_{n}$. Then the following statements are equivalent:

(i)\;  $\mathcal{U}(\Sigma_{t})=\mathcal{U}(\Sigma_{t\'})$, where $t\'$ is connected to $t$ by an edge in $\T_{n}$;

(ii)\;
  $\alpha$ is a Laurent polynomial in expression of any $X_{t\'},t\'\in\T_{n}$ if and only if $M_{k;t}^{d^{t}_{k}}\mid (P^{t}|_{x_{k;t}\rightarrow M_{k;t}})$ for any $k\in[1,n]$.
\end{Lemma}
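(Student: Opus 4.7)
The plan is to establish (i) outright from the principal-coefficients hypothesis via Theorem \ref{theorem of upper cluster algebras}, and then deduce the biconditional in (ii) from (i) by a direct substitution analysis across each edge of $\T_n$. Since (i) holds unconditionally in our setup, the equivalence with (ii) reduces to verifying that biconditional as a genuine theorem.

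For (i): in a principal-coefficients cluster algebra, each exchange binomial $M_{k;t} = y_{k;t}\prod_i x_{i;t}^{[b_{ik}^t]_+} + \prod_i x_{i;t}^{[-b_{ik}^t]_+}$ is a binomial whose two monomials have disjoint $x$-supports, so $M_{i;t}$ and $M_{j;t}$ are coprime in $\Z\P[X_t]$ for $i \neq j$. Applying Theorem \ref{theorem of upper cluster algebras} along every edge of $\T_n$ then gives $\U(\Sigma_t) = \U(\Sigma_{t'})$ for all adjacent pairs, which is (i); iterating along paths in $\T_n$ additionally yields $\U(\A) = \U(\Sigma_t)$ for every vertex $t$.

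For the biconditional in (ii): the identification $\U(\A) = \U(\Sigma_t)$ reduces the statement ``$\alpha$ is a Laurent polynomial in every $X_{t'}$'' to ``$\alpha$ is a Laurent polynomial in each of the $n$ clusters $X_{t_k}$ adjacent to $X_t$''. Fix $k \in [1,n]$ and write $P^t = \sum_{j \ge 0} p_j\, x_{k;t}^j$ with each $p_j$ a Laurent polynomial over $\Z\P$ in the variables $\{x_{i;t}\}_{i \neq k}$. Using $x_{k;t} = M_{k;t}/x_{k;t_k}$ one rewrites
\[\alpha \;=\; \frac{\sum_j p_j\, M_{k;t}^j\, x_{k;t_k}^{d^t_k - j}}{\bigl(\prod_{i \neq k} x_{i;t_k}^{d^t_i}\bigr)\, M_{k;t}^{d^t_k}}.\]
Since the monomial in the denominator is a unit of $\Z\P[X_{t_k}^{\pm 1}]$ and $M_{k;t}$ is independent of $x_{k;t_k}$, $\alpha$ lies in $\Z\P[X_{t_k}^{\pm 1}]$ precisely when $M_{k;t}^{d^t_k}$ divides the numerator in the Laurent polynomial ring; matching coefficients of powers of $x_{k;t_k}$ translates this into the divisibility $M_{k;t}^{d^t_k}\mid P^t|_{x_{k;t}\to M_{k;t}}$ of (ii). The converse direction (ii)$\Rightarrow$(i) is immediate since (i) has already been established in our setting.

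The main obstacle is the last coefficient-matching step: one must carefully reconcile the ``after-substitution'' divisibility $M_{k;t}^{d^t_k}\mid P^t|_{x_{k;t}\to M_{k;t}}$ with divisibility of the rewritten numerator $\sum_j p_j\, M_{k;t}^j\, x_{k;t_k}^{d^t_k - j}$ in the Laurent ring. This relies on $M_{k;t}$ being a nonmonomial binomial so that polynomial- and Laurent-ring divisibility agree, and on $\Z\P = \Z[y_1^{\pm 1},\ldots,y_n^{\pm 1}]$ being a UFD so that the coefficient-by-coefficient divisibility checks are valid.
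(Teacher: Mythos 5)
Your route differs from the paper's in its logical architecture. The paper proves both implications as genuine conditionals: assuming (i) it derives the biconditional in (ii) (necessity by substitution into an adjacent cluster, sufficiency by noting the divisibility places $\alpha$ in $\U(\Sigma_t)$ and then invoking (i)); and assuming (ii) it shows $\U(\Sigma_{t})\subseteq\Z\P[X_{t'}^{\pm 1}]$ for every $t'$ and hence (i). The payoff, stated right after the proof, is that (ii) is an equivalent reformulation of Theorem \ref{theorem of upper cluster algebras}. You instead note that (i) is automatic for principal coefficients (which the paper also records just before the lemma) and set out to prove (ii) outright, making the equivalence vacuous. That is logically admissible for establishing the biconditional, and your reduction of ``Laurent in every cluster'' to ``Laurent in the $n$ adjacent clusters'' via $\U(\A)=\U(\Sigma_{t})$ is correct; but your (ii)$\Rightarrow$(i) direction then carries no content, whereas the paper's does.

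The more serious issue is the coefficient-matching step you yourself flag as the main obstacle: it only works in one direction, and the justifications you offer (the binomial form of $M_{k;t}$, $\Z\P$ being a UFD) do not address the actual difficulty. Writing $P^{t}=\sum_{j}p_{j}x_{k;t}^{j}$, your substitution correctly shows that $\alpha\in\Z\P[X_{t_{k}}^{\pm 1}]$ iff $M_{k;t}^{d^{t}_{k}}$ divides $\sum_{j}p_{j}M_{k;t}^{j}x_{k;t_{k}}^{d^{t}_{k}-j}$, and since $M_{k;t}$ is free of $x_{k;t_{k}}$ this is the graded condition $M_{k;t}^{d^{t}_{k}-j}\mid p_{j}$ for each $j<d^{t}_{k}$. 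The graded condition implies $M_{k;t}^{d^{t}_{k}}\mid P^{t}|_{x_{k;t}\rightarrow M_{k;t}}=\sum_{j}p_{j}M_{k;t}^{j}$, but not conversely: the substitution collapses the grading by powers of $x_{k;t}$ and permits cancellation among the terms $p_{j}M_{k;t}^{j}$. Concretely, pick any index $i\neq k$ and take $P^{t}=(M_{k;t}^{2}-M_{k;t}x_{i;t})+x_{i;t}x_{k;t}$ with $d^{t}=2e_{k}$; then $P^{t}|_{x_{k;t}\rightarrow M_{k;t}}=M_{k;t}^{2}$, yet $p_{1}=x_{i;t}$ is not divisible by $M_{k;t}$ and $\alpha$ is not a Laurent polynomial in $X_{t_{k}}$. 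So the ``if'' half of your biconditional — the heart of your proof of (ii) — is not justified as written. (To be fair, the paper's own sufficiency step asserts the same passage without comment; the divisibility is evidently intended coefficient-by-coefficient, which is the form actually used in Remark \ref{mutation of P} and Lemma \ref{general deg > d}.)
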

\begin{Proof}
 (i) $\Longrightarrow$ (ii):\; Firstly, we prove the necessity. Since $\alpha$ is a Laurent polynomial in expression of any $X_{t\'}$ for any $t\'\in\T_{n}$, in particular this holds when $t\'$ is the vertex connected to $t$ by an edge labelled $k$ in $\T_{n}$. By the definition of mutations, we have that $\alpha=\frac{P^{t}}{X_{t}^{d^{t}}}|_{x_{k;t}\rightarrow x_{k;t\'}^{-1}M_{k;t}}$ and it is a Laurent polynomial. So, $M_{k;t}^{d^{t}_{k}}\mid (P^{t}|_{x_{k;t}\rightarrow M_{k;t}})$ for any $k\in[1,n]$ as $d^t\in\N^{n}$.

  Secondly, we prove the sufficiency. $M_{k;t}^{d^{t}_{k}}\mid (P^{t}|_{x_{k;t}\rightarrow M_{k;t}})$ for any $k\in[1,n]$ ensures that $\alpha=\frac{P^{t}}{X_{t}^{d^{t}}}|_{x_{k;t}\rightarrow x_{k;t\'}^{-1}M_{k;t}}$ is a Laurent polynomial for any $k$, i.e., $\alpha\in \mathcal{U}(\Sigma_{t})$. Then by statement (i), we have that $\alpha\in \mathcal{U}(\Sigma_{t\'})\subseteq\Z\P[X_{t\'}^{\pm 1}]$ for any $t\'\in\T_{n}$.

 (ii) $\Longrightarrow$ (i):\; If $M_{k;t}^{d^{t}_{k}}\mid (P^{t}|_{x_{k;t}\rightarrow M_{k;t}})$ for any $k\in[1,n]$ can lead to that $\alpha$ is a Laurent polynomial in expression of any $X_{t\'},t\'\in\T_{n}$, then $\mathcal{U}(\Sigma_{t})\subseteq\Z\P[X_{t\'}^{\pm 1}]$ for any $t\'\in\T_{n}$. Hence $\mathcal{U}(\Sigma_{t})\subseteq\mathcal{U}(\Sigma_{t\'})$ for any $t\'\in\T_{n}$. Therefore $\mathcal{U}(\Sigma_{t})=\mathcal{U}(\Sigma_{t\'})$ because of the arbitrary choice of $t$.
\end{Proof}

Hence Lemma \ref{generalized Laurent} (ii) gives an equivalent statement of Theorem \ref{theorem of upper cluster algebras}.

Until now, there are so many researchers studying about cluster algebras and many important properties are found. Here we would like to list some of them which are helpful in our research. Although we may not use these results directly in this paper, they help us to understand cluster algebras better and inspire our construction of $\rho_{h}$.

\begin{Theorem}\cite{GLS}\label{P is irreducible}
  For any skew-symmetrizable cluster algebra $\A$, $l\in [1, n]$ and $t,t\'\in\T_{n}$, $P_{l;t}^{t\'}$ is irreducible as a polynomial in $\Z\P[X_{t\'}]$.
\end{Theorem}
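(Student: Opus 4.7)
The plan is to interpret $P_{l;t}^{t\'}$ via the cluster character formula and derive irreducibility from the indecomposability of the underlying module.

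First, by the Derksen--Weyman--Zelevinsky cluster character formula in the skew-symmetric case, extended to the skew-symmetrizable case through Demonet and Labardini-Fragoso--Zelevinsky via species with potential, each cluster variable can be written as
\begin{equation*}
  x_{l;t}\;=\;X_{t\'}^{\,g_{l;t}^{t\'}}\,F_{M_{l;t}}\!\bigl(\hat{y}_{1},\ldots,\hat{y}_{n}\bigr),\qquad F_{M}(y)\;=\;\sum_{\underline{e}}\chi\!\bigl(\mathrm{Gr}_{\underline{e}}(M)\bigr)\,y^{\underline{e}},
\end{equation*}
where $M=M_{l;t}$ is an indecomposable rigid decorated representation of the (species) Jacobian algebra associated with $B_{t\'}$ and $\mathrm{Gr}_{\underline{e}}(M)$ is the corresponding quiver Grassmannian. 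The substitution $y_{j}\mapsto\hat{y}_{j}=y_{j}\prod_{i}x_{i;t\'}^{b_{ij}^{t\'}}$ is an invertible monomial change of variables in $X_{t\'}$, so, modulo a monomial factor in $X_{t\'}$ and $Y_{t\'}$, irreducibility of $P_{l;t}^{t\'}$ in $\Z\P[X_{t\'}]$ is equivalent to irreducibility of $F_{M}$ in $\Z\P[y_{1},\ldots,y_{n}]$ with the $y_i\in\P$ treated as parameters.

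Second, the key step is to show $F_{M}$ is irreducible whenever $M$ is indecomposable. Suppose $F_{M}=Q\cdot R$ is a non-trivial factorization. Since $F_{M}$ has constant term $1$ and non-negative coefficients (rigidity of $M$ gives smoothness of the quiver Grassmannians and hence non-negativity of the Euler characteristics), both $Q$ and $R$ can be normalized to have constant term $1$ and non-negative coefficients. The Newton polytope then decomposes as a Minkowski sum $\mathrm{Newt}(F_{M})=\mathrm{Newt}(Q)+\mathrm{Newt}(R)$. Using that the vertices of $\mathrm{Newt}(F_{M})$ correspond to generic submodules of $M$ and that the submodule lattice is compatible with direct sums, each Minkowski summand must itself arise as the $F$-polynomial of a direct summand of $M$. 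Indecomposability of $M$ then forces one of $Q, R$ to be a constant.

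The main obstacle will be precisely the last correspondence ``Minkowski summand $\leftrightarrow$ direct summand of $M$''. Establishing it uses the full strength of the representation theory of quivers or species with potential: smoothness of quiver Grassmannians for rigid $M$, the submodule-lattice description of the Newton polytope vertices, and the compatibility of cluster characters with direct sums. Handling the skew-symmetrizable case (as in the statement) rather than the skew-symmetric one adds the technical overhead of working with species in place of ordinary quivers, which is the main additional difficulty compared to the original DWZ setup.
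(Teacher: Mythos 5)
This theorem is imported from \cite{GLS} and the paper offers no proof of its own, so the relevant comparison is with Geiss--Leclerc--Schr\"{o}er's argument, which is purely ring-theoretic: $\Z\P[X_{t\'}]$ is a UFD, cluster variables are shown to be irreducible elements of $\A$ via the Laurent phenomenon and a careful analysis of units, and a nontrivial factorization $P_{l;t}^{t\'}=QR$ would descend to a nontrivial factorization of $x_{l;t}$ after dividing by the monomial denominator. Your categorical route is genuinely different, and it has a real gap at exactly the step you flag: the assertion that every Minkowski summand of $\mathrm{Newt}(F_{M})$ ``must itself arise as the $F$-polynomial of a direct summand of $M$'' is not a known lemma, and it is the entire content of the argument. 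A Minkowski decomposition of a Newton polytope is a purely polyhedral datum; nothing in the submodule-lattice description of the vertices forces a Minkowski summand to be realized by a module, let alone by a direct summand of $M$. Until that implication is proved, the proof does not close.

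Two further steps are also unsound as written. First, from $F_{M}=QR$ with $F_{M}$ having nonnegative coefficients and constant term $1$ you cannot conclude that $Q$ and $R$ have nonnegative coefficients: positivity of a product does not pass to its factors (e.g. $(y^{2}-y+1)(y+1)=y^{3}+1$), so the normalization you rely on fails. Second, the claimed equivalence between irreducibility of $P_{l;t}^{t\'}$ in $\Z\P[X_{t\'}]$ and irreducibility of $F_{M}$ in the $y$-variables is not automatic: the substitution $y_{j}\mapsto\hat{y}_{j}$ embeds monomials injectively, but a factorization of $\psi(F_{M}(\hat{y}))$ in $\Z\P[X_{t\'}]$ need not be pulled back from a factorization of $F_{M}$ in $\Z\P[y_{1},\dots,y_{n}]$ unless $B_{t\'}$ has full rank, and the theorem is stated over an arbitrary semifield $\P$ where the $y_{j}$ are specialized. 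Given these issues, the representation-theoretic route buys nothing here over the elementary UFD argument of \cite{GLS}, which avoids categorification entirely and already covers the skew-symmetrizable (indeed more general) case.
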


Theorem \ref{theorem of upper cluster algebras}, Lemma \ref{generalized Laurent} and Theorem \ref{P is irreducible} can lead to the result that non-initial cluster variable is uniquely determined by its corresponding $F$-polynomial (Corollary \ref{$F$-polynomial uniquely determines cluater variable} for skew-symmetrizable case). But in the sequel, we will give the proof of this theorem in another way as an application of Newton polytope. In fact, we will provide a stronger result showing how a non-initial $F$-polynomial determines its corresponding $d$-vector specifically, which will lead to Corollary \ref{$F$-polynomial uniquely determines cluater variable} directly.

\begin{Theorem}\label{max term and constant term}
  \cite{GHKK}For any skew-symmetrizable cluster algebra $\A$, each $F$-polynomial $F_{l;t}^{t\'}$ has constant term $1$ and a unique monomial of maximal degree. Furthermore, this monomial has coefficient $1$, and it is divisible by all the other occurring monomials.
\end{Theorem}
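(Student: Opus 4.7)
My plan is to prove the theorem by induction on the graph distance $d(t,t')$ between $t$ and $t'$ in the exchange tree $\T_n$, using the mutation recurrence for $F$-polynomials established in \cite{FZ4}. The base case $t=t'$ gives $F_{l;t}^{t}=1$, which trivially satisfies all three assertions (the constant term, the unique maximal monomial, and its coefficient are all equal to $1$).

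For the inductive step, suppose the three claims hold for every $F_{l';s}^{t'}$ with $d(s,t')\leq N$, and let $t$ be obtained from $s$ by mutation in direction $k$. Then $F_{l;t}^{t'}=F_{l;s}^{t'}$ for $l\neq k$, so only $F_{k;t}^{t'}$ requires attention. The Fomin--Zelevinsky recurrence expresses $F_{k;t}^{t'}\cdot F_{k;s}^{t'}$ as a sum of two terms, each being a monomial in the $y_{i;t'}$ times a product $\prod_i (F_{i;s}^{t'})^{[\pm b_{ik}^{s}]_+}$. Evaluating this identity at $y_{i;t'}=0$ and using the inductive hypothesis $F_{i;s}^{t'}|_{y=0}=1$ yields that exactly one of the two terms survives and equals $1$, hence $F_{k;t}^{t'}|_{y=0}=1$, giving the constant-term claim.

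For the unique maximal monomial I would use the partial order on monomials given by divisibility. By induction each $F_{i;s}^{t'}$ has a unique divisibility-maximum $M_i$ with coefficient $1$, so the products $\prod_i (F_{i;s}^{t'})^{[\pm b_{ik}^{s}]_+}$ inherit this property. The two $\pm$--terms of the recurrence then each have a unique divisibility-maximum with coefficient $1$; being a positive sum, their sum has the larger one as unique divisibility-maximum, again with coefficient $1$. Finally, dividing by $F_{k;s}^{t'}$ should preserve the property: the divisibility-maximum of the numerator is divisible by the divisibility-maximum $M_k$ of $F_{k;s}^{t'}$, and the quotient polynomial inherits a unique divisibility-maximum with coefficient $1$ dominating every other monomial by coordinate-wise inequality.

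The main obstacle is justifying that last divisibility step cleanly: one must show that the Newton polytope of $F_{k;t}^{t'}$ has a distinguished vertex whose exponent vector dominates every lattice point of the polytope coordinate-wise, and that the corresponding coefficient is exactly $1$. This is precisely the combinatorial heart of the scattering-diagram proof in \cite{GHKK}: $F$-polynomials are expanded as sums of contributions from broken lines/theta functions, and positivity plus the structure of broken lines force the two extremal exponents (the origin and the top vertex) to each receive exactly one contribution of coefficient $1$. An alternative route, when available, is to invoke the Derksen--Weyman--Zelevinsky categorification in the skew-symmetric case, where $F_{l;t}^{t'}=\sum_e \chi(\mathrm{Gr}_e(M))y^e$ for a decorated representation $M$; then $e=0$ and $e=\underline{\dim}\,M$ both give $\mathrm{Gr}_e(M)$ a single point, yielding coefficient $1$ and the maximal $e$ dominating all other $e$ in the support. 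Either route resolves step (iii) and completes the induction.
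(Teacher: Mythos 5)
There is a genuine gap, and it sits exactly where your argument needs to do real work. First, the constant-term step: in the Fomin--Zelevinsky recurrence the two terms of the numerator carry coefficient monomials $\prod_j y_j^{[c_{jk}]_+}$ and $\prod_j y_j^{[-c_{jk}]_+}$, where $c_k=(c_{jk})_j$ is a $c$-vector. Your claim that ``exactly one of the two terms survives and equals $1$'' at $y=0$ is precisely the sign-coherence of $c$-vectors, and by \cite{FZ4} (Prop.~5.6) sign-coherence is \emph{equivalent} to the constant-term-$1$ property you are trying to prove. So the induction is circular at this point unless you import sign-coherence from elsewhere --- and in the skew-symmetrizable case that ``elsewhere'' is again \cite{GHKK}. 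Second, you candidly concede that the divisibility-maximum step (that the Newton polytope has a vertex dominating all lattice points coordinate-wise, with coefficient $1$, and that this survives division by $F_{k;s}^{t'}$) must be supplied by the scattering-diagram or DWZ machinery. Since the theorem is itself a citation of \cite{GHKK}, what you have written is a reduction of the statement to the two hard inputs of \cite{GHKK}/\cite{DWZ}, not an independent proof.

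Note also that the paper does not prove this theorem at all --- it is quoted from \cite{GHKK} --- but it does later prove a strictly more general statement, Corollary \ref{unique maximal and minimal term}, valid for all TSSS cluster algebras, by a completely different route: the recursive Newton-polytope construction of $\rho_h=N_{g_{l;t}}$, the fact that every edge of $N_h$ is ordered under the coordinate-wise partial order (Theorem \ref{properties in case tsss}~(v) plus the rank-$2$ classification in Remark \ref{shape for rank 2}), an up-move/down-move argument on reduced edge paths to force a unique maximal vertex, and the fact that vertices of $N_h$ carry weight $1$. If you want a self-contained argument in the spirit of this paper, that polytope path argument is the one to learn; your recurrence-based induction cannot close without assuming the conclusion.
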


As we introduced above, Laurent Phenomenon ensures that any cluster variable $x_{l;t}$ can be expressed as a Laurent polynomial of any cluster $X_{t^{\prime}}$:
\begin{equation*}
  x_{l;t}=\frac{P_{l;t}^{t^{\prime}}}{\prod\limits_{i=1}^{n}x_{i;t^{\prime}}^{d_{i}^{t^{\prime}}(x_{l;t})}},
\end{equation*}
where $P_{l;t}$ is a polynomial in $\Z\P[x_{1;t^{\prime}},x_{2;t^{\prime}},\cdots,x_{n;t^{\prime}}]$ which is not divisible by $x_{1;t^{\prime}},x_{2;t^{\prime}},\cdots,x_{n;t^{\prime}}$.

For any $l,k\in[1,n]$, $t,t^{\prime}\in\T_{n}$, we can express $P_{l;t}^{t\'}$ as
\begin{equation} \label{P decomp}
  P_{l;t}^{t^{\prime}}=\sum\limits_{s=0}^{deg_{x_{k;t^{\prime}}}(P_{l;t}^{t\'})}x_{k;t^{\prime}}^{s}P_{s}(k)=\sum\limits_{s=d_{k}^{t\'}(x_{l;t})}^{deg_{x_{k;t\'}}(P_{l;t}^{t\'})}x_{k;t\'}^{s}P_{s}(k)+ \sum\limits_{s=0}^{d_{k}^{t\'}(x_{l;t})-1}x_{k;t\'}^{s}P_{s}(k),
\end{equation}
where $P_{s}(k)$ is a polynomial in $\Z\P[x_{1;t^{\prime}},\cdots,x_{k-1;t^{\prime}},x_{k+1;t^{\prime}},\cdots,x_{n;t^{\prime}}]$ for any $s$. Note that $P_{0}(k)\neq 0$ and $\sum\limits_{s=d_{k}^{t\'}(x_{l;t})}^{deg_{x_{k;t\'}}(P_{l;t}^{t\'})}x_{k;t\'}^{s}P_{s}(k)=0$ if $deg_{x_{k;t^{\prime}}}(P_{l;t}^{t\'})<d_{k}^{t\'}(x_{l;t})$.

\begin{Lemma}\label{mutations of $d$-vector}
  For any $l,k\in[1,n]$, $t,t_{1},t_{2}\in\T_{n}$, if $t_{1}$ and $t_{2}$ are connected by an edge labeled j, then
  \begin{equation*}
    d_{k}^{t_{2}}(x_{l;t})=\left\{
    \begin{array}{lr}
      d_{k}^{t_{1}}(x_{l;t}) & \text{if }k\neq j; \\
      deg_{x_{j;t_{1}}}(P_{l;t}^{t_{1}})-d_{j}^{t_{1}}(x_{l;t}) & \text{if }k=j.
    \end{array}
    \right .
  \end{equation*}
\end{Lemma}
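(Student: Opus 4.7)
The plan is to substitute the inverse mutation into the reduced Laurent expression of $x_{l;t}$ in $X_{t_{1}}$ and then read off the denominator vector at $t_{2}$ via $x_{k;t_{2}}$-adic valuations. Abbreviating $d_{i}:=d_{i}^{t_{1}}(x_{l;t})$ and $D:=deg_{j}^{t_{1}}(x_{l;t})$, I first expand
\[P_{l;t}^{t_{1}}=\sum_{s=0}^{D}x_{j;t_{1}}^{s}\,P_{s}(j),\qquad P_{s}(j)\in\Z\P[x_{i;t_{1}}:i\neq j],\]
noting $P_{D}(j)\neq 0$ by definition of $D$ and $P_{0}(j)\neq 0$ because $P_{l;t}^{t_{1}}$ is not divisible by $x_{j;t_{1}}$. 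Denoting the exchange binomial by $M_{j;t_{1}}=x_{j;t_{1}}\mu_{j}(x_{j;t_{1}})$, the mutation rule (\ref{equation: mutation of x}) gives $x_{j;t_{1}}=M_{j;t_{1}}/x_{j;t_{2}}$ and $x_{k;t_{1}}=x_{k;t_{2}}$ for $k\neq j$; substituting into $x_{l;t}=P_{l;t}^{t_{1}}/\prod_{i}x_{i;t_{1}}^{d_{i}}$ and clearing powers of $x_{j;t_{2}}$ yields
\[x_{l;t}=\frac{Q}{M_{j;t_{1}}^{d_{j}}\,x_{j;t_{2}}^{D-d_{j}}\prod_{k\neq j}x_{k;t_{2}}^{d_{k}}},\qquad Q:=\sum_{s=0}^{D}M_{j;t_{1}}^{s}\,x_{j;t_{2}}^{D-s}\,P_{s}(j)\in\Z\P[X_{t_{2}}].\]

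Next, the Laurent phenomenon supplies a reduced form $x_{l;t}=P_{l;t}^{t_{2}}/X_{t_{2}}^{d_{l;t}^{t_{2}}}$ with $P_{l;t}^{t_{2}}$ not divisible by any $x_{k;t_{2}}$. I would cross-multiply the two expressions in $\F$ and apply the $x_{k;t_{2}}$-adic valuation $v_{k}$, which is additive because $\Z\P$ is an integral domain. The key ingredient is that $v_{k}(M_{j;t_{1}})=0$ for every $k$: trivially for $k=j$ since $M_{j;t_{1}}$ does not involve $x_{j;t_{2}}$, and for $k\neq j$ because the two monomial summands of $M_{j;t_{1}}$ carry the complementary exponents $[b_{kj}^{t_{1}}]_{+}$ and $[-b_{kj}^{t_{1}}]_{+}$, at least one of which vanishes. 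Additivity then gives $v_{j}(Q)+d_{j}^{t_{2}}=D-d_{j}$ and $v_{k}(Q)+d_{k}^{t_{2}}=d_{k}$ for $k\neq j$, so the claimed formula reduces to showing $v_{k}(Q)=0$ for every $k$.

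The main obstacle is this final verification, i.e., ruling out spurious cancellation in $Q$. View $Q=\sum_{s=0}^{D}x_{j;t_{2}}^{D-s}\bigl(M_{j;t_{1}}^{s}P_{s}(j)\bigr)$ as a polynomial in $x_{j;t_{2}}$ over $R:=\Z\P[x_{k;t_{2}}:k\neq j]$; since the summands carry pairwise distinct powers of $x_{j;t_{2}}$, the divisibility $x_{k;t_{2}}\mid Q$ is equivalent to $x_{k;t_{2}}\mid M_{j;t_{1}}^{s}P_{s}(j)$ for every $s$. For $k=j$ the $s=D$ summand contributes the nonzero $x_{j;t_{2}}^{0}$-coefficient $M_{j;t_{1}}^{D}P_{D}(j)$, so $v_{j}(Q)=0$ at once. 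For $k\neq j$, coprimality of $M_{j;t_{1}}$ with $x_{k;t_{2}}$ in the domain $R$ would force $x_{k;t_{1}}\mid P_{s}(j)$ for every $s$, hence $x_{k;t_{1}}\mid P_{l;t}^{t_{1}}$, contradicting the defining property of $P_{l;t}^{t_{1}}$. Thus $v_{k}(Q)=0$ in all cases, and the mutation formula follows.
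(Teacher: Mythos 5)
Your proof is correct and takes essentially the same route as the paper's: substitute $x_{j;t_{1}}=M_{j;t_{2}}/x_{j;t_{2}}$ into the $x_{j;t_{1}}$-graded expansion $\sum_{s}x_{j;t_{1}}^{s}P_{s}(j)$ and read off the new denominator exponents from the resulting expression. The paper stops once it has rewritten $x_{l;t}$ over $X_{t_{2}}$; your additional valuation argument that $v_{k}(Q)=0$ for every $k$ (using primality of $x_{k;t_{2}}$ and the fact that $M_{j;t_{2}}$ is coprime to each $x_{k;t_{2}}$) is a correct and welcome justification of the reduced-form claim that the paper leaves implicit.
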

\begin{proof}
  By the mutation formula (\ref{equation: mutation of x}),
  $x_{i;t_{1}}=\left\{
    \begin{array}{lr}
      x_{i;t_{2}} & i\neq j; \\
      \frac{M_{j;t_{2}}}{x_{j;t_{2}}} & i=j.
    \end{array}
    \right.$
  Assume $x_{l;t}$ can be expressed as a Laurent polynomial of $X_{t_{1}}$ as
  \begin{equation*}
      x_{l;t}=\frac{\sum\limits_{s=0}^{deg_{x_{j;t_{1}}}(P_{l;t}^{t_{1}})}x_{j;t_{1}}^{s}P_{s}(j)}{\prod\limits_{i=1}^{n}x_{i;t_{1}}^{d_{i}^{t_{1}}(x_{l;t})}},
  \end{equation*}
  then we can get the expression of $x_{l;t}$ by $X_{t_{2}}$ as
  \begin{equation*}
      x_{l;t}=\frac{x_{j;t_{2}}^{d_{j}^{t_{1}}(x_{l;t})}(\sum\limits_{s=0}^{deg_{x_{j;t_{1}}}(P_{l;t}^{t_{1}})}(\frac{M_{j;t_{2}}}{x_{j;t_{2}}})^{s}P_{s}(j))}{\prod\limits_{i\neq j}x_{i;t_{2}}^{d_{i}^{t_{1}}(x_{l;t})}M_{j;t_{2}}^{d_{j}^{t_{1}}(x_{l;t})}}
      =\frac{M_{j;t_{2}}^{-d_{j}^{t_{1}}(x_{l;t})}(\sum\limits_{s=0}^{deg_{x_{j;t_{1}}}(P_{l;t}^{t_{1}})}M_{j;t_{2}}^{s}P_{s}(j)x_{j;t_{2}}^{deg_{x_{j;t_{1}}}(P_{l;t}^{t_{1}})-s})}{\prod\limits_{i\neq j}x_{i;t_{2}}^{d_{i}^{t_{1}}(x_{l;t})}x_{j;t_{2}}^{deg_{x_{j;t_{1}}}(P_{l;t}^{t_{1}})-d_{j}^{t_{1}}(x_{l;t})}},
  \end{equation*}
  which completes the proof.
\end{proof}

\begin{Lemma}\label{general deg > d}
  For any $l,k\in[1,n]$, $t,t\'\in\T_{n}$ and cluster variable $x_{l;t}$, $\widetilde{deg}_{k}^{t\'}(P^{t\'}_{l;t})\geqslant d_{k}^{t\'}(x_{l;t})$.
\end{Lemma}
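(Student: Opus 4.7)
The plan is to deduce the inequality directly from the Laurent phenomenon together with the characterization of $\widetilde{deg}_k^{t'}$ stated immediately after its definition, namely that $\widetilde{deg}_k^{t'}(P)$ equals the largest integer $a$ for which $P/x_{k;t'}^a$ lies in $\Z\P[X_{t'_k}^{\pm 1}]$, where $t'_k$ is the vertex adjacent to $t'$ along the edge labelled $k$.

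First I would rewrite the Laurent expansion
\[x_{l;t}=\frac{P_{l;t}^{t'}}{\prod_{i=1}^{n}x_{i;t'}^{d_{i}^{t'}(x_{l;t})}}\]
in the form
\[\frac{P_{l;t}^{t'}}{x_{k;t'}^{d_{k}^{t'}(x_{l;t})}}=x_{l;t}\cdot\prod_{i\neq k}x_{i;t'}^{d_{i}^{t'}(x_{l;t})}.\]
By the Laurent phenomenon, $x_{l;t}\in\Z\P[X_{t'_k}^{\pm 1}]$. Moreover, since the mutation $\mu_k$ leaves all cluster variables with index $i\neq k$ unchanged, we have $x_{i;t'}=x_{i;t'_k}$ for every $i\neq k$, so the monomial $\prod_{i\neq k}x_{i;t'}^{d_{i}^{t'}(x_{l;t})}$ is already a Laurent monomial in $X_{t'_k}$. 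Consequently the right-hand side, and hence the quotient $P_{l;t}^{t'}/x_{k;t'}^{d_{k}^{t'}(x_{l;t})}$, belongs to $\Z\P[X_{t'_k}^{\pm 1}]$.

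Applying the characterization of $\widetilde{deg}_k^{t'}$ recalled above to the integer $a=d_{k}^{t'}(x_{l;t})$, one concludes $\widetilde{deg}_k^{t'}(P_{l;t}^{t'})\geqslant d_{k}^{t'}(x_{l;t})$, as required. In the non-homogeneous case, the argument must be applied to each $x_{k;t'}$-homogeneous component separately: writing $P_{l;t}^{t'}=\sum_i P_i$ with $P_i$ homogeneous of distinct $x_{k;t'}$-degrees, each $P_i/x_{k;t'}^{d_k^{t'}(x_{l;t})}$ appears as an $x_{k;t'}$-homogeneous piece of a Laurent polynomial in $X_{t'_k}$, and thus itself lies in $\Z\P[X_{t'_k}^{\pm 1}]$ (since Laurent polynomials in $X_{t'_k}$ are, after clearing the shared denominator $x_{k;t'_k}^{N}$, homogeneous-grading preserving in $x_{k;t'}$ via the substitution $x_{k;t'}=M_{k;t'}/x_{k;t'_k}$).

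The argument is essentially a one-line unwinding of definitions; the only mild subtlety, and the main point to check carefully, is that the characterization of $\widetilde{deg}_k^{t'}$ continues to give a lower bound for sums of homogeneous pieces via the defining minimum, which is precisely why the definition takes the minimum over homogeneous parts. No deeper structural result (such as irreducibility of $P_{l;t}^{t'}$ or properties of $F$-polynomials) is needed here, so this lemma really serves as a bookkeeping bridge between the $d$-vector and the general degree, preparing for the finer statement in Theorem~\ref{positivity of d-vectors} where equality will be established.
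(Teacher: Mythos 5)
Your proof is correct and follows essentially the same route as the paper's: both rest on the Laurent phenomenon at the adjacent vertex $t'_{k}$ together with the observation (stated in the paper immediately after the definition of general degree) that $\widetilde{deg}_{k}^{t'}(P)$ is the largest $a$ for which $P/x_{k;t'}^{a}$ is a Laurent polynomial in $X_{t'_{k}}$. The paper simply unwinds that characterization by hand, extracting the divisibility $M_{k;t'}^{\,d_{k}^{t'}(x_{l;t})-s}\mid P_{s}(k)$ for each $x_{k;t'}$-homogeneous component, which is precisely the content of your closing remark about homogeneous pieces.
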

\begin{proof}
  First, when $d_{k}^{t\'}(x_{l;t})\leqslant 0$, this is true as $\widetilde{deg}_{k}^{t\'}(P^{t\'}_{l;t})\geqslant 0$.

  When $d_{k}^{t\'}(x_{l;t})>0$, let $X_{t_{k}}=\mu_{k}(X_{t\'})$. Then $x_{i;t_{k}}=x_{i;t^{\prime}}$ for $i\neq k$ and $x_{k;t^{\prime}}=\frac{M_{k;t_{k}}}{x_{k;t_{k}}}$. $x_{l;t}$ can be expressed as a Laurent polynomial of $X_{t\'}$ and $X_{t_{k}}$ respectively as
  \begin{equation*}
      x_{l;t}=\frac{\sum\limits_{s=0}^{deg_{x_{k;t^{\prime}}}(P_{l;t}^{t\'})}x_{k;t^{\prime}}^{s}P_{s}(k)}{\prod\limits_{i=1}^{n}x_{i;t^{\prime}}^{d_{i}^{t^{\prime}}(x_{l;t})}},
  \end{equation*}
  and
  \begin{equation*}
    x_{l;t}=\prod\limits_{i\neq k}x_{i;t_{k}}^{-d_{i}^{t\'}(x_{l;t})}(\sum\limits_{s=d_{k}^{t\'}(x_{l;t})}^{deg_{x_{k;t\'}}(P_{l;t}^{t\'})}(\frac{M_{k;t_{k}}}{x_{k;t_{k}}})^{s-d_{k}^{t\'}(x_{l;t})}P_{s}(k)+ \frac{\sum\limits_{s=0}^{d_{k}^{t\'}(x_{l;t})-1}M_{k;t_{k}}^{s}P_{s}(k)x_{k;t_{k}}^{d_{k}^{t\'}(x_{l;t})-s}}{M_{k;t_{k}}^{d_{k}^{t\'}(x_{l;t})}}).
  \end{equation*}
  Therefore, $M_{k;t_{k}}^{d_{k}^{t\'}(x_{l;t})}|\sum\limits_{s=0}^{d_{k}^{t\'}(x_{l;t})-1}M_{k;t_{k}}^{s}P_{s}(k)x_{k;t_{k}}^{d_{k}^{t\'}(x_{l;t})-s}$. Then for every $s\in[0,d_{k}^{t\'}(x_{l;t})-1]$, because $x_{k;t_{k}}$ does not appear in $M_{k;t_{k}}$ or $P_{s}(k)$, it follows that $M_{k;t_{k}}^{d_{k}^{t\'}(x_{l;t})-s}|P_{s}(k)$.

  Hence,  by the definition of general degree, $$\widetilde{deg}_{k}^{t\'}(x_{k;t^{\prime}}^{s}P_{s}(k))=deg_{x_{k;t\'}}(x_{k;t^{\prime}}^{s}P_{s}(k))+max\{s\in\N:M_{k;t\'}^{s}|x_{k;t^{\prime}}^{s}P_{s}(k)\}\geqslant s+d_{k}^{t\'}(x_{l;t})-s=d_{k}^{t\'}(x_{l;t})$$ for any $s\in[0,deg_{x_{k;t^{\prime}}}(P_{l;t}^{t\'})]$. So $\widetilde{deg}_{k}^{t\'}(P^{t\'}_{l;t})\geqslant d_{k}^{t\'}(x_{l;t})$.
\end{proof}

\begin{Remark}\label{mutation of P}
  Due to the above lemma, $M_{k;t\'}^{d^{t\'}_{k}(x_{l;t})}\mid (P_{l;t}^{t\'}|_{x_{k;t\'}\rightarrow M_{k;t\'}})$ for any $k\in[1,n]$ and we can express $x_{l;t}=P_{l;t}^{t\'}X_{t\'}^{-d_{l;t}^{t\'}}$ more explicitly by
  \begin{equation}\label{likemu1}
    x_{l;t}=\sum\limits_{s=0}^{deg_{x_{k;t\'}}(P_{l;t}^{t\'})-d_{k}^{t\'}(x_{l;t})}x_{k;t\'}^{s}P_{s}(k)+ \sum\limits_{s=-d_{k}^{t\'}(x_{l;t})}^{-1}x_{k;t\'}^{s}M_{k;t\'}^{-s}P_{s}(k).
  \end{equation}
  where $P_{s}(k)$ is a Laurent polynomial in $\Z\P[x_{1;t\'}^{\pm1},\cdots,x_{k-1;t\'}^{\pm1},x_{k+1;t\'}^{\pm1},\cdots,x_{n;t\'}^{\pm1}]$ for any $s$.

  According to the mutation formula (\ref{equation: mutation of x}), for $t^{\prime\prime}\in\T_{n}$ connected to $t\'$ by an edge labeled $k\in[1,n]$, the Laurent expression of $x_{l;t}$ in $X_{t^{\prime\prime}}$ is obtained from that in $X_{t\'}$ by the following way:

  The $x_{k;t\'}$-homogeneous term $x_{k;t\'}^{s}M_{k;t\'}^{[-s]_{+}}P_{s}(k)$ with $x_{k;t\'}$-degree $s$ is changed to $x_{k;t^{\prime\prime}}$-homogeneous term $x_{k;t^{\prime\prime}}^{-s}M_{k;t^{\prime\prime}}^{[s]_{+}}P_{s}(k)$ with $x_{k;t^{\prime\prime}}$-degree $-s$. Therefore, the Laurent expression of $x_{l;t}$ in $X_{t^{\prime\prime}}$ equals
  \begin{equation}\label{likemu2}
  \sum\limits_{s=0}^{deg_{x_{k;t\'}}(P_{l;t}^{t\'})-d_{k}^{t\'}(x_{l;t})}x_{k;t^{\prime\prime}}^{-s}M_{k;t^{\prime\prime}}^{s}P_{s}(k)+ \sum\limits_{s=-d_{k}^{t\'}(x_{l;t})}^{-1}x_{k;t^{\prime\prime}}^{-s}P_{s}(k).
  \end{equation}
  Here the Laurent expressions are for the same cluster algebra. However in the following, we always deal with the cluster algebra with principal coefficients associated to $B_{t\'}$ when we express $x_{l;t}$ in $X_{t\'}$. Therefore, we should also take the change of semifield into consideration. According to Theorem \ref{properties in case tsss} (iii), the expression of $X_{t^{\prime\prime}}$ under semifield change
  \[\sum\limits_{s=0}^{deg_{x_{k;t\'}}(P_{l;t}^{t\'})-d_{k}^{t\'}(x_{l;t})}x_{k;t^{\prime\prime}}^{-s}M_{k;t^{\prime\prime}}^{s}P\'_{s}(k)+ \sum\limits_{s=-d_{k}^{t\'}(x_{l;t})}^{-1}x_{k;t^{\prime\prime}}^{-s}P\'_{s}(k)\]
  is obtained from (\ref{likemu2}) by dividing $y_{k;t\'}^{[g_{kl;t}^{t\'}]_{+}}$ and then substituting Y-variables $y_{i;t\'}$ by Laurent monomials in $Y_{t^{\prime\prime}}$ according to (\ref{equation: mutation of y}), where $g_{kl;t}^{t\'}$ is the $k$-th entry of $g_{l;t}^{t\'}$.

  In the sequel, we say that under the mutation in direction $k$, $x_{k;t\'}^{s}M_{k;t\'}^{[-s]_{+}}p$ and $x_{k;t^{\prime\prime}}^{-s}M_{k;t^{\prime\prime}}^{[s]_{+}}p\'$ \textbf{correlate} to each other for any non-zero monomial summands $p$ of $P_{s}(k)$ and $p\'$ of $P\'_s(k)$ such that $p\'$ can be obtained from $p$ by dividing $y_{k}^{[h_{k}]_{+}}$ and then substituting Y-variables $y_{i;t\'}$ by Laurent monomials in $Y_{t^{\prime\prime}}$ according to (\ref{equation: mutation of y}), and we also say two non-zero monomial summands of $x_{k}^{s}M_{k}^{[-s]_{+}}p$ and $x_{k;t_k}^{-s}M_{k;t_k}^{[s]_{+}}p\'$ respectively correlate to each other under the mutation in direction $k$.

  In this sense, we also say two faces $S_1$ and $S_2$ of $N_{l;t}^{t\'}$ and $N_{l;t}^{t^{\prime\prime}}$ respectively correlate to each other under the mutation in direction $k$ if for any point $q$ in $S_i$ with non-zero weight, there is $q\'$ in $S_{3-i}$ with non-zero weight correlated to $q$ under the mutation in direction $k$ for $i=1,2$.

  These notions can be naturally extended to polytope functions $\rho_h$ and its Newton polytope $N_h$ defined later for any $h\in\Z^n$.
\end{Remark}

\vspace{4mm}

\section{Polytope associated to an integer vector and the relevant polytope functions}

We will construct a collection of polytopes as well as their corresponding Laurent polynomials associated to vectors and show that they admit some interesting properties. In this section, assume $\A$ is a totally sign-skew-symmetric cluster algebra with principal coefficients.

Before introducing the construction, we would like to explain some notations first.

In this paper, for any $i\in\Z$, $j\in\N$, we denote binomial coefficients
\begin{equation*}
  \begin{pmatrix}
    i \\
    j
  \end{pmatrix}\triangleq\left\{\begin{array}{lr}
                         \frac{i(i-1)\cdots(i-j+1)}{j!} & \text{if}\; j> 0; \\
                         1 & \text{if}\; j=0.
                       \end{array}\right.
\end{equation*}
and denote
\begin{equation*}
  \tilde{C}_{i}^{j}\triangleq\left\{\begin{array}{lr}
            \begin{pmatrix}
              i \\
              j
            \end{pmatrix} & \text{if}\; i>0; \\
            0 & \text{if}\; i\leqslant 0.
          \end{array}\right.
\end{equation*}
as modified binomial coefficients.

In this paper we denote the canonical projections and embeddings respectively as
\[\pi_{i}:\quad\R^{n}\quad\longrightarrow\quad\R^{n-1}\quad\quad\quad\qquad\]
\[(\alpha_{1},\cdots,\alpha_{i},\cdots,\alpha_{n})\quad\mapsto\quad (\alpha_{1},\cdots,\alpha_{i-1},\alpha_{i+1}\cdots,\alpha_{n}), \;\]
and
\[\gamma_{i;j}:\quad\R^{n-1}\quad\longrightarrow\quad\R^{n}\quad\quad\quad\quad\quad\quad\quad\quad\quad\quad\]
\[(\alpha_{1},\cdots,\alpha_{n-1})\quad\mapsto\quad (\alpha_{1},\cdots,\alpha_{i-1},j,\alpha_{i},\cdots,\alpha_{n-1}),\]
where $i\in[1,n]$ and $j\in\R$. We extend $\gamma_{i;j}$ to be a map from the set of polytopes in $(n-1)$-dimensional real vector space to that of polytopes in $n$-dimensional real vector space, which is also denoted as $\gamma_{i;j}$, that is, $\gamma_{i;j}(N)=\{\gamma_{i;j}(p)\;|\; \forall p\in N\}\subset\R^n$ for any polytope $N\subseteq\R^{n-1}$. It is easy to see $\gamma_{i;j}(N)$ is a polytope in $\R^n$.

\subsection{Outline of the idea of the polytope function $\rho_h$ associated to $h\in\Z^n$} \label{2dim}\quad

Before introducing our construction, we would like to briefly explain our idea to make it sightly easier to understand our main objects $N_h$ as well as $\rho_h$ in this paper.

As a generalization of cluster monomials, we want to construct $\rho_h$ from $X^h$ satisfying that it can be expressed as a homogeneous (formal) Laurent polynomial in $X_t$ with coefficients in $\N[Y_t]$ under $L^t$ for any $t\in\T_n$, and includes $X^h$ as a summand. Such global conditions are too complicated to deal with directly, so we first try to construct a (formal) Laurent polynomial in $\widehat{\mathcal{U}}^+_{\geqslant0}(\Sigma_{t_0})$ from $X^h$ and then prove it satisfies the above global conditions.

Based on the above idea, $\rho_h$ can be constructed in the following three steps, while more details will be given in the sequel.

(i)\; Given a vector $h=(h_1,\cdots,h_n)\in\Z^n$, we get a coefficient free Laurent monomial $X^h=x_1^{h_1}\cdots x_n^{h_n}$ in $X$.
In general, it can not be expressed as a Laurent polynomial with positive coefficients in any cluster. For example, when $h_k<0$ for some $k\in[1,n]$, the expression of $X^h$ in $X_{t_{k}}$ via mutation $\mu_k$ equals to
\[(\frac{M_{k;t_k}}{x_{k;t_k}})^{h_k}\prod\limits_{i\neq k}x_{i;t_{k}}^{h_{i}}=\frac{x_{k;t_{k}}^{-h_k}\prod\limits_{i\neq k}x_{i;t_{k}}^{h_{i}}}{M_{k;t_k}^{-h_k}},\]
which is not a Laurent polynomial in $X_{t_k}$, where $t_k\in\T_n$ is the vertex connected to $t_0$ by an edge labeled $k$. Our method is to add some Laurent polynomial in $X$ to make the summation also a Laurent polynomial in $X_{t_k}$. Concretely, we find $(\hat{y}_k+1)^{-h_k}X^h=x_{k}^{h_k}M_{k;t_k}^{-h_{k}}\prod\limits_{i\neq k}x_{i}^{h_{i}+[-b_{ik}]_+h_k}$ having $X^h$ as a summand, which can be expressed as a Laurent polynomial in $X_{t_k}$.

(ii)\;If there is $k\'\in[1,n]$ such that $(\hat{y}_k+1)^{-h_k}X^h$ can not be expressed as a Laurent polynomial in $X_{t_{k\'}}$, then there is a summand
$x_{k\'}^{-a}p$, where $a\in\Z_{>0}$ and $p$ is some Laurent monomial in $\N[Y][x_{1}^{\pm 1},\cdots,x_{k\'-1}^{\pm 1},x_{k\'+1}^{\pm 1},\cdots,x_{n}^{\pm
1}]$, causing the expression non-Laurent polynomial similar as $X^h$ we deal with above.

Again we need to find an appropriate Laurent polynomial $x_{k\'}^{-a}M_{k\',t_{k\'}}^{a}q\in\N[Y][X^{\pm 1}]$ which has $x_{k\'}^{-a}p$ as a summand (here the ``appropriate'' refers to the condition induced by our aim $\rho_h\in\widehat{\mathcal{U}}^+_{\geqslant0}(\Sigma_{t_0})$, which restricts the support of $N_h$ in certain region), where $q$ is a Laurent monomial in $\N[Y][x_{1}^{\pm 1},\cdots,x_{k\'-1}^{\pm 1},x_{k\'+1}^{\pm 1},\cdots,x_{n}^{\pm 1}]$..

In the above process we call $x_{k}^{-a}M_{k,t_k}^{a}q$ a {\bf complement} of $x_{k}^{-a}p$ in direction $k$.

(iii)\;Then we focus on the minimal Laurent polynomial having both $(\hat{y}_k+1)^{-h_k}X^h$ and $x_{k\'}^{-a}M_{k\',t_{k\'}}^{a}q$ as summands and look for $k^{\prime\prime}\in[1,n]$ if it exists such that the minimal Laurent polynomial can not be expressed as a Laurent polynomial in $X_{t_{k^{\prime\prime}}}$ and to repeat step (ii) for $k^{\prime\prime}$. Such construction keeps on until the final (formal) Laurent polynomial can be expressed as a (formal) Laurent polynomial in any $X_{t_k}$ for $k\in[1,n]$, and we denote it by $\rho_h$. Note that $\rho_h$ is a Laurent polynomial if the construction ends in finitely many steps, otherwise it is a formal Laurent polynomial.

In summary, the construction is achieved by inductively adding a complement of some monomial summand in certain direction $k^{(s)}$ with negative exponent of $x_{k^{(s)}}$. In this way we construct a (formal) Laurent polynomial $\rho_h$ in $\widehat{\mathcal{U}}^+_{\geqslant0}(\Sigma_{t_0})$ having $X^h$ as a summand. And it will turn out $\rho_h$ is moreover universally positive. Hence it is really the object we search for. In the above process, we also keep $\rho_{h}$ ``minimal'' to make it universally indecomposable by avoiding unnecessary summands.

{\bf Supplemental illustration on the case of higher ranks ---}

For a cluster algebra of rank 2, we will construct polytope function $\rho_h$ by the above three steps. Although such polytope functions can also be constructed similarly for higher ranks, however, in general, it seems inefficient to build polytopes of higher ranks by segments. So with the help of the decomposition $x_{i}\rho_{h}=\sum\limits_{w,\alpha} c_{w,\alpha}Y^{w}\rho_{\alpha} $ for $\rho_h$ (see (\ref{equation: statement for decomposition})), we achieve our construction through induction on the partial order induced by sub-polytopes. The form of this construction may seem different in general rank from the above three steps, but it still comes from the idea we just explain.

During the construction of a polytope function $\rho_h$, the Newton polytope $N_{h}$ associated to $\rho_{h}$ is constructed with the order induced by sub-polytopes and some combinatorial structures. Thus,  it is more convenient for us to construct and study $\rho_h$ via $N_h$.

\subsection{Polytope $N_h$ and polytope function $\rho_h$ in rank 2 case}\quad

Calculation under the above idea leads us to the following definition of $N_{h}$ as well as $\rho_{h}$.

When $\A$ is a cluster algebra with principal coefficients of rank $2$,  without loss of generality, assume  that the initial exchange matrix is
\[B=\begin{pmatrix}
      0 & b \\
      -c & 0
    \end{pmatrix},\]
where $b,c\in\Z_{>0}$.

For $h=(h_{1},h_{2})\in\Z^{2}$, as explained in the last subsection,

(1)\; Starting from $X^h$, we have $(1+\hat{y}_1)^{[-h_1]_+}X^h$ as the complement of $X^h$ in direction 1 according to (i) in the last page. Since $(1+\hat{y}_1)^{[-h_1]_+}X^h=\sum\limits_{i=0}^{[-h_1]_+}\begin{pmatrix}
                                                                                       [-h_{1}]_+ \\
                                                                                       i
                                                                                     \end{pmatrix}\hat{y}_{1}^iX^h$,
it corresponds to a segment with vertices $v_1$ and $v_2$ via the bijection $\tilde{v}$ defined in (\ref{polypoly}) of the first section, where
\begin{equation*}
  \begin{array}{l}
     v_1=(0,0), \\
   v_2=([-h_{1}]_{+}, 0),
  \end{array}
\end{equation*}

(2)\; Keep on doing (ii) in the last page, we have $(1+\hat{y}_2)^{[-h_{2}+c[-h_{1}]_{+}]_{+}}\hat{y}_1^{[-h_1]_+}X^h$ as the complement of $\hat{y}_1^{[-h_1]_+}X^h$ in direction 2, which corresponds to a segment with vertices $v_2$ and $v_3$, where
$$v_3=([-h_{1}]_{+}, [-h_{2}+c[-h_{1}]_{+}]_{+}).$$
Similarly, we have $(1+\hat{y}_2)^{[-h_2]_+}X^h$ as the complement of $X^h$ in direction 2, which corresponds to a segment with vertices $v_1$ and $v_4$, where
\begin{equation*}
v_4=(0, [-h_{2}]_{+}).
\end{equation*}
And we have
$$(1+\hat{y}_1)^{[-h_1]_+}\hat{y}_1^{[-h_{1}]_{+}-[[-h_{1}]_{+}-b[c[-h_{1}]_{+}-h_{2}]_{+}]_{+}}\hat{y}_2^{[-h_{2}+c[-h_{1}]_{+}]_{+}}X^h$$
as the complement of $\hat{y}_1^{[-h_{1}]_{+}}\hat{y}_2^{[-h_{2}+c[-h_{1}]_{+}]_{+}}X^h$ in direction 1, which corresponds to a segment with vertices $v_3$ and $v_5$, where
\begin{equation*}
v_5=([-h_{1}]_{+}-[[-h_{1}]_{+}-b[c[-h_{1}]_{+}-h_{2}]_{+}]_{+}, [-h_{2}+c[-h_{1}]_{+}]_{+}).
\end{equation*}

Note that the next vertex $v_6$ calculated in this process is in the convex hull of $\{v_1,v_2,v_3,v_4,v_5\}$, so it is enough for us to construct the polytope we want inductively from these five vertices.

The calculation goes on to find all complements, this can be summarized as following inductively based on the above three points $v_1,v_2,v_3$.

For any point $p_0=(u_0,v_0)$ on $p_{1}p_{2}$, where $p_{1}p_{2}$ is either $v_{1}v_{2}$ or $v_{2}v_{3}$, define the weight $co_{p_0}=\tilde{C}_{l(\overline{p_{1}p_{2}})}^{l(\overline{p_0p_{2}})}$, and denote
\begin{equation}\label{equation: m_j for E_h}
  m_{1}(p_0)=\left\{\begin{array}{rl}
                      co_{p_0}, & \text{if } u_{0}=-h_{1}; \\
                      0, & otherwise.
                    \end{array}\right. \;\;\;\;
  \text{and}\;\;\;\;
  m_{2}(p_0)=\left\{\begin{array}{rl}
                      co_{p_0}, & \text{if } v_{0}=0; \\
                      0, & otherwise.
                    \end{array}\right.
\end{equation}
For any other point $p=(u,v)$, define $co_{p}$ inductively as follows: $$co_{p}=m_{1}(p)=m_{2}(p)=0\;\;\; \text{if}\;\;\; u>[-h_{1}]_{+}\; \text{or}\; v<0;$$
otherwise,
\begin{equation}\label{equation: weight of rank2}
co_{p}=max\{\sum\limits_{i=1}^{[-h_{1}]_{+}-u}m_{1}((u+i,v))\tilde{C}_{-h_{1}-bv}^{i},\;\; \sum\limits_{i=1}^{v}m_{2}((u,v-i))\tilde{C}_{-h_{2}+cu}^{i}\}
\end{equation}
while
\begin{equation}\label{equation: m(p)}
  m_{1}(p)=co_{p}-\sum\limits_{i=1}^{[-h_{1}]_{+}-u}m_{1}((u+i,v))\tilde{C}_{-h_{1}-bv}^{i},\;\; m_{2}(p)=co_{p}-\sum\limits_{i=1}^{v}m_{2}((u,v-i))\tilde{C}_{-h_{2}+cu}^{i}.
\end{equation}
Note that by induction $co_p,m_{1}(p),m_{2}(p)\geqslant0$ always holds according to (\ref{equation: weight of rank2}) and (\ref{equation: m(p)}). Then, we denote by {\bf $N_{h}$  the convex hull of the set $\{p\in\N^{2}\mid co_{p}\neq 0\}$ with weight $co_{p}$ for each $p\in \N^2$}.

(3)\;According to the definition of $N_{h}$ for $h\in\Z^{2}$, it is easy to see that its support is finite. Hence we can associate a Laurent polynomial
\begin{equation}\label{equation: rho_h in rank 2}
  \rho_{h}=\sum\limits_{p\in N_{h}}co_{p}\hat{Y}^{p}X^{h}
\end{equation}
to each $N_{h}$. Such $\rho_{h}$ is homogeneous with grading $h$.

Let $V_{h}=\{v_1,v_2,v_3,v_4,v_5\}$ and the \textbf{essential skeleton} $E_{h}$ of $N_h$ be the set consisting of edges connecting points in $V_{h}$ and parallel to $e_{1}$ or $e_{2}$.

We call $\rho_h$ the {\bf polytope function} associated to vector $h$. As mentioned before, in this case $N_{h}$ is the Newton polytope of $\rho_{h}|_{x_{i}\rightarrow1}$.

Next we will show that $N_h$ and $\rho_h$ are exactly the ones we are looking for in the last subsection.

When $\A$ is a cluster algebra without coefficients of rank $2$, we know in this case $\A=\mathcal U(\A)$ since $\A$ is acyclic. A $\Z$-basis $\{x[d]\mid d\in\Z^{2}\}$ for $\mathcal U(\A)$ was found in \cite{LLZ} called the {\bf greedy basis}, where $x[d]=X^{-d}\sum\limits_{u,v\in\N}c(u,v)x_{1}^{bu}x_{2}^{cv}$ with $c(0,0)=1$ and
\begin{equation}\label{equation: greedy basis}
  c(u,v)=max\{\sum\limits_{k=1}^{u}(-1)^{k-1}c(u-k,v)\begin{pmatrix}d_{2}-cv+k-1 \\k\end{pmatrix},
\sum\limits_{k=1}^{v}(-1)^{k-1}c(u,v-k)\begin{pmatrix}d_{1}-bu+k-1 \\k\end{pmatrix}\}
\end{equation}
for each $(u,v)\in\N^{2}\setminus\{(0,0)\}$.

On the other hand, we also have a set of Laurent polynomials $\{\rho_{h}|_{y_{i}\rightarrow 1,\forall i\in[1,2]} | h\in\Z^{2}\}$, where $\rho_{h}$ is defined for the principal coefficients cluster algebra corresponding to $\A$ as above. Here we modify $\rho_h$ for $\A$ by setting $y_{i}$ to be 1 for $i=1,2$.

The following result claims that the greedy basis is in fact the same as $\{\rho_{h}|_{y_{i}\rightarrow 1,\forall i\in[1,2]}|h\in\Z^{2}\}$ in the above case for rank 2.

\begin{Proposition}\label{polytope basis equals greedy basis}
  Let $\A$ be a cluster algebra without coefficients of rank $2$. Then $\{\rho_{h}|_{y_{i}\rightarrow 1,\forall i\in[1,2]}\mid h\in\Z^{2}\}$ and the greedy basis $\{x[d]\mid d\in\Z^{2}\}$ are the same. More precisely, $\rho_{h}|_{y_{i}\rightarrow 1,\forall i\in[1,2]}=x[d]$ for any $h=(h_1,h_2)\in\Z^{2}$, where $d=(d_{1},d_{2})=(-h_{1},-h_{2}+c[-h_{1}]_{+})$.
\end{Proposition}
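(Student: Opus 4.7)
The strategy is to show that $\rho_{h}|_{y_{i}\rightarrow 1,\forall i\in[1,2]}$ and $x[d]$ coincide by matching coefficients under an explicit lattice bijection. Setting $y_{1}=y_{2}=1$ in the defining formula gives $\hat{y}_{1}=x_{2}^{-c}$ and $\hat{y}_{2}=x_{1}^{b}$, so that
\[
\rho_{h}|_{y_{i}\rightarrow 1,\forall i\in[1,2]}=\sum\limits_{(p_{1},p_{2})\in N_{h}}co_{(p_{1},p_{2})}(N_{h})\,x_{1}^{h_{1}+bp_{2}}\,x_{2}^{h_{2}-cp_{1}}.
\]
With $d_{1}=-h_{1}$ and $d_{2}=-h_{2}+c[-h_{1}]_{+}$ as in the statement, the bijection $\phi:(p_{1},p_{2})\mapsto(u,v)=(p_{2},[-h_{1}]_{+}-p_{1})$ identifies the Laurent monomial $\hat{Y}^{p}X^{h}$ with $x_{1}^{bu-d_{1}}x_{2}^{cv-d_{2}}$, and Lemma \ref{m_j in rank 2}(ii) ensures that $\phi$ sends $N_{h}$ into $\N^{2}$. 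The proposition thus reduces to the scalar identity $co_{\phi^{-1}(u,v)}(N_{h})=c(u,v)$ on all of $\N^{2}$, adopting the convention $co_{p}=0$ for $p\notin N_{h}$.

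This scalar identity will be proved by induction on $u+v$. The base case $(u,v)=(0,0)$ corresponds to $p=([-h_{1}]_{+},0)$ and gives $co_{p}=1=c(0,0)$ via Lemma \ref{m_j in rank 2}(iii); the axis-parallel edges of $N_{h}$ carry binomial weights matching the closed-form values of $c(u,0)$ and $c(0,v)$ produced by the one-sided greedy recursion. For the inductive step, set $D=-h_{2}+cu_{p}$ and $E=-h_{1}-bv_{p}$. The key algebraic input is the binomial inversion identity
\[
\sum\limits_{k=0}^{\ell}(-1)^{k}\binom{D+k-1}{k}\binom{D}{\ell-k}=\delta_{\ell,0}\qquad(D\geqslant 0),
\]
which is an immediate consequence of the generating-function relation $(1+x)^{-D}(1+x)^{D}=1$. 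Combined with the $N_{h}$-expression $co_{p}=\sum\limits_{i\geqslant 0}m_{2}((u_{p},v_{p}-i))\tilde{C}^{i}_{D}$, this identity inverts $m_{2}$ into the alternating sum $m_{2}(p)=\sum\limits_{\ell\geqslant 0}(-1)^{\ell}\binom{D+\ell-1}{\ell}co_{(u_{p},v_{p}-\ell)}$; substituting the inductive hypothesis $co_{(u_{p},v_{p}-\ell)}=c(u-\ell,v)$ for $\ell\geqslant 1$ then rewrites the $N_{h}$-sum $\sum\limits_{i\geqslant 1}m_{2}((u_{p},v_{p}-i))\tilde{C}^{i}_{D}$ as the greedy sum $\sum\limits_{k\geqslant 1}(-1)^{k-1}c(u-k,v)\binom{D+k-1}{k}$. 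A parallel manipulation with $E\geqslant 0$ matches the $m_{1}$-sum against the other greedy sum, so taking the maximum of each pair yields $co_{p}=c(u,v)$.

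The principal obstacle lies in the regime $D<0$ or $E<0$, where $\tilde{C}^{i}_{D}$ or $\tilde{C}^{i}_{E}$ vanishes while the unmodified binomial $\binom{D+k-1}{k}=(-1)^{k}\binom{-D}{k}$ remains nonzero and alternating. When only $D<0$ holds, the $N_{h}$-sum involving $D$ vanishes while its greedy counterpart is non-positive; positivity of the greedy basis then forces $c(u,v)$ to equal the other greedy sum, and the inversion on the still-valid $E$-side identifies this with the nonzero $N_{h}$-sum involving $E$, giving $co_{p}=c(u,v)$. The symmetric case $E<0$ is dual, while when both $D<0$ and $E<0$ both $N_{h}$-sums vanish so $co_{p}=0$, and positivity of the greedy basis combined with non-positivity of both greedy sums forces $c(u,v)=0$. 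These cases together close the induction and establish $\rho_{h}|_{y_{i}\rightarrow 1,\forall i\in[1,2]}=x[d]$.
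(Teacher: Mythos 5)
Your proposal is correct and follows essentially the same route as the paper: both identify $co_{(p_1,p_2)}(N_h)$ with $c(p_2,[-h_1]_+-p_1)$ by induction, converting the positive $m_i$-recursion for $N_h$ into the alternating greedy recursion via binomial inversion (the paper's displayed computation $[\sum_k-\tilde{C}_{[-h_1]_+}^{[-h_1]_+-r-k}\binom{h_1}{k}]_+=\tilde{C}_{[-h_1]_+}^{r}$ is exactly this identity on the boundary, and it then iterates row by row). Your write-up is in fact more explicit than the paper's sketch about the inversion identity and about the degenerate cases $D<0$ or $E<0$, where you correctly invoke positivity of the greedy coefficients to resolve the $\max$.
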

\begin{Proof}
  In this proof, we define a partial order ``$\prec$" on $\Z^{2}$ as
  \begin{equation}\label{partial order in proof}
    (u,v)\prec(u\',v\')\;\; \text{if} \;\;(-h_{1}-u,v)<(-h_{1}-u\',v\')
  \end{equation}

  We claim that $co_{(u,v)}=c(v,[-h_{1}]_{+}-u)$ for $h=(h_1,h_2)\in\Z^{2}$ and $d=(d_{1},d_{2})=(-h_{1},-h_{2}+c[-h_{1}]_{+})$, which will be proved by induction on $\N^2$ with respect to $\prec$ as follows (In fact, since $co_{(u,v)}=0$ when $u>[-h_{1}]_{+}$, we only need to focus on those with $0\leqslant u\leqslant [-h_{1}]_{+}$). More precisely, we will first show the claim for $(u,v)$ in $E_{h}$ holds. Then, we prove the claim for any points $(u,v)\in N_{h}$ by verifying that the recurrence relations (\ref{equation: weight of rank2}) and (\ref{equation: greedy basis}) for $co_{(u,v)}$ and $c(v,[-h_{1}]_{+}-u)$ respectively are the same based on the induction assumption that the claim holds for points $(u\',v\')$ satisfying $(u\',v\')\prec(u,v)$.

  According to the definition of $\rho_{h}$ for any $h\in\Z^{2}$, we have $V_{h}=\{v_1, v_2, v_3, v_4, v_5\}$, where

   $v_1=(0, 0)$,

   $v_2=([-h_{1}]_{+}, 0)$,

   $v_3=(0, [-h_{2}]_{+})$,

   $v_4=([-h_{1}]_{+}, [-h_{2}+c[-h_{1}]_{+}]_{+})$,

   $v_5=([-h_{1}]_{+}-[[-h_{1}]_{+}-b[c[-h_{1}]_{+}-h_{2}]_{+}]_{+},[-h_{2}+c[-h_{1}]_{+}]_{+})$.\\
   So $E_{h}$ is as shown in Figure \ref{figure of essential skeleton}, where two red points connected by an edge may be coincident.
  \begin{figure}[H]
    \centering
    \includegraphics[width=60mm]{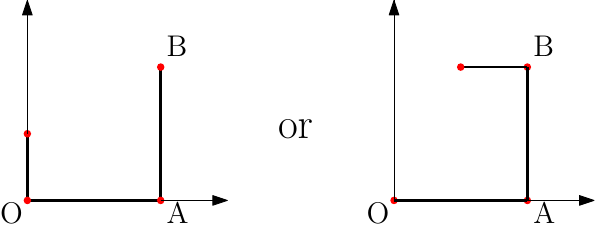}
    \caption{The shape of $E_{h}$.}\label{figure of essential skeleton}
  \end{figure}

  First $co_{([-h_{1}]_{+},0)}=c(0,0)=1$. Assume $co_{(u,0)}=c(0,[-h_{1}]_{+}-u)=\tilde{C}_{[-h_{1}]_{+}}^{u}$ when $u>r$. Then when $u=r$, we have $co_{(r,0)}=\tilde{C}_{[-h_{1}]_{+}}^{r}$ and
  \begin{equation*}
    \begin{array}{rl}
      c(0,[-h_{1}]_{+}-r)&=max\{0, \sum\limits_{k=1}^{[-h_{1}]_{+}-r}(-1)^{k-1}c(0,[-h_{1}]_{+}-r-k)\begin{pmatrix}-h_{1}+k-1 \\k\end{pmatrix}\} \\
       &=[\sum\limits_{k=1}^{[-h_{1}]_{+}-r}-\tilde{C}_{[-h_{1}]_{+}}^{[-h_{1}]_{+}-r-k}\begin{pmatrix}h_{1} \\k\end{pmatrix}]_{+}\\
       &=\tilde{C}_{[-h_{1}]_{+}}^{r}.
    \end{array}
  \end{equation*}
  So $co_{(u,0)}=c(0,[-h_{1}]_{+}-u)=\tilde{C}_{[-h_{1}]_{+}}^{u}$ for any point $(u,0)\in \overline{OA}$. Similarly, it can be proved that $$co_{([-h_{1}]_{+},v)}=c(v,0)=\tilde{C}_{[-h_{2}+c[-h_{1}]_{+}]_{+}}^{v}$$
  for any point $([-h_{1}]_{+},v)\in\overline{AB}$.

Similar discussion also works when point $(u,v)\in N_{h}$ not lying in $\overline{OA}$ or $\overline{AB}$. According to (\ref{equation: m(p)}), we can see that
  \begin{equation*}
    \begin{array}{ll}
      \sum\limits_{i=1}^{[-h_{1}]_{+}-u}m_{1}((u+i,v))\tilde{C}_{-h_{1}-bv}^{i} &
      =\sum\limits_{i=1}^{[-h_{1}]_{+}-u}(co_{(u+i,v)}-\sum\limits_{j=1}^{[-h_{1}]_{+}-u-i}m_{1}((u+i+j,v))\tilde{C}_{-h_{1}-bv}^{j})\tilde{C}_{-h_{1}-bv}^{i} \\
       & =\cdots \\
       & =\sum\limits_{i=1}^{[-h_{1}]_{+}-u}co_{(u+i,v)}\sum\limits_{r=1}^{i}\sum\limits_{i=i_{1}+\cdots+i_{r}}(-1)^{r-1}\prod\limits_{j=1}^{r}\tilde{C}_{-h_{1}-bv}^{i_{j}},
    \end{array}
  \end{equation*}
  where $i_{1},\cdots,i_{r}\in\Z_{>0}$. Take induction on $i$, assume \[\sum\limits_{r=1}^{k}\sum\limits_{k=k_{1}+\cdots+k_{r}}(-1)^{r-1}\prod\limits_{j=1}^{r}\tilde{C}_{-h_{1}-bv}^{k_{j}}=-\begin{pmatrix}
                                                                                                                           -[-h_{1}-bv]_{+} \\
                                                                                                                           k
                                                                                                                          \end{pmatrix}\]
  for $k<i$, then
  \begin{equation*}
    \begin{array}{ll}
      \sum\limits_{r=1}^{i}\sum\limits_{i=i_{1}+\cdots+i_{r}}(-1)^{r-1}\prod\limits_{j=1}^{r}\tilde{C}_{-h_{1}-bv}^{i_{j}} &
      =\tilde{C}_{-h_{1}-bv}^{i}-\sum\limits_{j=1}^{i-1}\tilde{C}_{-h_{1}-bv}^{j}(\sum\limits_{r=2}^{i}\sum\limits_{i-j=i_{2}+\cdots+i_{r}}(-1)^{r-2}\tilde{C}_{-h_{1}-bv}^{i_{j}})\\
      & =\sum\limits_{j=1}^{i}\begin{pmatrix}
                              [-h_{1}-bv]_{+} \\
                              j
                            \end{pmatrix}\begin{pmatrix}
                                           -[-h_{1}-bv]_{+} \\
                                           i-j
                                         \end{pmatrix}\\
       & =\sum\limits_{j=0}^{i}\begin{pmatrix}
                              [-h_{1}-bv]_{+} \\
                              j
                            \end{pmatrix}\begin{pmatrix}
                                           -[-h_{1}-bv]_{+} \\
                                           i-j
                                         \end{pmatrix}-
           \begin{pmatrix}
             -[-h_{1}-bv]_{+} \\
             i
           \end{pmatrix}\\
           & =-\begin{pmatrix}
             -[-h_{1}-bv]_{+} \\
             i
           \end{pmatrix},
    \end{array}
  \end{equation*}
  where the last equality holds because
  $\sum\limits_{j=0}^{k}\begin{pmatrix}
                           a_{1} \\
                           j
                         \end{pmatrix}\begin{pmatrix}
                                        a_{2} \\
                                        k-j
                                      \end{pmatrix}=\begin{pmatrix}
                                                      a_{1}+a_{2} \\
                                                      k
                                                    \end{pmatrix} $
  for any $a_{1},a_{2}\in\Z$ and $k\in\N$. Hence by induction we get that
  \begin{equation}\label{equation: m_1 in greedy basis}
    \begin{array}{ll}
      \sum\limits_{i=1}^{[-h_{1}]_{+}-u}m_{1}((u+i,v))\tilde{C}_{-h_{1}-bv}^{i} & =-\sum\limits_{i=1}^{[-h_{1}]_{+}-u}co_{(u+i,v)}\begin{pmatrix}
                                                                                                                                    -[-h_{1}-bv]_{+} \\
                                                                                                                                    i
                                                                                                                                  \end{pmatrix}, \\
       & =\sum\limits_{i}^{[-h_{1}]_{+}-u}(-1)^{i-1}co_{(u+i,v)}\begin{pmatrix}
                                                                  [-h_{1}-bv]_{+}+i-1 \\
                                                                  i
                                                                \end{pmatrix}.
    \end{array}
  \end{equation}
  Dually, we have \begin{equation}\label{equation: m_2 in greedy basis}
            \sum\limits_{i=1}^{v}m_{2}((u,v-i))\tilde{C}_{-h_{2}+cu}^{i}=\sum\limits_{i=1}^{v}(-1)^{i-1}co_{(u,v-i)}\begin{pmatrix}
                                                                                                                      [-h_{2}+cu]_{+}+i-1 \\
                                                                                                                      i
                                                                                                                    \end{pmatrix}.
          \end{equation}
 Then an induction on $(u,v)$ completes the proof of our claim as the right-hand sides of (\ref{equation: m_1 in greedy basis}) and of (\ref{equation: m_2 in greedy basis}) equals elements in the bracket of the right-hand side of (\ref{equation: greedy basis}) respectively when our claim holds for points $(u\',v\')$ such that $(u\',v\')\prec(u,v)$, which leads to $co_{(u,v)}=c(v,[-h_{1}]_{+}-u)$.

  Therefore,
  \begin{equation*}
    \begin{array}{rl}
      \rho_{h}|_{y_{i}\rightarrow 1,\forall i\in[1,2]}&=\sum\limits_{u,v\in\N}co_{(u,v)}x_{1}^{h_{1}+bv}x_{2}^{h_{2}-cu}\\
           &= x_{1}^{h_{1}}x_{2}^{h_{2}-c[-h_{1}]_{+}}\sum\limits_{u,v\in\N}c(v,[-h_{1}]_{+}-u)x_{1}^{bv}x_{2}^{c([-h_{1}]_{+}-u])}\\
           &= x_{1}^{-d_{1}}x_{2}^{-d_{2}}\sum\limits_{u,v\in\N}c(u,v)x_{1}^{bu}x_{2}^{cv}\\
           &=x[d].
    \end{array}
  \end{equation*}
\end{Proof}

Following Proposition \ref{polytope basis equals greedy basis}, since in this case $B$ is either invertible or 0, it is not hard to see that when $\A$ is a cluster algebra with pricipal coefficients, $\{\rho_{h}\mid h\in\Z^{2}\}$ is a $\Z Trop(Y)$-basis of $\A$. We would like to call it the {\bf polytope basis} as its construction is based on polytopes. Later, we will similarly construct the polytope basis for a TSSS cluster algebra of general rank.

According to Proposition \ref{polytope basis equals greedy basis}, Proposition 1.6 in \cite{LLZ} can be restated by $m_{1}$ and $m_{2}$ as follows.

\begin{Proposition}\cite{LLZ}\label{prop from llz}
  In the above settings, for any $h\in\Z^{2}$ satisfying $h_{1},h_{2}+ch_{1}<0$ and any point $(u,v)\in\N_{h}$, we have

    (i)\; $m_{1}((u,v))=0$ when $ch_{1}(-h_{1}-u)\leqslant b(h_{2}+ch_{1})v$;

    (ii)\; $m_{2}((u,v))=0$ when $ch_{1}(-h_{1}-u)\geqslant b(h_{2}+ch_{1})v$.
\end{Proposition}

For any $k\in[1,2]$, substituting (\ref{equation: m(p)}) into (\ref{equation: rho_h in rank 2}), we obtain
\begin{equation*}
  \begin{array}{rl}
    \rho_{h}= &\sum\limits_{(s_1,s_2)\in\N^2}m_k((s_1,s_2))\hat{Y}^{(s_1,s_2)}(\frac{M_{k}}{y_k^{2-k}})^{[-h_k-b_{k,3-k}s_{3-k}]_+}X^h \\
     =&\sum\limits_{(s_1,s_2)\in\N^2}(\frac{M_{k}}{x_{k}})^{[-h_k-b_{k,3-k}s_{3-k}]_+}m_k((s_1,s_2))Y^{(s_1,s_2)-(2-k)[-h_k-b_{k,3-k}s_{3-k}]_+e_k}\cdot\\
     &x_{k}^{[h_k+b_{k,3-k}s_{3-k}]_+}x_{3-k}^{h_{3-k}+b_{3-k,k}s_{k}}.
  \end{array}
\end{equation*}
So by adding up terms with the same $x_{k}-$degree, we can rewrite the above equation as
$$\rho_{h}=\sum\limits_{s\in\N}(\frac{M_{k}}{x_{k}})^{s}P_{s},$$
where $P_{s}\in\Z\P[x_{3-k}^{\pm1}]$ for $s>0$ and $P_{0}\in\Z\P[x_{3-k}^{\pm1},x_{k}]$. Hence according to the definition of mutations, $\rho_{h}\in\mathcal{U}_{\geqslant0}(\Sigma_{t_{0}})$, and so is $x_{1}\rho_{h}$.

The following result plays an important role in the construction and study of polytope functions.

\begin{Lemma}
  In the above settings, for any $h\in\Z^2$,

  (a)\;\begin{equation}\label{equation: decomposition of x1Ph}
         x_{1}\rho_{h}=\sum\limits_{(u,v)\in J^{(1)}_{h}}c_{u,v}y_{1}^{u}y_{2}^{v}\rho_{(h_{1}+1+bv,h_{2}-cu)},
       \end{equation}
       where $c_{u,v}\in\N$ and $J^{(1)}_{h}=\{(0,0)\}$ when $h_{1}\geqslant 0$ while
       $$J^{(1)}_{h}=\{(u,v)\in N_{h}\; |\; m_{1}((u+[-h_{1}-bv]_{+}, v))>0,\; m_{2}((u+[-h_{1}-bv]_{+}-1, v))>0\}\cup\{(1, 0)\}$$
       when $h_{1}<0$.

  (b)\;\begin{equation}\label{equation: decomposition of x2Ph}
         x_{2}\rho_{h}=\sum\limits_{(u,v)\in J^{(2)}_{h}}c\'_{u,v}y_{1}^{u}y_{2}^{v}\rho_{(h_{1}+bv,h_{2}+1-cu)},
       \end{equation}
       where $c\'_{u,v}\in\N$ and
       $$J^{(2)}_{h}=\{(u,v)\in N_{h}\; |\; m_{1}((u+[-h_{1}-bv]_{+},v))>0,\; m_{2}((u+[-h_{1}-bv]_{+}, v-1))>0\}\cup\{(0, 0)\}.$$
\end{Lemma}
\begin{Proof}
  Here we will only prove (\ref{equation: decomposition of x1Ph}). Then   (\ref{equation: decomposition of x2Ph}) can be proved similarly,  according to the duality of $x_{1}$ and $x_{2}$, $y_{1}$ and $y_{2}$ respectively.

  When $h_{1}\geqslant 0$, $x_{1}\rho_{h}=\rho_{(h_{1}+1,h_{2})}$, hence the equation (\ref{equation: decomposition of x1Ph}) holds in this case. When $h_{2}-c[-h_{1}]_{+}\geqslant0$, $\rho_{h}=(1+\hat{y}_{1})^{[-h_{1}]_{+}}X^{h}$, so $x_{1}\rho_{h}=(1+\hat{y}_{1})^{[-h_{1}]_{+}}X^{h+e_{1}}=\rho_{(h_{1}+1,h_{2})}+y_{1}\rho_{(h_{1}+1,h_{2}-c)}$, hence the equation (\ref{equation: decomposition of x1Ph}) holds.

  So in the following discussion, we may assume $h_1,h_2+ch_1<0$.

  Because of the correspondence between homogeneous Laurent polynomials and polytopes lying in the nonnegative cone when a degree vector is given, we will first show a decomposition of the weighted polytope $N_h$ as the sum of its weighted sub-polytopes, which then can induce the corresponding decomposition of $x_{1}\rho_h$ by equalizing the degree of both sides.

  In this proof, the partial order ``$\prec$" is defined as (\ref{partial order in proof}). We claim that we can obtain a chain of sub-polytopes $N(0)=N_{h},N(1),\cdots$, which eventually becomes the empty set $\emptyset$ from $N_{h}$ by iteratively subtracting sub-polytopes as follows and moreover, there is a unique minimal point $(u_{i},v_{i})\in N(i)$ with respect to ``$\prec$'', $(u_{i}-[-h_{1}-1-bv_{i}]_{+},v_{i})\in J^{(1)}_{h}$ and $co_{(u_{i},v_{i})}(N(i))>0$.

  $N(0)=N_{h}$ has the unique minimal point $(u_0,v_0)=(-h_{1},0)$ with respect to ``$\prec$''. Moreover, $(u_0-[-h_{1}-1-bv_0]_{+},v_0)=(1,0)\in J^{(1)}_{h}$ and $co_{(-h_{1},0)}(N(0))=1>0$. So, the claim holds when $i=0$. We set $c_{1,0}=1$ and $N(1)=N(0)-c_{1,0}N_{(h_{1}+1,h_{2}-c)}$.

  Assume the claim holds for $i\geqslant0$, that is we get a sub-polytope $N(i)$ and there is a unique minimal point $(u_i,v_i)\in N(i)$ with respect to ``$\prec$'', $(u_i-[-h_{1}-1-bv_i]_{+},v_i)\in J^{(1)}_{h}$ and $co_{(u_i,v_i)}(N(i))>0$. Then let $c_{u_i-[-h_{1}-1-bv_i]_{+},v_i}=co_{(u_i,v_i)}(N(i))$ and
  $$N(i+1)=N(i)-c_{u_i-[-h_{1}-1-bv_i]_{+},v_i}N_{(h_{1}+1+bv_i,h_{2}-c(u_i-[-h_{1}-1-bv_i]_{+}))}[(u_i-[-h_{1}-1-bv_i]_{+},v_i)]. $$

  By induction, we only need to show the above result holds for $N(i+1)$.

  Denote by $(u_{i+1},v_{i+1})$ a minimal point in $N(i+1)\cap\mathcal N$ with respect to ``$\prec$'' where
  $$ \mathcal N= \{(r+[-h_{1}-bs-1]_{+},s)|(r,s)\in J^{(1)}_{h}\}.$$
  According to Proposition \ref{prop from llz} and the definition of $J^{(1)}_{h}$, if a point $(r,s)\in J^{(1)}_{h}$, then  $$ch_{1}(-h_{1}-r-[-h_{1}-bs]_{+}+1)\leqslant b(h_{2}+ch_{1})s,\;\;\; ch_{1}(-h_{1}-r-[-h_{1}-bs]_{+})\geqslant b(h_{2}+ch_{1})s.$$ So for any two points $(r,s),(r\',s\')\in J^{(1)}_{h}$, $s>s\'$ induces $r+[-h_{1}-bs]_{+}\leqslant r\'+[-h_{1}-bs\']_{+}$. According to the definition of $N_{h}$, $m_{2}((u,v))=0$ when $h_{1}+bv\geqslant0$, hence $(r\',s\')\in J^{(1)}_{h}$ leads to $h_{1}+bs>0$. Thus $(r\'+[-h_{1}-bs\'-1]_{+},s\')\prec(r+[-h_{1}-bs]_{+},s)$. So when restricted in the set $\mathcal N$, the partial order ``$\prec$'' becomes a total order, which induces the uniqueness of $(u_{i+1},v_{i+1})$.

  Next we will show that $(u_{i+1},v_{i+1})$ is the unique minimal point in $N(i+1)$ with respect to ``$\prec$'' and $co_{(u_{i+1},v_{i+1})}(N(i+1))>0$. Recall that due to the inductive assumption,  $(u_{i},v_{i})$ is the unique minimal point in $N(i)$ with respect to ``$\prec$''. So it is enough to show that $co_{(r,s)}(N(i+1))=0$ for any $(r,s)\in N(i)$ not larger than or equal to $(u_{i+1},v_{i+1})$ with respect to ``$\prec$''.

  It is proved in \cite{CGMMRSW} that the greedy basis coincides with theta basis in rank 2 case, so together with Proposition \ref{polytope basis equals greedy basis} it induces the equivalence of polytope basis and theta basis in rank 2 case. Moreover, it was showed in \cite{GHKK} that $x_1\rho_h=\sum\limits_\alpha a_\alpha Y^{w_\alpha}\rho_{h_\alpha}$, where $a_\alpha$ is non-negative as it can be realized combinatorially as a counting of certain broken lines, $w_{\alpha}\in\N^2$ and $h_{\alpha}\in\Z^2$. Therefore, if $(u,v)$ is a minimal point in $N(i+1)$ with respect to ``$\prec$'', then $co_{(u,v)}(N(i+1))=a_\alpha>0$ for some $\alpha$.

  The summand of $x_{1}\rho_{h}$ corresponding to the 1-section of $N_{h}$ at $(u,v)$ equals
  \begin{equation*}
    \begin{array}{ll}
       & \sum\limits_{r\leqslant-h_{1}}m_{1}((r,v))(1+\hat{y}_{1}^{-1})^{-h_{1}-bv}\hat{Y}^{(r,v)}X^{h+e_{1}} \\
      = & \sum\limits_{r\leqslant-h_{1}}(m_{1}((r,v))+m_{1}((r+1,v)))(1+\hat{y}_{1}^{-1})^{-h_{1}-bv-1}Y^{(r,v)}x_1^{h_{1}+bv+1}x_2^{h_2-cr}.
    \end{array}
  \end{equation*}
  According to the choice of $(u,v)$, $co_{(r,v)}(N(i+1))=0$ when $r>u$, hence due to the definition of $N(i+1)$, the summand of $x_{1}\rho_{h}$ corresponding to the 1-section of $N(i+1)$ at $(u,v)$ must equal $\sum\limits_{r\leqslant u}a_r(1+\hat{y}_{1}^{-1})^{-h_{1}-bv-1}\hat{Y}^{(r,v)}X^{h+e_{1}}$ for some $a_r\leqslant m_{1}((r,v))+m_{1}((r+1,v))$ and thus $co_{(u,v)}(N(i+1))=a_{u}>0$. Therefore, $m_{1}((u,v))+m_{1}((u+1,v))>0$.

  Similarly, the summand of $x_{1}\rho_{h}$ corresponding to the 2-section of $N_{h}$ at $(u,v)$ equals
  \[\sum\limits_{s>0}m_{2}((u,s))(1+\hat{y}_{2})^{-h_{2}+cu}Y^{(u,s)}x_1^{h_{1}+bs+1}x_2^{h_2-cu}.\]
  According to the choice of $(u,v)$, $co_{(u,s)}(N(i+1))=0$ when $s<v$, hence the summand of $x_{1}\rho_{h}$ corresponding to the 2-section of $N(i+1)$ at $(u,v)$ must equal $\sum\limits_{s\geqslant v}a\'_s(1+\hat{y}_{2})^{-h_{2}+cu}\hat{Y}^{(u,s)}X^{h+e_{1}}$ for some $a\'_s<m_{2}((u,s))$ and thus $co_{(u,v)}(N(i+1))=a\'_{v}>0$. Therefore, $m_{2}((u,v))>0$.

  Because of Lemma \ref{fact} (a) and the fact $(u_i,v_i)\prec(u,v)$, $m_{1}((u,v))m_{2}((u,v))=0$. So $m_{1}((u+1,v))>0$ and $m_{2}((u,v))>0$, which means $(u,v)\in\mathcal{N}$ and thus $(u,v)=(u_{i+1},v_{i+1})$. In conclusion, $(u_{i+1},v_{i+1})$ is the unique minimal point in $N(i+1)$ and therefore $co_{(u_{i+1},v_{i+1})}(N(i+1))>0$.

  Note that $N(k)=\emptyset$ for large enough $k$ since $J^{(1)}_{h}$ is a finite set. Thus we obtain a chain of sub-polytopes $N_h=N(0),N(1),\cdots,N(k-1),N(k)=\emptyset$ of $N_h$ and then get a decomposition of $N_h$:
  \begin{equation*}
         \begin{array}{ll}
           N_h & =\sum\limits_{i=0}^k c_{u_i-[-h_{1}-1-bv_i]_{+},v_i}N_{(h_{1}+1+bv_i,h_{2}-c(u_i-[-h_{1}-1-bv_i]_{+}))}[(u_i-[-h_{1}-1-bv_i]_{+},v_i)] \\
            & =\sum\limits_{(u,v)\in J^{(1)}_{h}}c_{u,v}N_{(h_{1}+1+bv,h_{2}-cu)}[(u,v)], \;
         \end{array}
       \end{equation*}
  where $c_{u,v}=0$ when $(u-[-h_{1}-1-bv]_{+},v)$ is not a minimal point of some $N(i)$ with respect to ``$\prec$''.

  Due to the relation between homogeneous Laurent polynomials and polytopes lying in the nonnegative cone, in the perspective of Laurent polynomials, we have
  \[X^{a}\rho_{h}=\sum\limits_{(u,v)\in J^{(1)}_{h}}c_{u,v}X^{a_{u,v}}y_{1}^{u}y_{2}^{v}\rho_{(h_{1}+1+bv,h_{2}-cu)},\]
  where $a,a_{u,v}\in\Z^{2}$ equalize the degree of two sides. So by comparing the degree of two sides, we can see that $a_{u,v}=a+1$ for any $(u,v)\in J^{(1)}_{h}$, which leads to (\ref{equation: decomposition of x1Ph}).
\end{Proof}
\begin{Remark}
  In order to construct the decomposition (\ref{equation: decomposition of x1Ph}), besides the way given in the above proof, we can also calculate in the procedure as follows:

  Let $N_{0}=N_{h}$. For each $N_{i}$, we choose a minimal point $(u,v)\in N_{i}$ with weight $co_{(u,v)}(N_{i})$, then denote $N_{i+1}=N_{i}-co_{(u,v)}(N_{i})N_{(h_{1}+1+bv,h_{2}-cu)}[(u,v)]$. Then for large enough $i<\infty$ we get a decomposition as $N_{i}=0$, which is exactly (\ref{equation: decomposition of x1Ph}).

  The decomposition (\ref{equation: decomposition of x2Ph}) can also be constructed in the similar procedure.
\end{Remark}

In the language of polytopes, (\ref{equation: decomposition of x1Ph}) and (\ref{equation: decomposition of x2Ph}) claims that $N_h$ can be decomposed as a summation of certain sub-polytopes. This allows us to deal with many problems about them using induction on the partial order of polytopes.

\begin{Example}\label{eg in rank 2}
  let $\A$ be a cluster algebra with principal coefficients associated to the initial exchange matrix
  $B=\begin{pmatrix}
        0 & 2 \\
        -3 & 0
      \end{pmatrix} $
  and $h=(-5,7)$. Then $N_{h}$ is expressed by the following table of numbers:
   \begin{equation*}
    \begin{array}{cccccc}
       &  &  &  &  & 1 \\
       &  &  &  &  & 8 \\
       &  &  &  &  & 28 \\
       &  &  &  & 5 & 56 \\
       &  &  &  & 25 & 70 \\
       &  &  & 7 & 50 & 56 \\
       &  & 2 & 24 & 50 & 28 \\
       & 1 & 11 & 27 & 25 & 8 \\
      1 & 5 & 10 & 10 & 5 & 1
    \end{array}
  \end{equation*}
  where the number lying in the intersection of the $u$-th column from left to right and the $v$-th row from bottom to top equals the weight $co_{(u,v)}(N_{h})$ of a point $(u,v)$, or equivalently, the number $co_{(u,v)}(N_{h})$ is put at the $(u,v)$-position.

  Let $N(0)=N_{h}$, where $(0,0)$ is the unique minimal point in $N(0)$ with weight 1. So, $N(1)=N(0)-N_{(-4,7)}$, where $(1,0)$ is the unique minimal point with weight 1, and then $N(2)=N(1)-N_{(-4,4)}[(1,0)]$, where $(1,1)$ is the unique minimal point in $N(2)$ with weight 1. So, $N(3)=N(2)-N_{(-2,4)}[(1,1)]$, where $(2,2)$ is the unique minimal point in $N(3)$ with weight 2. Finally, $N(4)=N(3)-2N_{(0,1)}[(2,2)]$.
  \begin{equation*}
    \begin{array}{ccccc}
    \\\\\\
       &  &  &  & 1 \\
       &  &  &  & 5 \\
       &  &  & 2 & 10 \\
       &  &  & 8 & 10 \\
       &  & 5 & 10 & 5 \\
      1 & 4 & 6 & 4 & 1 \\\\
       &  & N_{(-4,7)} &  &
    \end{array}\qquad
    \begin{array}{ccccc}
       &  &  &  & 1 \\
       &  &  &  & 8 \\
       &  &  &  & 28 \\
       &  &  & 4 & 56 \\
       &  &  & 20 & 70 \\
       &  & 4 & 40 & 56 \\
       &  & 14 & 40 & 28 \\
       & 4 & 16 & 20 & 8 \\
      1 & 4 & 6 & 4 & 1 \\\\
       &  & N_{(-4,4)} &  &
    \end{array}\qquad
    \begin{array}{ccc}
       \\ \\ \\\\\\\\
       &  & 1 \\
       &  & 2 \\
      1 & 2 & 1 \\\\
       & N_{(-2,4)} &
    \end{array}
  \end{equation*}
  Since $N(4)=\emptyset$, we get
  \[N_{(-5,7)}=N_{(-4,7)}+N_{(-4,4)}[(1,0)]+N_{(-2,4)}[(1,1)]+2N_{(0,1)}[(2,2)],\]
  whose Laurent polynomial version equals
  \[x_{1}\rho_{(-5,7)}=\rho_{(-4,7)}+y_{1}\rho_{(-4,4)}+y_{1}y_{2}\rho_{(-2,4)}+2y_{1}^{2}y_{2}^{2}\rho_{(0,1)}.\]
  Similarly, we also have
  \[N_{(-5,7)}=N_{(-5,8)}+N_{(-3,5)}[(1,1)]+N_{(-3,2)}[(2,1)]+2N_{{-1,2}}[(3,2)]\]
  and
  \[x_{2}\rho_{(-5,7)}=\rho_{(-5,8)}+y_{1}y_{2}\rho_{(-3,5)}+y_{1}^{2}y_{2}\rho_{(-3,2)}+2y_{1}^{3}y_{2}^{2}\rho_{(-1,2)}.\]
\end{Example}

\begin{Lemma}\label{fact}\quad

(a)\; $\{p\in N_{h}|m_{1}(p)m_{2}(p)\neq 0\}=\{([-h_{1}]_{+},0)\}$, $\{([-h_{1}]_{+},v)\in N_{h}\}\subseteq\{p\in N_{h}|m_{1}(p)>0\}$ and $\{(u,0)\in N_{h}\}\subseteq\{p\in N_{h}|m_{2}(p)>0\}$.

(b)\; For any point $(u,v)\in N_{h}$,

(i)\; $m_{1}((u,v))>0$ if and only if there is $(u\',v\')\in J^{(2)}_{h}$ such that $m_{1}(u-u\',v-v\')>0$ in $N_{(h_{1}+bv\',h_{2}+1-cu\')}$.

(ii)\; $m_{2}((u,v))>0$ if and only if there is $(u^{\prime\prime},v^{\prime\prime})\in J^{(1)}_{h}$ such that $m_{2}(u-u^{\prime\prime},v-v^{\prime\prime})>0$ in $N_{(h_{1}+1+bv^{\prime\prime},h_{2}-cu^{\prime\prime})}$.

(c)\;For any points $(u,v)\in N_{h}$,

 (i)\; $m_{1}((u,v))>0$ or $m_{1}((u+1,v))>0$ if and only if there is $(u\',v\')\in J^{(1)}_{h}$ such that $m_{1}(u-u\',v-v\')>0$ in $N_{(h_{1}+1+bv\',h_{2}-cu\')}$.

 (ii)\; $m_{2}((u,v))>0$ or $m_{2}((u-1,v))>0$ if and only if there is $(u^{\prime\prime},v^{\prime\prime})\in J^{(2)}_{h}$ such that $m_{2}(u-u^{\prime\prime},v-v^{\prime\prime})>0$ in $N_{(h_{1}+bv^{\prime\prime},h_{2}+1-cu^{\prime\prime})}$.
\end{Lemma}
\begin{Proof}
  (a)\;If $p\in E_h$, then according to (\ref{equation: m_j for E_h}), $m_{1}(p)m_{2}(p)\neq 0$ only when $p=([-h_{1}]_{+},0)$; if $p\in N_h\setminus E_h$, due to (\ref{equation: weight of rank2}) and (\ref{equation: m(p)}), at least one of $m_{1}(p)$ and $m_{2}(p)$ equals 0, thus $m_{1}(p)m_{2}(p)=0$. In conclusion, $\{p\in N_{h}|m_{1}(p)m_{2}(p)\neq 0\}=\{([-h_{1}]_{+},0)\}$.

  It can be seen from (\ref{equation: m_j for E_h}) that $m_{1}(([-h_{1}]_{+},v))=co_{([-h_{1}]_{+},v)}>0$ and $m_{2}((u,0))=co_{(u,0)}>0$.

  (b)\;It is enough to show (i) holds, since (i) and (ii) are symmetric on indexes. According to (\ref{equation: decomposition of x2Ph}),
  \[m_{1}((u,v))=\sum\limits_{(u\',v\')\in J^{(2)}_{h}}c\'_{u\',v\'}m_{1}((u-u\',v-v\')),\]
  where in the right-hand side, $m_{1}((u-u\',v-v\'))$ is calculated in $N_{(h_{1}+bv\',h_{2}+1-cu\')}$. Moreover, $m_{1}((u-u\',v-v\'))\geqslant0$. Hence $m_{1}((u,v))>0$ is equivalent to $m_{1}(u-u\',v-v\')>0$ in $N_{(h_{1}+bv\',h_{2}+1-cu\')}$ for some $(u\',v\')\in J^{(2)}_{h}$.

  (c)\;It is sufficient to show the first claim holds. According to (\ref{equation: decomposition of x1Ph}),
  \[m_{1}((u,v))+[-h_1-bv]_{+}m_{1}((u+1,v))=\sum\limits_{(u\',v\')\in J^{(1)}_{h}}c_{u\',v\'}m_{1}((u-u\',v-v\')),\]
  where in the right-hand side, $m_{1}((u-u\',v-v\'))$ is calculated in $N_{(h_{1}+1+bv\',h_{2}-cu\')}$. Moreover, $m_{1}((u-u\',v-v\'))\geqslant0$. Hence $m_{1}((u,v))>0$ or $m_{1}((u+1,v))>0$ is equivalent to $m_{1}(u-u\',v-v\')>0$ in $N_{(h_{1}+1+bv\',h_{2}-cu\')}$ for some $(u\',v\')\in J^{(1)}_{h}$.
\end{Proof}

\begin{Lemma}\label{m_j in rank 2}
  Let $\A$ be a cluster algebra with principal coefficients of rank $2$ with initial exchange matrix
  $B=\begin{pmatrix}
      0 & b \\
      -c & 0
    \end{pmatrix}.$
  Assume $h\in\Z^{2}$, $j\neq\bar{j}\in\{1,2\}$ and $p=(u,v)\in N_{h}$. Then,

  (i)\; Assume $m_{j}(p)>0$ and $p\'=p+(-1)^{\bar{j}}ae_{\bar{j}}\in N_{h}$ for $a\in\N$, then $m_{j}(p\')>0$ if $co_{p\'}(N_{h})>0$.

  (ii) Assume $m_{j}(p)>0$ and $p^{\prime\prime}=p+(-1)^{\bar{j}}e_{j}\in N_{h}$, then $m_{j}(p^{\prime\prime})>0$.
  Moreover, $N_{h}$ is in the rectangular with vertices
  $$(0,0), \;\; ([-h_{1}]_{+},0), \;\; ([-h_{1}]_{+},[-h_{2}+c[-h_{1}]_{+}]_{+})\quad\text{and}\quad(0,[-h_{2}+c[-h_{1}]_{+}]_{+}),$$
  which would degenerate to a segment or a point when some vertices are coincident.

  (iii)\; $N_{h}\setminus\{([-h_{1}]_{+},0)\}$ can be divided into three connected areas as the form in Figure \ref{figure: m_j}, where
  \begin{itemize}
    \item any point $p$ in Area $\uppercase\expandafter{\romannumeral1}$ (in the right part) satisfies $m_{1}(p)\neq 0$ and $m_{2}(p)=0$;
    \item any point $p$ in Area $\uppercase\expandafter{\romannumeral2}$ (Area $\uppercase\expandafter{\romannumeral2}$ may not contain any point) satisfies $m_{1}(p)=m_{2}(p)=0$;
    \item any point $p$ in Area $\uppercase\expandafter{\romannumeral3}$ (in the lower part) satisfies $m_{1}(p)=0$ and $m_{2}(p)\neq0$;
  \end{itemize}
  and $m_{1}([-h_{1}]_{+},0)=m_{2}([-h_{1}]_{+},0)=1$.
\end{Lemma}
\begin{figure}[H]
  \centering
  \includegraphics[width=25mm]{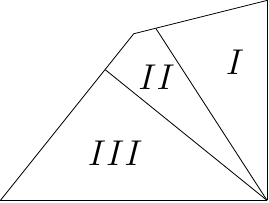}
  \caption{The polytope $N_{h}$ can be divided into three areas.}\label{figure: m_j}
\end{figure}

\begin{Proof}
When $h_{1}\geqslant 0$ or $h_{2}+ch_{1}\geqslant 0$, $N_{h}$ degenerates to a segment or a point, so it is easy to see the lemma holds. Hence now we only need to consider the case where $h_{1}<0$ and $h_{2}+ch_{1}<0$.

  It is shown in Theorem 1.7 of \cite{LLZ} that the elements of the greedy basis are universally indecomposable and that the greedy basis is independent of the choice of the initial seed. Therefore by Proposition \ref{polytope basis equals greedy basis}, the definition of $L^{t_{k}}$ and the fact that $B$ is of full rank in rank 2 case, there is $\alpha,h^{t_{k}}\in\Z^{2}$ such that $\hat{Y}_{t_{k}}^{\alpha}\rho^{t_{k}}_{h^{t_{k}}}=L^{t_k}(\rho_{h})$, which means the Newton polytope of the Laurent expression of $L^{t_k}(\rho_{h})$ in $X_{t_{k}}$ can be obtained from $N^{t_{k}}_{h^{t_{k}}}$ by a translation along $\alpha$. Thus we get an isomorphism $\sigma_{k}$ between two polytopes via the translation along $\alpha$.

  (i) \;Let $j=1$, then $m_1(p)>0$. For $p\'=p+(-1)^{2}ae_{2}=(u,v+a)$, on the contrary suppose $co_{(u,v+a)}(N_{h})>0$ but $m_{1}((u,v+a))=0$ for some $a\in\N$. If $deg_{x_{1}}(\hat{y}_1^{u}\hat{y}_2^{v+a}X^h)\geqslant0$, then $m_{1}((u,v+a))=co_{(u,v+a)}(N_{h})$, which is impossible. So $deg_{x_{1}}(\hat{y}_1^{u}\hat{y}_2^{v+a}X^h)<0$. Without loss of generality, assume $m_{1}(u,v\')=0$ if $v\'\in[v+1,v+a]$. Then according to the isomorphism $\sigma_{1}$ and the discussion in Remark \ref{mutation of P} about how the Laurent expression is changed under mutations, there is a point $(r,s)$ in $N^{t_{1}}_{h^{t_{1}}}$ correlated to $(u,v)$ under mutation $\mu_{1}$ satisfying $co_{(r,s)}(N^{t_{1}}_{h^{t_{1}}})=m_1((r,s))>0$, $co_{(r,s\')}(N^{t_{1}}_{h^{t_{1}}})=0$ for any $s\'\in[s+1,s+a]$. Then due to the construction of $N^{t_{1}}_{h^{t_{1}}}$, $m_{2}((r,s))>0$. So by Lemma \ref{fact} (a), $(u,v)$ must be $(-h_{1},0)$, then $m_{1}((u,v+a))=co_{(u,v+a)}(N_{h})>0$ according to the definition of $N_h$, which contradicts our assumption.

  Similarly for $j=2$. So $m_{j}(p\')>0$ if $m_{j}(p)>0$ and $co_{p\'}(N_{h})>0$ for $j=1,2$.

  (ii)\;We  prove (ii) by induction on $(-h_{1},-h_{2}-ch_{1})$ with respect to the canonical partial order.

  When $(-h_{1},-h_{2}-ch_{1})=(1,1)$, $N_{h}$ is as follows
  \begin{equation*}
    \begin{array}{cc}
       & 1 \\
      1 & 1
    \end{array}
  \end{equation*}
  so trivially the second half of (ii) holds. And $m_{1}((0,0))=m_{2}((1,1))=0$ while $m_{1}((1,0))=m_{1}((1,1))=m_{2}((0,0))=m_{2}((0,1))=1$. Then we can see easily that there is no point $p\in N_{h}$ satisfying the assumption condition of (ii) indeed, thus the first half of (ii) is true. So (ii) holds.

  Assume (ii) holds for all $(-h_{1},-h_{2}-ch_{1})<(l_{1},l_{2})\in\Z^{2}_{>0}$. Then we need to prove (ii) when $(-h_{1},-h_{2}-ch_{1})=(l_{1},l_{2})$.
  First we discuss the first part of (ii). On the contrary, suppose
  \begin{equation}\label{contrary}
    m_{j}(p)>0\;\;\;\text{ but}\;\;\; m_{j}(p^{\prime\prime})=0
      \end{equation}
  for some $p=(u,v),p^{\prime\prime}=p+(-1)^{\bar{j}}e_{j}\in N_h$ and $j\in\{1,2\}$. Here we choose $p$ to be a point closest to $(-h_{1},0)$ (which implies the minimality of $(-h_{1}-u)^{2}+v^2$) satisfying the conditions in (\ref{contrary}). Also we can assume $deg_{x_{j}}(\hat{y}_1^u\hat{y}_2^vX^h)\leqslant0$, otherwise we consider $N^{t_j}_{h^{t_j}}$ instead.

  It is enough to deal with the case where $j=1$ because of the duality of indices 1 and 2.

  According to (\ref{equation: decomposition of x2Ph}), Lemma \ref{fact} (b) and the inductive assumption, there is $(u\'-[-h_{1}-bv\']_+,v\')\in J^{(2)}_{h}$ such that $u\'\geqslant u\geqslant u\'-[-h_{1}-bv\']_+$, $v\'\leqslant v$ and $m_{1}((u-u\'+[-h_{1}-bv\']_+,v-v\'))>0$ in $N_{(h_{1}+bv\',h_{2}+1-cu-ch_{1}-bcv\')}$. If $u\'>u$, then $u\'\geqslant u+1$ and so $(u+1-u\'+[-h_{1}-bv\']_+,v-v\')$ lies in $N_{(h_{1}+bv\',h_{2}+1-cu-ch_{1}-bcv\')}$ because of the inductive assumption for this polytope, which means that in the right-hand side of (\ref{equation: decomposition of x2Ph}), there is a summand whose corresponding polytope $N'=N_{(h_{1}+bv\',h_{2}+1-cu-ch_{1}-bcv\')}[((u\'-[-h_{1}-bv\']_+,v\'))]$ contains both $(u,v)$ and $(u+1,v)$, then by inductive assumption $m_1(p'')=m_1((u+1,v))>0$ in $N'$, therefore $m_1((u+1,v))>0$ in $N_{h}$ because $N'$ is a summand in the decomposition of $N_h$ in (\ref{equation: decomposition of x2Ph}), which contradicts the assumption. So $u\'=u$.

  Denote by $V$ the set consisting of integers $v\'$ such that $(u-[-h_{1}-bv\']_+,v\')\in J^{(2)}_{h}$ satisfying $v\'\leqslant v$ and $m_{1}((u-u\'+[-h_{1}-bv\']_+,v-v\'))>0$ in $N_{(h_{1}+bv\',h_{2}+1-cu-ch_{1}-bcv\')}$. $(u-[-h_{1}-bv\']_+,v\')\in J^{(2)}_{h}$ induces $m_{1}((u,v\'))>0$ and $m_{2}((u,v\'-1))>0$ in $N_h$.

  Next we case by case show that the assumption will lead to a contradiction:

  (a)\; The case where there is $v\'\in V$ such that $v\'<v$ and $co_{(u+1,v)}(N_h)>0$.

  In this case, $m_{1}((u+1,v\'))>0$, since otherwise $(u,v\')$ becomes a closer point satisfying (\ref{contrary}), which contradicts our choice of $(u,v)$. However, $m_{1}((u+1,v))=0$ by assumption and $co_{(u+1,v)}(N_h)>0$, which contradicts to (i).

  (b)\; The case where there is $v\'\in V$ such that $v\'<v$ and $co_{(u+1,v)}(N_h)=0$.

  In this case, $m_{1}((u+1,v\'))>0$ as showed in (a). However $m_{1}((u+1,v))=co_{(u+1,v)}(N_h)=0$ and $deg_{x_{1}}(\hat{y}_1^{u+1}\hat{y}_2^vX^h)<0$. So similar to the proof of (i), we can find some $(r,s)\in N^{t_1}_{h^t_1}$ satisfying $m_{1}(r,s)>0$ in $N^{t_1}_{h^t_1}$ but $co_{(r,s+1)}(N^{t_1}_{h^{t_1}})=0$, which leads to $m_{2}(r,s)>0$ in $N^{t_1}_{h^t_1}$ and thus contradicts Lemma \ref{fact} (a).

  (c)\; The case where $V=\{v\}$ and $co_{(u+1,v)}(N_h)>0$.

  In this case, $m_2((u,v-1))>0$ in $N_h$, and $m_{1}((u,v))=co_{(u,v)}(N_h)=m_2((u,v-1))\tilde{C}_{-deg_{x_{2}}(\hat{y}_1^u\hat{y}_2^vX^h)}^{1}$. Comparing with (\ref{equation: weight of rank2}), we get $deg_{x_{2}}(\hat{y}_1^u\hat{y}_2^vX^h)=-1$. So $deg_{x_{2}}(\hat{y}_1^{u-i}\hat{y}_2^vX^h)\geqslant$ for any $i\in[1,u]$, which induces $m_{1}(u-i,v)=0$.

  Let $(r,s)$ be the unique minimal point in $N^{t_{1}}_{h^{t_{1}}}$ correlated to $(u,v)$ under $\mu_{1}$. Then $co_{(r-1,s)}(N^{t_{1}}_{h^{t_{1}}})=0$ as $m_{1}((u+1,v))=0$. Applying (\ref{equation: decomposition of x1Ph}) to $\rho^{t_{1}}_{h^{t_{1}}}$, we get an decomposition of $N^{t_{1}}_{h^{t_{1}}}$ as a sum of some sub-polytopes. And in particular there is a summand $N^{t_{1}}_{h^{t_{1}}+e_{1}}$. As we discussed before, $\hat{Y}_{t_{1}}^{\alpha}\rho^{t_{1}}_{h^{t_{1}}}=L^{t_1}(\rho_{h})$, so $\hat{Y}_{t_{1}}^{\alpha\'}\rho^{t_{1}}_{h^{t_{1}}}x_{1;t_{1}}=L^{t_1}(\rho_{h}\frac{M_{1}}{x_{1}})$ for some $\alpha\'\in\Z^{n}$. Then as a summand we get $\hat{Y}_{t_{1}}^{\alpha^{\prime\prime}}\rho^{t_{1}}_{h^{t_{1}}+e_{1}}=L^{t_1}(\rho_{h+(-1,b)})$ for some $\alpha^{\prime\prime}\in\Z^{n}$, which means the Newton polytope of the Laurent expression of $L^{t_1}(\rho_{h+(-1,b)})$ in $X_{t_{1}}$ can be obtained from $N^{t_{1}}_{h^{t_{1}}+e_{1}}$ by a translation.

  Moreover, according to (\ref{equation: decomposition of x1Ph}), $y_{1}\rho_{h}$ is a summand of $x_{1}\rho_{h+(-1,b)}$. Then we get
  \begin{equation}\label{equation: c}
    co_{u+1+deg_{x_{1}}(\hat{y}_1^u\hat{y}_2^vX^h)}(N_{h+(-1,b)})>0
  \end{equation}
  as $m_1((u,v))>0$ in $N_{h}$. Therefore according to the discussion in section 2 about the change of a Laurent expression under mutations, (\ref{equation: c}) induces $co_{(r,s)}(N^{t_{1}}_{h^{t_{1}}+e_{1}})>0$.

  By iteratively repeating the above discussion we have $co_{(r,s)}(N^{t_{1}}_{h^{t_{1}}+ie_{1}})>0$ for $i\in\N$, but it fails definitely when $i$ is large enough due to the construction of these polytopes, which induces a contradiction.

  (d)\;In the case where $V=\{v\}$ and $co_{(u+1,v)}(N_h)=0$.

  In this case, $m_{1}((u,v-i))=0$ for any $i\in[1,v]$, since otherwise there is $j\in[1,v]$ such that $(u,v-j)$ is a closer point to satisfy (\ref{contrary}), contradicting our choice of $(u,v)$. If $m_{1}((u+1,v-i))>0$ for some $i\in[1,v]$, then it leads to a contradiction by a discussion similar to that in case (b). Therefore $m_{1}((u+1,v-i))=0$ for any $i\in[1,v]$. Because $co_{(u+1,v)}(N_h)=0$, so in the right-hand side of (\ref{equation: decomposition of x1Ph}), we can fined $(u^{\prime\prime},v^{\prime\prime})\in J^{(1)}_{h}$ such that $m_{1}((u,v))>0$ in the polytope $N_{(h_{1}+1+bv^{\prime\prime},h_{2}-cu^{\prime\prime})}[(u^{\prime\prime},v^{\prime\prime})]$. Due to the definition of $J^{(1)}_{h}$, $m_{1}((u^{\prime\prime},v^{\prime\prime}))>0$ in $N_{h}$. So $u^{\prime\prime}\geqslant u+2$. Hence by the inductive assumption for polytope $N_{(h_{1}+1+bv^{\prime\prime},h_{2}-cu^{\prime\prime})}$, $(u+1,v)\in N_{(h_{1}+1+bv^{\prime\prime},h_{2}-cu^{\prime\prime})}[(u^{\prime\prime},v^{\prime\prime})]$. Then due to the inductive assumption for polytope $N_{(h_{1}+1+bv^{\prime\prime},h_{2}-cu^{\prime\prime})}$, $m_{1}((u+1,v))>0$ in $N_{(h_{1}+1+bv^{\prime\prime},h_{2}-cu^{\prime\prime})}[(u^{\prime\prime},v^{\prime\prime})]$, which leads to $co_{(u+1,v)}(N_{h})\geqslant co_{(u+1,v)}(N_{(h_{1}+1+bv^{\prime\prime},h_{2}-cu^{\prime\prime})}[(u^{\prime\prime},v^{\prime\prime})])>0$, contradicting $co_{(u+1,v)}(N_h)=0$.

  In summary, $m_{1}(p^{\prime\prime})>0$ when $m_{1}(p)>0$. Similarly, it can be proved for $j=2$ case.

  Now we discuss the second part. Note that according to the definition of $N_{h}$, if $(u,0),(-h_{1},v)\in N_{h}$, then $u\in[0,-h_{1}]$ and $v\in[0,-h_{2}-ch_{1}]$. Suppose there is $(u,v)\in N _{h}$ with $v>-h_{2}-ch_{1}$. Without loss of generality, we assume $u=max\{w|(w,v)\in N_{h}\}$. Then $u<-h_{1}$, and $m_{1}((u,v))>0$. So according to the first part of (ii), we can see that $m_{1}((u+1,v))>0$, which leads to $(u+1,v)\in N_{h}$, contradicting the choice of $u$. So for any $(u,v)\in N _{h}$, $v\leqslant -h_{2}-ch_{1}$. Dually, it can also be proved similarly for any $(u,v)\in N _{h}$, $u\geqslant 0$. Therefore $N_{h}$ is in the unique rectangular with vertices
   $$(0,0),\;\; ([-h_{1}]_{+},0),\;\; (0,[-h_{2}+c[-h_{1}]_{+}]_{+}),\;\; ([-h_{1}]_{+},[-h_{2}+c[-h_{1}]_{+}]_{+}).$$

  In conclusion, by induction we have proved (ii).

  (iii) \;The shape of $N_{h}$ is of the form in Figure \ref{figure: m_j} according to (ii) and the definition of $N_{h}$. While
  $$m_{1}([-h_{1}]_{+},0)=m_{2}([-h_{1}]_{+},0)=1$$
  can be calculated directly from the definition of $N_{h}$. Moreover, because of (i), we can see that if there is $(u,v)\in N_h$ satisfying $m_{1}((u,v))>0$, then $m_{1}((u\',v))>0$ when $u\'>u$ and $(u\',v)\in N_h$. So there is a left bound of Area $\uppercase\expandafter{\romannumeral1}$ such that all points in $N_h$ lying in the right of this bound is in Area $\uppercase\expandafter{\romannumeral1}$, i.e., Area $\uppercase\expandafter{\romannumeral1}$ lies in the right part of $N_h$ as showed in Figure \ref{figure: m_j}. Similarly, Area $\uppercase\expandafter{\romannumeral3}$ lies in the lower part by (i) and the rest is Area $\uppercase\expandafter{\romannumeral2}$.
\end{Proof}

\begin{Lemma}\label{face for rank 2}
  Let $\A$ be a cluster algebra with principal coefficients of rank $2$ in the above setting. Assume $h\in\Z^{2}$ and $t_{k}\in\T_{n}$ is a vertex connected to $t_{0}$ by the edge labeled $k\in[1,2]$. Suppose the dimension of $N_{h}$ is not zero, i.e., at least one of $h_{1}$ and $h_{2}$ is negative, and a segment $l$ connecting $p_{1},p_{2}$ is an edge of $N_{h}$. Then,

  (i)\;$\rho^{t}_{h}\in\N[Y_{t}][X_{t}^{\pm 1}]$ is universally indecomposable and $\{L^{t_{0}}(\rho^{t}_{h})\mid h\in\Z^{2}\}$ (see the definition in (\ref{L(f)})) is independent of the choice of $t$. Moreover, $L^{t_{k}}(\rho_{h})=\rho^{t_{k}}_{h^{t_{k}}}$, where $$h^{t_{k}}=h-2h_{k}e_{k}+h_{k}[b^{t}_{k}]_{+}+[-h_{k}]_{+}b^{t}_{k}$$
  for any $k\in[1,2]$.

  (ii)\;Up to interchanging $p_{1}$ and $p_{2}$, $p_{1}>p_{2}$ and there is $s\in\Z_{>0}$ such that $l$ is isomorphic to $N_{(0,-s)}$.
\end{Lemma}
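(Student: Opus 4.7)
My plan for part~(i) is to establish the mutation formula $L^{t_k}(\rho_h) = \rho^{t_k}_{h^{t_k}}$ directly; the other assertions then follow. Starting from $\rho_h = \sum_{p\in N_h}co_p(N_h)\,\hat{Y}^{p}X^{h}$, I substitute $x_k = M_k/x_{k;t_k}$ and apply the normalization~(\ref{modify}) defining $L^{t_k}$. To match supports I would compare the bounding rectangles from Lemma~\ref{m_j in rank 2}(ii) for $N_h$ and $N_{h^{t_k}}$, verifying directly that the formula for $h^{t_k}$ transforms one into the other. For the weights, I would apply the decomposition formulas (\ref{equation: decomposition of x1Ph}) and~(\ref{equation: decomposition of x2Ph}), which express $x_k\rho_h$ as a sum of other $\rho$'s, and transfer this inductively to the weight identity. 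Once the mutation formula is established, $\rho_h^t \in \N[Y_t][X_t^{\pm 1}]$ for every $t\in\T_2$ follows by iterating along paths in $\T_2$, and independence of $\{L^{t_0}(\rho_h^t)\mid h\in\Z^2\}$ from $t$ is immediate.

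For universal indecomposability, observe $co_{(0,0)}(N_h) = 1$, so $X^h$ appears in $\rho_h$ with coefficient one. Given any decomposition $\rho_h = f + g$ in $\mathcal{U}^+_{\geqslant 0}(\Sigma)$ with $f,g$ nonzero, the monomial $X^h$ must lie in exactly one piece (say $f$) with coefficient $1$. The recursion~(\ref{coefrank2}) selects the minimum $co_p$ compatible with the Laurent phenomenon in both cluster directions, so any universally positive Laurent polynomial with $co_{(0,0)} = 1$ must dominate $\rho_h$ pointwise. Subtracting then forces $g\leq 0$ pointwise, contradicting $g\in\N\P[X^{\pm 1}]$ being nonzero.

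For part~(ii), Lemma~\ref{m_j in rank 2}(ii)--(iii) shows that $N_h$ is a convex polygon with vertices among $V_h=\{v_1,\dots,v_5\}$, so its edges are either parallel to a coordinate axis (among $v_1v_2$, $v_4v_5$, $v_1v_3$, $v_2v_4$) or diagonal ($v_3v_5$ in the pentagonal subcase $-h_1 > b(-h_2-ch_1)$, $v_3v_4$ when $v_5=v_4$, or $v_1v_4$ in the triangular degeneration $v_1=v_3$). For each axis-parallel edge, the weights come directly from the $E_h$ part of the construction as $\tilde{C}_n^{i} = \binom{n}{i}$, yielding an explicit isomorphism to $N_{(0,-s)}$ with $s$ the lattice length; taking $p_1$ the upper-right endpoint and $p_2$ the lower-left gives $p_1 > p_2$ componentwise and $\tilde\tau(e_1)\in\{e_1,e_2\}\subseteq\N^2$.

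The main obstacle is the diagonal edge. In the pentagonal case it runs from $v_3 = (0,-h_2)$ to $v_5 = (ab, a)$ with $a = -h_2-ch_1>0$, $d=-h_1>0$, and $d>ab$, so its lattice points are $q_j = v_3 + (j/s)(v_5-v_3)$ for $s = \gcd(ab,cd)$ and $j=0,\ldots,s$. Set $p_1 = v_5$, $p_2 = v_3$ (then $p_1 > p_2$ componentwise since both coordinates of $v_5-v_3 = (ab,cd)$ are positive), giving $\tilde\tau(e_1) = (ab/s, cd/s)\in\N^2$. The remaining task is to prove $co_{q_j} = \binom{s}{j}$. Unfolding the recursion~(\ref{coefrank2}) at $q_j$: Lemma~\ref{m_j in rank 2}(iii)'s area partition kills one term in the max (off-diagonal points in the relevant row or column have $m_1 = 0$ or $m_2 = 0$ apart from boundary points with known binomial weight), and the surviving sum evaluates through a Chu--Vandermonde-type identity to $\binom{s}{j}$. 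The other diagonal subcases ($v_3v_4$ and $v_1v_4$) are handled by the same argument with smaller $s$.
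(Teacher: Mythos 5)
Your part (ii) contains a genuine error. You claim that the lattice points $q_0,\dots,q_s$ of the diagonal edge carry weights $co_{q_j}=\binom{s}{j}$, where $s$ is the lattice length of the edge. This is false. Take $b=c=2$ and $h=(-2,2)$ (so $d=(2,2)$): here $V_h=\{(0,0),(2,0),(2,2)\}$, $N_h$ is the triangle on these vertices, and the hypotenuse from $(0,0)$ to $(2,2)$ has lattice length $s=2$ with interior lattice point $(1,1)$. Unfolding the recursion (\ref{coefrank2}) at $(1,1)$ gives $co_{(1,1)}=\max\{m_1((2,1))\tilde{C}_{0}^{1},\,m_2((1,0))\tilde{C}_{0}^{1}\}=\max\{0,0\}=0$, not $\binom{2}{1}=2$; equivalently, $\rho_h|_{y_i\rightarrow 1}$ is the greedy element $x[(2,2)]=x[(1,1)]^2-2$, whose numerator has vanishing $x_1^2x_2^2$-coefficient. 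So this edge is isomorphic to $N_{(0,-1)}$ (only the two endpoints carry weight, each equal to $1$), not to $N_{(0,-2)}$, and no Chu--Vandermonde evaluation of the recursion will produce your formula. The paper's proof of (ii) proceeds quite differently: it passes to a seed minimizing the maximal point $(u,v)$ of the polytope; if $(u,v)=(0,0)$ there, every edge is the image of an axis-parallel edge of some essential skeleton $E_{h^{t}}^{t}$ under a composition of unimodular polytope mutations (and those edges do carry consecutive binomial weights); otherwise one deduces $2v\leqslant cu$ and $2u\leqslant bv$ and shows by a symmetry and comparison argument that $N_h$ is a triangle whose hypotenuse has nonzero weight only at its two endpoints. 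Your direct case analysis misses exactly this dichotomy, which is the substance of part (ii).

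For part (i), your route (proving $L^{t_k}(\rho_h)=\rho^{t_k}_{h^{t_k}}$ by direct comparison of supports and weights, then deducing universal positivity by iterating along $\T_{2}$) differs from the paper's, which identifies $\rho_h|_{y_i\rightarrow 1}$ with the greedy element $x[d]$ via Proposition \ref{polytope basis equals greedy basis} and imports universal positivity and indecomposability from Theorem 1.7 of \cite{LLZ}. Your plan is viable in principle, but the step you describe as verifying directly that the weights match under mutation is precisely the hard combinatorial content (equivalent to the universal positivity of greedy elements); comparing bounding rectangles and invoking (\ref{equation: decomposition of x1Ph})--(\ref{equation: decomposition of x2Ph}) does not carry it out, so as written this is an assertion rather than a proof. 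Your indecomposability argument (any universally positive element containing $X^h$ dominates $\rho_h$) is essentially the paper's Lemma \ref{unique indecomposable in rank 2} and is sound once positivity is established.
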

\begin{Proof}
  (i)\;It is claimed in theorem 1.7 of \cite{LLZ} that the elements of the greedy basis are universally indecomposable and that the greedy basis is independent of the choice of the initial seed. Therefore as said in the proof of last lemma, by Proposition \ref{polytope basis equals greedy basis}, the definition of $L^{t_{k}}$ and the fact that $B$ is of full rank in rank 2 case, $\rho^{t}_{h}$ is universally indecomposable and there is $\alpha,h\'\in\Z^{2}$ such that $L^{t_{k}}(\rho_{h})=Y_{t_{k}}^{\alpha}\rho^{t_{k}}_{h\'}$, where $\rho^{t_{k}}_{h\'}$ is defined associated to $h\'$ with initial seed being $\Sigma_{t_{k}}$.

  On the other hand, by lemma \ref{m_j in rank 2} (ii), the definition of $L^{t_{k}}$ and the mutation formula of $Y$-variables, it can be checked that $X_{t_{k}}^{h^{t_{k}}}$ is a summand of $L^{t_{k}}(M_{k;t_{0}}^{[-h_{k}]_{+}}X^{h})$, while the latter is a summand of $L^{t_{k}}(\rho_{h})$. Therefore $\alpha=0$ and $h\'=h^{t_{k}}$. And thus $\{L^{t_{0}}(\rho^{t}_{h})\mid h\in\Z^{2}\}$ is independent of the choice of $t$.

  (ii)\;Denote by $h^{t}\in\Z^{2}$ for any $t\in\T_{2}$ such that $L^{t}(\rho_{h})=\rho^{t}_{h^{t}}$ and by $(u_{t},v_{t})$ the unique maximal point in $N^{t}_{h^{t}}$. Because $u_{t},v_{t}\geqslant 0$, there is a vertex $t\in\T_{2}$ such that $(u_{t},v_{t})\leqslant(u_{t\'},v_{t\'})$ for any $t\'$ connected to $t$. It can be check that such $(u_{t},v_{t})$ is unique.

  We may assume $t_{0}$ satisfies the above condition. In the sequel of this proof, we always denote $u=u_{t_{0}},v=v_{t_{0}}$.

  If $(u,v)=(0,0)$, then $N_{h}$ is the origin. According to the mutation formula (\ref{equation: mutation of x}) and (\ref{equation: mutation of y}), two Laurent monomials corresponding to two points in a proper face (i.e., a face with dimension in $[0,dim(N_{h^{t}}^{t})-1]$) of $N_{h^{t}}^{t}$ respectively must correlate to two Laurnt monomials corresponding to two points in some proper face of $N_{h^{t\'}}^{t\'}$ for any two connected vertices $t,t\'\in\T_{2}$. So for any $l$ which is a face of $N_{h^{t}}^{t}$ with dimension $1$ for $t\in\T_{2}$, $l$ is either in $E_{h^{t}}^{t}$ or is isomorphic to an edge in $E_{h^{t\'}}^{t\'}$ for some $t\'\in\T_{2}$. And according to the definition of polytope $N_{h}$ for $h\in\Z^2$, the (ii) holds for edges in the essential skeleton. Therefore (ii) holds when there is $t\in\T_{2}$ such that $N_{h^{t}}^{t}$ is a point.

  If $(u,v)\neq(0,0)$, then according to the mutation formula (\ref{equation: mutation of x}) and (\ref{equation: mutation of y}), we have $u_{t\'}=u$ and $v_{t\'}=cu-v$, where $t\'$ is connected to $t_{0}$ by an edge labeled 2. So by our assumption that $v_{t\'}\geqslant v$, we have $2v\leqslant cu$. Similarly, $u_{t^{\prime\prime}}=bv-u$ and $v_{t^{\prime\prime}}=v$, where $t^{\prime\prime}$ is connected to $t_{0}$ by an edge labeled 1, hence we have $2u\leqslant bv$.

  First assume $2v= cu$, and the Newton polytope is of the following form as Figure \ref{figure: face for rank 2},
  \begin{figure}[H]
    \centering
    \includegraphics[width=30mm]{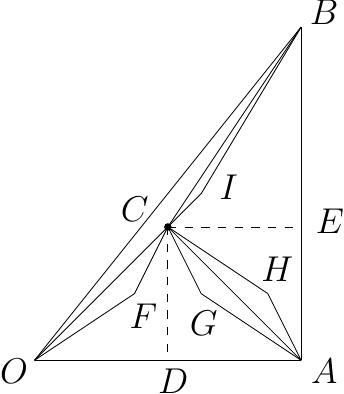}
    \caption{The Newton polytope $N_{h}$ when $2v=cu$.}\label{figure: face for rank 2}
  \end{figure}
  \noindent where Laurent monomials corresponding to points lying at the left (resp. right) side of the dashed line $CD$ have positive (resp. negative) exponents of $x_{2}$ while those corresponding to points lying above (below) the dashed line $CE$ have positive (resp. negative) exponents of $x_{1}$. Let $AGCFO$ (resp. $ABICH$) denotes the Area $III$ (resp. Area $I$) in Lemma \ref{m_j in rank 2}. Hence for any line $z_{2}=r$ below $CE$, it intersects with $OF$ (or $FC$) and $AH$ (or $HC$) at two points (not necessarily lattice points) and the length of the segment connecting these points is $u-br$.

  On the other hand, $z_{2}=r$ also intersects with $AC$ and $OC$ at two points (not necessarily lattice points) and the length of the segment connecting these points is $u-br$. Moreover, since $2v=u$, $N_{h^{t_{2}}}^{t_{2}}=N_{h}$ up to reflection and translation, so $OFC$ is symmetric with $AGC$ over $CD$. Therefore, $OFC$ lies below $OC$ and $AHC$ lies above $AC$.

  And in general for $2v\leqslant cu$, according to (\ref{equation: decomposition of x2Ph}), Area $I$ for $2v\leqslant cu$ case lies in $ABICH$, which leads to that in this case the new bound $OF\'C\'$ lies below $OC\'$, where the letters with prime mean those in $2v\leqslant cu$ case.

  Dually, we also have that $BIC$ lies below $BC$.

  Therefore, we get that $N_{h}=\Delta_{OAB}$, where $O$ and $B$ are the only two points in $OB$ with nonzero weights, which equal to 1. Again according to mutation formula (\ref{equation: mutation of x}) and (\ref{equation: mutation of y}), we see that for any $l$ which is a face of $N_{h^{t}}^{t}$ with dimension $1$ for $t\in\T_{2}$, $l$ is either in $E_{h^{t}}^{t}$ or is isomorphic to $OB$, which is isomorphic to $N_{(0,-1)}$.
\end{Proof}

\begin{Remark}\label{shape for rank 2}
  According to the discussion in above proof, we know that for any $h\in\Z^{2}$, $N_{h}$ is one of the following forms in Figure \ref{figure: shape for rank 2} (up to reflections over the line $z_{1}=z_{2}$):

  Figure \ref{figure: shape for rank 2}.(1): in the case where $h_{1},h_{2}\geqslant 0$.

  Figure \ref{figure: shape for rank 2}.(2): in the case where $h_{2}\geqslant 0$, $h_{1}<0$ and $-h_{2}-ch_{1}\leqslant 0$.

  Figure \ref{figure: shape for rank 2}.(3)-(4): in the case where $h_{1},h_{2}<0$.

  Figure \ref{figure: shape for rank 2}.(5)-(7): in the case where $h_{2}\geqslant 0$, $h_{1}<0$ and $-h_{2}-ch_{1}>0$.
  \begin{figure}[H]
    \centering
    \includegraphics[width=130mm]{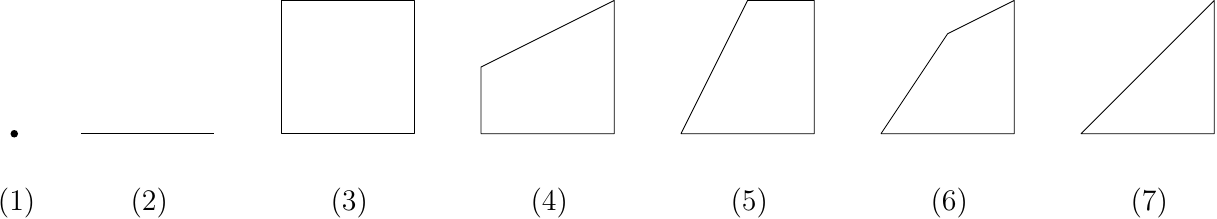}
    \caption{The shape of $N_{h}$ for $h\in\Z^{2}$ up to reflections.}\label{figure: shape for rank 2}
  \end{figure}
\end{Remark}

\begin{Remark}
  In the perspective of greedy basis, Lee, Li Li and Zelevinsky present a precise description of $support(N_h)$ for any $h\in\Z^{2}$ in \cite{LLZ,LLZ2}, which also leads to Lemma \ref{face for rank 2} (ii) and Remark \ref{shape for rank 2}.
\end{Remark}

\begin{Lemma}\label{unique indecomposable in rank 2}
  Let $\A$ be a cluster algebra with principal coefficients of rank $2$ in the above setting, $h\in\Z^{2}$. Then $\rho_{h}$ is the unique indecomposable Laurent polynomial in $X$ in $\mathcal{U}^{+}_{\geqslant0}(\Sigma_{t_{0}})$ which has $X^{h}$ as a summand and whose support is contained in $supp(N_{h})$.
\end{Lemma}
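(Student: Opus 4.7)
The plan is to establish two complementary facts: (a) $\rho_{h} \in \mathcal{U}^{+}_{\geqslant0}(\Sigma_{t_{0}})$ has $X^{h}$ as a summand; (b) a pointwise domination principle stating that for every $\rho \in \mathcal{U}^{+}_{\geqslant0}(\Sigma_{t_{0}})$ with $X^{h}$ as a summand, $\rho$ dominates $\rho_{h}$ coefficient-by-coefficient in each of the expansions in $X_{t_{0}}, X_{t_{1}}, X_{t_{2}}$. Indecomposability of $\rho_{h}$ and the uniqueness claim will both follow formally from (a) and (b).

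Fact (a) is nearly immediate: a direct evaluation using (\ref{coefrank2}) gives $co_{(0,0)}(N_{h}) = \tilde{C}_{[-h_{1}]_{+}}^{[-h_{1}]_{+}} = 1$, so $X^{h}$ appears in $\rho_{h}$ with coefficient $1$. Lemma \ref{face for rank 2}(i) provides $L^{t_{k}}(\rho_{h}) = \rho^{t_{k}}_{h^{t_{k}}}$ for $k \in [1, 2]$, and each $\rho^{t}_{h^{t}}$ is by construction in $\N[Y_{t}][X_{t}^{\pm 1}]$ with non-negative weights, so $\rho_{h} \in \mathcal{U}^{+}_{\geqslant0}(\Sigma_{t_{0}})$.

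For Fact (b), I would first reduce to the homogeneous case: the $\Z^{2}$-grading on $\A$ with $\deg(x_{i}) = e_{i}$ and $\deg(y_{i}) = -b_{i}^{t_{0}}$ is preserved under mutation, so each homogeneous component of $\rho$ again lies in $\mathcal{U}^{+}_{\geqslant0}(\Sigma_{t_{0}})$, allowing me to replace $\rho$ by its degree-$h$ piece. Writing $\rho = F(\hat{Y})\, X^{h}$ with $F(\hat{Y}) = \sum_{u,v \in \N} c_{u,v}\, \hat{y}_{1}^{u}\, \hat{y}_{2}^{v}$ and $c_{0,0} \geqslant 1$, the goal becomes $c_{u,v} \geqslant c_{0,0}\, co_{(u,v)}(N_{h})$. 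I would prove this by induction on the position of $(u,v)$ following the construction of $co_{p}(N_{h})$ in (\ref{coefrank2}), with the boundary values along $E_{h}$ as base. For the inductive step, expressing $\rho$ in $X_{t_{k}}$ via $x_{k} = M_{k;t_{k}}/x_{k;t_{k}}$ and requiring $\rho \in \N[Y_{t_{k}}][X_{t_{k}}^{\pm 1}]$ forces divisibility of certain partial sums of the $c_{u,v}$'s by powers of the exchange binomial $M_{k;t_{k}}$; expanding those binomials yields the two lower bounds (one from $k=1$, one from $k=2$) appearing inside the $\max$ in (\ref{coefrank2}), and taking the maximum produces the required inequality.

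Given (a) and (b), uniqueness follows: if $\rho$ is indecomposable in $\mathcal{U}^{+}_{\geqslant0}(\Sigma_{t_{0}})$ with $X^{h}$ as a summand, Fact (b) applied in the three clusters $X_{t_{0}}, X_{t_{1}}, X_{t_{2}}$ (using that $X_{t_{k}}^{h^{t_{k}}}$ is a summand of $L^{t_{k}}(\rho)$, by the binomial-expansion argument appearing in the proof of Lemma \ref{face for rank 2}(i)) yields $\rho - \rho_{h} \in \mathcal{U}^{+}_{\geqslant0}(\Sigma_{t_{0}})$; since $\rho_{h} \neq 0$, indecomposability of $\rho$ forces $\rho = \rho_{h}$. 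The same argument applied to $\rho_{h}$ itself delivers indecomposability: a decomposition $\rho_{h} = f + g$ in $\mathcal{U}^{+}_{\geqslant0}(\Sigma_{t_{0}})$ with both parts nonzero would have, say, $X^{h}$ as a summand of $f$, and then $co_{p}(f) \geqslant co_{p}(\rho_{h}) \geqslant co_{p}(f)$ forces $f = \rho_{h}$ and the contradictory $g = 0$. The principal obstacle is the inductive step of (b): carefully matching the divisibility constraints from the two mutation directions with the two sums inside the $\max$ of (\ref{coefrank2}).
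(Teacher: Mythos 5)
Your proposal is correct and follows essentially the same route as the paper: membership of $\rho_{h}$ in $\mathcal{U}^{+}_{\geqslant0}(\Sigma_{t_{0}})$ with $X^{h}$ as a summand, followed by a coefficient-wise domination argument whose base case is forced along $E_{h}$ by complements in the two directions and whose inductive step extracts the two sums inside the $\max$ of (\ref{coefrank2}) from the divisibility constraints by powers of $M_{1}$ and $M_{2}$. The only notable (and harmless) deviations are that you derive the indecomposability of $\rho_{h}$ from the domination principle itself rather than importing it from Lemma \ref{face for rank 2}(i) (i.e.\ from the greedy-basis results of \cite{LLZ}), and that the inductive step you flag as the principal obstacle must, as in the paper, carry the inequalities $m_{i}(p)|_{\rho}\geqslant m_{i}(p)|_{\rho_{h}}$ alongside $co_{p}(\rho)\geqslant co_{p}(\rho_{h})$, since the recursion is phrased in terms of the $m_{i}$'s rather than the weights alone.
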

\begin{Proof}
  According to Lemma \ref{face for rank 2} (i) and the definition of $\rho_{h}$ for any $h\in\Z^{2}$, $\rho_{h}$ is an indecomposable Laurent polynomial in $\mathcal{U}^{+}_{\geqslant0}(\Sigma_{t_{0}})$ which has $X^{h}$ as a summand and whose support is contained in $supp(N_{h})$. Then we only need to show the uniqueness.

  Let ``$\prec$'' be the partial order defined in the proof of Proposition \ref{polytope basis equals greedy basis}.

  Assume $f\neq\rho_{h}$ is another one satisfying the conditions in this lemma and $N(f)$ is its corresponding polytope. We choose $p$ to be a minimal point with respective to ``$\prec$'' in $N_{h}$ satisfying $co_{p}(N_{h})\neq co_{p}(N(f))$. Since $f\neq \rho_{h}$, such point exists.

  If $p\neq ([-h_{1}]_{+},0)$, then $co_{q}(N(f))=co_{q}(N_h)$ for any $q\prec p$. Because $f\in\mathcal{U}^{+}_{\geqslant0}(\Sigma_{t_{0}})$, $f$ can be expressed as a Laurent polynomial in $X_{t_{1}}$ and $X_{t_{2}}$ with non-negative constant coefficients respectively, where $t_{i}$ is the vertex connected to $t_0$ by an edge labeled $i$ for $i=1,2$, so $co_{p}(N(f))>co_{p}(N_h)$ according to (\ref{equation: weight of rank2}) and (\ref{equation: m(p)}) and thus $p-[-deg_{x_{1}}(\hat{Y}^{p}X^{h})]_{+}e_{1},p+[-deg_{x_{2}}(\hat{Y}^{p}X^{h})]_{+}e_{2}\in supp(N(f))$ following a discussion similar to that in Subsection 3.1. On the other hand, due to Lemma \ref{fact} (a), either $m_{1}(p)=0$ or $m_{2}(p)=0$ in $N_{h}$, which together with Lemma \ref{m_j in rank 2} induces one of $p-[-deg_{x_{1}}(\hat{Y}^{p}X^{h})]_{+}e_{1}$ and $p+[-deg_{x_{2}}(\hat{Y}^{p}X^{h})]_{+}e_{2}$ do not lying in $supp(N_{h})$. Hence $supp(N(f))\nsubseteq supp(N_{h})$, contradicting the assumption $supp(N(f))\subseteq supp(N_{h})$.

  If $p=([-h_{1}]_{+},0)$, then compare $N(f)$ and $co_{p}(N(f))N_{h}$ instead. Then either we get $N(f)=co_{p}(N(f))N_{h}$, that is, $f=co_{p}(N(f))\rho_{h}$ for $co_{p}(N(f))>1$, which contradicts the indecomposability of $f$, or similar to the above discussion, there is a point in $supp(N(f))$ but not in $supp(N_{h})$, which leads to $supp(N(f))\nsubseteq supp(N_{h})$, contradicting the assumption $supp(N(f))\subseteq supp(N_{h})$.

  Therefore in conclusion, $\rho_h$ is the unique one satisfying the conditions in the lemma.
\end{Proof}

\subsection{Construction program of $N_h$ and $\rho_h$}\quad

In general case, the process of this construction is pretty complex. So our strategy is using sections as well as formulas similar  to (\ref{equation: decomposition of x1Ph}) and (\ref{equation: decomposition of x2Ph}) in general rank case to provide an inductive construction. It may not be able to calculate concrete coefficients conveniently. But, this inductive algorithm already contains much information about cluster algebras, especially about cluster monomials.

In the definition of $\rho_{h}$, the combinatorial objects of a polytope, such as sections and faces, help us to understand the structure of $\rho_{h}$ better. It is why we introduce polytopes corresponding to homogeneous Laurent polynomials in our study.

\begin{Definition}\label{essential}\textbf{(Essential skeleton)}
  Let $\A$ be a cluster algebra of rank $n$ with initial exchange matrix $B=(b_{ij})_{i,j\in [1,n]}$ and initial cluster $X$.

  (a)\; For any Laurent monomial $p$ in $X$, subset $I\subseteq[1,n]$, set of subsets $\mathcal{K}=\{K^{(1)}_l\subseteq I|l\in I\}$, vector $\lambda_0=(\lambda_{0i})_{i\in I}\in\{-1,1\}^{|I|}$ and matrix $\Lambda=(\lambda_{uv})\in Mat_{|I_1|\times|I|}(\{-1,1\})$, define a set $E(p,I,\mathcal{K},\lambda_0,\Lambda)$ to be the maximal set which is the union of segments $\overline{q_{r-1}q_{r}}$ for any sequence of Laurent monomials $q_{0},\cdots,q_{r}=p\'$ in $X$ and any sequence of indices $i_{1},\cdots,i_{r}$ with $i_{j}\in I_j$ for any $j\in[1,r]$ satisfying:

  (i) $q_{0}=p$, $i_0=\emptyset$ and $\epsilon_{k}^{(1)}=\lambda_{0k}$ for any $k\in I$. $K_{j}=\{k\in I|\text{There is a sequence }l_1,\cdots,l_s=k\text{ of }I\text{ such that }deg_{x_{l_1}}(q_{j-1})<0\text{ and }l_t\in K^{(j)}_{l_{t-1}}\text{ for any }t\in[2,s]\}$ and $I_{j}=\{s\in I|deg_{x_{s}}(q_{j-1})<0,s\notin K_{j}\}$.

  (ii) $i_{j}\in I_j\setminus\{i_{j-1}\}$, $\epsilon_{j}=\epsilon_{i_j}^{(j)}$ and $q_{j}=q_{j-1}\hat{y}_{i_{j}}^{-\epsilon_{j}deg_{x_{i_{j}}}(q_{j-1})}$ is a vertex of $conv(E(p,I,\mathcal{K},\lambda_0,\Lambda))$ for any $j\in[1,r]$.

  (iii) \begin{equation}\label{equation: essential skeleton}
          \epsilon_{k}^{(j+1)}=\left\{\begin{array}{ll}
                          -\epsilon_{k}^{(j)}, & \text{if $k=i_{j}$;} \\
                          \lambda_{i_1k}, & \text{if $j=1$ and $k\neq i_j$;} \\
                          \epsilon_{k}^{(j)}, & \text{if $j>1$ and either $deg_{x_k}(q_{j-1})<0$ or $k\in K_{j}$;} \\
                          \overline{sign}(b_{ki_{j-1}})\overline{sign}({b_{ki_j}})\epsilon_{k}^{(j)},  & \text{otherwise.}
                        \end{array}\right.
        \end{equation}
  for any $k\in I$, where $\overline{sign}(b_{ks})=sign(b_{ks}+0.5)$.

  (iv) $K^{(j+1)}_{i_j}=\{k\in I| b_{ki_j}\epsilon_{i_j}^{(j+1)}<0,b_{i_j k}\epsilon_{k}^{(j+1)}<0\}$, $K^{(j+1)}_{l}=\{k\in I\setminus\{i_j\}| b_{kl}\epsilon_{l}^{(j+1)}<0,b_{lk}\epsilon_{k}^{(j+1)}<0, \text{ either }\epsilon_l^{(j+1)}=\epsilon_l^{(j)}\text{ and }k\in K^{(j)}_l\text{ or }\epsilon_l^{(j+1)}\neq\epsilon_l^{(j)}\}$ for any $l\in I\setminus\{i_j\}$.

  $\overline{q_{j-1}q_{j}}$ in the above sequence is parallel to $e_{i_{j}}$ for any $j\in[1,r]$, so we label it by $i_{j}$.

  (b)\; We define the \textbf{essential skeleton} $E_{h}$ to be $E(X^{h},[1,n],\{\emptyset,\cdots,\emptyset\},\sum\limits_{i=1}^ne_i,\Lambda)$ with $\lambda_{ij}=1$ for any $i,j\in[1,n]$. Denote by $V_{h}$ the set consisting of vertices of the segments in $E_{h}$.
\end{Definition}

\begin{Remark}
(i)\;The definition of essential skeleton essentially follows the idea explained in Subsection 3.1. We start with $X^h$ and iteratively adding edges into $E_{h}$ corresponding to complements of vertices already in $E_{h}$. $\epsilon_{j}$ is chosen so that the polytope $N_{h}$ constructed below based on $E_{h}$ can satisfy the mutation formula in Theorem \ref{properties in case tsss} (iii) in the sequel.

(ii)\; For any vertex $q_j$ of $E(p,I,\mathcal{K},\lambda_0,\Lambda)$ with a sequence $q_0,\cdots,q_r$, $E(p,I,\mathcal{K},\lambda_0,\Lambda)$ equals\\ $E(q_j,I,\{K_l^{(j+1)}|l\in I\},(\epsilon_l^{(j+1)})_{l\in I},(\lambda_{uv})_{u\in I_{j+1},v\in I})$, where for each $u$, we may choose the sequence such that either $\overline{q_{j-1}q_j}$ or $\overline{q_jq_{j+1}}$ parallels to $e_u$, and let
\[\lambda_{uv}=\left\{\begin{array}{ll}
                        \epsilon_{v}^{(j+1)}, & \text{if $\overline{q_jq_{j+1}}$ parallels to $e_u$}; \\
                        -\epsilon_{v}^{(j)}, & \text{if $\overline{q_{j-1}q_j}$ parallels to $e_u$, }deg_{x_v}(q_{j-1})\geqslant0, v\notin K_j\text{ and }b_{i_{j-1}v}b_{i_j v}<0; \\
                        \epsilon_{v}^{(j)}, & \text{otherwise}.
                      \end{array}\right.\]
\end{Remark}

Note that there is not necessarily a sequence which satisfies above conditions containing all vertices in $V_{h}$.

It can be verified that for any subset $I\'\subseteq [1,n]$, if we delete all segments in $E_{h}$ with labels not in $I$, then there are unique maximal and minimal points in every connected component respectively because of the sign-skew-symmetry of the exchange matrix by induction on $|[1,n]\setminus I\'|$.

\begin{Example}\label{example of essential skeleton}
  (i)\; Let $\A$ be a cluster algebra having principal coefficients with the initial seed $(X,Y,B)$, where $X=(x_{1},x_{2},x_{3})$, $Y=(y_{1},y_{2},y_{3})$ and
  \[B=\left(\begin{array}{ccc}
  0 & 2 & -1 \\
  -1 & 0 & 2 \\
  1 & -4 & 0
  \end{array}\right).\]
  Following the notions and notations of Definition \ref{essential}, the essential skeleton $E_{h}$ for $h=(-5,2,4)$ can be calculated as follows:\\
  (a)\; $q_{0}=x_{1}^{-5}x_{2}^{2}x_{3}^{4}$, $K_1=\emptyset$, $i_{1}=1$ and $\epsilon_{1}=1$. So $q_{1}=q_{0}\hat{y}_{1}^{5}=y_{1}^{5}x_{1}^{-5}x_{2}^{-3}x_{3}^{9}$.\\
  (b)\; $K_2=\{3\}$, so $i_{2}=2$ and $\epsilon_{2}=1$. Then $q_{2}=q_{1}\hat{y}_{2}^{3}=y_{1}^{5}y_{2}^{3}x_{1}x_{2}^{-3}x_{3}^{-3}$.\\
  (c)\; $K_3=\emptyset$, $i_{3}=3$ and $\epsilon_{3}=1$. So $q_{3}=q_{2}\hat{y}_{3}^{3}=y_{1}^{5}y_{2}^{3}y_{3}^{3}x_{1}^{-2}x_{2}^{3}x_{3}^{-3}$.\\
  (d)\; $K_4=\{2\}$, $i_{4}=1$ and $\epsilon_{4}=1$. So $q_{4}=q_{3}\hat{y}_{1}^{2}=y_{1}^{7}y_{2}^{3}y_{3}^{3}x_{1}^{-2}x_{2}x_{3}^{-1}$.\\
  (e)\; $K_5=\emptyset$, $i_{5}=3$ and $\epsilon_{5}=-1$. So $q_{5}=q_{4}\hat{y}_{3}^{-1}=y_{1}^{7}y_{2}^{3}y_{3}^{2}x_{1}^{-1}x_{2}^{-1}x_{3}^{-1}$.\\
  (f)\; $K_6=\{1\}$, $i_{6}=2$ and $\epsilon_{6}=-1$. So $q_{6}=q_{5}\hat{y}_{2}^{-1}=y_{1}^{7}y_{2}^{2}y_{3}^{2}x_{1}^{-2}x_{2}^{-1}x_{3}^{3}$.\\
  (g)\; $K_7=\emptyset$, $i_{7}=1$ and $\epsilon_{7}=-1$. However in this case $q_6\hat{y}_1^2=y_{1}^{5}y_{2}^{2}y_{3}^{2}x_{1}^{-2}x_{2}x_{3}$ is not a vertex of their convex hull. Hence it is not included in $E_h$ and thus the procedure ends here. Therefore, $E_{h}$ is the set consisting of red edges as showed in the left-hand side of Figure \ref{example2} (In order to save space, we put the figure in the next subsection, where there is also a need for this figure and its right-hand side).

  (ii)\footnote{This example was provided by Jiarui Fei, which helps us to revise the notion of essential skeleton in Definition \ref{essential}i.}\;
  Let $\A$ be a cluster algebra having principal coefficients of type A with the initial seed $(X,Y,B)$, where $X=(x_{1},x_{2},x_{3})$, $Y=(y_{1},y_{2},y_{3})$ and
  \[B=\left(\begin{array}{ccc}
  0 & -1 & 1 \\
  1 & 0 & -1 \\
  -1 & 1 & 0
  \end{array}\right).\]
  Following the notions and notations of Definition \ref{essential}, the essential skeleton $E_{h}$ for $h=(0,-1,-2)$ can be calculated as follows:\\
  (a)\; $q_{0}=x_{2}^{-1}x_{3}^{-2}$, $K_1=\emptyset$ and $i_{1}$ can be either 2 or 3. When $i_{1}=2$, $\epsilon_1=1$, $q_{1}=q_{0}\hat{y}_{2}=y_{2}x_{1}^{-1}x_{2}^{-1}x_{3}^{-1}$.\\
  (b)\; Since $K_{2}=\{3\}$, $i_{2}$ can only be 1 and $\epsilon_{2}=1$. So $q_{2}=q_{1}\hat{y}_{1}=y_{1}y_{2}x_{1}^{-1}x_{3}^{-2}$.\\
  (c)\; $K_3=\emptyset$, hence $i_{3}=3$ and $\epsilon_{3}=1$. So $q_{3}=q_{2}\hat{y}_{3}^{2}=y_{1}y_{2}y_{3}^{2}x_{1}x_{2}^{-2}x_{3}^{-2}$.\\
  (d)\; $K_4=\{1\}$, hence $i_{4}=2$ and $\epsilon_{4}=1$. So $q_{4}=q_{3}\hat{y}_{2}^{2}=y_{1}y_{2}^{3}y_{3}^{2}x_{1}^{-1}x_{2}^{-2}$.\\
  (e)\; $K_5=\emptyset$, hence $i_{5}=1$ and $\epsilon_{5}=-1$. So $q_{5}=q_{4}\hat{y}_{1}^{-1}=y_{2}^{3}y_{3}^{2}x_{1}^{-1}x_{2}^{-3}x_{3}$.\\
  (f)\; $K_6=\emptyset$, hence $i_6=2$ and $\epsilon_6=-1$. So $q_{6}=q_{5}\hat{y}_{2}^{-3}=y_{3}^{2}x_{1}^{2}x_{2}^{-3}x_{3}^{-2}$.\\
  (g)\; $K_7=\{1\}$, hence $i_7=3$ and $\epsilon_{7}=-1$. Then $q_{7}=q_{6}\hat{y}_{3}^{-2}=x_{2}^{-1}x_{3}^{-2}=q_0$.\\
  If choose $i_1$ to be 3, then we can similarly obtain the sequence of points inversely from $q_7$ to $q_0$. Thus we complete the calculation of essential skeleton.
\end{Example}

As a generalization of the definitions of $N_{h}$ and $\rho_h$ in rank 2 case, we present the construction of the polytope $N_{h}$ and the polytope function $\rho_{h}$ for a cluster algebra with principal coefficients of arbitrary rank as follows. In the sequel, $\rho_{h}^{t_{0}}$ and $N_{h}^{t_{0}}$ will be abbreviated as $\rho_{h}$ and $N_{h}$ respectively.
\begin{Construction}\label{construction}\textbf{(Construction program of $N_h$ and $\rho_h$)}
  Given a TSSS cluster algebra $\A$ of rank $n$ with principal coefficients and a vector $h\in\Z^{n}$, we can construct $N_{h}$ as well as $\rho_{h}$ as follows:

  When $h\in\N^{n}$, let $N_{h}$ be the origin and $\rho_{h}=X^{h}$.

  When $h\in \Z^n\backslash \N^n$, $N_{h}$ as well as $\rho_{h}$ can be constructed inductively step by step:

  (i)\; When $n=1$, Let $N_{h}$ equal $\overline{pq}$ and $\rho_{h}=X^{h}(\hat{y}_{1}+1)^{[-h]_{+}}$, where $p,q$ are the origin and $[-h]_{+}$ respectively.

  (ii)\; When $n>1$ and $h\in \Z^n\backslash\N^{n}$, we construct $N_{h}$ iteratively. Assume we have already construct $N_{h\'}$ less than $N_{h}$ with respective to the partial order induced by sub-polytopes, then we can construct $N_{h}$ in following two cases:

  (a)\; If there is $k\in[1,n]$ such that $deg_{x_{k}}(\hat{Y}^{p}X^{h})\geqslant 0$ for any point $p\in\gamma_{k;0}(N_{\pi_{k}(h)}|_{\A_{k}})$, then  $N_{h}=\gamma_{k;0}(N_{\pi_{k}(h)}|_{\A_{k}})$ with dimension not larger than $n-1$, where $\A_{k}$ is the cluster algebra of rank $n-1$ with principal coefficients whose initial exchange matrix is obtained from $B$ by deleting its $k$-th row and column while $N_{\pi_{k}(h)}|_{\A_{k}}$ is the polytope associated to $\pi_{k}(k)$ for $\A_{k}$;

  (b)\; If for any $k\in[1,n]$, $deg_{x_{k}}(\hat{Y}^{p}X^{h})< 0$ holds for some point $p\in\gamma_{k;0}(N_{\pi_{k}(h)}|_{\A_{k}})$, then the dimension of $N_h$ is $n$ (which means $N_{h}$ is the colimit of a family of polytpes with dimension $n$ when it contains infinitely many points). Choose any $k$ satisfying $N_{\pi_{k}(h)}|_{\A_{k}}$ has dimension $n-1$ and $i\neq k\in[1,n]$. Then we can construct $N_h$ inductively by the following steps (1) to (4):

  (1)\;First we can calculate the essential skeleton $E_{h}$ by Definition \ref{essential}. Then replace each edge $\overline{pq}$ (assume $p<q$) in $E_{h}$ parallel to $e_{i}$ by two segments $\overline{pq\'}$ and $\overline{p\'q}$, where $p\'=p+e_{i}$ while $q\'=q-e_{i}$. Here the segments $\overline{pq\'}$ and $\overline{p\'q}$ may degenerate to points, and we regard them separately.

  After replacement $E_{h}$ can be divided into several components satisfying that two segments $l$ and $l\'$ are in the same component if they are connected by a sequence of segments $l=l_{0},\cdots,l_{r}=l\'$ such that $l_{i}$ and $l_{j}$ have a same endpoint. There is a unique way to complete each component $\theta$ to be
  $$E(h+e_{i}+w_{\theta}B^{\top},[1,n])[w_{\theta}]$$
  such that
  $$conv(E(h+e_{i}+w_{\theta}B^{\top},[1,n]))\subseteq conv(E_{h})$$
  for some $w_{\theta}\in\N^{n}$. Then let $U_{E_{h}}$ be the set consisting of above polytopes $N_{h+e_{i}+w_{\theta}B^{\top}}[w_{\theta}]$ and
  \[\mathcal{R}=\left\{\gamma_{k;q_{k}}(N_{\pi_{k}(h+qB^{\top})}[\pi_{k}(q)]|_{\A_{k}})|q\text{ is a minimal point in some face $F$ of }conv(E_{h})\right.\]\[\left.\text{with }F=conv(E_{h})\cap\text{ the hyperplane } z_{k}=q_{k}\right\},\]
  where $q$ is an arbitrary point in the segment $\overline{q\'q^{\prime\prime}}\in E_{h}$ with $q\'$ and $q^{\prime\prime}$ two vertices, and $q_{k}$ denotes the $k$-th entry of $q$.

  (2)\;Denote $\mathcal{R}_{s,+}=\{\gamma_{k;q_{k}}(N_{\pi_{k}(h+qB^{\top})}[\pi_{k}(q)]|_{\A_{k}})\in\mathcal{R}|q_{k}\leqslant s\}$ and $\mathcal{R}_{s,-}=\mathcal{R}\setminus\mathcal{R}_{s,+}$. Choose $a\in\N$ satisfying that $a\geqslant deg_{x_{k}}(\hat{Y}^{p}X^{h})$ for any point $p\in \mathcal{R}$.

  Let $N(s,\epsilon)$ be the polytope consisting of points $p$ satisfying $p_{k}=s$ and there is a point $q\in\mathcal{R}_{s,\epsilon}$ such that $\overline{pq}$ parallels to $e_{k}$, $l(\overline{pq})\leqslant a-deg_{x_{k}}(\hat{Y}^{q}X^{h})$ with weight $co_{p}(N(s,\epsilon))=\binom{a-deg_{x_{k}}}{l(\overline{pq})}$. Define a sequence of polytopes as $N(s,+,i)=\gamma_{k;s}(N_{\pi_{k}(h+pB^{\top})}[\pi_{k}(p)]|_{\A_{k}})$, where $p$ is a minimal point in $N(s,+)-\sum\limits_{j=0}^{i-1}N(s,+,j)$ with positive weight and $N(s,+,0)=\emptyset$.

  Dually define a sequence of polytopes as $N(s,-,i)=\gamma_{k;s}(N_{\pi_{k}(\alpha_{p})}[w_p]|_{\A_{k}})$, where $p$ is a maximal point in $N(s,-)-\sum\limits_{j=0}^{i-1}N(s,-,j)$ with positive weight, $N(s,-,0)=\emptyset$ while $\pi_{k}(\alpha_{p})\in\Z^{n-1}$ and $w_{p}\in\N^{n-1}$ satisfying that $\gamma_{k;s}(N_{\pi_{k}(\alpha_{p})}[w_p]|_{\A_{k}})$ is the unique polytope having $p$ as the maximal point and $\alpha_{p}=h+\gamma_{k;s}(w_{p})B^{\top}$. Let $\tilde{S}_{s}$ be the set consisting of $N(s,+,i)$ and $N(s,-,i)$ for $i\in\N$.

  (3)\;Define $U_{h}^{0}=\bigcup\limits_{j}\{(N_{v_j}[\gamma_{k;0}(w_{j})])^{c_{w_{j},\alpha_{j}}}\; |\;v_j=\gamma_{k;h_{k}+\gamma_{k;0}(w_{j})(b^{\top})_{k}}(\alpha_{j})\}$, where $(b^{\top})_{k}$ represents the $k$-th column of $B^{\top}$, while $c_{w_{j},\alpha_{j}}$, $\alpha_{j}$ and $w_{j}$ are those appearing in the right-hand side of the equation $\rho_{\pi_{k}(h)}x_{i}=\sum\limits_{j}c_{w_{j},\alpha_{j}}Y^{w_{j}}\rho_{\alpha_{j}}$ for the cluster algebra $\A_{k}$. Here all sets denoted by $U_{h}^{l}$ or $U_{i}$ are multisets, and the superscript of a polytope denotes its multiplicity which equals the number of this polytope in the multiset.

  When $U_{h}^{l}$ has been determined for any $0\leqslant l<s$, $U_{h}^{s}$ is defined as follows.

  According to the iterative construction of sub-polytopes $N_{h\'}$, the intersection of $z_{k}=s$ and $\sum\limits_{N_{\alpha_{j}}[w_{j}]\in \bigcup\limits_{0\leqslant l< s}U_{h}^{l}}N_{\alpha_{j}}[w_{j}]+\sum\limits_{\substack{N_{\alpha}[w]\in U_{E_{h}}\\\text{and }w(k)=s}}N_{\alpha}[w]$ (or $\sum\limits_{N_{\alpha_{j}}[w_{j}]\in \bigcup\limits_{0\leqslant l< s}U_{h}^{l}}N_{\alpha_{j}}[w_{j}]$ respectively) equals the sum of certain polytopes associated to vectors in $\Z^{n}$ for $\A_{k}$ up to a translation. So we use a countable set $\tilde{J}$ (or $J$ respectively) to parameterize these polytopes and then the sum is $\sum\limits_{j\in \tilde{J}} \gamma_{k;s}(N_{\alpha\'_{j}}[w\'_{j}]|_{\A_{k}})$ (or $\sum\limits_{j\in J} \gamma_{k;s}(N_{\alpha\'_{j}}[w\'_{j}]|_{\A_{k}})$ respectively), where $w(k)$ is the $k$-th entry of $w$.

  There is $j_0\in J$ such that $\gamma_{k;s}(N_{\alpha\'_{j_{0}}}[w\'_{j_{0}}]|_{\A_{k}})$ is the summand of $\sum\limits_{j\in J} \gamma_{k;s}(N_{\alpha\'_{j}}[w\'_{j}]|_{\A_{k}})$ containing a point $p_{0}$ with minimal $deg_{x_{k}}(\hat{Y}^{p_0}X^{h})$ and weight $c>0$ in $\sum\limits_{j\in \tilde{J}} \gamma_{k;s}(N_{\alpha\'_{j}}[w\'_{j}]|_{\A_{k}})$. Then associate to $\gamma_{k;s}(N_{\alpha\'_{j_{0}}}[w\'_{j_{0}}]|_{\A_{k}})$ the polytope $\gamma_{k;s}(N_{\alpha\'}[w\']|_{\A_{k}})\in\tilde{S}_{s}$ satisfying that $\hat{Y}^{w\'_{j_{0}}}\rho_{\alpha\'_{j_{0}}}$ appears in the following decomposition
  \begin{equation}\label{equation in the construction}
    cY^{w\'}\rho_{\alpha\'}x_{i}=\sum\limits_{j\in J^{\prime\prime}}Y^{w\'_{j}}\rho_{\alpha\'_{j}},
  \end{equation}
  for $\A_{k}$ induced by the construction of $N_{\alpha\'}$ and $deg_{x_{k}}(\hat{Y}^{p}X^{h})\geqslant deg_{x_{k}}(\hat{Y}^{p_{0}}X^{h})$ for any point $p$ in $\gamma_{k;s}(N_{\alpha\'}[w\']|_{\A_{k}})$. Let $N_{1}=\sum\limits_{j\in \tilde{J}\'} \gamma_{k;s}(N_{\alpha\'_{j}}[w\'_{j}])$, where $\tilde{J}\'$ is the maximal subset of $\tilde{J}$ making $N_{1}$ a summand of $c\gamma_{k;s}(N_{\alpha\'}[w\'])$ whose corresponding Laurent polynomial appears in the right-hand side of (\ref{equation in the construction}).

  Let $J\'=\tilde{J}\'\cap J$ and $U_{1}$ be the minimal set consisting of polytopes $N_{\nu_{j}}[v_{j}]$ such that the intersection of $z_{k}=s$ and $\sum\limits_{N_{\nu_{j}}[v_{j}]\in U_{1}}N_{\nu_{j}}[v_{j}]$ equals $\sum\limits_{j\in J^{\prime\prime}\setminus J'}\gamma_{k;s}(N_{\alpha\'_{j}}[w\'_{j}])$ and the homogeneous Laurent polynomial with degree $h$ corresponding to the intersection of $z_{k}=r$ and $\sum\limits_{N_{\nu_{j}}[v_{j}]\in U_{1}}N_{\nu_{j}}[v_{j}]$ appears in the decomposition of $\sum\limits_{\gamma_{k;s}(N_{\pi_{k}(\alpha_{p})}[w_p]|_{\A_{k}})\in\tilde{S}_{r}}x_{i}\hat{Y}^{\gamma_{k;s}(w_p )}\rho_{\pi_{k}(\alpha_{p})}|_{\A_{k}}$ according to the iterative construction.

  Replacing $\sum\limits_{j\in \tilde{J}} \gamma_{k;s}(N_{\alpha\'_{j}}[w_{j\'}])$ by $\sum\limits_{j\in \tilde{J}} \gamma_{k;s}(N_{\alpha\'_{j}}[w_{j\'}])-N_{1}$ and repeating the above process to get $N_{2}$ and $U_{2}$. This goes on until $\sum\limits_{j\in \tilde{J}} \gamma_{k;s}(N_{\alpha\'_{j}}[w_{j\'}])-\sum\limits_{i}N_{i}=0$. Then finally, let $U^{s}_{h}=\bigcup\limits_{i}U_{i}$, and we call it the $s$-th stratum of $N_h$ for $x_{i}$ along direction $k$.

  (4)\;At last, let $N_{h}=\sum\limits_{N_{\alpha_{j}}[w_{j}]\in \bigcup\limits_{l}U_{h}^{l}}N_{\alpha_{j}}[w_{j}]$ and $\rho_{h}=\sum\limits_{p\in N_{h}}co_{p}(N_{h})\hat{Y}^{p}X^{h}$, where $U_{h}^{l}$ runs over all strata of $N_h$ for $x_{i}$ along direction $k$.
\end{Construction}

As in the case of rank 2, we call $\rho_h$ the \textbf{polytope function} associated to $h$ for any $n$. It is easy to check that $\rho_{h}$ is homogeneous with degree $h$ under the canonical $\Z^{n}$-grading since
\[deg(\hat{y_{i}})=deg(y_{i}X^{b_{i}})=-b_{i}+\sum\limits_{j=1}^{n}b_{ji}e_{j}=0\]
for any $i\in[1,n]$, so $deg(\rho_{h})=deg(F_{h}|_{\F}(\hat{Y}))+deg(X^{h})=h$.

In general, we are not sure whether $\rho_{h}$ constructed above has finitely many terms, but it must be a formal Laurent polynomial. So we abuse the definition of polytope and also call the convex hull of a vector set of infinitely many elements a \emph{polytope}. However, because such ``polytope'' corresponds to a formal Laurent polynomial, it can be regarded as the colimit of a family of polytopes. So in the sequel, we will always treat them as polytopes. And in the sequel proofs, we usually only concern about polytopes corresponding to Laurent polynomials. While for those ``polytopes" corresponding to formal Laurent polynomials, we need to further more take colimit. However, for convenience, we will omit the process of colimit and regard the results of polytopes to hold also for such ``polytopes" once they hold for polytopes corresponding to Laurent polynomials.

\begin{Remark}\label{rmk of construction}
  (i)\; In Construction \ref{construction}(ii)(a), $N_{h}$ does not depend on the choice of $k$.

  In fact, in (a), if there are two different indices $j,k\in[1,n]$ with $j<k$ such that
  \[deg_{x_{k}}(\hat{Y}^{p}X^{h})\geqslant 0\text{ for any point }p\in\gamma_{k;0}(N_{\pi_{k}(h)}|_{\A_{k}});\]
  \[deg_{x_{j}}(\hat{Y}^{p}X^{h})\geqslant 0\text{ for any point }p\in\gamma_{j;0}(N_{\pi_{j}(h)}|_{\A_{j}}),\]
  then $deg_{x_{j}}(\hat{Y}^{p}X^{h})\geqslant 0$ for any point $p\in\gamma_{k;0}(\gamma_{j;0}(N_{\pi_{j}\circ\pi_{k}(h)}|_{\A_{k,j}}))$, where $\A_{k,j}$ is the cluster algebra with principal coefficients whose initial exchange matrix is obtained from $B$ by deleting its $k$-th and $j$-th rows and columns. Therefore we have $$\gamma_{k;0}(N_{\pi_{k}(h)}|_{\A_{k}})=\gamma_{k;0}(\gamma_{j;0}(N_{\pi_{j}\circ\pi_{k}(h)}|_{\A_{k,j}})) =\gamma_{j;0}(\gamma_{k-1;0}(N_{\pi_{k-1}\circ\pi_{j}(h)}|_{\A_{k,j}}))=\gamma_{j;0}(N_{\pi_{j}(h)}|_{\A_{j}}).$$
  So, Construction \ref{construction}(ii)(a) is well-defined.

  (ii)\; In (b) (2), $N_{\pi_{k}(\alpha_{p})}[w_p]|_{\A_{k}}$ in fact equals $conv(\{\pi_{k}(p)-q|q\in N_{\pi_{k}(h+pB^{\top})}|_{\A_{k}\'}\})$ equipped with weights $co_{q}(N_{\pi_{k}(\alpha_{p})}[w_p])=co_{p-q}(N_{\pi_{k}(h+pB^{\top})}|_{\A_{k}\'})$, where $\A_{k}\'$ is the cluster algebra associated to the matrix obtained from $-B$ by deleting the $k$-th column and row.

  (iii)\; It can be verified that $N_{h}$ is independent of the choice of $k$ and $i$ in Construction \ref{construction} (b). In fact, $N_{h}$ can be written as a sum of its sub-polytopes, and so can these sub-polytopes. So for $i\neq j\in[1,n]\setminus\{k\}$, by induction on the order induced by sub-polytopes, we can see that
  \[N_{h}=\sum\limits_{N_{\alpha}[w]\in \bigcup\limits_{l}U_{h}^{l}(i)}\sum\limits_{N_{\beta}[u]\in \bigcup\limits_{l}U_{\alpha}^{l}(j)}N_{\beta}[u+w]=\sum\limits_{N_{\alpha\'}[w\']\in \bigcup\limits_{l}U_{h}^{l}(j)}\sum\limits_{N_{\beta\'}[u\']\in \bigcup\limits_{l}U_{\alpha\'}^{l}(i)}N_{\beta\'}[u\'+w\'],\]
  where $U_{h}^{l}(i)$ denotes the $l$-th stratum of $N_h$ for $x_{i}$ along direction $k$. Moreover, if we iteratively decompose polytopes as a sum of sub-polytopes for enough large times, we finally obtain a unique decomposition of $N_{h}$ as a sum of polytopes in dimension 1 as we claimed in Subsection 3.1.

  Also, it can be verified that the union of strata of $N_{h}$ for $x_{i}$ along direction $k$ is independent of the choice of $k$ by comparing the construction of $N_{h}$ with respect to different choices of $k$. Therefore the decomposition (\ref{equation: statement for decomposition}) can be uniquely calculated analogous to (\ref{equation: decomposition of x1Ph}) and (\ref{equation: decomposition of x2Ph}) in general case.

  Hence, although we need to fix $k$ and $i$ during the construction of $N_{h}$ and $\rho_{h}$, neither $N_{h}$ nor $\rho_{h}$ depends on the choice of $k$ and $i$. In other words, $N_{h}$ and $\rho_{h}$ are uniquely determined by the initial seed $\Sigma_{t_{0}}$ and the vector $h$.

  (iv)\; Following (iii), in practical, Construction \ref{construction} (b) can be simplified by choosing $k$ and $i$ satisfying $N_{\pi_{k}(h)}|_{\A_{k}}$ has dimension $n-1$ and $b_{ik}\geqslant0$. Such pair can always be found because if $b_{jk}<0$ for any $j\in[1,n]\setminus\{k\}$, then $h_{k}<0$ since $deg_{x_{k}}(\hat{Y}^{p}X^{h})< 0$ holds for some point $p\in\gamma_{k;0}(N_{\pi_{k}(h)}|_{\A_{k}})$ and $deg_{x_{k}}(\hat{y}_{j})>0$. Therefore, we can find $k\'\neq k$ satisfying $N_{\pi_{k\'}(h)}|_{\A_{k\'}}$ has dimension $n-1$ and according to our assumption $b_{kk\'}>0$.

  In this case, we can omit $U_{E_{h}}$ and replace $\mathcal{R}$ by $\{\gamma_{k;0}(N_{\pi_{k}(h)}|_{\A_{k}})\}$.
\end{Remark}

\begin{Definition}
  Fix $s_{i}\in\{\pm 1\}$ for any $i\in [1,n]$, define an order with respect to $s_{i}$ in a polytope $N$ as $p\leqslant_{s_{i}} p\'$ for any point $p,p\'\in N$ if $p$ is in the $i$-section at $p\'$ and $s_{i}z_{i}(p)\leqslant s_{i}z_{i}(p\')$, where $z_{i}(p)$ is the $i$-th coordinate of $p$. Then define a map $$m_{i}:\; N\cap\Z^{n}\;\longrightarrow\;\Z$$ such that $m_{i}(p)=0$ if $p\notin N$ and inductively,
  \begin{equation*}
  m_{i}(p)=co_{p}(N)-\sum\limits_{v<_{s_{i}}p}m_{i}(v)\tilde{C}_{-deg_{x_{i}}(p)}^{l(\overline{pv})}.
  \end{equation*}
\end{Definition}

\begin{Remark}
  (i) The definition of $N_{h}$ in rank 2 case is compatible to Construction \ref{construction} according to (\ref{equation: decomposition of x1Ph}) and (\ref{equation: decomposition of x2Ph}).

  (ii) The main reason why we do not construct $N_{h}$ using the form similar to (\ref{equation: weight of rank2}) is that in general there may be a point $p$ such that $m_{i}(p)>0$ for any $i\in[1,n]$ no matter how we choose $s_{k}$ for $k\in[1,n]$, so it is difficult to determine the weight $co_{p}$ directly this way.

  Instead, we construct it as a sum of certain sub-polytopes as a generalization of (\ref{equation: decomposition of x1Ph}) and (\ref{equation: decomposition of x2Ph}). Then there comes the problem how to determine such decomposition. So we choose an integer $a$ as in Construction \ref{construction}, then $\rho_{h}$ must be a summand of the decomposition of $x_{k}^{a}\rho_{h-ae_{k}}$. Because $deg_{x_{k}}(\hat{Y}^{p}X^{h-ae_{k}})\leqslant0$ for any point $p\in\mathcal{R}$, it helps us to complete $\sum\limits_{j\in J} \gamma_{k;s}(N_{\alpha\'_{j}}[w\'_{j}])$ as a Laurent polynomial which maintains a Laurent polynomial under mutations in direction $[1,n]\setminus\{k\}$.
\end{Remark}

\subsection{Main theorem on $N_h$ and $\rho_h$ in any rank case}\quad

In this subsection, we show that most properties in Subsection 3.2 still hold in arbitrary rank case, which will lead to some important applications in the following sections.

\begin{Theorem}\label{properties in case tsss}
  Let $\A$ be a TSSS cluster algebra having principal coefficients and $h\in\Z^{n}$. Then,

  (i)\;For any $i\in[1,n]$, there is a decomposition
  \begin{equation}\label{equation: statement for decomposition}
    x_{i}\rho_{h}=\sum\limits_{w,\alpha} c_{w,\alpha}Y^{w}\rho_{\alpha},
  \end{equation}
  where $w\in\N^{n},\alpha\in\Z^{n}$ and $c_{w,\alpha}\in\N$.

  (ii)\;The polytope function $\rho_{h}$ is the unique indecomposable formal Laurent polynomial in $X$ in $\widehat{\mathcal{U}}_{\geqslant0}(\Sigma_{t_{0}})$ which has $X^{h}$ as a summand and whose support is contained in $supp(N_{h})$.

  (iii)\; For any $h\in\Z^{n}$ and any $k\in[1,n]$, there is
  \begin{equation}\label{mutation formula for h}
    h^{t_{k}}=h-2h_{k}e_{k}+h_{k}[(b_{k})^{\top}]_{+}+[-h_{k}]_{+}(b_{k})^{\top}
  \end{equation}
  such that $L^{t_{k}}(\rho_{h})=\rho^{t_{k}}_{h^{t_{k}}}$, where $t_{k}\in\T_{n}$ is the vertex connected to $t_{0}$ by an edge labeled $k$ and $h_{k}$ is the $k$-th entry of $h$.

  (iv)\;For any $p,p\'\in N_{h}$, if the segment $l$ connecting $p$ and $p\'$ is parallel to the $k$-th coordinate axis for some $k\in[1,n]$ and $m_{k}(p),m_{k}(p\')>0$, then $m_{k}(p^{\prime\prime})>0$ for any point $p^{\prime\prime}\in l$.

  (v)\; Let $S$ be an $r$-dimensional face of $N_{h}$ for $h\in\Z^{n}$ such that $\rho_{h}\in \mathcal{U}_{\geqslant0}(\Sigma)$. Then there is a vector $h\'\in\Z^{ldim(S)}$ and a cluster algebra $\A\'$ with principal coefficients of rank $ldim(S)$ and a non-negative polytope projection $\tau: N_{h\'}|_{\A\'}\rightarrow S$. In particular, $\tau$ is an isomorphism when $ldim(S)=r$.
\end{Theorem}
\begin{Proof}
  We prove this theorem using an induction on the partial order induced by sub-polytopes.

  When $h\in\Z^{2}$, (i) is equivalent to the equations (\ref{equation: decomposition of x1Ph}) and (\ref{equation: decomposition of x2Ph}) for $i=1,2$ respectively; (ii) follows from Lemma \ref{unique indecomposable in rank 2}; (iii) is ensured by Lemma \ref{face for rank 2} (i); (iv) follows from Lemma \ref{m_j in rank 2} and (v) is exactly Lemma \ref{face for rank 2} (ii).

  Assume they hold for all $h\'\in\Z^{n}$ satisfying $N_{h\'}$ is a sub-polytope of $N_{h}$. Then let us focus on $h$.

  If there is $k\in[1,n]$ such that $deg_{x_{k}}(\hat{Y}^{p}X^{h})\geqslant 0$ for any point $p\in\gamma_{k;0}(N_{\pi_{k}(h)}|_{\A_{k}})$, which according to Construction \ref{construction} means $N_{h}=\gamma_{k;0}(N_{\pi_{k}(h)}|_{\A_{k}})$, then the theorem for $N_{h}$ is induced by that for $N_{\pi_{k}(h)}|_{\A_{k}}$ and that for rank 2 case (we can consider the mutation in direction $k$ by looking at $\{i,k\}$-sections of $N_{h}$ for any $i\in[1,n]\setminus\{k\}$). So in the following we always assume $dim(N_{h})=n$ and it is constructed by Construction \ref{construction} (b).

  (i)\;If there is $k\neq i$ such that $dim(N_{\pi_{k}(h)}|_{\A_{k}})=n-1$, where $\A_{k}$ is the cluster algebra of rank $n-1$ with principal coefficients whose initial exchange matrix is obtained from $B$ by deleting its $k$-th row and column, then according to the construction of $N_{h}$, we have $N_{h}=\sum\limits_{N_{\alpha_{j}}[w_{j}]\in \bigcup\limits_{l}U_{h}^{l}}N_{\alpha_{j}}[w_{j}]$. So $X^{\beta}\rho_{h}=\sum\limits_{N_{\alpha_{j}}[w_{j}]\in \bigcup\limits_{l}U_{h}^{l}}Y^{w_{j}}\rho_{\alpha_{j}}$ for some $\beta\in\Z^{n}$. Moreover, $deg(Y^{w_{j}}\rho_{\alpha_{j}})=h+e_{i}=deg(\rho_{h})+e_{i}$ due to the definition of each stratum for any $N_{\alpha_{j}}[w_{j}]\in \bigcup\limits_{l}U_{h}^{l}$, hence $\beta=e_{i}$, which leads to
  \begin{equation}\label{equation: decomposition of rho_h for x_i}
    x_{i}\rho_{h}=\sum\limits_{N_{\alpha}[w]\in\bigcup\limits_{l} U_{h}^{l}}Y^{w}\rho_{\alpha}
  \end{equation}
  for any $i\in[1,n]$, which is the formal Laurent polynomial version of $N_{h}=\sum\limits_{N_{\alpha_{j}}[w_{j}]\in\bigcup\limits_{l}U_{h}^{l}}N_{\alpha_{j}}[w_{j}]$.

  If $dim(N_{\pi_{k}(h)}|_{\A_{k}})<n-1$ for any $k\neq i$, define a sum of sub-polytopes of $N_{h}$ recursively: let $w_{1}=0$, $\alpha_{1}=h+e_{k}$ and $c_{w_{1},\alpha_{1}}=1$. For any $j>1$, let $w_{j}$ be a minimal point in $N_{h}-\sum\limits_{l=1}^{j-1}c_{w_{l},\alpha_{l}}N_{\alpha_l}[w_l]$, $\alpha_{j}=h+e_{k}+w_{j}B^{\top}$ and $c_{w_{j},\alpha_{j}}=co_{w_{j}}(N_{h}-\sum\limits_{l=1}^{j-1}c_{w_{l},\alpha_{l}}N_{\alpha_l}[w_l])$. Then similar to the proof of (\ref{equation: decomposition of x1Ph}) and (\ref{equation: decomposition of x2Ph}), we can show that $c_{w_{j},\alpha_{j}}>0$ for any $j$ with the help of inductive assumption of (i) for sub-polytopes and Construction \ref{construction}. Therefore we get $N_{h}=\sum\limits_{l}c_{w_{l},\alpha_{l}}N_{\alpha_l}[w_l]$ with $c_{w_{l},\alpha_{l}}>0$. Then similar to the above case, (i) holds via comparing the degree of both sides.

  \textbf{First part of (ii).}\;We first show that $\rho_{h}$ is an indecomposable formal Laurent polynomial in $X$ in $\widehat{\mathcal{U}}_{\geqslant0}(\Sigma_{t_{0}})$ which has $X^{h}$ as a summand and whose support is contained in $supp(N_{h})$. Then after the proof of (iv), we prove the uniqueness.

  Denote $\widehat{\mathcal{U}}_{\geqslant0}^{k}(\Sigma_{t})=\N\P[[X_{t}]]\cap\bigcap\limits_{t\'}\N\P[[X_{t\'}]]$ for any $k\in[1,n]$ and $t\in\T_{n}$, where $t\'$ runs over all vertices connected to $t$ by an edge not labeled $k$. According to the construction and inductive assumption of (ii) for sub-polytopes, we have $f_{s}\in\bigcap\limits_{l\in[1,n]\setminus \{k\}}\widehat{\mathcal{U}}_{\geqslant0}^{k}(\Sigma_{t_{l}})$, where $f_{s}$ is the formal Laurent polynomial corresponding to the intersection of $z_{k}=s$ and $N_{h}$ while $t_{l}$ represents the vertex connected to $t_{0}$ by an edge labeled $l$. Hence so does $\rho_{h}=\sum\limits_{s\in\N}f_{s}$. Moreover, this together with inductive assumption of (ii) for sub-polytopes induces $Y^{w_{j}}\rho_{\alpha_{j}}\in\bigcap\limits_{l\in[1,n]\setminus \{i\}}\widehat{\mathcal{U}}_{\geqslant0}^{i}(\Sigma_{t_{l}})\text{ for any }N_{\alpha_{j}}[w_{j}]\in \bigcup\limits_{l}U_{h}^{l}$. Hence so does $\rho_{h}$ following (i). Thus we get $\rho_{h}\in\widehat{\mathcal{U}}_{\geqslant0}(\Sigma)$ as $i\neq k$,.

  We claim that $\rho_h$ itself is the only non-zero summand of $\rho_{h}$ in $\widehat{\mathcal{U}}_{\geqslant0}(\Sigma)$. Assume there is a summand $f\neq\rho_{h}$ of $\rho_{h}$ in $\widehat{\mathcal{U}}_{\geqslant0}(\Sigma)$. Let $p\in supp(\rho_h)$ be a minimal point such that $co_{p}(f)<co_{p}(\rho_h)$. Then $f$ is a summand of $\rho_{h}-\hat{Y}^{p}X^{h}(1+\hat{y}_{i})^{[-deg_{x_{i}}(\hat{Y}^{p}X^{h})]_{+}}$ for any $i$ since $f\in\widehat{\mathcal{U}}_{\geqslant0}(\Sigma)$. In other words, because during the inductive construction of $\rho_h$, every term $p$ of $\rho_{h}$ is added as a complement of some monomial summand $q$ in certain direction, so $f$ being a summand of $\rho_{h}-p$ leads to $f$ being a summand of $\rho_{h}-p-q$ so as to make sure $f\in\widehat{\mathcal{U}}_{\geqslant0}(\Sigma)$. Therefore, we finally get $f$ being a summand of $\rho_{h}-\rho_{h}=0$ by continuously applying the above result, which shows our claim.

  So $\rho_{h}$ is indecomposable in $\widehat{\mathcal{U}}_{\geqslant0}(\Sigma)$, it has $X^{h}$ as a summand and its support is contained in $supp(N_{h})$.

  (iii)\;We check how the corresponding polytope is changed under the change of initial seed. Denote by $L^{t_{k}}(N_{h})$ the corresponding polytope after the initial seed changing to $\Sigma_{t_{k}}$, i.e., the polytope corresponding to $L^{t_{k}}(\rho_{h})$. We will compare the weights of two polytopes $L^{t_{k}}(N_{h})$ and $N_{h^{t_{k}}}^{t_{k}}$ to show they are the same.

  According to (\ref{equation: decomposition of rho_h for x_i}) we have
  \begin{equation*}
    L^{t_{k}}(\rho_{h})=\sum\limits_{N_{\alpha}[w]\in\bigcup\limits_{s} U_{h}^{s}}L^{t_{k}}(Y^{w}\rho_{\alpha}x_{i}^{-1}),
  \end{equation*}
  where $U_h^s$ is the $s$-th stratum of $N_h$ for $x_{i}$ along direction $r$. Since $b_{kr}b_{kr}^{t_{k}}\leqslant0$, according to Remark \ref{rmk of construction} (iv), we can choose $i\neq k$ such that either $b_{ir}\geqslant0$ or $b_{ir}^{t_{k}}\geqslant0$. Without loss of generality assume $b_{ir}\geqslant0$ (otherwise we can compare $N_{h}$ and $L^{t_{0}}(N_{h^{t_{k}}}^{t_{k}})$ instead).

  First we deal with the case where $r\neq k$ and the dimension of $N_{\pi_{r}(h^{t_{k}})}^{t_{k}}|_{\A_{r}}$ equal $dim(N_{h^{t_{k}}}^{t_{k}})-1$.
  According to inductive assumption of (iii) for $N_{\pi_{r}(h^{t_{k}})}^{t_{k}}|_{\A_{r}}$, we get that $L^{t_{k}}(N_{\pi_{r}(h)}|_{\A_{r}})=N_{\pi_{r}(h^{t_{k}})}^{t_{k}}|_{\A_{r}}$, which means that the intersection of the hyperplane $z_{r}=0$ and $L^{t_{k}}(N_{h})$ equals that of the hyperplane $z_{r}=0$ and $N_{h^{t_{k}}}^{t_{k}}$. Moreover, denote by $U_{h^{t_{k}}}^{s;t_{k}}$ the $s$-th stratum of $N_{h^{t_{k}}}^{t_{k}}$ for $x_{i}$ along direction $r$, then due to the above discussion and induction assumption (iii), $U_{h^{t_{k}}}^{0;t_{k}}=\{L^{t_{k}}(N_{\alpha}[w])|N_{\alpha}[w]\in U_{h}^{0}\}$. Inductively, assume $\{L^{t_{k}}(N_{\alpha}[w])|N_{\alpha}[w]\in U_{h}^{l}\}$ is a subset of $U_{h^{t_{k}}}^{l;t_{k}}$ for any $l\in[0,s-1]$, then the intersection of $z_{r}=s$ and $\sum\limits_{l=0}^{s-1}\sum\limits_{N_{\alpha}[w]\in U_{h}^{l}}L^{t_{k}}(N_{\alpha}[w])$ is a sub-polytope of the intersection of $z_{r}=s$ and $\sum\limits_{l=0}^{s-1}\sum\limits_{N_{\alpha}^{t_{k}}[w]\in U_{h^{t_{k}}}^{l;t_{k}}}N_{\alpha}^{t_{k}}[w]$, which leads to $\{L^{t_{k}}(N_{\alpha}[w])|N_{\alpha}[w]\in U_{h}^{s}\}$ is a subset of $U_{h^{t_{k}}}^{s;t_{k}}$ according to Construction \ref{construction}. Hence in conclusion $L^{t_{k}}(N_{h})$ is a sub-polytope of $N_{h^{t_{k}}}^{t_{k}}$, that is $L^{t_{k}}(\rho_{h})$ is a summand of $\rho_{h^{t_{k}}}^{t_{k}}$.

  By (i), the first part of (ii) and the inductive assumption of (iii) for sub-polytopes, we have $L^{t_{k}}(\rho_{h})\in \mathcal{U}_{\geqslant 0}(\Sigma)$.

  On the other hand as we mentioned in the proof of the first part of (ii), according to the construction, the only non-zero summand of $\rho_{h^{t_{k}}}^{t_{k}}$ in $\mathcal{U}^{+}_{\geqslant 0}(\Sigma_{t_{k}})$ is itself. So $L^{t_{k}}(\rho_{h})=\rho^{t_{k}}_{h^{t_{k}}}$.

  (iv)\;Note that this result does not depend on the choice of $\epsilon_{k}$. In the construction of $N_{h}$, we can choose $i\neq k$ and get $x_{i}\rho_{h}=\sum\limits_{N_{\alpha}[w]\in\bigcup\limits_{s} U_{h}^{s}}Y^{w}\rho_{\alpha}$.
  By inductive assumption, (iv) holds for each $N_{\alpha}$, i.e., for any $p\in N_{\alpha}$, all points $q$ in the $k$-section of $N_{\alpha}$ at $p$ with $m_{k}(q)>0$ compose an interval. Therefore for any $p\in N_{h}$, all points $q$ in the $k$-section of $N_{h}$ at $p$ with $m_{k}(q)>0$ compose a union of several intervals. Moreover, due to the definition of $\rho_{h}$, particularly the definition of strata and (\ref{equation: decomposition of rho_h for x_i}) as well as Lemma \ref{m_j in rank 2} (look at $\{j,k\}$-sections for $j\neq k$), we can see that the union of these intervals is still an interval. Hence (iv) holds.

  {\bf Second part of (ii).} \;Now we prove $\rho_{h}$ is the unique one to satisfy the conditions in (ii) by showing that $\rho_{h}=f$ if $f$ is an indecomposable formal Laurent polynomial in $\widehat{\mathcal{U}}_{\geqslant0}(\Sigma)$ which has $X^{h}$ as a summand and whose support is contained in $supp(N_{h})$.

  Because of (iii) and (iv), for large enough $a\in\N$, we have $a\rho_{h}-f\in\widehat{\mathcal{U}}_{\geqslant0}(\Sigma)$. Since  $f\in\widehat{\mathcal{U}}_{\geqslant0}(\Sigma)$, $a\rho_{h}-f$ is a summand of $a\rho_{h}$. However, according to the construction of $\rho_{h}$, the set of summands of $a\rho_{h}$ in $\widehat{\mathcal{U}}_{\geqslant0}(\Sigma)$ equals $\{a\'\rho_{h}|a\'\in\N\text{ and }a\'\leqslant a\}$ (the proof is analogous to that of the indecomposability of $\rho_h$ in $\widehat{\mathcal{U}}_{\geqslant0}(\Sigma)$ above). Therefore, due to the indecomposibility of $f$, $f=\rho_{h}$. In summary, we get that the formal Laurent polynomial satisfying conditions in (ii) uniquely exists.

  (v)  We concentrate on $N_{h}$ which is finite. Denote $E^{0}=E_{h}\cap E(N_{h})$ and $E^{i}=\{l\in E(N_{h})\mid l\notin \bigcup\limits_{j=0}^{i-1}E^{j}$, and it is an edge of a 2-dimensional face of $N_{h}$ which contains an edge in $\bigcup\limits_{j=0}^{i-1}E^{j}$ parallel to $v$ for each vector $v$ in its lattice generating set based on its minimal vertex $\}$. Following (i), it can be seen that $E(N_{h})=\bigcup\limits_{j\in\N}E^{j}$ by induction.

  Next we prove (v) by induction on the dimension $r$. Our proof contains two parts: the first part is to prove (v) for 2-dimensional faces; the second part is to prove (v) for $r$-dimensional faces under the inductive assumption of faces with dimension less than $r$.

  For any 2-dimensional face $S$ of $N_{h}$ which contains an edge in $E^{0}$ parallel to $v$ for each vector $v$ in its lattice generating set based on its minimal vertex, $ldim(S)=2$ and $S$ has two non-parallel edges in $E^{0}$. Then there is $i\neq j\in[1,n]$ satisfying that $S$ is parallel to the 2-dimensional plane determined by $e_i$ and $e_j$ (in fact, $i$ and $j$ are the labels of the above two edges in $S$ assigned in the definition of $E_{h}$). Then due to the construction of $\rho_{h}$ and inductive assumption of (v) for sub-polytopes, there is $h\'\in\Z^{2},w\in\N^{n}$ such that $S=\gamma(N_{h\'}|_{\A\'})[w]$, where $\gamma$ is the canonical embedding of $\R e_i\oplus\R e_j$ into $\R^n$ and $\A\'$ is a cluster algebra with principal coefficients whose initial exchange matrix is obtained from $B$ by deleting all but the $i$-th and $j$-th rows and columns. Hence (v) holds for such faces.

  Recall that $E^{1}$ consists of edges $l$ of faces $S$ in the above case which are not included in $E^{0}$. In the proof of Lemma \ref{face for rank 2} (ii), we divide the above faces $S$ into two kinds, those with $(u,v)=(0,0)$ and with $(u,v)>(0,0)$ respectively (here we use the same notation with that in the proof of Lemma \ref{face for rank 2}). When $(u,v)=(0,0)$, there is a mutation sequence $\mu$ such that $N^{\mu(t_{0})}_{h^{\prime \mu(t_{0})}}|_{\A\'}$ is of dimension 1. Therefore, the face $S$ correlates to an edge in $E^{0}$ of $N^{\mu(t_{0})}_{h^{\mu(t_{0})}}$ under mutation sequence $\mu$, which means that $l$ correlates to an edge of $N^{\mu(t_{0})}_{h^{\mu(t_{0})}}$ parallel to some coordinate axis, where $\mu(t_{0})$ represents the vertex connected to $t_{0}$ by the path induced from $\mu$. When $(u,v)>(0,0)$, there is no point in $l$ other than two vertices.

  For any face $S$ of $N_{h}$, let $s$ be the minimal integer such that $S$ contains an edge in $\bigcup\limits_{j=0}^{s-1}E^{j}\}$ parallel to $v$ for each vector $v$ in its lattice generating set based on its minimal vertex. We take induction on $s$ to prove an edge $l$ of $S$ in $\bigcup\limits_{j=0}^{s-1}E^{j}\}$ satisfies one of the following conditions:

  (a) We can find a mutation sequence $\mu(l)=\mu_{i_{p}}\circ\cdots\circ\mu_{i_{1}}$ such that there is an edge $l_{p}$ in $N^{\mu(t_{0})}_{h^{\mu(t_{0})}}$ parallel to some coordinate axis and correlated to $l$ under $\mu(l)$.

  (b) $l$ does not correlate to an edge parallel to some coordinate axis under any mutation sequence and there are only two points on $l$.

  \noindent hence $S$ satisfies one of the following conditions:

  (A) Every edge of $S$ satisfies condition (a), then there is a sequence of faces $s_0=S,\cdots,S_p=S_l$ satisfying $S_s$ is a face correlated to $S_{s-1}$ under $\mu_{i_s}$, a non-negative projection $\tau_S:N_{f}|_{\A\'}\rightarrow S$ and a non-negative projection $\tau_l:\mu(l)\'(N_{f}|_{\A\'})\rightarrow S_l$ for each edge $l$ of $S$ such that $i=j$ when $\tilde{\tau}_S(e_i)=e_j$ or $\tilde{\tau}_l(e_i)=e_j$,
  \[b_{ik}(l)=\sum\limits_{s=1}^n b_{is}^tv_{sk;l},\]
  where $\A\'$ is a cluster algebra of rank $\max\{ldim(S_s)|s\in[0,p]\}$ with principal coefficients associated to the initial exchange matrix $B\'$, $\mu(l)\'$ is induced from $\mu(l)$ by deleting mutations $\mu_{i_s}$ if there does not exist a segment parallel to $e_{i_s}$ in $S_{s-1}$ and $S_s$, $b_{ik}(l)$ represents the corresponding entry of $\mu(l)\'(B\')$ and $\tilde{\tau}_l(e_k)=(v_{1k;l},\cdots,v_{nk;l})$.

  (B) There is an edge $l$ of $S$ satisfies condition (b), so there is a non-negative projection $\tau_S:N_{f}|_{\A\'}\rightarrow S$ such that the support of $N_f|_{\A\'}$ is one of the following forms in Figure \ref{figure: support of N_f} with horizontal edges representing polytopes lying in the hyperplane $z_i=0$ up to translations, where $\tilde{\tau}_S(e_i)$ parallel to $l$ and $\A\'$ is a cluster algebra of rank $ldim(S)$ with principal coefficients associated to the initial exchange matrix $B\'$ such that the $i$-th row of $B\'$ does not affect $N_f|_{\A\'}$. The other rows of $B\'$ can be determined by restricting $\tau_S$ to polytopes represented by horizontal edges in Figure \ref{figure: support of N_f}.
  \begin{figure}[H]
    \centering
    \includegraphics[width=70mm]{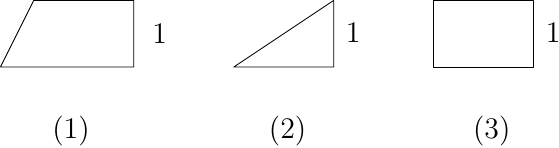}
    \caption{Three forms of supports of $N_{f}|_{\A^{\prime}}$}\label{figure: support of N_f}
  \end{figure}

  The $s=0$ case has been showed above. Assume the claim is true for less than $s$ case, and now we deal with the $s$ case.

  For any $f\in\Z^{n}$, due to the mutation formula (\ref{equation: mutation of x}), (\ref{equation: mutation of y}) and $\hat{y}_{j}=y_{j}\prod\limits_{i=1}^{n}x_{i}^{b_{ij}}$, (iii) indicates that any face $R$ of $N_{f}$ correlates either to an face of $N_{f^{t_{k}}}^{t_{k}}$ isomorphic to $R$ if there does not exist a segment parallel to the $e_k$ in $R$ or a face isomorphic to $\mu_{k}(R)$ under the mutation in direction $k$ otherwise.

  According to the inductive assumption, an edge $l$ of $S$ in $E^{s}$ is in a 2-dimensional face $S\'$ of $N_{h}$ satisfying (A) or (B) which contains an edge in $\bigcup\limits_{i=0}^{s-1}E^{i}$ parallel to $v$ for each vector $v$ in its lattice generating set based on its minimal vertex, hence there is a
  non-negative projection $\tau_{S\'}: N_{f\'}|_{\A^{\prime\prime}}\rightarrow S\'$ and a 2-dimensional face $S^{\prime\prime}$ such that $\tau_{S\'}$ maps an edge $l\'$ of $S^{\prime\prime}$ to $l$. If either $S\'$ satisfies (A) and $l\'$ satisfies (b) or $S\'$ satisfies (B), $l$ has no point other than its vertices and according to the construction of $N_{h}$, inductive assumption and lemma \ref{face for rank 2} (ii), $l$ satisfies (b) and there is a projection from a polytope whose support is one of the forms in Figure \ref{figure: support of N_f} to $S$ since there is no interior point in $S$. During the construction of this polytope, the $i$th-row of exchange matrix is not used, where $i$ is the index such that an edge parallel to $e_i$ is mapped to $l$. So $S$ satisfies (B) in this case.

  Otherwise if $S\'$ satisfies (A) and $l\'$ satisfies (a) for any edge $l$ of $S$, there is a mutation sequence $\mu(l)\'$ such that $l'$ correlates to an edge parallel to some $e_i$ under $\mu(l)\'$, which leads to a mutation sequence $\mu(l)$ such that $l$ correlates to an edge parallel to $e_i$ under $\mu(l)$. Hence $l$ satisfies (a). Denote by $t=\mu(l)(t_0)$ the vertex connected to $t_0$ by the path induced from $\mu(l)$ and by $S^t$ the face correlated to $S$ containing an edge $l^t$ parallel to $e_i$ and correlated to $l$ under $\mu(l)$. Due to the construction of $N_{h^t}^t$, for each point $p\in S^t$, each Laurent monomial $\hat{Y}_t^pX_t^{h^t}$ should be contained in some Laurent polynomial of the form $x_{i;t}^{deg_{x_{i;t}}(\hat{Y}_t^pX_t^{h^t})}M_{i;t}^{[-deg_{x_{i;t}}(\hat{Y}_t^pX_t^{h^t})]_+}X_t^{\alpha}$. Also note that $deg_{x_{i;t}}(\hat{Y}_t^qX_t^{h^t})-deg_{x_{i;t}}(\hat{Y}_t^pX_t^{h^t})=(q-p)(B_{i\cdot}^t)^\top$ for any two points $p,q\in S^t$, where $B_{i\cdot}^t$ denotes the $i$-th row of $B_t$. Hence if every edge of $S$ satisfies condition (a), there is a projection $\tau_S:N_{f}|_{\A\'}\rightarrow \check{S}$, where $\A\'$ is a cluster algebra of rank $ldim(S)$ with principal coefficients associated to $B\'$ satisfying
  \[b_{ik}(l)=\sum\limits_{s=1}^n b_{is}^tv_{sk;l},\]
  and $\check{S}$ is a subpolytope of $S$. Moreover, for each $l$, according to (i) and (iii), we may take an induction on the partial order of polytopes for those in the decomposition of $\rho_{h^t}^tx_{i;t}^a$ for $a\in\N$ to confirm that $S^t$ is correlated to $\check{S}$ under $\mu(l)\'$, hence $\check{S}=S$ and $S$ satisfies (B).

\end{Proof}

According to Theorem \ref{properties in case tsss} (iii), we can extend Theorem \ref{properties in case tsss} (ii) to the following corollary, which shows that by constructing $\rho_{h}$ satisfying certain properties locally for vertices connected to $t_{0}$, we succeed in obtaining some element satisfying these properties globally for any vertices in $\T_{n}$ as said in subsection 3.1.

\begin{Corollary}\label{independent of initial cluster}
  Let $\A$ be a cluster algebra having principal coefficients, $h\in\Z^{n}$ and $t\in\T_{n}$. Then

  (i)\;$L^{t}(\rho_{h})$ is indecomposable in $\mathcal{U}^{+}_{\geqslant0}(\Sigma_{t})$. Therefore, $\rho_{h}$ is universally indecomposable.

  (ii)\;Both $\widehat{\mathcal{P}}=\{L^{t_{0}}(\rho_{h}^{t})|h\in\Z^{n}\}$ and $\mathcal{P}=\widehat{\mathcal{P}}\cap \N\P[X^{\pm 1}]$ are independent of the choice of $t$.

  (iii)\;The set $\{X_{t}^{\alpha}\mid \alpha\in\N^{n},t\in\T_{n}\}$ consisting of coefficient free cluster monomials is a subset of $\mathcal P$.
\end{Corollary}

\begin{Remark}
  Analogous conclusions of Theorem \ref{properties in case tsss} (iii) and Corollary \ref{independent of initial cluster} were proved in \cite{LLZ} and \cite{LLS} for Newton polytopes of cluster variables in rank $2$ and rank $3$ cases respectively. This is not a coincidence. According to Theorem \ref{expression of a cluster variable}, up to multiplying a Laurent monomial $X_{t_{0}}$, $F_{l;t}$ determines the Laurent expression of $x_{l;t}$ in $X_{t_{0}}$. So the Newton Polytope of $F_{l;t}$ determines that of $x_{l;t}$ up to a translation along the exponential vector of the above Laurent monomial.

  Theorem \ref{properties of N_h for skew-symmetrizable} in the sequel can be regarded as an enhanced version of Theorem \ref{properties in case tsss} (v) for skew-symmetrizable case. Both of them are inspired by Theorem 6.8 of \cite{F} for Newton polytopes associated to modules, which presents a specific relation between such a polytope and its facets.
\end{Remark}

Due to Theorem \ref{properties in case tsss} (iii), we have the following definition:

\begin{Definition}
  The polytope $N_{h^{t_{k}}}^{t_{k}}$ of $\rho_{h^{t_{k}}}^{t_{k}}$ is called the \textbf{mutation of the polytope $N_{h}$ of $\rho_h$} in direction $k$, and it is denoted as $\mu_{k}(N_{h})$.
\end{Definition}

Please refer to \cite{P} for more details about polytope mutations.

The relation among $N_h$, $\mu_{k}(N_{h})$ and their corresponding Laurent polynomials is shown in Figure \ref{figure: polytope mutation}:
  \begin{figure}[H]
  \centering
  \includegraphics[width=30mm]{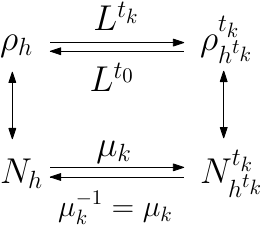}
  \caption{The mutation of $N_{h}$ in direction $k$.}\label{figure: polytope mutation}
  \end{figure}

\begin{Example}
  (i)\;Construction \ref{construction} is compatible with the definition of $\rho_{h}$ in Subsection 3.2 for any $h\in\Z^{2}$ as the former in fact follows the equations (\ref{equation: decomposition of x1Ph}) and (\ref{equation: decomposition of x2Ph}). Let the cluster algebra and $h=(-5,7)$ be those in Example \ref{eg in rank 2}. Following Construction \ref{construction}, choose $k=2$ and $i=1$, so $b_{ik}>0$, then

  (1)\;We have $\rho_{-5}=x_{1}^{-5}(1+y_{1})^{5}$ and $x_{1}\rho_{-5}=x_{1}^{-4}(1+y_{1})^{4}+y_{1}x_{1}^{-4}(1+y_{1})^{4}=\rho_{-4}+y_{1}\rho_{-4}$ for $\A_{2}$. Hence we get $U^{0}_{(-5,7)}=\{N_{(-4,7)},N_{(-4,4)}[(1,0)]\}$.

  (2)\;Choose $a=7$. The intersection of hyperplane $z_{2}=1$ and $N_{(-4,7)}+N_{(-4,4)}[(1,0)]$ equals $9\gamma_{2;1}(N_{-2}[2])+8\gamma_{2;1}(N_{-2}[3])$. Since $\tilde{S}_{1}=\{\gamma_{2;1}(N_{-3}[1]),\gamma_{2;1}(N_{-3}[2])\}$ and $(5,1)$ is the unique point with minimal $x_{2}$-degree and weight 8 in the intersection, we get $8\gamma_{2;1}(N_{\alpha\'}[w\']|_{\A_2})=8\gamma_{2;1}(N_{-3}[2]|_{\A_{2}})=N_{1}$. So $U_{1}=\emptyset$. While $N_{2}=\gamma_{2;1}(N_{-2}[2])|_{\A_{2}}$, thus we obtain $U_{2}=\{N_{(-2,4)}[(1,1)]\}$ and $N_{3}=\emptyset$. Therefore $U^{1}_{(-5,7)}=U_{2}=\{N_{(-2,4)}[(1,1)]\}$. Inductively we can also get $U^{2}_{(-5,7)}=\{(N_{(0,1)}[(2,2)])^{2}\}$ and $U^{i}_{(-5,7)}=\emptyset$ for $i>2$.

  (3)\;Then we get that $N_{(-5,7)}=N_{(-4,7)}+N_{(-4,4)}[(1,0)]+N_{(-2,4)}[(1,1)]+2N_{(0,1)}[(2,2)]$,
  which coincides with the polytope associated to $h=(-5,7)$ presented in Example \ref{eg in rank 2}.

  (ii)\;In Example \ref{example of essential skeleton}, we can calculate $N_{h}$ as well as $\rho_{h}$ according to Construction \ref{construction}. Choose $k=3$ and $i=1$, then

  (1)\;The essential skeleton is the set consisting of red edges in Figure \ref{example2} as calculated in Example \ref{example of essential skeleton}. Moreover we get
  \[U_{E_{h}}=\{N_{-4,2,4},N_{-4,1,5}[(1,0,0)],N_{-3,1,3}[(3,1,1)]\}\]
  and
  \[\mathcal{R}=\{\gamma_{3;0}(N_{-5,2}|_{\A_{3}}),\gamma_{3;3}(N_{-2,3}[(5,3)]|_{\A_{3}})\}.\]

  We have $x_{1}\rho_{(-5,2)}=\rho_{(-4,2)}+y_{1}\rho_{(-4,1)}+y_{1}y_{2}\rho_{(-2,1)}$ for $\A_{3}$. Hence \[U^{0}_{(-5,2,4)}=\{N_{(-4,2,4)},N_{(-4,1,5)}[(1,0,0)],N_{(-2,1,1)}[(1,1,0)]\}.\]

  (2)\;The intersection of hyperplane $z_{3}=1$ and $N_{(-4,2,4)}+N_{(-4,1,5)}[(1,0,0)]+N_{(-2,1,1)}[(1,1,0)]+N_{-3,1,3}[(3,1,1)]$ (or $N_{(-4,2,4)}+N_{(-4,1,5)}[(1,0,0)]+N_{(-2,1,1)}[(1,1,0)]$ respectively) is $2\gamma_{3;1}(N_{(-3,1)}[(3,1)]|_{\A_3})+2\gamma_{3;1}(N_{(-1,1)}[(3,2)]|_{\A_3})+3\gamma_{3;1}(N_{(-1,0)}[(4,2)]|_{\A_3})$ (or $\gamma_{3;1}(N_{(-3,1)}[(3,1)]|_{\A_3})+2\gamma_{3;1}(N_{(-1,1)}[(3,2)]|_{\A_3})+3\gamma_{3;1}(N_{(-1,0)}[(4,2)]|_{\A_3})$ respectively).

  Since $\mathcal{R}_{1,+}=\{\gamma_{3;0}(N_{(-5,2)}|_{\A_{3}})\}$ while $\mathcal{R}_{1,-}=\{\gamma_{3;3}(N_{(-2,3)}[(5,3)]|_{\A_{3}})\}$, we can choose $a=9$. Then it can be calculated to see that $$\tilde{S}_{1}=\{\gamma_{(3;1)}(N_{-6,4}|_{\A_{3}}),\gamma_{3;1}(N_{(-6,3)}[(1,0)]|_{\A_{3}}),\gamma_{3;1}(N_{(-4,3)}[(1,1)]|_{\A_{3}}), \gamma_{3;1}(N_{(-4,2)}[(2,1)]|_{\A_{3}}),\gamma_{3;1}(N_{(-2,1)}[(3,2)]|_{\A_{3}})\}.$$
  Therefore, since $(6,3,1)$ is the unique point with minimal $x_{3}$-degree and weight 2 in the intersection, so we get $2\gamma_{3;1}(N_{\alpha\'}[w\']|_{\A_3})=2\gamma_{3;1}(N_{(-4,2)}[(2,1)]|_{\A_3})=2\gamma_{3;1}(N_{(-3,1)}[(3,1)]|_{\A_3})+2\gamma_{3;1}(N_{(-3,2)}[(2,1)]|_{\A_3})$ and $N_{1}=2\gamma_{3;1}(N_{(-3,1)}[(3,1)]|_{\A_3})$. Hence $U_{1}=\{N_{(-3,1,3)}[(3,1,1)],(N_{(-3,2,2)}[(2,1,1)])^{2}\}$. Then for $2\gamma_{3;1}(N_{(-3,1)}[(3,1)])+2\gamma_{3;1}(N_{(-1,1)}[(3,2)])+3\gamma_{3,1}(N_{(-1,0)}[(4,2)])-N_{1}=2\gamma_{3;1}(N_{(-1,1)}[(3,2)])+3\gamma_{3,1}(N_{(-1,0)}[(4,2)])$, we get $3\gamma_{3;1}(N_{(-2,1)}[(3,2)]|_{\A_3})=3\gamma_{3;1}(N_{(-1,1)}[(3,2)]|_{\A_3})+3\gamma_{3;1}(N_{(-1,0)}[(4,2)]|_{\A_3})$, $N_{2}=2\gamma_{3;1}(N_{(-1,1)}[(3,2)])+3\gamma_{3,1}(N_{(-1,0)}[(4,2)])$ and $U_{2}=\gamma_{3;1}(N_{(-1,1)}[(3,2)]|_{\A_3})$. As $2\gamma_{3;1}(N_{(-3,1)}[(3,1)])+2\gamma_{3;1}(N_{(-1,1)}[(3,2)])+3\gamma_{3,1}(N_{(-1,0)}[(4,2)])-N_{1}-N_{2}=\empty$, so
  \[U^{1}_{(-5,2,4)}=U_{1}\cup U_{2}=\{N_{(-3,1,3)}[(3,1,1)],(N_{(-3,2,2)}[(2,1,1)])^{2},N_{(-1,1,-1)}[(3,2,1)]\}.\]

  Analogously, we can continue to obtain that $$\tilde{S}_{2}=\{\gamma_{3;2}(N_{(-7,6)}|_{\A_{3}}),\gamma_{3;2}(N_{(-5,5)}[(1,1)]|_{\A_{3}}),\gamma_{3;2}(N_{(-3,3)}[(3,2)]|_{\A_{3}}), \gamma_{3;2}(N_{(-3,2)}[(4,2)]|_{\A_{3}}),\gamma_{(3;2)}(N_{(-1,1)}[(5,3)]|_{\A_{3}})\},$$
  while the intersection of $z_{3}=2$ and $N_{(-4,2,4)}+N_{(-4,1,5)}[(1,0,0)]+N_{(-2,1,1)}[(1,1,0)]$ equals $\gamma_{3;2}(N_{(-2,3)}[(3,2)]|_{\A_3})+\gamma_{3;2}(N_{(-2,1)}[(5,2)]|_{\A_3})+3\gamma_{3;2}(N_{(0,1)}[(5,3)]|_{\A_3},4\gamma_{3;2}(N_{(0,0)}[(5,4)]|_{\A_3})$. Then we can calculate iteratively to get
  \[U^{2}_{(-5,2,4)}=\{(N_{(-2,2,0)}[(4,2,2)])^{2}\}.\]
  Moreover, $U^{i}_{(-5,2,4)}=\emptyset$ when $i>2$.

  (3)\; Then we get that $N_{(-5,2,4)}=N_{(-4,2,4)}+N_{(-4,1,5)}[(1,0,0)]+N_{(-2,1,1)}[(1,1,0)]+N_{(-3,1,3)}[(3,1,1)]+2N_{(-3,2,2)}[(2,1,1)]+N_{(-1,1,-1)}[(3,2,1)]+2N_{(-2,2,0)}[(4,2,2)]$ as showed in Figure \ref{example2} (left) below.

  \begin{figure}[H]
    \centering
    \includegraphics[width=100mm]{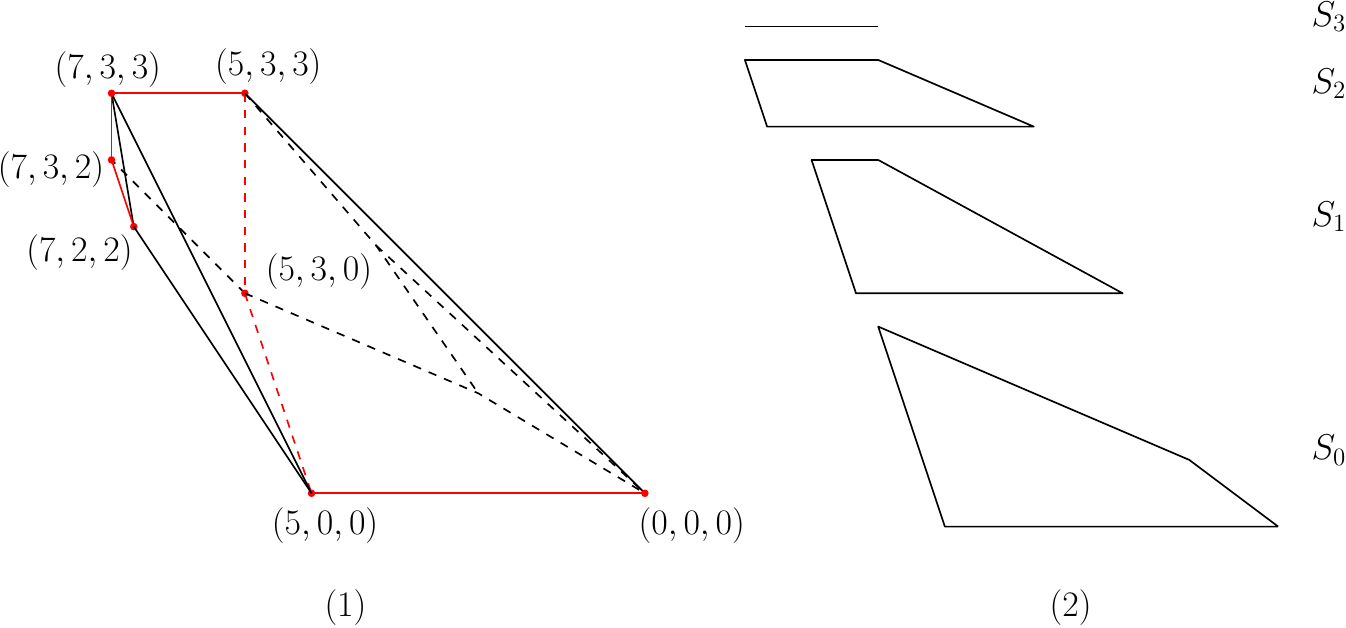}
    \caption{The polytope $N_{h}$ and its intersections with hyperplanes $z_{3}=i$ for $i\in[0,3]$.}\label{example2}
  \end{figure}
  The intersections $S_{i}$ of $N_{h}$ and hyperplanes $z_{3}=i$ are
  \[S_{0}=\{\gamma_{3;0}(N_{(-5,2)})\},\quad S_{1}=\{(\gamma_{3;1}(N_{(-4,2)}[(2,1)]))^{2},(\gamma_{3;1}(N_{(-2,1)}[(3,2)]))^{3}\},\]
  \[S_{2}=\{\gamma_{3;2}(N_{(-3,3)}[(3,2)]), \gamma_{3;2}(N_{(-3,2)}[(4,2)]),(\gamma_{3;2}(N_{(-1,1)}[(5,3)]))^{3}\} \text{ and } S_{3}=\{\gamma_{3;3}(N_{(-2,3)}[(5,3)])\}\]
  respectively, see Figure \ref{example2} (right). And the corresponding Laurent polynomial is
  \begin{figure}[H]
    \centering
    \includegraphics[width=150mm]{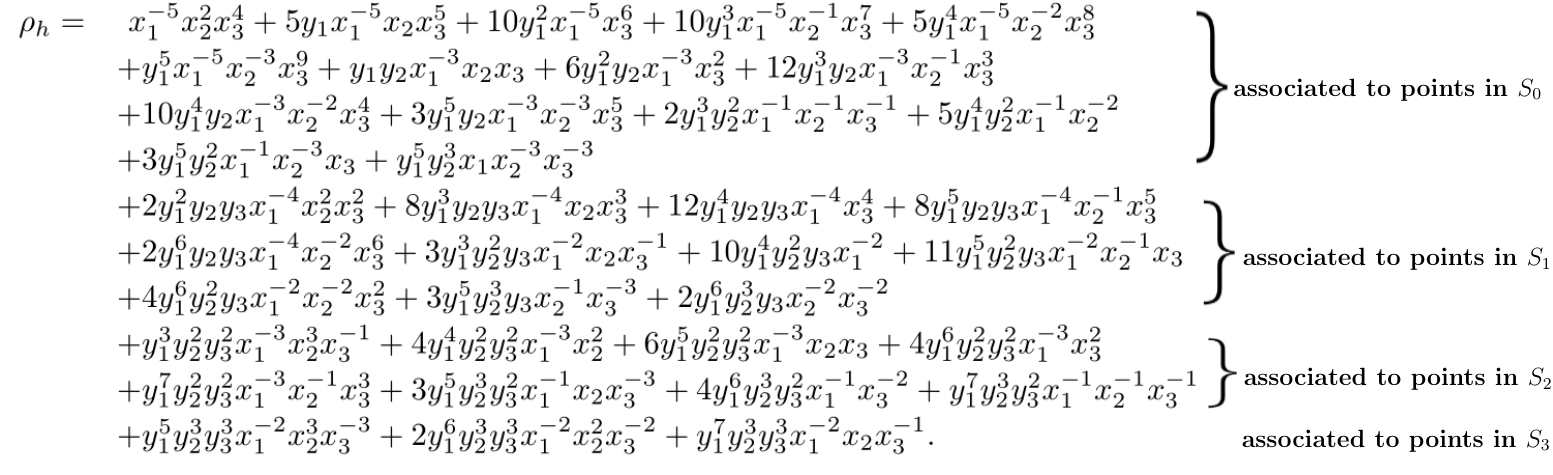}
  \end{figure}
\end{Example}

\begin{Remark}\label{remark after the theorem}
  As in the proof of Theorem \ref{properties in case tsss}, We define $F_{h}=\rho_{h}|_{x_{i}\rightarrow 1,\forall i\in[1,n]}$. When $h=g_{l;t}$, the above $F_h$ is exactly the $F$-polynomial of $x_{l;t}$. So, the polynomial $F_h$ defined here is a generalization of the $F$-polynomial associated to a cluster variable. Hence we call $F_h$ {\bf the $F$-polynomial associated to the vector $h$}.

  For any circle $\gamma$ in the exchange graph with endpoint $t$, $L^{t;\gamma}(f)=qf$ for any homogeneous formal Laurent polynomial $f\in\bigcap\limits_{t\'\in\gamma}\Z[Y_{t\'}][[X_{t\'}^{\pm 1}]]\in\Z Trop(Y_{t})[[X_{t}^{\pm1}]]$ by the definition of $L^{t;\gamma}$, where $q\in Trop(Y_{t})$; on the other hand $L^{t;\gamma}(\rho_{h})=\rho_{h}$ following Theorem \ref{properties in case tsss} (iii). So $q=1$ and thus $L^{t;\gamma}$ only depends on the endpoints of $\gamma$ for any path $\gamma$ in $\T_{n}$, which justifies the notation $L^{t}$.

  According to Theorem \ref{expression of a cluster variable}, it is natural to generalize the definition of $\rho_{h}$ for a cluster algebra over an arbitrary semifield $\P$ as
  \[\rho_{h}|_{\P}:=\frac{F_{h}|_{\F}(\hat{Y})}{F_{h}|_{\P}(Y)}X^{h}\in\N\P[[X^{\pm 1}]].\]
  This does not depend on the choice of the initial vertex $t_{0}$ due to Theorem \ref{properties in case tsss} (iii) and the fact that $L^{t_{k}}(\rho_{h})=\frac{F_{h}|_{\F}(\hat{Y})}{F_{h}|_{Trop(Y_{t_{k}})}(Y)}X^{h}$.

  Obviously, $\rho_{h}|_{\P}$ is related to the choice of the semifield $\P$. However we in general omit the subscript of semifield if there is no risk of confusion. For example when we talk about $\rho^{t}_{h}$ and $\rho^{t\'}_{h}$, the semifields are $Trop(Y_{t})$ and $Trop(Y_{t\'})$ respectively.

  For a cluster algebra $\A$ of rank $n$ over semifield $\P$, denote by $\widehat{\mathcal{P}}|_{\P}$ the set consisting of all such formal Laurent polynomials $\rho_h|_{\P}$, i.e.,
  \[\widehat{\mathcal{P}}|_{\P}=\{\rho_{h}|_{\P}\in\N\P[[X^{\pm 1}]]\mid h\in\Z^{n}\}.\]
  While $\mathcal{P}|_{\P}$ is a subset of $\widehat{\mathcal{P}}|_{\P}$ such that
  $$\mathcal{P}|_{\P}=\widehat{\mathcal{P}}|_{\P}\cap\N\P[X^{\pm 1}].$$
  It follows that both $\widehat{\mathcal{P}}|_{\P}$ and $\mathcal{P}|_{\P}$ do not depend on the choice of the initial vertex $t_{0}$.

  Define $H$ to be the index set of $\mathcal{P}|_{\P}$, i.e., $H:=\{h\in\Z^{n}\mid \rho_{h}|_{\P}\in\mathcal{P}\}$. This set is independent of the choice of semifield $\P$, and according to Theorem \ref{properties in case tsss} (iii), there is a canonical bijection between $H^{t}$ and $H^{t\'}$ for any $t,t\'\in\T_{n}$ sending $h^{t}$ to $h^{t\'}$.
\end{Remark}

\begin{Corollary}\label{unique maximal and minimal term}
  Let $\A$ be a cluster algebra having principal coefficients, $h\in H$. Then there is a unique maximal point $p$ in $N_{h}$. Hence the $F$-polynomial $F_{h}$ has a unique term $Y^{p}$ with maximal $Y$-degree as well as a constant term and $co_{Y^{p}}(F_{h})=co_{1}(F_{h})=1$.
\end{Corollary}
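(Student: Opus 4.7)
The plan is an induction on the rank $n$, combining the inductive structure of $\rho_h$ in Construction \ref{construction} with the face correspondence of Theorem \ref{properties in case tsss}(v). Both claims---constant term $co_{1}(F_h)=1$ and unique componentwise-maximal monomial $Y^p$ with $co_{Y^p}(F_h)=1$---will be handled in parallel, treating the origin and the maximum symmetrically.

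The base cases are immediate: for $n=1$, $\rho_h=X^h(\hat{y}_1+1)^{[-h]_+}$ gives $F_h=(y_1+1)^{[-h]_+}$, which visibly has constant term $1$ and unique maximal monomial $y_1^{[-h]_+}$ of coefficient $1$; for $n=2$ I would inspect the seven shapes in Remark \ref{shape for rank 2} and observe that the two extreme vertices of $N_h$ lie on the essential skeleton $E_h$, where the weights take the form $\tilde{C}_{l}^{l}=\tilde{C}_{l}^{0}=1$. For the weight-$1$ claim at the origin in arbitrary rank, note $0\in N_h\subseteq\N^n$ is automatically a vertex (being the componentwise minimum), so applying Theorem \ref{properties in case tsss}(v) to any $1$-dimensional face of $N_h$ through $0$ produces an isomorphism to a polytope $N_{h'}|_{\A'}$ of a rank-$1$ cluster algebra, and transports the weight $1$ from the rank-$1$ base case. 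The analogous transport handles the weight of the componentwise maximum once its uniqueness has been established.

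For uniqueness of the componentwise maximum when $n\geq 3$, I would argue by contradiction: suppose $p\neq q$ are two distinct componentwise-maximal points of $N_h$, and let $F$ be the minimal face of $N_h$ containing both. By Theorem \ref{properties in case tsss}(v), $F\cong N_{h'}|_{\A'}$ for a cluster algebra $\A'$ of some rank $r$ via an isomorphism $\tau$ whose linearization $\tilde\tau$ satisfies $\tilde\tau(e_i)\in\N^n$. Because $\tilde\tau$ sends the standard basis into $\N^n$, it is monotone with respect to componentwise orders, so $\tau^{-1}(p)$ and $\tau^{-1}(q)$ are distinct componentwise-maximal points of $N_{h'}|_{\A'}$; if $r<n$, the inductive hypothesis applied to $\A'$ forces $\tau^{-1}(p)=\tau^{-1}(q)$, contradicting $p\neq q$.

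The main obstacle is the possibility $r=n$, that is, $F$ coincides with $N_h$ itself, in which case the rank reduction fails (this is the polytope phenomenon of two componentwise-incomparable vertices lying in no proper face, as with opposite corners of a cube). I would resolve this by exploiting the stratification of Construction \ref{construction}: choosing the stratification direction $r$ and auxiliary index $i\neq r$ adaptively---after, if necessary, a preliminary mutation via Theorem \ref{properties in case tsss}(ii) to land in a seed where the sign pattern of the $r$-th column of $B$ makes $z_r$-maximality compatible with componentwise maximality---one can force every componentwise maximum of $N_h$ into a single extremal stratum $U_h^{s_0}$, which is a translate of the polytope of a rank-$(n-1)$ cluster algebra. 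This reduces the uniqueness claim for $N_h$ to the rank-$(n-1)$ inductive hypothesis, bypassing the opposite-corners obstruction and closing the induction.
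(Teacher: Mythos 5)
Your reduction to a proper face is sound: when the two putative maxima $p\neq q$ lie in a common proper face $F$, Theorem \ref{properties in case tsss}(v) together with the monotonicity of $\tilde\tau$ (each $\tilde\tau(e_i)\in\N^n\setminus\{0\}$, so $\tau$ and $\tau^{-1}$ transport componentwise maximality) does push the contradiction down to lower rank, and your treatment of the origin and of the weight-$1$ claims is fine. The genuine gap is exactly the case you flag and then do not actually close: $F=N_h$. Your proposed resolution asserts, without argument, that after a suitable choice of stratification direction $r$, auxiliary index $i$, and a preliminary mutation, ``every componentwise maximum of $N_h$'' lands ``in a single extremal stratum $U_h^{s_0}$, which is a translate of the polytope of a rank-$(n-1)$ cluster algebra.'' Neither half of this is justified. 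First, the strata $U_h^s$ of Construction \ref{construction} are multisets of full rank-$n$ polytopes $N_{\alpha_j}[w_j]$ rooted at level $z_r=s$; only the slice $N_h\cap\{z_r=s\}$ decomposes as a sum $\sum_j\gamma_{r;s}(N_{\alpha'_j}[w'_j])$ of embedded rank-$(n-1)$ polytopes, it is generally a sum of several of them, and for non-extremal $s$ it is not a face of $N_h$, so neither Theorem (v) nor the inductive hypothesis applies to it. Second, a componentwise-maximal point need not have extremal $r$-th coordinate for any single $r$ (that is the whole difficulty), and you give no mechanism by which a mutation via Theorem (ii) would force this; as stated, the assertion that all maxima can be herded into one extremal slice is essentially the uniqueness claim you are trying to prove, restated in the coordinate $z_r$.

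For comparison, the paper avoids the face reduction entirely in this step. It extracts from Theorem (v) and Remark \ref{shape for rank 2} only the fact that the two endpoints of every edge of $N_h$ are comparable, and then runs a global combinatorial argument on the $1$-skeleton: among reduced edge paths from $p$ to $q$ it picks one that is highest with respect to an ``up/down move'' order, observes that two distinct maxima force a local valley $p_1>p_2<p_3$ on that path, and uses convexity of $N_h$ (plus the rank-$2$ classification of $2$-faces) to replace the valley by a strictly higher edge path, contradicting the choice. Because this argument never requires $p$ and $q$ to share a proper face, it is immune to the opposite-corners obstruction. If you want to keep your induction on rank, you would need either to prove that two distinct componentwise maxima of $N_h$ always lie in a common proper face (false for general polytopes, so it would have to use specific properties of $N_h$), or to replace your stratum claim by an actual argument; the paper's path-lifting lemma is in effect that missing argument.
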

\begin{Proof}
  According to Theorem \ref{properties in case tsss} (v) and Lemma \ref{face for rank 2} (ii), for two vertices $p$ and $q$ in an edge of $N_{h}$, we always have $p>q$ or $p<q$. And according to Remark \ref{shape for rank 2}, the corollary is easy to be verified when $N_{h}$ has dimension 2.

  In this proof, when we say a path in $N_{h}$, we always mean a sequence of directed edges $\underline{l_{1}\cdots l_{s}}$ in $N_{h}$ satisfying that it does no contain any oriented circle and the target of $l_{i}$ equals the source of $l_{i+1}$ for any $i\in[1,s-1]$. For two paths $\zeta$ and $\zeta\'$, we say {\bf $\zeta\'$ lies locally above $\zeta$} if for any vertices $p$ in $\zeta\'$, there is a vertices $q$ in $\zeta$ satisfying $p\geqslant q$. Note that it is possible for us to choose the same $q$ for various vertices $p$ when $\zeta'$ lies locally above $\zeta$.  Moreover if $\zeta\'$ lies locally above $\zeta$ but $\zeta$ does not lies locally above $\zeta\'$, then we say $\zeta\'$ {\bf strictly lies locally above $\zeta$.}

  Next we prove by contradiction that there is a unique maximal point $p$ in $N_{h}$. So we assume there are two different maximal points $p$ and $q$ in $N_{h}$.

  Choose a path in $N_{h}$ from $p$ to $q$ such that there is no path strictly lying locally above it. The existence of such path is ensured by the finiteness of $V(N_{h})$. Denote it by $\underline{l_{1} l_{2} \cdots l_{r}}$. Then there is a minimal vertex $p_{2}$ in the path with respective to ``$<$''. Let $s\in[1,r-1]$ such that $p_{1}$ and $p_{2}$ are the vertices of $l_{s}$ while $p_{2}$ and $p_{3}$ are those of $l_{s+1}$. Thus $p_{1}>p_{2}$ and $p_{3}>p_{2}$ by Theorem \ref{properties in case tsss} (v) and Lemma \ref{face for rank 2} (ii). Then we can find a sequence of edges $f_{0},\cdots,f_{j}$ having $p_{2}$ as a common vertex and a sequence of 2-dimensional faces $S_{1},\cdots,S_{j}$ of $N_{h}$ satisfying that $f_{0}$ equals $l_{s}$, $f_{j}$ equals $l_{s+1}$ if forgetting direction, and $f_{i-1},f_{i}$ are two edges in $S_{i}$. Such sequences of edges and faces exist as we can look at the neighborhood of $p_{2}$ in $N_{h}$, which is a {\em cone} with several edges of finite length including $f_{0}$ and $f_{j}$, then we can clockwise or counterclockwise enumerate edges from $f_{0}$ to $f_{j}$ on the {\em cone}.

  Moreover, because of the convexity of $N_{h}$ and the fact that $p_{1}>p_{2}$ as well as $p_{3}>p_{2}$, we can further more find $f_{0},\cdots,f_{j}$ such that $p>p_{2}$, where $p$ is any vertices in these edges other than $p_{2}$. So according to Remark \ref{shape for rank 2} and Theorem \ref{properties in case tsss} (v) we get that $p_{2}$ is the unique minimal point of $S_{i}$ for any $i\in[0,j]$. Then in each $S_{i}$ there is a path from the vertex of $f_{i-1}$ other than $p_{2}$ to the vertex of $f_{i}$ other than $p_{2}$ not passing through $p_{2}$. Connecting these paths we get a path $\zeta$ from $p_{1}$ to $p_{3}$ satisfying that $\zeta$ strictly lies locally above path $\underline{l_{s} l_{s+1}}$. Hence we obtain a new path $\underline{l_{1} \cdots l_{s-1} \zeta l_{s+2}\cdots l_{r}}$ by replacing $\underline{l_{s}l_{s+1}}$ with $\zeta$. Then we obtain a path in $N_{h}$ strictly lying locally above $\underline{l_{1} l_{2}\cdots l_{r}}$, which contradicts the choice of path $\underline{l_{1} l_{2}\cdots l_{r}}$.

  Hence there is a unique maximal point $p$ in $N_{h}$, i.e., the $F$-polynomial $F_{h}$ has a monomial $Y^{p}$ as the unique maximal term.

  The definition of $\rho_{h}$ ensures that the origin is the unique minimal point in $N_{h}$. And obviously the origin and $p$ are both vertices of $N_{h}$. By Theorem \ref{properties in case tsss} (v), if $q$ is a vertex of $N_{h}$, then $co_{q}(N_{h})=1$. Thus $co_{0}(N_{h})=co_{p}(N_{h})=1$.
\end{Proof}

It can be seen that when restricted to $F$-polynomials associated to cluster variables, this corollary is a generalization of the corresponding result in \cite{GHKK} from skew-symmetrizable case to TSSS case, which is equivalent to the sign-coherence of $c$-vectors according to \cite{FZ4} (although the equivalence is proved for skew-symmetrizable cluster algebras in \cite{FZ4}, the proof holds in TSSS case).

\section{Special case: polytopes associated to cluster variables}

We have known in \cite{F} that the Newton polytope of an $F$-polynomial is defined associated to representations of a finite-dimensional basic algebra, and it admits some interesting combinatorial properties. But those cluster algebras, whose categorification have not been found so far, are not suitable for the theory in \cite{F}. In this sense, it is necessary for us to establish the theory of these Newton polytopes for general TSSS cluster algebras. This is one of the motivations for our construction in the last section.

In this section we will take a look at cluster variables, which turn out to be contained in $\mathcal{P}$. Hence the results in last section hold for cluster variables. In particular, we have a recurrence formula and universally positivity for cluster variables as special case polytope functions (Theorem \ref{from general to cluster variables}).

\subsection{Recurrence formula and positivity of a cluster variable}\quad

Denote by $N_{l;t}^{t\'}$ the Newton polytope of an $F$-polynomial $F_{l;t}^{t\'}$. The $F$-polynomial of a cluster variable can be generalized to any cluster monomial $X_{t}^{\alpha}$ as $F_{X_{t}^{\alpha}}=\prod\limits_{i}F_{i;t}^{\alpha_{i}}$. We denote the Newton polytope of $F_{X_{t}^{\alpha}}$ as $N(X_{t}^{\alpha})$.

\begin{Theorem}\label{from general to cluster variables}
  (\textbf{Recurrence formula})  Let $\A$ be a TSSS cluster algebra having principal coefficients, then $x_{l;t}=\rho_{g_{l;t}}$ and $N_{l;t}=N_{g_{l;t}}$. Following this view, we have that
   \begin{equation}
     co_{p}(N_{l;t})=co_{p}(N_{g_{l;t}})=\sum\limits_{N_{\alpha_{j}}[w_{j}]\in \bigcup\limits_{r}U_{g_{l;t}}^{r}}co_{p}(N_{\alpha_{j}}[w_{j}])
   \end{equation}
   and
   \begin{equation}
     x_{l;t}=X^{g_{l;t}}(\sum\limits_{p\in N_{g_{l;t}}}co_{p}(N_{l;t})\hat{Y}^{p}),
   \end{equation}
   where $U_{g_{l;t}}^{r}$, running over all $r$-th strata of the polytope $N_h$ for $x_{i}$ along direction $k$, and hence all $N_{\alpha_{j}}[w_{j}]$ are defined in the construction \ref{construction} (b) with $h=g_{l;t}$.
\end{Theorem}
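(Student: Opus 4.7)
The plan is to identify $x_{l;t}$ as a specific polytope function $\rho_h$; once this identification is established, the two displayed formulas become immediate transcriptions of Construction \ref{construction}. The starting point is Theorem \ref{properties in case tsss} (iv), which guarantees that every coefficient-free cluster monomial---in particular every cluster variable $x_{l;t}$---belongs to $\mathcal{P}$. Thus there exists some $h \in \Z^n$ with $x_{l;t} = \rho_h^{t_0}$, and it remains to pin down which $h$. To do this I would exploit the canonical $\Z^n$-grading on a principal coefficients cluster algebra, where $\deg(x_i) = e_i$ and $\deg(y_i) = -b_i^{t_0}$ force $\deg(\hat y_i) = 0$. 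Under this grading, Theorem \ref{expression of a cluster variable} exhibits $x_{l;t}$ as a homogeneous Laurent polynomial of degree $g_{l;t}$, while $\rho_h = F_h(\hat Y) X^h$ is by construction homogeneous of degree $h$ (cf.\ Remark \ref{remark after the theorem}). Since a nonzero homogeneous element has a unique degree, the identification forces $h = g_{l;t}$, giving $x_{l;t} = \rho_{g_{l;t}}$.

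Granted this identification, the equality $N_{l;t} = N_{g_{l;t}}$ is automatic, as both polytopes are defined as the Newton polytope of the polynomial $F_{l;t} = x_{l;t}|_{x_i \to 1} = \rho_{g_{l;t}}|_{x_i \to 1}$. The coefficient decomposition stated in the theorem is then Step (3) of Construction \ref{construction} read verbatim with $h = g_{l;t}$, and the Laurent expansion of $x_{l;t}$ over points of $N_{g_{l;t}}$ is nothing but the definition of $\rho_{g_{l;t}}$ as a Laurent polynomial.

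The main potential obstacle is precisely the identification step, since Theorem \ref{properties in case tsss} (iv) only asserts that $x_{l;t} \in \mathcal{P}$ without specifying an index. As a cross-check---useful if the grading argument is not deemed transparent enough---one may instead start with the trivial identity $x_{l;t} = \rho^t_{e_l} = X_t^{e_l}$ at the vertex $t$ itself (a direct instance of Construction \ref{construction} (i), since $e_l \in \N^n$) and walk back along the unique path in $\T_n$ from $t$ to $t_0$, applying Theorem \ref{properties in case tsss} (ii) at each edge. The resulting recurrence for the $h$-index is exactly formula (\ref{mutation formula for h}), which matches step-for-step the standard mutation recurrence for $g$-vectors $g^s_{l;t}$ as the reference seed $s$ varies; in the principal coefficients setting, the $[-h_k]_+ b_k$ term precisely accounts for the $\sum_i [b_{n+i,k}^t]_+ b_i^{t_0}$ correction in the $g$-vector formula, since the bottom block of $\tilde B^{t_0}$ is the identity. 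Induction on path length then yields $h^{t_0} = g_{l;t}$, confirming the identification.
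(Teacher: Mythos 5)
Your proposal is correct and, in its ``cross-check'' form, is exactly the paper's own argument: the paper identifies $x_{l;t}=\rho_{g_{l;t}}$ by observing that $X_t^{\alpha}=\rho^t_{\alpha}$ trivially at $t$ and that the index recurrence (\ref{mutation formula for h}) from Theorem \ref{properties in case tsss} (ii) coincides with the $g$-vector mutation rule, after which both displayed formulas are just Construction \ref{construction} (2)--(3) transcribed. Your primary route via the canonical $\Z^n$-grading is a mild but legitimate variant: since the paper establishes that $\rho_h$ is homogeneous of degree $h$ and Theorem \ref{properties in case tsss} (iv) already places $x_{l;t}$ in $\mathcal{P}$, comparing degrees pins down the index without tracking the mutation path; this buys a shorter identification at the cost of leaning on the (already proved) homogeneity statement rather than exhibiting the recurrence explicitly, which is what the ``recurrence formula'' viewpoint of the theorem is really about. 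Either way the substance matches the paper and no step is missing.
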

\begin{Proof}
  According to Corollary \ref{independent of initial cluster}, any cluster monomial $X_{t}^{\alpha}=\rho^{t}_{\alpha}$ is in $\mathcal{P}$ (defined in Corollary \ref{independent of initial cluster}) for any $t\in\T_{n}$,  $\alpha\in\N^{n}$, and $h^{t_{k}}=h-2h_{k}e_{k}+h_{k}[b^{t}_{k}]_{+}+[-h_{k}]_{+}b^{t}_{k}$ for any $t_{k}$ connected to $t$ by an edge labeled $k$, which coincides with the mutation formula of $g$-vectors. Hence we can see that $X_{t}^{\alpha}=\rho_{g(X_{t}^{\alpha})}$ as well as $N(X_{t}^{\alpha})=N_{g(X_{t}^{\alpha})}$. In particular, this is true for any cluster variable. Thus $x_{l;t}$ and $N_{l;t}$ naturally inherit the properties about $\rho_{h}$ and $N_{h}$ claimed in Theorem \ref{properties in case tsss}. And the constructions of $\rho_{h}$ and $N_{h}$ provide a recurrence formulas for $N_{l;t}$ and $x_{l;t}$.
\end{Proof}

Repeating the Recurrence formula in the above theorem, we can expresses the weight of each point in $N_{l;t}$ as a sum of weights of points in some polytopes of the form $N_{h\'}[w]$ with $h\'\in\Z^{2}$ and $w\in\N^{n}$, thus by (\ref{equation: weight of rank2}) it is in fact a sum of certain binomial coefficients, which are naturally non-negative. So this provides a proof of the positivity of cluster variables in a totally sign-skew-symmetric cluster algebra. Then we obtain directly the follows:
\begin{Corollary}\label{TSSS positivity} $(${\bf Positivity for TSSS cluster algebras}$)$
  Let $\A$ be a TSSS cluster algebra with principal coefficients and $(X,Y,B)$ be its initial seed. Then every cluster variable in $\A$ is a Laurent polynomial over $\N[Y]$ in $X$. In particular, the positivity of TSSS cluster algebras holds.
\end{Corollary}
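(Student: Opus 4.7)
The plan is to reduce the positivity statement to a non-negativity check on the weights $co_{p}(N_{l;t})$, and then peel off the recursive construction of $\rho_{h}$ layer by layer until reaching the rank $2$ base case. First, I would invoke the identification $x_{l;t}=\rho_{g_{l;t}}$ and $N_{l;t}=N_{g_{l;t}}$ from Theorem \ref{from general to cluster variables} to write
\[
x_{l;t}=X^{g_{l;t}}\sum_{p\in N_{g_{l;t}}}co_{p}(N_{l;t})\,\hat{Y}^{p}.
\]
Since $\A$ has principal coefficients and $\hat{y}_{j}=y_{j}\prod_{i=1}^{n}x_{i}^{b_{ij}}$, each $X^{g_{l;t}}\hat{Y}^{p}$ is the product of a Laurent monomial in $X$ with a genuine monomial $Y^{p}$ of coefficient $1$. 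Consequently, positivity of $x_{l;t}$ over $\N[Y]$ is equivalent to the assertion that every weight $co_{p}(N_{g_{l;t}})$ is a non-negative integer.

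Next, I would carry out a double induction: an outer induction on the rank $n$ of $\A$ and an inner induction following the recursive construction of $\rho_{h}$ (Construction \ref{construction}) along the partial order on the vectors $h$ that appear during the construction. The Recurrence formula
\[
co_{p}(N_{g_{l;t}})=\sum_{N_{\alpha_{j}}[w_{j}]\in\bigcup_{r}U_{g_{l;t}}^{r}}co_{p}(N_{\alpha_{j}}[w_{j}])
\]
expresses $co_{p}(N_{g_{l;t}})$ as a finite sum of weights of strictly smaller polytopes: either polytopes $N_{\pi_{k}(h)}|_{\A_{k}}$ in the pure sub-cluster algebra $\A_{k}$ of rank $n-1$ (covered by the outer induction), or polytopes $N_{\alpha_{j}}$ with $\alpha_{j}$ earlier in the stratum-wise inner induction (covered by the inner induction). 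By inductive hypothesis each summand is a non-negative integer, hence so is the total.

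The base case is rank $2$ (together with the trivial case $h\in\N^{n}$, where $N_{h}$ is a single point with weight $1$). For rank $2$, formula (\ref{coefrank2}) computes $co_{p}$ as a \emph{maximum} of sums of products of modified binomial coefficients $\tilde{C}_{i}^{j}$ and auxiliary weights $m_{j}(\cdot)$, all of which are manifestly non-negative. The main obstacle I expect is verifying that the $m_{j}(p)$, whose definitions involve subtractions, remain non-negative throughout the construction; this is precisely why the $\max$ appears in (\ref{coefrank2}), and it is already embedded in Lemma \ref{m_j in rank 2} and Lemma \ref{unique indecomposable in rank 2}. Once non-negativity is in hand at the base, it propagates through every level of the recursion, so that $x_{l;t}\in\N[Y][X^{\pm 1}]$ and the positivity of $\A$ follows.
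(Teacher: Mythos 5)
Your proposal is correct and follows essentially the same route as the paper: the paper's own argument (given in the paragraph preceding the corollary) is precisely to iterate the Recurrence formula of Theorem \ref{from general to cluster variables} until every weight of $N_{l;t}$ is written as a sum of weights of rank-$2$ polytopes $N_{h'}[w]$, which by (\ref{coefrank2}) are sums of modified binomial coefficients and hence non-negative. Your additional remarks — that injectivity of $p\mapsto Y^{p}X^{g_{l;t}+Bp}$ reduces positivity over $\N[Y]$ to non-negativity of the $co_{p}$, and that non-negativity of the $m_{j}$ is secured by Lemma \ref{m_j in rank 2} — only make explicit what the paper leaves implicit.
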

As a special case, the proof of this result provides a new method different from that in \cite{GHKK} to present the positivity of cluster variables in a skew-symmetrizable cluster algebra of rank greater than 2. We use the conclusion of \cite{GHKK} only in the case of rank 2 as the starting of inductive method. In general, the positivity for TSSS cluster algebras can not be proved via the method in \cite{GHKK}.

\begin{Example}\label{example}
  Let $\A$ be a cluster algebra having principal coefficients with the initial seed $(X,Y,B)$, where $X=(x_{1},x_{2},x_{3})$, $Y=(y_{1},y_{2},y_{3})$ and
  \[B=\left(\begin{array}{ccc}
  0 & -3 & 16 \\
  3 & 0 & -6 \\
  -16 & 6 & 0
  \end{array}\right).\]
  Then in the seed $(X_{t},Y_{t},\tilde{B}_{t})=\mu_{3}\circ\mu_{1}\circ\mu_{2}((X,\tilde{B}))$, the Laurent expression of $x_{3;t}$ in $X$ is
  \begin{equation*}
    \begin{array}{rl}
      x_{3;t} & =\frac{y_{3}[y_{2}^{3}y_{1}x_{2}^{3}x_{3}^{2}+(y_{2}x_{3}^{6}+x_{1}^{3})^{3}]^{2}+x_{1}^{2}x_{2}^{6}}{x_{1}^{2}x_{2}^{6}x_{3}} \\
       & =(y_{1}^{2}y_{2}^{6}y_{3}x_{2}^{6}x_{3}^{4}+2y_{1}y_{2}^{6}y_{3}x_{2}^{3}x_{3}^{20}+6y_{1}y_{2}^{5}y_{3}x_{1}^{3}x_{2}^{3}x_{3}^{14}+ 6y_{1}y_{2}^{4}y_{3}x_{1}^{6}x_{2}^{3}x_{3}^{8} \\
       & +2y_{1}y_{2}^{3}y_{3}x_{1}^{9}x_{2}^{3}x_{3}^{2}+y_{2}^{6}y_{3}x_{3}^{36}+ 6y_{2}^{5}y_{3}x_{1}^{3}x_{3}^{30}+15y_{2}^{4}y_{3}x_{1}^{6}x_{3}^{24}+20y_{2}^{3}y_{3}x_{1}^{9}x_{3}^{18}\\
       & +15y_{2}^{2}y_{3}x_{1}^{12}x_{3}^{12}+6y_{2}y_{3}x_{1}^{15}x_{3}^{6}+y_{3}x_{1}^{18}+x_{1}^{2}x_{2}^{6})/x_{1}^{2}x_{2}^{6}x_{3}.
    \end{array}
  \end{equation*}

 Hence the corresponding Newton polytope $N_{3;t}$ is as follows
  \begin{figure}[H]
    \centering
    \includegraphics[width=25mm]{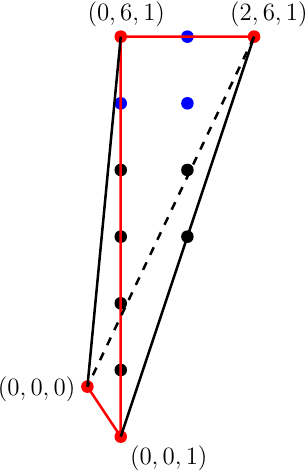}
    \caption{The Newton polytope $N_{3;t}$}
  \end{figure}
  \noindent where the set of points is the support of $F_{3;t}$ and we mark points in $V_{3;t}$ and edges in $E_{3;t}$ in red. It can be seen that the support of $F_{3;t}$ is saturated. Moreover, it can be checked that the constant coefficients of these Laurent monomials satisfy Theorem \ref{from general to cluster variables}. We calculate the coefficients of blue points in $\rho_{g_{3;t}}$ for example, where $g_{3;t}=(0,0,-1)$.
  \[co_{y_{2}^{5}y_{3}x_{1}x_{2}^{-6}x_{3}^{29}}(\rho_{g_{3;t}})=\tilde{C}_{6}^{5}=6=co_{y_{2}^{5}y_{3}x_{1}x_{2}^{-6}x_{3}^{29}}(P_{3;t}),\] \[co_{y_{1}y_{2}^{6}y_{3}x_{1}^{-2}x_{2}^{-3}x_{3}^{19}}(\rho_{g_{3;t}})=\tilde{C}_{2}^{1}=2=co_{y_{1}y_{2}^{6}y_{3}x_{1}^{-2}x_{2}^{-3}x_{3}^{19}}(P_{3;t}),\]
  and
  \begin{equation*}
    \begin{array}{rl}
      co_{y_{1}y_{2}^{5}y_{3}x_{1}x_{2}^{-3}x_{3}^{13}}(\rho_{g_{3;t}}) & =max\{co_{y_{2}^{5}y_{3}x_{1}x_{2}^{-6}x_{3}^{29}}(\rho_{g_{3;t}})\tilde{C}_{-deg_{x_{1}}(y_{2}^{5}y_{3}x_{1}x_{2}^{-6}x_{3}^{29})}^{1}, \\
      &\qquad\qquad co_{y_{1}y_{2}^{6}y_{3}x_{1}^{-2}x_{2}^{-3}x_{3}^{19}}(\rho_{g_{3;t}})\tilde{C}_{-deg_{x_{2}}(y_{1}y_{2}^{6}y_{3}x_{1}^{-2}x_{2}^{-3}x_{3}^{19})}^{1}\} \\
        & =max\{0,6\}\\
        & =6\\
        & =co_{y_{2}^{5}y_{3}x_{1}^{3}x_{2}^{3}x_{3}^{14}}(P_{3;t}).
    \end{array}
  \end{equation*}
  We can calculate to see that $N_{3;t}=N_{g_{3;t}}$ and $x_{3;t}=\rho_{g_{3;t}}$.
\end{Example}

\subsection{On a conjecture of Fei}\quad

Now we show some properties of Newton polytopes associated to cluster variables, which may usually not be true for Newton polytopes associated to general  vectors in $H$.

\begin{Theorem}\label{properties for cluster variable case}
  Let $\A$ be a TSSS cluster algebra having principal coefficients, $l\in[1,n]$ and $t\in\T_{n}$. Then, for $F$-polynomial $F_{l;t}$ associated to $x_{l;t}$ and its corresponding Newton polytope $N_{l;t}$, the following statements hold:

  (i)\; The support of $F$-polynomial $F_{l;t}$ is saturated.

  (ii)\; For any $p\in N_{l;t}$, $co_{p}(N_{l;t})=1$ if and only if $p\in V(N_{l;t})$.
\end{Theorem}
\begin{Proof}
  We prove this theorem by induction on the length of the path connecting $t$ and $t_{0}$. It is trivial when the length is $0$, i.e., $t_{0}=t$. Assume it is true for $t\'\in\T_{n}$. We claim that it is also true for $t_{0}\in\T_{n}$ connected to $t\'$ by an edge labeled $k\in[1,n]$.

  (i)\; Assume on the contrary there is a point $p\in N_{l;t}$ such that $co_{p}(N_{l;t})=0$. If there is no $p_{1}\in N_{l;t}$ such that $\frac{p}{p_{1}}=\hat{y}_{k}^{a}$ and $co_{p_{1}}(N_{l;t})\neq0$ for some $a\in\Z_{>0}$, then according to the mutation formulas, we know that there is $p\'$ correlated to $p$ (with respect to the mutation at direction $k$) satisfying $co_{p\'}(N^{t\'}_{l;t})=0$. Moreover, as $p\in N_{l;t}$, we have $p\'\in N_{l;t}^{t\'}$. Thus $N_{l;t}^{t\'}$ is not saturated, which contradicts our inductive assumption. Similarly, we obtain a contradiction if there is no $p_{1}\in N_{l;t}$ such that $\frac{p}{p_{1}}=\hat{y}_{k}^{a}$ and $co_{p_{1}}(N_{l;t})\neq0$ for some $a\in\Z_{<0}$.

  Therefore, there must be $p_{1},p_{2}\in N_{l;t}$ such that $\frac{p}{p_{1}}=\hat{y}_{k}^{a_{1}}$, $\frac{p}{p_{2}}=\hat{y}_{k}^{a_{2}}$ where $a_{1}\in \Z_{>0},a_{2}\in\Z_{<0}$ and $co_{p_{1}}(N_{l;t})co_{p_{2}}(N_{l;t})\neq 0$. If $deg_{x_{k}}(p)\geqslant 0$, then $deg_{x_{k}}(p_{1})\geqslant 0$ and $deg_{x_{k}}(p_{2})\geqslant 0$. Hence we always have $m_{k}(p)=co_{p}(N_{l;t})=0$ while $m_{k}(p_{1})=co_{p_{1}}(N_{l;t})\neq0$ and $m_{k}(p_{2})=co_{p_{1}}(N_{l;t})\neq0$, which contradicts Theorem \ref{properties in case tsss} (iv). Otherwise if $deg_{x_{k}}(p)< 0$, then due to the definition of $m_{k}$, we can find proper $p_{1}$ and $p_{2}$ satisfying $m_{k}(p_{1})m_{k}(p_{2})\neq 0$, which contradicts Theorem \ref{properties in case tsss} (iv). So in conclusion we get that $N_{l;t}$ is saturated.

  (ii)\; If there is a point $p\in N_{l;t}$ satisfying that $co_{p}(N_{l;t})=1$ but $p\notin V(N_{l;t})$, then according to the mutation formula (\ref{equation: mutation of x}) and (\ref{equation: mutation of y}), there is a point $p\'\in N_{l;t}^{t\'}$ correlated to $p$ with $co_{p\'}(N_{l;t}^{t\'})=1$. Hence $p\'\in V(N_{l;t}^{t\'})$, then due to the mutation formula (\ref{equation: mutation of x}), (\ref{equation: mutation of y}), Theorem \ref{properties in case tsss} (iv) and (i) of this theorem, either $p\in V(N_{l;t})$ or $co_{p}(N_{l;t})>1$, which contradicts our assumption.
\end{Proof}

In the context of categories of representations of a finite-dimensional algebra, the analogous results were given in \cite{F}. Moreover, Jiarui Fei conjectured the following statements hold there.

\begin{Conjecture} \label{F} \cite{F}\;
  Let $\A$ be a TSSS cluster algebra with principal coefficients. Then,

  (i)\;A point $p$ in the Newton polytope associated to any coefficient free cluster monomial is a vertex if and only if $co_{p}=1$.

  (ii)\;The support of the $F$-polynomial of any cluster monomial is saturated.
\end{Conjecture}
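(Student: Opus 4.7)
The plan is to deduce Conjecture \ref{F} as a corollary of the machinery already developed for the polytope functions $\rho_h$. The pivotal observation is Theorem \ref{properties in case tsss} (iv): every coefficient free cluster monomial $X_t^\alpha$ belongs to $\mathcal{P}$, so $X_t^\alpha=\rho_{g(X_t^\alpha)}$ and the Newton polytope $N(X_t^\alpha)$ of its $F$-polynomial coincides with $N_{g(X_t^\alpha)}$. Hence all combinatorial properties previously proved for the family $\{N_h:h\in H\}$ transfer directly to the Newton polytopes of $F$-polynomials of coefficient free cluster monomials.

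For part (i) of the conjecture, the implication ``vertex $\Rightarrow co_p=1$'' is immediate from Theorem \ref{properties in case tsss} (v) applied to the $0$-dimensional face $\{p\}$: such a face is isomorphic (weight-preservingly) to the polytope $N_{h'}$ of a rank-$0$ cluster algebra, which is a single point of weight $1$. For the reverse implication ``$co_p=1\Rightarrow p$ is a vertex'', I would adapt the mutation argument of Theorem \ref{properties for cluster variable case} (ii) from a single cluster variable to a cluster monomial. Induct on the distance $d(t,t_0)$ in $\T_n$; under a mutation $\mu_k$ at the initial seed, the Newton polytope and its weights transform in a controlled way by Theorem \ref{properties in case tsss} (ii) together with (\ref{mutation formula for h}), and a weight-one non-vertex point $p$ would descend to a weight-one non-vertex point in the Newton polytope over the adjacent seed, contradicting the inductive hypothesis (invoking Theorem \ref{properties in case tsss} (iii) in the step where the $m_k$-weights must be controlled).

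Part (ii) of the conjecture (saturation of the support of $F_{X_t^\alpha}$) is handled by the same inductive scheme, modeled on the proof of Theorem \ref{properties for cluster variable case} (i). The base case $t=t_0$ is trivial since $F=1$. For the inductive step, assume some lattice point $p\in N(X_t^\alpha)$ has $co_p=0$; split into two cases according to whether there exist lattice points $p_1,p_2$ in $N(X_t^\alpha)$ with $p/p_1=\hat y_k^{a_1}$ and $p/p_2=\hat y_k^{a_2}$ for some $a_1>0>a_2$ and $co_{p_1}co_{p_2}\ne 0$. If no such pair exists, the image of $p$ under the mutation to an adjacent initial seed furnishes a lattice point with zero coefficient in the Newton polytope there, contradicting the inductive hypothesis. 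If such a pair does exist, then $m_k(p_1),m_k(p_2)>0$ while $m_k(p)=0$, which directly contradicts the connectedness of $\{m_k>0\}$ along the $k$-th coordinate axis provided by Theorem \ref{properties in case tsss} (iii).

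The main obstacle is the careful adaptation of the single-variable mutation arguments to the setting of cluster monomials. Unlike a single cluster variable, a cluster monomial $X_t^\alpha$ is generally not a cluster monomial in the adjacent seed; what allows the induction to proceed is precisely the identification $X_t^\alpha=\rho_{g(X_t^\alpha)}$ together with the transformation rule (\ref{mutation formula for h}) for $g$-vectors under change of initial seed, so that the Newton polytopes on both sides of an initial-seed mutation remain of the form $N_h$ for some $h\in H$. Once this compatibility is in hand, the mutation-based contradiction arguments of Theorem \ref{properties for cluster variable case} close essentially as in the cluster variable case, yielding both parts of Conjecture \ref{F} simultaneously.
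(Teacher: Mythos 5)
Your strategy is sound, but it is not the route the paper takes. The paper's proof of Corollary \ref{answer to fei} is a two-line reduction to the single-variable case: since $F_{X_t^\alpha}=\prod_i F_{i;t}^{\alpha_i}$, the Newton polytope of the monomial is the Minkowski sum $N(X_t^\alpha)=\bigoplus_{i=1}^{n}\bigoplus_{j=1}^{\alpha_i}N_{i;t}$, weights multiply according to $co_q(N\oplus N')=\sum_{p+p'=q}co_p(N)\,co_{p'}(N')$, and both assertions then follow from Theorem \ref{properties for cluster variable case} (i)--(ii) together with elementary facts about vertices and lattice points of Minkowski sums. You instead treat the cluster monomial as a single element of $\mathcal{P}$, identify $N(X_t^\alpha)=N_{g(X_t^\alpha)}$, and rerun the mutation induction of Theorem \ref{properties for cluster variable case} on it directly; this is legitimate because every ingredient of that induction --- Theorem \ref{properties in case tsss} (ii) and (iii), the rule (\ref{mutation formula for h}) for how the polytope transforms under change of reference seed, and the fact that the polytope collapses to a point once the reference seed reaches $t$ --- applies verbatim to $\rho_{g(X_t^\alpha)}$. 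What your route buys is that you never need the Minkowski-sum bookkeeping (in particular the slightly delicate claim that a non-vertex lattice point of $N\oplus N'$ must carry weight at least $2$); what the paper's route buys is brevity, since Theorem \ref{properties for cluster variable case} is already in hand and the corollary becomes essentially formal. One small correction to your framing: under your induction the cluster monomial itself never changes --- it remains a monomial in $X_t$ throughout --- only the reference seed in which its Laurent expansion is taken moves, and that is precisely why the identification with $\rho_{g(X_t^\alpha)}$ together with (\ref{mutation formula for h}) is enough to keep every stage of the induction inside the family $\{N_h\}$.
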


In \cite{F}, this conjecture was proved to be true when the initial exchange matrix $B_{t_{0}}$ is acyclic and skew-symmetric. Here, as a direct corollary of Theorem \ref{properties for cluster variable case}, we give a positive answer to the conjecture.

\begin{Corollary}\label{answer to fei}
  Let $\A$ be a TSSS cluster algebra with principal coefficients and $t\in \T_{n}$. Then,

  (i)\;A point $p$ in the Newton polytope $N(X_{t}^{\alpha})$ associated to any cluster monomial $X_{t}^{\alpha}$ is a vertex if and only if $co_{p}(N_{g(X_{t}^{\alpha})})=1$ for any $\alpha\in\N^{n}$.

  (ii)\;The support of the $F$-polynomial $F_{X_{t}^{\alpha}}$ of any cluster monomial $X_{t}^{\alpha}$ is saturated.
\end{Corollary}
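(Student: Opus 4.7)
The key observation is that by Theorem \ref{properties in case tsss} (iv), every coefficient-free cluster monomial $X_{t}^{\alpha}$ lies in $\mathcal{P}$, so $X_{t}^{\alpha}=\rho_{g(X_{t}^{\alpha})}$ and therefore $N(X_{t}^{\alpha})=N_{g(X_{t}^{\alpha})}$. The plan is to recast both (i) and (ii) as assertions about the polytope $N_{h}$ for the specific vector $h=g(X_{t}^{\alpha})\in H$, and then redeploy the inductive arguments of Theorem \ref{properties for cluster variable case} (i) and (ii) to this polytope, since those arguments never used anything specific about a single cluster variable beyond the fact that its Newton polytope equals $N_{h}$ for some $h$.

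For (ii), I would rerun the same induction on the length of the path connecting $t_{0}$ and $t$ in $\T_{n}$ that appears in the proof of Theorem \ref{properties for cluster variable case} (i). The base case $t=t_{0}$ is trivial since $F_{X_{t_{0}}^{\alpha}}=1$. For the inductive step at an initial vertex $t_{0}$ whose neighbor $t'$ on the path to $t$ is connected by an edge labeled $k$, assume for contradiction that some lattice point $p\in N(X_{t}^{\alpha})$ satisfies $co_{p}=0$. If no $p_{1}\in N(X_{t}^{\alpha})$ with $co_{p_{1}}\neq 0$ satisfies $p/p_{1}=\hat{y}_{k}^{a}$ for some $a\in\Z_{>0}$, then the mutation formulas (\ref{equation: mutation of x}) and (\ref{equation: mutation of y}) produce a lattice point $p'\in N^{t'}(X_{t}^{\alpha})$ with $co_{p'}=0$, contradicting the inductive hypothesis applied at the initial vertex $t'$; the case $a\in\Z_{<0}$ is symmetric. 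Otherwise, the presence of both $p_{1},p_{2}\in N(X_{t}^{\alpha})$ with $co_{p_{i}}\neq 0$ and $p/p_{i}=\hat{y}_{k}^{a_{i}}$, $a_{1}>0>a_{2}$, supplies a segment parallel to $e_{k}$ passing through $p$ with $m_{k}>0$ at both endpoints but $m_{k}(p)=0$, contradicting Theorem \ref{properties in case tsss} (iii). The inputs Theorem \ref{properties in case tsss} (ii) and (iii) apply to every $\rho_{h}$, so the argument transfers from $\rho_{g_{l;t}}$ to $\rho_{g(X_{t}^{\alpha})}$ without any extra work.

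For (i), the forward direction is an immediate consequence of Theorem \ref{properties in case tsss} (v): a vertex of $N(X_{t}^{\alpha})$ is a $0$-dimensional face, hence isomorphic to $N_{h'}|_{\A'}$ for a rank-$0$ principal-coefficients cluster algebra $\A'$, whose unique lattice point carries weight $1$. For the converse, I would mimic the proof of Theorem \ref{properties for cluster variable case} (ii): if a non-vertex $p\in N(X_{t}^{\alpha})$ satisfied $co_{p}=1$, pulling $p$ back to $N^{t'}(X_{t}^{\alpha})$ along the mutation $\mu_{k}$ would produce a point $p'$ with $co_{p'}=1$, which by the inductive hypothesis would be a vertex of $N^{t'}(X_{t}^{\alpha})$; then the combination of the mutation formulas, Theorem \ref{properties in case tsss} (iii), and part (ii) just established forces either $p\in V(N(X_{t}^{\alpha}))$ or $co_{p}>1$, a contradiction.

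The main obstacle is confirming that the induction on path distance propagates uniformly over arbitrary exponents $\alpha\in\N^{n}$ rather than just over standard basis vectors, since Theorem \ref{properties for cluster variable case} was phrased for individual cluster variables $x_{l;t}$. This is settled by Theorem \ref{properties in case tsss} (ii): the identity $L^{t_{k}}(\rho_{h})=\rho^{t_{k}}_{h^{t_{k}}}$ applied to $h=g(X_{t}^{\alpha})$ says that the Newton polytope of a cluster monomial mutates exactly as the polytope of a single cluster variable does, so the inductive hypothesis at $t'$ bears on the whole cluster monomial and the proof closes.
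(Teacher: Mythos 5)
Your proof is correct, but it takes a genuinely different route from the paper's. The paper's proof is a two-line reduction to the single-variable case: since $F_{X_{t}^{\alpha}}=\prod_{i}F_{i;t}^{\alpha_{i}}$, one has $N(X_{t}^{\alpha})=\bigoplus_{i=1}^{n}(\bigoplus_{j=1}^{\alpha_{i}}N_{i;t})$ as a Minkowski sum, and then (i) follows because a point of a Minkowski sum is a vertex iff it decomposes uniquely as a sum of vertices of the summands, while (ii) follows by decomposing each lattice point of the sum as a sum of lattice points of the summands and invoking Theorem \ref{properties for cluster variable case} together with positivity. You instead identify $N(X_{t}^{\alpha})$ with $N_{g(X_{t}^{\alpha})}$ via Theorem \ref{properties in case tsss} (iv) and re-run the path-length induction of Theorem \ref{properties for cluster variable case} verbatim for the monomial, with base case $F^{t}_{X_{t}^{\alpha}}=1$, using Theorem \ref{properties in case tsss} (ii) to propagate the polytope under change of initial seed and (iii) plus (v) for the contradiction steps; your closing observation that $L^{t_{k}}(\rho_{h})=\rho^{t_{k}}_{h^{t_{k}}}$ for $h=g(X_{t}^{\alpha})$ is exactly what makes the induction close for arbitrary $\alpha$. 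The trade-off: the paper's argument is shorter given Theorem \ref{properties for cluster variable case}, but its saturation step silently requires that every \emph{lattice} point of $N\oplus N'$ be a sum of lattice points of $N$ and $N'$, which is not automatic for arbitrary lattice polytopes and has to be supplied by the special structure of these Newton polytopes; your induction sidesteps that subtlety entirely, at the cost of repeating an argument already in the paper. Both are valid proofs.
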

\begin{Proof}
  (i)\; According to the definition of Minkowski sum, a point $q\in N\oplus N\'$ is a vertex if and only if there is unique vertices $p\in N$ and $p\'\in N\'$ such that $q=p+p\'$. Then by Theorem \ref{properties for cluster variable case} (ii), $p\in V(N(X_{t}^{\alpha}))$ is equivalent to $co_{p}=1$ since $N(X_{t}^{\alpha})=\bigoplus\limits_{i=1}^{n}(\bigoplus\limits_{j=1}^{\alpha_{i}}N_{i;t})$.

  (ii)\; For any point $q\in N\oplus N\'$, there are points $p\in N$ and $p\'\in N\'$ such that $q=p+p\'$. If the supports of $N$ and $N\'$ are both saturated, then $co_{p}(N),co_{p\'}(N\')\neq 0$. So (ii) is induced by Theorem \ref{properties for cluster variable case} (i) and the universally positivity of any cluster variable since $N(X_{t}^{\alpha})=\bigoplus\limits_{i=1}^{n}(\bigoplus\limits_{j=1}^{\alpha_{i}}N_{i;t})$.
\end{Proof}

However, Theorem \ref{properties for cluster variable case} does not hold for general $N_{h}$ with arbitrary $h\in\Z^{n}$. Next we present a counterexample when $h$ is not a $g$-vector of some cluster monomial.

\begin{Example}
  In Example \ref{eg in rank 2}, $\rho_{h}$ is not a cluster monomial since it can never be expressed as a polynomial in certain cluster. $N_{h}$ is a triangle given in Example \ref{eg in rank 2} with the lengths of two right-angle sides equaling 5 and 8 respectively. We can see that   $co_{(4,6)}(N_{h})=0$ and $co_{(1,1)}(N_{h})=1$. However, both $(4,6)$ and $(1,1)$ lie in the interior of $N_{h}$, which indicates neither (i) nor (ii) of Theorem \ref{properties for cluster variable case} holds for $N_{h}$.
\end{Example}

\begin{Proposition}
  Let $\A$ be a TSSS cluster algebra having principal coefficients, $l,r\in[1,n],t\in\T_{n}$ and $S$ be an $r$-lattice-dimensional face of $N_{l;t}$. Then there is a cluster algebra $\A\'$ with principal coefficients of rank $r$, a Newton polytope $S\'$ corresponding to some coefficient-free cluster monomial in $\A\'$ and a non-negative projection $\tau$ from $S\'$ to $S$.
\end{Proposition}

\begin{Proof}
  Because of Theorem \ref{properties in case tsss} (v), there is a non-negative projection from $N_{h\'}|_{\A\'}$ to $S$ for some $h\'\in\Z^{r}$. So we only need to show that $N_{h\'}|_{\A\'}$ equals the Newton polytope associated to some coefficient free cluster monomial in $\A\'$.

  We prove the proposition by induction on the length of the path connecting $t$ and $t_{0}$. It is trivial when the length is $0$, i.e., $t_{0}=t$. Assume it is true for $t\'\in\T_{n}$. We claim that it is true for $t_{0}\in\T_{n}$ connected to $t\'$ by an edge labeled $k\in[1,n]$.

  According to the mutation formula (\ref{equation: mutation of x}) and (\ref{equation: mutation of y}), any face $S$ of $N_{l;t}$ correlates to a face $S^{\prime\prime}$ of $N^{t\'}_{l;t}$, which is a projection of the Newton polytope $N$ of some coefficient free cluster monomial in $\A\'$ due to the inductive assumption. Denote this projection by $\tau\'$. Since $S$ correlates to $S^{\prime\prime}$ with respect to the mutation in direction $k$, it is either a projection of $N$ when $\tilde{\tau}\'(e_{i})\neq e_{k}$ for any $i$ or of $\mu_{i}(N)$ when there is $i$ such that $\tilde{\tau}\'(e_{i})=e_{k}$. In both case, $S$ is a projection of the Newton polytope associated to some coefficient free cluster monomial.
\end{Proof}

\vspace{4mm}

\section{Relations among $F$-polynomials, $d$-vectors, $g$-vectors and cluster variables}

In this section, we will use the results in the last two sections to investigate some concrete relations among $F$-polynomials, $d$-vectors, $g$-vectors and cluster variables in a totally sign-skew-symmetric cluster algebra.

\subsection{From $F$-polynomials to $g$-vectors, cluster variables and $d$-vectors with positivity}\quad

\begin{Lemma}\label{non-maximal and non-minimal}
  Let $\A$ be a cluster algebra having principal coefficients. Denote by $p$ a vertex of $N_{h}$, and $p\'$ another vertex of the $k$-section at $p$ for some $k\in[1,n]$. If $l(\overline{pp\'})>[-deg_{x_{k}}(p)]_{+}$, then $-d_{k}(\rho_{h})<deg_{x_{k}}(p)<deg_{x_{k}}(\rho_{h})$, where $d_{k}(\rho_{h})$ denotes the $k$-th element of the denominator vector of $\rho_{h}$.
\end{Lemma}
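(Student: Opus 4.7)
The plan is to argue by contradiction for each strict inequality, treating them in parallel through the mutation symmetry afforded by Theorem~\ref{properties in case tsss}(ii). First I would record the key geometric preliminary: since $b_{kk}=0$, the exponent $\deg_{x_k}(\hat Y^qX^h)=h_k+\sum_{j\ne k}b_{kj}q_j$ is independent of $q_k$, so $\deg_{x_k}$ is constant along every $k$-section. Writing $D:=\deg_{x_k}(p)$ and WLOG taking $p'=p+le_k$, every lattice point of $\overline{pp'}$ carries the common value $D$.

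For the lower bound $D>-d_k(\rho_h)$, I would assume $D=-d_k(\rho_h)$, so $p$ attains the minimum $x_k$-degree; the entire $k$-section then lies in the face $F_{\min}=\{q\in N_h:\deg_{x_k}(\hat Y^qX^h)=D\}$, which is parallel to $e_k$. Setting $a:=d_k(\rho_h)=[-D]_+$ and substituting $x_k=M_{k;t_0}/x_{k;t_k}$, the contribution of $F_{\min}$ to $\rho_h$ becomes the top $x_{k;t_k}^a$-coefficient of $L^{t_k}(\rho_h)=\rho^{t_k}_{h^{t_k}}$, divided globally by $M_{k;t_0}^a$. Theorem~\ref{properties in case tsss}(ii) asserts this mutated expression lies in $\N[Y_{t_k}][X_{t_k}^{\pm 1}]$, so the $F_{\min}$-sum must be divisible by $M_{k;t_0}^a$ inside $\Z[Y][X_{t_0}\setminus\{x_k\}]$. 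Invoking the indecomposability of $\rho_h$ from Theorem~\ref{properties in case tsss}(i) and the minimality of Construction~\ref{construction}, this divisibility can only be realized by attaching to each vertex $q$ of $F_{\min}$ the canonical $e_k$-complement $X^h\hat Y^q(1+\hat y_k)^a=X^h\hat Y^qM_{k;t_0}^a/\prod_{i\ne k}x_i^{a[-b_{ik}]_+}$, which occupies an $e_k$-parallel segment of length exactly $a$. A strictly longer $k$-section at $q$ would produce an excess factor $g(\hat y_k)$ of positive degree, allowing a strict splitting of $\rho_h$ into two nonzero summands in $\widehat{\U}^+_{\ge 0}(\Sigma)$, violating indecomposability. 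Applied at $p$, this gives $l\le[-D]_+$, contradicting the hypothesis.

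For the upper bound $D<\deg_{x_k}(\rho_h)$, I would pass to the mutated polytope via Theorem~\ref{properties in case tsss}(ii): the vertex $p$ transports to a vertex $\tilde p$ of $N^{t_k}_{h^{t_k}}$ with a $k$-section of the same length $l$, because $\hat y_k^{t_k}=\hat y_k^{-1}$ reflects only the $k$-th coordinate. Under $\mu_k$ the extremal degrees swap: $\deg_{x_{k;t_k}}(\tilde p)=-D$ and $d_k(\rho^{t_k}_{h^{t_k}})=\deg_{x_k}(\rho_h)$, so if $D=\deg_{x_k}(\rho_h)$ then $\tilde p$ attains the minimum in $N^{t_k}$. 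Applying the lower-bound argument inside $N^{t_k}$ at $\tilde p$ yields $l\le[-\deg_{x_{k;t_k}}(\tilde p)]_+=[D]_+$; combined with $l>[-D]_+$, the cases $D\le 0$ give the immediate numeric contradiction $0<-D<l\le 0$, while $D>0$ is excluded by the separate structural claim that the maximum face of $\deg_{x_k}$ at a positive value cannot contain an $e_k$-parallel edge. I would verify this last claim in rank $2$ directly from Remark~\ref{shape for rank 2}: a direct calculation shows that the conditions ``$v_4\ne v_5$'' (top edge parallel to $e_k$ nontrivial) and ``$\max\deg_{x_k}>0$'' impose strictly opposite inequalities on $(bc-1)(-h_1)$ vs.\ $bh_2$ and hence cannot both hold; for higher rank I would induct using Theorem~\ref{properties in case tsss}(v), identifying $F_{\max}$ with some $N_{h''}|_{\A''}$ of lower rank, and appealing to the inductive hypothesis on the positive-degree max.

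The principal technical obstacle will be making rigorous the minimality statement in the lower bound -- namely, that at any min-face vertex the $k$-section has length exactly $[-D]_+$ rather than strictly more -- together with the parallel structural claim for $D>0$ in the upper bound. Both will require a double induction on the rank $n$ and on $\dim N_h$: the rank-$2$ base case is handled by direct inspection of the shapes classified in Remark~\ref{shape for rank 2} in conjunction with Lemma~\ref{m_j in rank 2}, while the inductive step uses the face identification of Theorem~\ref{properties in case tsss}(v) that writes $F_{\min}$ (respectively $F_{\max}$) as a polytope $N_{h'}|_{\A'}$ in a lower-rank principal-coefficient cluster algebra, with the condition $\tilde\tau(e_i)\in\N^n$ pinpointing a unique $i_0$ for which $\tilde\tau(e_{i_0})=e_k$ so that the $k$-section transports coherently to an $i_0$-section in $N_{h'}|_{\A'}$.
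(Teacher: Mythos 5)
Your proposal is workable, but it takes a noticeably longer route than the paper for one half of the statement, and the detour is what creates your ``principal technical obstacle.'' The paper's proof is essentially three lines: (1) since $\deg_{x_k}(\hat Y^qX^h)=h_k+\sum_j b_{kj}q_j$ is an affine function of $q$ (and is constant along $k$-sections because $b_{kk}=0$), the level set $K$ of $\deg_{x_k}$ through $p$ contains $\overline{pp'}$, and the level sets at the \emph{extremal} values $-d_k(\rho_h)$ and $\deg_{x_k}(\rho_h)$ are automatically faces of $N_h$; (2) if $K$ were a face, Theorem \ref{properties in case tsss}(v) identifies it with some $N_{h'}|_{\A'}$, and by the construction of $N_{h'}$ the $k$-section at any vertex has length exactly $[-\deg_{x_k}(p)]_+$, contradicting the hypothesis; (3) hence $K$ is not a face and $\deg_{x_k}(p)$ is neither extremal, giving both strict inequalities at once. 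Your lower-bound argument is this same mechanism in disguise: the divisibility-by-$M_k^a$ plus indecomposability reasoning is a hands-on substitute for ``$F_{\min}$ is a face $\cong N_{h'}$ with $k$-sections of length $[-D]_+$,'' and as you yourself note, making it rigorous forces you back to the face identification of Theorem \ref{properties in case tsss}(v) anyway. Where you genuinely diverge is the upper bound: by passing through $\mu_k$ you convert max into min, but since $[-\deg_{x_{k;t_k}}(\tilde p)]_+=[D]_+$ this only yields a contradiction when $D\leqslant 0$, and you must then supply the separate claim that a positive-valued maximal face carries no $e_k$-parallel edge. That claim is true (your rank-$2$ computation is correct: the top edge of $N_h$ has length exactly $[-\mathrm{maxdeg}]_+$, so positivity of the maximal degree kills it), but it is again just the statement that the maximal level set is a face whose $k$-sections have length $[-\deg_{x_k}]_+=0$ --- i.e., the symmetric observation the paper exploits directly, without mutation. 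So nothing in your outline is wrong, but recognizing that the maximal level set of an affine function is a face, on equal footing with the minimal one, collapses your two-case analysis and the extra structural claim into the paper's single argument.
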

\begin{Proof}
  There is nothing to say when the dimension is 0 or 1 since the assumption never holds. So we assume the dimension is at least 2.

  Let $K$ be the maximal section of $N_{h}$ at $p$ satisfying that $deg_{x_{k}}(q)=deg_{x_{k}}(p)$ for any point $q\in K$. Obviously, the segment connecting $p$ and $p\'$ lies in $K$. If $K$ is a face of $N_{h}$, then by Theorem \ref{properties in case tsss} (v), $K$ equals a polytope $N_{h\'}$ for some $h\'\in\Z^{r}$ with $r\leqslant n$ up to a translation. So according to the definition of $N_{h\'}$, we always have $l(\overline{pp\'})=[-deg_{x_{k}}(p)]_{+}$, which contradicts our assumption. Hence $K$ can not be a face of $N_{h}$.

  On the other hand, by Theorem \ref{expression of a cluster variable}, the $x_{k}$-degree of a point in $N_{h}$ linearly depends on its coordinates, which leads to that the convex hull of the set of points with maximal or minimal $x_{k}$-degree is a face of $N_{h}$.

  Therefore, combining the above facts we get $-d_{k}(\rho_{h})<deg_{x_{k}}(p)<deg_{x_{k}}(\rho_{h})$.
\end{Proof}

According to the definitions of $N_{h}$ and $\rho_{h}$, for any $h\in H$ and $k\in [1,n]$, we have the following $x_{k}$-degree decomposition:
\[\rho_{h}=\sum\limits_{i=-d_{k}(\rho_{h})}^{deg_{x_{k}}(\rho_{h})}x_{k}^{i}\rho_{h}^{(i)} =\sum\limits_{i=-d_{k}(\rho_{h})}^{deg_{x_{k}}(\rho_{h})}x_{k}^{i}M_{k}^{[-i]_{+}}\rho_{h}^{(i;k)},\]
where $\rho_{h}^{(i)}=M_{k}^{[-i]_{+}}\rho_{h}^{(i;k)}$ and $\rho_{h}^{(i;k)}$ is a Laurent polynomial in $\N\P[x_{1}^{\pm},\cdots,x_{k-1}^{\pm},x_{k+1}^{\pm},\cdots,x_{n}^{\pm}]$.

\begin{Lemma}\label{P(0)}
  Let $\A$ be a cluster algebra with principal coefficients and $h\in H$. Then for any $k\in [1,n]$, we have

  (i)\;$M_{k}\nmid \rho_{h}^{(-d_{k}(\rho_{h});k)}$;\;\;\;
  (ii)\;$M_{k}\nmid \rho_{h}^{(deg_{x_{k}}(\rho_{h});k)}$. \;\;
\end{Lemma}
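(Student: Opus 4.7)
The plan is to prove (i) and (ii) in parallel by contradiction, combining the Newton polytope structure of $N_h$ with Lemma~\ref{non-maximal and non-minimal}.

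Suppose $M_k \mid \rho_h^{(i_0;k)}$, where $i_0 = -d_k(\rho_h)$ in case (i) and $i_0 = \deg_{x_k}(\rho_h)$ in case (ii); in both cases $i_0$ is an extremal $x_k$-exponent of $\rho_h$, so the set
\[
S := \bigl\{p \in N_h : \deg_{x_k}(\hat Y^p X^h) = i_0\bigr\}
\]
is a face of $N_h$, cut out by the affine linear functional $p \mapsto \sum_i p_i b_{ki} + h_k$. Writing $M_k = X^{[-b_k]_+}(1 + \hat y_k)$ and combining
\[
\rho_h^{(i_0)} = x_k^{i_0} M_k^{[-i_0]_+}\rho_h^{(i_0;k)} = X^h\sum_{p\in S} co_p(N_h)\hat Y^p,
\]
the hypothesis that $\rho_h^{(i_0;k)}$ carries an extra factor of $M_k$ is equivalent, after cancelling a monomial in the $x_j$ with $j\neq k$, to the polynomial divisibility
\[
(1+\hat y_k)^{[-i_0]_+ + 1} \,\Big|\, \Phi_S(\hat Y) := \sum_{p\in S} co_p(N_h)\hat Y^p \quad\text{in }\Z[\hat Y].
\]

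Grouping $\Phi_S$ by the exponents of $\hat y_j$ for $j\neq k$, write $\Phi_S(\hat Y) = \sum_q \hat Y_{\neq k}^{\,q}\, g_q(\hat y_k)$. The displayed divisibility forces each nonzero $g_q$ to be divisible by $(1+\hat y_k)^{[-i_0]_+ + 1}$, and therefore $\deg_{\hat y_k} g_q - \mathrm{mindeg}_{\hat y_k} g_q \geq [-i_0]_+ + 1$. Reinterpreting geometrically, every nonempty fiber of the projection $\pi_k \colon S \to \R^{n-1}$ forgetting the $k$-th coordinate has length at least $[-i_0]_+ + 1$. Pick a vertex $q^*$ of $\pi_k(S)$; its fiber in $S$ is itself a face of $S$, hence of $N_h$, and has length at least $[-i_0]_+ + 1$. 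Let $p, p'$ be its two endpoints, which are then vertices of $N_h$ with $p_j = p'_j$ for $j \neq k$ and $|p_k - p'_k| \geq [-i_0]_+ + 1$. Since $\hat y_k$ has $x_k$-degree $b_{kk} = 0$, moving in the $e_k$-direction does not change $\deg_{x_k}$, so every point of $N_h$ whose non-$k$ coordinates equal $q^*$ already lies in $S$; hence $p$ and $p'$ are precisely the endpoints (and vertices) of the $1$-dimensional $e_k$-section of $N_h$ through $p$.

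Now apply Lemma~\ref{non-maximal and non-minimal} to this $p$ and $p'$: using $\deg_{x_k}(p) = i_0$,
\[
l(\overline{pp'}) \geq [-i_0]_+ + 1 > [-i_0]_+ = [-\deg_{x_k}(p)]_+,
\]
so the lemma yields $-d_k(\rho_h) < \deg_{x_k}(p) < \deg_{x_k}(\rho_h)$. But the defining property of $S$ forces $\deg_{x_k}(p) = i_0$ to equal one of the two extrema $-d_k(\rho_h)$ or $\deg_{x_k}(\rho_h)$, contradicting the strict inequality in either case. Hence $M_k \nmid \rho_h^{(i_0;k)}$, giving both (i) and (ii). The main subtlety is in the translation step: one must carefully check that the algebraic divisibility $M_k \mid \rho_h^{(i_0;k)}$ in $\N\P[X^{\pm 1}]$ transfers to divisibility of $\Phi_S$ by $(1+\hat y_k)^{[-i_0]_+ + 1}$ with exactly the right exponent, uniformly across $i_0 \geq 0$ and $i_0 < 0$ via $[-i_0]_+$, while keeping track of the monomial factor $X^{[-b_k]_+}$.
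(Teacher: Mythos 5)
Your argument is correct and is essentially the paper's proof run in the contrapositive direction: both reduce to the extremal face where $\deg_{x_k}$ equals $-d_k(\rho_h)$ or $\deg_{x_k}(\rho_h)$, identify a vertex $p$ there whose $k$-section would have to have length at least $[-\deg_{x_k}(p)]_+ + 1$ if the extra factor of $M_k$ were present, and invoke Lemma~\ref{non-maximal and non-minimal} to rule this out. Your version merely spells out in more detail the dictionary between $M_k$-divisibility and fiber lengths that the paper leaves implicit.
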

\begin{Proof}
  In fact this is a direct corollary of Lemma \ref{non-maximal and non-minimal}.

  Note that the $x_{k}$-degree of a summand Laurent monomial of $\rho_{h}$ linearly depends on the coordinates of its corresponding point in $N_{h}$. Hence there must be a vertex $p$ in $N_{h}$ such that $\hat{Y}^{p}X^h$ has maximal or minimal $x_{k}$-degree. Denote by $p\'$ the other vertex of the $k$-section at $p$. Then by Lemma \ref{non-maximal and non-minimal}, $l(\overline{pp\'})\leqslant[-deg_{x_{k}}(p)]_{+}$. Hence there is no summand $P$ of $\rho_{h}$ such that $p$ is a summand of $P$ and $M_{k}^{[-deg_{x_{k}}(p)]_{+}+1}\mid P$.
  In particular,
  $$M_{k}^{[d_{k}(\rho_{h})]_{+}+1}\nmid x_{k}^{-d_{k}(\rho_{h})}M_{k}^{[d_{k}(\rho_{h})]_{+}}\rho_{h}^{(-d_{k}(\rho_{h});k)}$$
  and
  $$M_{k}^{[-deg_{x_{k}}(\rho_{h})]_{+}+1}\nmid x_{k}^{deg_{x_{k}}(\rho_{h})}M_{k}^{[-deg_{x_{k}}(\rho_{h})]_{+}}\rho_{h}^{(-deg_{x_{k}}(\rho_{h});k)},$$ i.e., $M_{k}\nmid \rho_{h}^{(-d_{k}(\rho_{h});k)}$ and $M_{k}\nmid \rho_{h}^{(deg_{x_{k}}(\rho_{h});k)}$.
\end{Proof}

\begin{Example}
  (i)\;   When $-d_{k}(\rho_{h})<s<deg_{x_{k}}(\rho_{h})$, there can be $M_{k}\mid \rho_{h}^{(s;k)}$. Let the initial seed be as that in Example \ref{example}, then in the seed $\mu_{3}\circ\mu_{1}\circ\mu_{2}((X,\tilde{B}))$ there is a cluster variable
  \[\rho_{(0,0,-1)}=\frac{y_{3}(y_{2}^{3}y_{1}x_{2}^{3}x_{3}^{2}+(y_{2}x_{3}^{6}+x_{1}^{3})^{3})^{2}+x_{1}^{2}x_{2}^{6}}{x_{1}^{2}x_{2}^{6}x_{3}}.\]
  Hence $-d_{1}(\rho_{(0,0,-1)})=-2$ and $deg_{x_{1}}(\rho_{(0,0,-1)})=16$. Choose $s$ to be 1, the summand with all $x_{1}$-degree 1 terms is $x_{1}\cdot6(y_{1}x_{2}^{3}+x_{3}^{16})y_{2}^{5}y_{3}x_{2}^{-6}x_{3}^{14}$. So $\rho_{(0,0,-1)}^{(1;1)}=6(y_{1}x_{2}^{3}+x_{3}^{16})y_{2}^{5}y_{3}x_{2}^{-6}x_{3}^{14}$. Since $M_{1}=y_{1}x_{2}^{3}+x_{3}^{16}$, we have $M_{1}\mid\rho_{(0,0,-1)}^{(1;1)}$ in this case.

  (ii)\;  Lemma \ref{P(0)} does not hold for any semifield $\P$. One counterexample is given in \cite{FK}. Let $\P=\{1\}$ and let the initial seed be $(X,B)$, where
    \[B=\left(\begin{array}{ccc}
                0 & 2 & -1 \\
                -2 & 0 & 1 \\
                1 & -1 & 0
              \end{array}\right).\]
  Then in seed $\mu_{1}\circ\mu_{2}\circ\mu_{3}((X,B))$ there is a cluster variable
  $\rho_{(0,0,-1)}|_{\P}=\frac{x_{1}^{2}+2x_{1}x_{2}+x_{2}^{2}+x_{3}}{x_{1}x_{2}x_{3}}.$

  We can see that $-d_{3}(\rho_{(0,0,-1)})=-1$,$M_{3}=x_{1}+x_{2}$ and $\rho_{(0,0,-1)}^{(-1;3)}=M_{3}$. So $M_{3}\mid \rho_{(0,0,-1)}^{(-d_{3}(\rho_{(0,0,-1)});3)}$.
\end{Example}

For any $i\in[1,n]$ and $t\in\T_{n}$, define a map
\[\phi_{i}^{t}:\; \Z\P[x_{1;t},\cdots,x_{n;t}]\longrightarrow \Z\P[x_{1;t},\cdots,x_{i-1;t},x_{i+1;t},\cdots,x_{n;t}]\]
such that $\phi_{i}^{t}(P)=P\mid_{x_{i;t}\rightarrow 0}$.
\begin{Theorem}\label{d-vector and lengths}
  Let $\A$ be a TSSS cluster algebra having principal coefficients and $h\in H$. Then $[d_{k}(\rho_{h})]_{+}$ equals the maximal length of edges of $N_{h}$ parallel to the $k$-th coordinate axis for any $k\in [1,n]$.
\end{Theorem}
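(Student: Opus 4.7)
The plan is to prove the two inequalities making up the claimed equality $[d_k(\rho_h)]_+=\ell_k$, writing $\ell_k$ for the maximal length of edges of $N_h$ parallel to $e_k$. The starting geometric fact is that, because $B$ is sign-skew-symmetric, $b_{kk}=0$ and hence the $x_k$-degree $h_k+\sum_j b_{kj}p_j$ of $\hat{Y}^pX^h$ is independent of $p_k$. Consequently the face $S_{\min}$ of $N_h$ on which this minimum is attained, with value $-d_k(\rho_h)$, is cylindrical in the $e_k$-direction: whenever $q\in S_{\min}$, the entire $k$-section of $N_h$ at $q$ lies in $S_{\min}$.

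For the lower bound $\ell_k\geq[d_k(\rho_h)]_+$ I will assume $d_k>0$ (the other case being trivial) and exhibit an edge of $N_h$ parallel to $e_k$ of length $[d_k(\rho_h)]_+$. At any vertex $p$ of $S_{\min}$, the level set of the $x_k$-degree functional through $p$ coincides with the face $S_{\min}$ itself; the argument used in the proof of Lemma \ref{non-maximal and non-minimal} then shows the $k$-section of $N_h$ at $p$ has length exactly $[-\deg_{x_k}(p)]_+=[d_k(\rho_h)]_+$. The cylinder structure forces the other endpoint of this section to be again a vertex of $S_{\min}$, and hence of $N_h$, so the section is a $1$-face of $S_{\min}$, and therefore an edge of $N_h$, of length $[d_k(\rho_h)]_+$.

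For the upper bound $\ell_k\leq[d_k(\rho_h)]_+$ I will take an arbitrary edge $E$ of $N_h$ parallel to $e_k$, with endpoints $p,p'$ and common $x_k$-degree $s$ along $E$. If $E\subseteq S_{\min}$ (so $s=-d_k$) or $E\subseteq S_{\max}$, the analogous face where the maximum $x_k$-degree is attained (so $s=\deg_{x_k}(\rho_h)$), then Lemma \ref{non-maximal and non-minimal} at once gives $l(\overline{pp'})=[-s]_+$, and the bound $[-\deg_{x_k}(\rho_h)]_+\leq[d_k(\rho_h)]_+$ follows from the range inequality $\deg_{x_k}(\rho_h)\geq -d_k(\rho_h)$. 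For an edge lying in neither extremal face, I will invoke Theorem \ref{properties in case tsss} (v) to identify $E$ with $N_{h'}|_{\A'}$ for a rank-$1$ pure sub-cluster algebra $\A'$ with $\tilde\tau(e_1)=e_k$, so that the length of $E$ is $[-h']_+$, and then induct on the rank $n$: the stratum decomposition in Construction \ref{construction} expresses $N_h$ as a sum of translates of polytopes $N_{\alpha_j}|_{\A_r}$ of rank $n-1$, each edge of $N_h$ parallel to $e_k$ (with $k\neq r$) lifts to an edge parallel to $e_k$ in some $N_{\alpha_j}$ whose length is bounded by $[d_k(\rho_{\alpha_j})]_+$ by the inductive hypothesis, and tracking the recurrence $x_i\rho_{\pi_r(h)}=\sum c_{w,\alpha}Y^w\rho_\alpha$ together with the divisibility from Lemma \ref{P(0)} bounds this in turn by $[d_k(\rho_h)]_+$.

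The main obstacle is precisely this middle case of the upper bound: Lemma \ref{non-maximal and non-minimal} does not directly control edges parallel to $e_k$ that avoid both $S_{\min}$ and $S_{\max}$. I expect the cleanest resolution to be the rank induction sketched above, combined with Theorem \ref{properties in case tsss} (v) for the rank-$1$ identification of each such edge and with the divisibility of Lemma \ref{P(0)} to control how $d_k$ propagates through the recursive construction of $\rho_h$ in Construction \ref{construction}.
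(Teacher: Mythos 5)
Your lower bound and your treatment of edges lying in the extremal faces are sound and follow the route the paper intends: the paper's own proof is a one-line citation of Theorem \ref{properties in case tsss} (v), Lemma \ref{P(0)} and the existence of a vertex of minimal $x_k$-degree, and your argument via the minimizing face $S_{\min}$ of the functional $p\mapsto h_k+\sum_j b_{kj}p_j$ is exactly the intended mechanism. Two small points there deserve explicit justification rather than assertion: (1) the equality $l=[d_k(\rho_h)]_+$ of the $k$-section at a vertex of $S_{\min}$ needs the inequality $\geqslant$ as well, which comes from the divisibility $M_k^{[d_k(\rho_h)]_+}\mid\rho_h^{(-d_k(\rho_h))}$ (equivalently, the complement of $\hat{Y}^pX^h$ in direction $k$ being a summand), not from Lemma \ref{non-maximal and non-minimal} alone; and (2) "the cylinder structure forces the section to be an edge" should be spelled out: since every fiber of $S_{\min}$ under the projection along $e_k$ has the same length $[d_k(\rho_h)]_+$, convexity forces the lower boundary of $S_{\min}$ to be affine, so $S_{\min}$ is a prism over its projection and the fibers over vertices of the projection are genuine $1$-faces.

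The genuine gap is the one you flag yourself: the upper bound for an edge $E$ parallel to $e_k$ whose constant $x_k$-degree $s$ is strictly between $-d_k(\rho_h)$ and $\deg_{x_k}(\rho_h)$. Lemma \ref{non-maximal and non-minimal} is silent there, and your proposed rank induction rests on the claim that such an edge of $N_h=\sum_j N_{\alpha_j}[w_j]$ "lifts to an edge parallel to $e_k$ in some $N_{\alpha_j}$". That step is not justified: the operation $+$ on polytopes is the convex hull of the union (not a Minkowski sum), and a face of a convex hull of a union of polytopes need not be a face of any constituent — even after restricting to the slice $z_r=s$ containing $E$, the slice is itself a sum $\sum_{j\in J}\gamma_{r;s}(N_{\alpha'_j}[w'_j])$ of several translated polytopes, and $E$ may join vertices of different summands. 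You would need to use the specific combinatorics of the strata (e.g.\ Theorem \ref{properties in case tsss} (iii) on the interval structure of $\{m_k>0\}$ in each $k$-section, together with the fact that each point with $m_k(q)>0$ emits a complement segment of length exactly $[-s]_+$) to control how long a $k$-section at level $s$ can be, and this is precisely the content that neither your sketch nor the paper's one-sentence proof actually supplies. Also note that the reduction "length of $E$ equals $[-h']_+$ via Theorem \ref{properties in case tsss} (v)" does not by itself help: $h'$ is a vector for a pure sub-cluster algebra based at a possibly different seed $\Sigma$, so $[-h']_+$ is not directly comparable to $d_k(\rho_h)$, which is computed at $t_0$.
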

\begin{Proof}
  Denote by $l_k$ the maximal length of edges of $N_{h}$ parallel to the $k$-th coordinate axis. Because $x_{k}$-degree of a summand Laurent monomial of $\rho_{h}$ linearly depends on the coordinates of its corresponding point in $N_{h}$, so there must be a vertex $p$ in $N_{h}$ such that $deg_{x_k}(p)=-d_{k}(\rho_h)$. Let $K$ be the maximal section of $N_{h}$ at $p$ satisfying that $deg_{x_{k}}(q)=deg_{x_{k}}(p)$ for any point $q\in K$.

  If $dim(N_h)\leqslant2$, due to Proposition \ref{polytope basis equals greedy basis}, $d_{1}(\rho_{h})=-h_{1}$ and $d_{2}(\rho_{h})=-h_{2}+c[-h_{1}]_{+}$. On the other hand, according to the construction of $N_{h}$ and Remark \ref{shape for rank 2}, the maximal lengths of edges of $N_{h}$ parallel to $e_{1}$ and $e_{2}$ are $[-h_{1}]_{+}$ and $[-h_{2}+c[-h_{1}]_{+}]_{+}$ respectively. So the theorem holds in this case.

  If $dim(N_h)>2$, we take induction on the order of polytopes to prove this result. Assume it holds for all proper sub-polytopes of $N_h$. According to the construction of $N_h$, for any $i\in[1,n]$, due to Construction \ref{construction} (b), we have
  \[x_{i}\rho_{h}=\sum\limits_{N_{\alpha}[w]\in\bigcup\limits_{l} U_{h}^{l}}Y^{w}\rho_{\alpha}\]
  and hence
  \[N_{h}=\sum\limits_{N_{\alpha_{j}}[w_{j}]\in\bigcup\limits_{l}U_{h}^{l}}N_{\alpha_{j}}[w_{j}].\]
  Then because of the inductive assumption, when $i\neq k$, the maximal length of edges of $N_{\alpha_{j}}[w_{j}]$ parallel to the $k$-th coordinate axis is not larger than $[d_{k}(\rho_{h})]_{+}$. So $l_k\leqslant[d_{k}(\rho_{h})]_{+}$.

  Since $\hat{Y}^pX^h$ has the minimal $x_k$-degree among monomial summands of $\rho_h$, $K$ must be a face of $N_h$ by the definition of faces in (\ref{face}). Moreover, due to the definition of $K$, it can be seen that $K=K\'\oplus K^{\prime\prime}$, where $K\'$ is the ${k}$-section of $K$ at the minimal point in $K$ while $K^{\prime\prime}$ is the $[1,n]\setminus\{k\}$-section of $K$ at the same point. Therefore because $p\in V(N_{h})$, the ${k}$-section of $K$ at $p$ is an edge of $K$, hence an edge of $N_h$. If the length of the ${k}$-section of $K$ at $p$ is less than $[-deg_{x_k}(p)]_+$, then $\rho_{h}$ is not a formal Laurent polynomial in $X_{t_{k}}$ according to the calculation in Subsection 3.1, where $t_{k}$ is the vertex connected to $t_{0}$ by an edge labeled $k$. Hence by Lemma \ref{non-maximal and non-minimal}, the length of the ${k}$-section of $K$ at $p$ equals $[-deg_{x_k}(p)]_+=[d_{k}(\rho_h)]_+$. So $l_{k}\geqslant[d_{k}(\rho_h)]_+$.

  Therefore, $l_k=[d_{k}(\rho_{h})]_{+}$.
\end{Proof}

The above relation between $d$-vectors and polytopes induces the positivity of $d$-vectors associated to non-initial cluster variables which was first come up as a conjecture in \cite{FZ4} and then proved in \cite{CL} for skew-symmetrizable case.
\begin{Theorem}\label{positivity of d-vectors}
  Let $\A$ be a TSSS cluster algebra and $x_{l;t}$ be a non-initial cluster variable in $\A$ with $l\in[1,n]$, $t\in\T_{n}$. Then $d_{l;t}\in\N^{n}$. More precisely, for any $k\in[1,n]$,
  $$d_{k}(x_{l;t})=\widetilde{deg}_{k}(P_{l;t})=\widetilde{deg}_{k}(\phi_{k}(P_{l;t})),$$
  where $P_{l;t}$ is the enumerator of the Laurent expression of $x_{l;t}$ in $X$.
\end{Theorem}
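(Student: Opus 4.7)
The plan is to reduce to the principal coefficients case (the $d$-vector is invariant under the choice of semifield) and to use the identification $x_{l;t} = \rho_{g_{l;t}}$ from Theorem \ref{from general to cluster variables}, so that the whole analysis takes place inside the polytope $N_{g_{l;t}}$. The key structural tool is the explicit $x_k$-degree decomposition of Remark \ref{mutation of P},
\begin{equation*}
P_{l;t} = \sum_{s = d_k}^{\deg_k(P_{l;t})} x_k^s P_s(k) + \sum_{s = 0}^{d_k - 1} x_k^s M_k^{d_k - s} P_s(k),
\end{equation*}
valid whenever $d_k \geq 0$. Setting $x_k \to 0$ kills every term with positive $x_k$-power and leaves $\phi_k(P_{l;t}) = M_k^{d_k} P_0(k)$.

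Assuming $d_k \geq 0$, both equalities in the theorem follow quickly. Up to a monomial in the remaining $x_j$'s, $P_0(k)$ coincides with $\rho_{g_{l;t}}^{(-d_k;k)}$ from the $x_k$-decomposition of $\rho_{g_{l;t}}$. Lemma \ref{P(0)}(i) gives $M_k \nmid \rho_{g_{l;t}}^{(-d_k;k)}$, and since $M_k$ is an irreducible binomial that cannot divide any monomial in the other $x_j$'s, one obtains $M_k \nmid P_0(k)$. Therefore $\widetilde{deg}_k(\phi_k(P_{l;t})) = 0 + d_k = d_k$. Running the same analysis on each $x_k$-homogeneous component of $P_{l;t}$ shows that the component at $s = 0$ realizes the minimum in the definition of $\widetilde{deg}_k$ (the components with $s \leq d_k$ contribute at least $d_k$ and those with $s > d_k$ contribute at least $s > d_k$), so $\widetilde{deg}_k(P_{l;t}) = d_k$ as well.

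The main obstacle is establishing $d_k \geq 0$ for every non-initial cluster variable, which is exactly what justifies invoking the decomposition above. By Theorem \ref{d-vector and lengths}, $[d_k]_+$ equals the maximal length of edges of $N_{g_{l;t}}$ parallel to the $k$-th coordinate axis, so one needs to show that when no such edge exists, the minimum $x_k$-degree over $N_{g_{l;t}}$ is still $\leq 0$ rather than strictly positive. I would attempt induction on the distance from $t_0$ to $t$ in $\T_n$: the base $t = t_j$ is a direct computation (with $P = M_j$ when $l = j$ and the claim trivial otherwise), the case $t = \mu_j(t')$ with $l \neq j$ is tautological, and the case $l = j$ is handled by substituting the inductive Laurent expansions of the $x_{i;t'}$'s into the exchange relation $x_{j;t'}\, x_{j;t} = M_{j;t'}$ and tracking the cancellations in $X_{t_0}$. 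A more geometric alternative is to show that the unique maximum vertex $p_{\max}$ of $N_{g_{l;t}}$ (Corollary \ref{unique maximal and minimal term}) has non-positive $x_k$-degree for every $k$ whenever $x_{l;t}$ is non-initial, using the polytope mutation of Theorem \ref{properties in case tsss}(ii) to transfer the question to the adjacent seed. The hardest case in either route is $g_{l;t,k} > 0$, where the origin of $N_{g_{l;t}}$ fails to witness $d_k \geq 0$ and one must produce a compensating vertex using the interior combinatorics of the polytope. Once $d_k \geq 0$ is secured, the formula computed above yields $d_{l;t} \in \N^n$ and completes the proof.
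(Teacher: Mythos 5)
The part of your argument that establishes the two degree equalities once $d_k(x_{l;t})\geqslant 0$ is known is essentially sound and matches the paper's route: the paper likewise obtains $d_k(x_{l;t})=\widetilde{deg}_{k}(\phi_{k}(P_{l;t}))$ from Lemma \ref{P(0)} (packaged as Theorem \ref{d-vector and lengths}) and then sandwiches the middle term via Lemma \ref{general deg > d}. The problem is that the heart of the theorem --- the positivity $d_{l;t}\in\N^{n}$ --- is exactly the step you leave open. Your two proposed routes are only sketches: the induction on the distance from $t_{0}$ to $t$ with ``tracking the cancellations'' in the exchange relation is precisely the naive approach that does not close (which is why $d$-vector positivity stood as a conjecture), and the geometric alternative is not carried out; you yourself flag that the case $g_{l;t,k}>0$ is unresolved. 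As written, the proposal proves the theorem only conditionally on its main assertion.

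The paper closes this gap with a short global argument that avoids any case analysis on $g_{l;t}$ or induction on seeds. Suppose $d_{i}(x_{l;t})<0$ for some $i$. Theorem \ref{d-vector and lengths} then forces $N_{g_{l;t}}$ to lie in the hyperplane $z_{i}=0$ while every Laurent monomial of $x_{l;t}$ has $x_{i}$-degree at least $-d_{i}(x_{l;t})>0$; hence $N_{g_{l;t}}=N_{g_{l;t}+d_{i}(x_{l;t})e_{i}}$ and
\begin{equation*}
x_{l;t}=x_{i}^{-d_{i}(x_{l;t})}\,\rho_{g_{l;t}+d_{i}(x_{l;t})e_{i}}.
\end{equation*}
Applying $L^{t}$ and using that $x_{l;t}$ is itself a Laurent \emph{monomial} in $X_{t}$, both factors $(\rho^{t}_{g_{i;t_{0}}^{t}})^{-d_{i}(x_{l;t})}$ and $\rho^{t}_{h}$ must be cluster monomials, which forces $e_{l}=-d_{i}(x_{l;t})g_{i;t_{0}}^{t}+h$ and hence $x_{l;t}$ initial --- a contradiction. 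This factorization-plus-transport-to-$\Sigma_{t}$ step is the missing idea, and neither of your sketched routes supplies a substitute for it.
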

\begin{Proof}
  Since this result is independent of the choice of the semifield $\P$, we can assume $\A$ has principal coefficients. If $d_{l;t}\not\in\N^{n}$, then there is $i\in[1,n]$ such that $d_{i}(x_{l;t})<0$.

  Thus, by Theorem \ref{d-vector and lengths}, $N_{g_{l;t}}$ lies in the hyperplane $z_{i}=0$ and $deg_{x_{i}}(\hat{Y}^{p}X^{g_{l;t}})\geqslant -d_{i}(x_{l;t})$ for any point $p\in N_{g_{l;t}}$. So according to the construction of $N_{g_{l;t}}$ and $N_{g_{l;t}+d_{i}(x_{l;t})e_{i}}$, we have \[N_{g_{l;t}}=\gamma_{i;0}(N_{\pi_{i}(g_{l;t})})=\gamma_{i;0}(N_{\pi_{i}(g_{l;t}+d_{i}(x_{l;t})e_{i})})=N_{g_{l;t}+d_{i}(x_{l;t})e_{i}}.\]
  Hence $x_{l;t}=\rho_{g_{l;t}}=x_{i}^{-d_{i}(x_{l;t})}\rho_{g_{l;t}+d_{i}(x_{l;t})e_{i}}$. So
  \begin{equation}\label{equation: in the proof of positivity of d-vectors}
    x_{l;t}=L^{t}(x_{i}^{-d_{i}(x_{l;t})}\rho_{g_{l;t}+d_{i}(x_{l;t})e_{i}})=L^{t}(x_{i}^{-d_{i}(x_{l;t})})L^{t}(\rho_{g_{l;t}+d_{i}(x_{l;t})e_{i}}) =(\rho^{t}_{g_{i;t_{0}}^{t}})^{-d_{i}(x_{l;t})}\rho^{t}_{h},
  \end{equation}
  where $h=(g_{l;t}+d_{i}(x_{l;t})e_{i})^{t}$.

  According to Theorem \ref{properties in case tsss}, $\rho^{t}_{g_{i;t_{0}}^{t}},\rho^{t}_{h}\in\N Trop(Y_{t})[X_{t}^{\pm}]$. So (\ref{equation: in the proof of positivity of d-vectors}) induces that both $\rho^{t}_{g_{i;t_{0}}^{t}}$ and $\rho^{t}_{h}$ are Laurent monomials, which means $g_{i;t_{0}}^{t},h\in\N^{n}$ and thus $x_{l;t}=X_{t}^{-d_{i}(x_{l;t})g_{i;t_{0}}^{t}+h}$. It holds only when $g_{i;t_{0}}^t=e_{l}$, $d_{i}(x_{l;t})=-1$ and $h=0$, which contradicts the non-initial assumption.

  Therefore $d_{l;t}\in\N^{n}$.

  Thus by Theorem \ref{d-vector and lengths} and its proof we have $d_{k}^{t\'}(x_{l;t})=[d_{k}^{t\'}(x_{l;t})]_{+}=\widetilde{deg}_{k}^{t\'}(\phi_{k}(P_{l;t}))$. Moreover, by Lemma \ref{general deg > d} and the definition of general degree we have
  \[d_{k}^{t\'}(x_{l;t})\leqslant\widetilde{deg}_{k}^{t\'}(P_{l;t})\leqslant\widetilde{deg}_{k}^{t\'}(\phi_{k}(P_{l;t})),\]
  so they are all equal.
\end{Proof}

According to Theorem \ref{positivity of d-vectors}, we get that $d_{l;t}\in\N^{n}$ when $x_{l;t}$ is a non-initial cluster variable. Therefore, $[d_{k}(x_{l;t})]_{+}=d_{k}(x_{l;t})$ in this case.  Hence Theorem \ref{d-vector and lengths} provides a geometric realization of $d$-vectors associated to non-initial cluster variables.

For a cluster algebra $\A$ having principal coefficients, define a map
\[\psi:\; \Z\P[x_{1}^{\pm 1},\cdots,x_{n}^{\pm 1}]\longrightarrow\Z\P[x_{1},\cdots,x_{n}]\]
such that $\psi(x)=P$ when $x=PX^{\alpha}$, where $P\in\Z\P[x_{1},\cdots,x_{n}]$, $\alpha\in\Z^{n}$ and $P$ is coprime with $x_{i},i\in[1,n]$. Theorem \ref{expression of a cluster variable} shows that
\[x_{l;t}=\frac{F_{l;t}|_{\F}(\hat{y}_{1},\hat{y}_{2},\cdots,\hat{y}_{n})}{F_{l;t}|_{\P}(y_{1},y_{2},\cdots,y_{n})}X^{g_{l;t}}
=F_{l;t}|_{\F}(\hat{y}_{1},\hat{y}_{2},\cdots,\hat{y}_{n})X^{g_{l;t}}.\]
Due to Theorem \ref{positivity of d-vectors}, for a non-initial cluster variable $x_{l;t}$,
\[P_{l;t}=\psi(F_{l;t}|_{\F}(\hat{y}_{1},\hat{y}_{2},\cdots,\hat{y}_{n})).\]
We obtain a map from theorem \ref{positivity of d-vectors}
\begin{equation*}
  \varphi:\; \{\text{non-initial $F$-polynomials of }\A\}\longrightarrow\{\text{non-initial $d$-vectors of }\A\}
\end{equation*}
such that \[\varphi(F_{l;t})=(\widetilde{deg}_{1}\circ\phi_{1}\circ\psi(F_{l;t}|_{\F}(\hat{y}_{1},\hat{y}_{2},\cdots,\hat{y}_{n})),\cdots,\widetilde{deg}_{n}\circ\phi_{n}\circ\psi(F_{l;t}|_{\F}(\hat{y}_{1},\hat{y}_{2},\cdots,\hat{y}_{n}))),\] which maps a $F$-polynomial associated to a non-initial cluster variable to the $d$-vector associated to the same cluster variable.

Again by Theorem \ref{expression of a cluster variable}, we can see that $X^{g_{l;t}}=\frac{p_{0}}{X^{d_{l;t}}}$ for any $l\in[1,n],t\in\T_{n}$, where $p_{0}$ is the unique monomial summand of $P_{l;t}$ with coefficient $1$. So we can also define a map from a $F$-polynomial associated to a non-initial cluster variable to the $g$-vector associated to the same cluster variable:
\begin{equation*}
  \theta_{1}:\; \{\text{non-initial $F$-polynomials of }\A\}\longrightarrow\{\text{non-initial $g$-vectors of }\A\}
\end{equation*}
such that
\[\theta_{1}(F_{l;t})=(deg_{x_{1}}(\frac{\psi(F_{l;t}|_{\F}(\hat{y}_{1},\hat{y}_{2},\cdots,\hat{y}_{n}))\mid_{y_{1}=\cdots=y_{n}=0}}{X^{\varphi(F_{l;t})}}),\cdots,deg_{x_{n}}(\frac{\psi(F_{l;t}|_{\F}(\hat{y}_{1},\hat{y}_{2},\cdots,\hat{y}_{n}))\mid_{y_{1}=\cdots=y_{n}=0}}{X^{\varphi(F_{l;t})}}))\]

In conclusion, we have the following theorem.

\begin{Theorem}\label{maps from $F$-polynomials}
  Let $\A$ be a TSSS cluster algebra with principal coefficients. Then, for any $l=1,\cdots,n$ and $t\in \T_n$,

  (i)\; there is a surjective map
  \begin{equation*}
    \varphi:\; \{\text{non-initial $F$-polynomials of }\A\}\longrightarrow\{\text{non-initial $d$-vectors of }\A\}
  \end{equation*}
  such that \[\varphi(F_{l;t})=(\widetilde{deg}_{1}\circ\phi_{1}\circ\psi(F_{l;t}|_{\F}(\hat{y}_{1},\hat{y}_{2},\cdots,\hat{y}_{n})),\cdots,\widetilde{deg}_{n}\circ\phi_{n}\circ\psi(F_{l;t}|_{\F}(\hat{y}_{1},\hat{y}_{2},\cdots,\hat{y}_{n})));\]

  (ii)\; there is a bijective map
  \begin{equation*}
    \theta_{1}:\; \{\text{non-initial $F$-polynomials of }\A\}\longrightarrow\{\text{non-initial $g$-vectors of }\A\}
  \end{equation*}
  such that
  \[\theta_{1}(F_{l;t})=(deg_{x_{1}}(\frac{\psi(F_{l;t}|_{\F}(\hat{y}_{1},\hat{y}_{2},\cdots,\hat{y}_{n}))\mid_{y_{1}=\cdots=y_{n}=0}}{X^{\varphi(F_{l;t})}}),\cdots,deg_{x_{n}}(\frac{\psi(F_{l;t}|_{\F}(\hat{y}_{1},\hat{y}_{2},\cdots,\hat{y}_{n}))\mid_{y_{1}=\cdots=y_{n}=0}}{X^{\varphi(F_{l;t})}})).\]

  (iii)\; there is a bijective map
  \begin{equation*}
    \chi_{1}:\; \{\text{non-initial $F$-polynomials of }\A\}\longrightarrow\{\text{non-initial cluster variables of }\A\}
  \end{equation*}
  such that
  \[x_{l;t}=\chi_{1}(F_{l;t})=\frac{\psi(F_{l;t}|_{\F}(\hat{y}_{1},\hat{y}_{2},\cdots,\hat{y}_{n}))}{X^{\varphi(F_{l;t})}}.\]
\end{Theorem}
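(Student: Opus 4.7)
The plan is to pin down a single identity which then makes all three parts nearly mechanical, namely $P_{l;t} = \psi\bigl(F_{l;t}|_{\F}(\hat{y}_1,\ldots,\hat{y}_n)\bigr)$. First I would observe that in the principal coefficients setting, Corollary \ref{unique maximal and minimal term} forces $F_{l;t}$ to have constant term $1$, so its tropical evaluation $F_{l;t}|_{Trop(Y)}(y_1,\ldots,y_n)$ equals $1$; Theorem \ref{expression of a cluster variable} therefore collapses to $x_{l;t} = F_{l;t}|_{\F}(\hat{y}) X^{g_{l;t}}$. Combining this with the positivity $d_{l;t} \in \N^n$ from Theorem \ref{positivity of d-vectors}, the expression $P_{l;t} = x_{l;t} X^{d_{l;t}} = F_{l;t}|_{\F}(\hat{y}) X^{g_{l;t}+d_{l;t}}$ is a genuine polynomial coprime with each $x_i$, so applying $\psi$ (which strips the Laurent-monomial factor $X^{g_{l;t}+d_{l;t}}$) yields the desired identity.

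For part (i), the formula defining $\varphi$ now unwinds to $\varphi(F_{l;t}) = (\widetilde{deg}_1(\phi_1(P_{l;t})),\ldots,\widetilde{deg}_n(\phi_n(P_{l;t})))^{\top}$, which is exactly $d_{l;t}$ by Theorem \ref{positivity of d-vectors}; surjectivity is automatic because every non-initial $d$-vector arises from some non-initial cluster variable, hence from its $F$-polynomial. For part (iii), the defining formula of $\chi_1$ reduces to $\psi(F_{l;t}|_{\F}(\hat{y}))/X^{\varphi(F_{l;t})} = P_{l;t}/X^{d_{l;t}} = x_{l;t}$; surjectivity is clear from the definition, and injectivity is the already-stated Corollary \ref{$F$-polynomial uniquely determines cluater variable}.

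For part (ii), the key move is to set $y_i = 0$ for every $i$ in $\psi(F_{l;t}|_{\F}(\hat{y})) = P_{l;t}$: since $\hat{y}_j = y_j \prod_{i} x_i^{b_{ij}}$, the substitution zeroes every $\hat{y}_j$ simultaneously, leaving only the contribution from the constant term of $F_{l;t}$, which is $1$. Thus $P_{l;t}|_{y_i=0,\,\forall i} = X^{g_{l;t}+d_{l;t}}$, so dividing by $X^{\varphi(F_{l;t})} = X^{d_{l;t}}$ and reading off the $x_k$-degree componentwise recovers $g_{l;t}$ exactly, proving $\theta_1(F_{l;t}) = g_{l;t}$. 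Injectivity of $\theta_1$ is then immediate from Theorem \ref{from general to cluster variables}, which asserts $x_{l;t} = \rho_{g_{l;t}}$: two non-initial $F$-polynomials sharing a $g$-vector correspond to cluster variables both equal to $\rho_{g_{l;t}}$, forcing their $F$-polynomials to agree. Surjectivity is trivial.

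The only serious obstacle would have been the possibility that $\psi$ might clip a factor incompatible with the structure of $P_{l;t}$, but this is precisely what the non-negativity of $d_{l;t}$ excludes, and it is handled once at the outset via Theorem \ref{positivity of d-vectors}; everything downstream is routine verification resting on the constant term of $F_{l;t}$ being $1$.
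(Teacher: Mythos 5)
Your argument is essentially the paper's own proof: both hinge on the identity $P_{l;t}=\psi\bigl(F_{l;t}|_{\F}(\hat{y}_1,\dots,\hat{y}_n)\bigr)$, obtained from Theorem \ref{expression of a cluster variable} together with the positivity of $d$-vectors (Theorem \ref{positivity of d-vectors}), after which (i) is the formula $d_k(x_{l;t})=\widetilde{deg}_k(\phi_k(P_{l;t}))$, (ii) reads off $X^{g_{l;t}}$ from the $y\to 0$ specialization (the constant term $1$ of $F_{l;t}$), and injectivity of $\theta_1$ comes from $F_{l;t}=\rho_{g_{l;t}}|_{x_i\to 1}$. One slip in part (iii): you invoke Corollary \ref{$F$-polynomial uniquely determines cluater variable} for injectivity of $\chi_1$, but that corollary asserts the implication $F_{l;t}=F_{l';t'}\Rightarrow x_{l;t}=x_{l';t'}$ (i.e.\ well-definedness of $\chi_1$, which your explicit formula already gives), not its converse; moreover the paper derives that corollary \emph{from} this theorem, so citing it here is circular. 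The injectivity you actually need is the converse direction, and it follows in one line from $F_{l;t}=P_{l;t}|_{x_i\to 1,\,\forall i}$, which is how the paper argues; with that substitution your proof is complete.
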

\begin{Proof}
  (i)\; It is directly induced by Theorem \ref{positivity of d-vectors}.

  (ii)\; The surjectivity naturally holds by the definition of $\theta_{1}$ and that of $g$-vectors.

  On the other hand, if there are $l,l\'\in [1,n]$ and $t,t\'\in\T_{n}$ satisfying $\theta_{1}(F_{l;t})=\theta_{1}(F_{l\';t\'})$, then $F_{l;t}=\rho_{\theta_{1}(F_{l;t})}|_{x_{i}\rightarrow 1,\forall i\in[1,n]}=\rho_{\theta_{1}(F_{l\';t\'})}|_{x_{i}\rightarrow 1,\forall i\in[1,n]}=F_{l\';t\'}$. Hence $\theta_{1}$ is also injective.

  (iii)\; Following from (i), we have $x_{l;t}=\frac{P_{l;t}}{X^{d_{l;t}}}=\frac{\psi(F_{l;t}|_{\F}(\hat{y}_{1},\hat{y}_{2},\cdots,\hat{y}_{n}))}{X^{\varphi(F_{l;t})}}$, which shows the surjectivity.

   On the other hand, since $F_{l;t}=P_{l;t}|_{x_{i}\rightarrow 1,\forall i\in[1,n]}$, the injectivity follows.
\end{Proof}

The map $\chi_{1}$ directly leads to the following result.

\begin{Corollary}\label{$F$-polynomial uniquely determines cluater variable}
  Let $\A$ be a TSSS cluster algebra having principal coefficients with two non-initial cluster variables $x_{l;t}$, $x_{l\';t\'}$ and $F_{l;t}$, $F_{l\';t\'}$ be the $F$-polynomials associated to $x_{l;t}$, $x_{l\';t\'}$ respectively. If $F_{l;t}=F_{l\';t\'}$, then $x_{l;t}=x_{l\';t\'}$.
\end{Corollary}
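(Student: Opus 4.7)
The plan is to deduce this corollary as a direct consequence of the bijectivity of the map $\chi_1$ established in Theorem \ref{maps from $F$-polynomials} (iii), which itself rests on the explicit reconstruction formula linking an $F$-polynomial to its cluster variable through the $d$-vector machinery of Theorem \ref{positivity of d-vectors}.

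First I would invoke Theorem \ref{expression of a cluster variable} to express $x_{l;t}$ in the principal coefficient setting as $x_{l;t} = F_{l;t}|_{\F}(\hat{y}_{1},\ldots,\hat{y}_{n})\, X^{g_{l;t}}$, using that in the tropical principal coefficient semifield $Trop(y_1,\ldots,y_n)$ we have $F_{l;t}|_{\P}(y_1,\ldots,y_n) = 1$ by Corollary \ref{unique maximal and minimal term}. Clearing denominators, this rewrites as $x_{l;t} = \dfrac{\psi(F_{l;t}|_{\F}(\hat{y}_{1},\ldots,\hat{y}_{n}))}{X^{d_{l;t}}}$. The content of the corollary is that the right-hand side depends only on $F_{l;t}$ when $x_{l;t}$ is non-initial.

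Next I would show that the $d$-vector $d_{l;t}$ is recoverable from $F_{l;t}$ alone. This is precisely what Theorem \ref{positivity of d-vectors} (proved for all non-initial cluster variables in a TSSS cluster algebra) supplies: for every $k \in [1,n]$, $d_{k}(x_{l;t}) = \widetilde{deg}_{k}(\phi_{k}(P_{l;t}))$, and $P_{l;t} = \psi(F_{l;t}|_{\F}(\hat{y}_{1},\ldots,\hat{y}_{n}))$ is itself determined by $F_{l;t}$. Hence $\varphi(F_{l;t}) = d_{l;t}$ is a well-defined function of $F_{l;t}$ via the surjection $\varphi$ of Theorem \ref{maps from $F$-polynomials} (i).

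Finally, combining the two previous steps yields $x_{l;t} = \chi_{1}(F_{l;t})$, a function of $F_{l;t}$ alone. So if $F_{l;t} = F_{l\';t\'}$ with both cluster variables non-initial, then $d_{l;t} = \varphi(F_{l;t}) = \varphi(F_{l\';t\'}) = d_{l\';t\'}$ and $\psi(F_{l;t}|_{\F}(\hat{Y})) = \psi(F_{l\';t\'}|_{\F}(\hat{Y}))$, giving $x_{l;t} = x_{l\';t\'}$ as required. No genuine obstacle arises here: all the substantive work lies in Theorem \ref{positivity of d-vectors} (positivity of $d$-vectors) and the construction of $\chi_1$, both of which are available. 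Thus the proof reduces to a one-line appeal to the injectivity built into $\chi_1$.
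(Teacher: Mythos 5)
Your proposal is correct and follows essentially the same route as the paper: the paper derives this corollary directly from the well-definedness of the map $\chi_{1}$ in Theorem \ref{maps from $F$-polynomials} (iii), whose construction rests precisely on the formula $x_{l;t}=\psi(F_{l;t}|_{\F}(\hat{Y}))/X^{\varphi(F_{l;t})}$ and on Theorem \ref{positivity of d-vectors} recovering $d_{l;t}$ from $F_{l;t}$, exactly as you argue.
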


\begin{Proposition}\label{cluster monomial}
  \cite{CL}Let $\A$ be a skew-symmetrizable cluster algebra having principal coefficients at some vertex and $t,t\'\in\T_{n}$. If $\prod\limits_{i=1}^{n}x_{i;t}^{a_{i}}=\prod\limits_{i=1}^{n}x_{i;t\'}^{a_{i}\'}$ and they are cluster monomials in $X_{t}$ and $X_{t\'}$ respectively, then for every $a_{i}\neq 0$, there is $j\in[1,n]$ such that $x_{i;t}=x_{j;t\'}$ and $a_{i}=a_{j}\'$.
\end{Proposition}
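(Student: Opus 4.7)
The plan is to reduce the identity to an equality of $g$-vectors and then extract the cluster-variable matching from the fan structure on $g$-vector cones.

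First, by Theorem \ref{properties in case tsss}(iv), both cluster monomials equal their polytope-basis representatives. Writing $M := \prod_{i=1}^{n} x_{i;t}^{a_i} = \prod_{i=1}^{n} x_{i;t'}^{a_i'}$, one has
\[
M \;=\; \rho_{g(M)}^{t_0}, \qquad g(M) \;=\; \sum_{i=1}^{n} a_i\, g_{i;t}^{t_0} \;=\; \sum_{j=1}^{n} a_j'\, g_{j;t'}^{t_0}.
\]
The two expressions for $g(M)$ must agree because $\rho_h^{t_0}$ is $\Z^{n}$-homogeneous of principal degree $h$, so the map $h \mapsto \rho_h^{t_0}$ from the index set $H$ to $\mathcal{P}$ is injective and forces the $g$-vectors of the two presentations of $M$ to coincide.

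Second, I would invoke the simplicial $g$-vector fan of a skew-symmetrizable cluster algebra with principal coefficients: the cones $C_{t} := \R_{\geqslant 0}\langle g_{1;t}^{t_0},\ldots,g_{n;t}^{t_0}\rangle$, together with all their faces, form a fan in $\R^{n}$, and each ray corresponds bijectively to a cluster variable via its $g$-vector. Let $F_t$ denote the face of $C_t$ spanned by $\{g_{i;t}^{t_0} : a_i > 0\}$; by the linear independence of $g_{1;t}^{t_0},\ldots,g_{n;t}^{t_0}$ within $C_t$, together with strict positivity of the $a_i$ on this index set, $g(M)$ lies in the relative interior of $F_t$, and symmetrically in the relative interior of the analogous face $F_{t'}$ of $C_{t'}$. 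Since two faces of a fan whose relative interiors meet must coincide, $F_t = F_{t'}$ and their extreme rays agree as sets.

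Third, translating this identification through the ray-to-cluster-variable correspondence yields a bijection $\sigma : \{i : a_i > 0\} \to \{j : a_j' > 0\}$ with $x_{i;t} = x_{\sigma(i);t'}$, and uniquely expanding $g(M)$ in the basis of common rays then forces $a_i = a_{\sigma(i)}'$, which is the desired conclusion. The main obstacle is the fan property invoked in the second step, which in the skew-symmetrizable case classically rests on deep scattering-diagram input from \cite{GHKK}. A self-contained derivation within the present framework should proceed via the Minkowski decomposition $N_{M} = \bigoplus_{i}(\text{$a_i$-fold})\,N_{i;t}$ applied to both sides, the indecomposability of each cluster-variable Newton polytope (coming from Theorem \ref{P is irreducible} together with Corollary \ref{$F$-polynomial uniquely determines cluater variable}), and a uniqueness statement for such Minkowski decompositions; this last uniqueness, anchored by the unique extremal vertex of Corollary \ref{unique maximal and minimal term} and the indecomposability of $\rho_h$ from Theorem \ref{properties in case tsss}(i), is the subtlest technical point.
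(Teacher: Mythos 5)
The paper does not prove this proposition itself; it cites it from \cite{CL} and then independently proves the strictly more general Corollary \ref{cluster laurent monomial} (which allows arbitrary integer exponents $\alpha,\beta\in\Z^{n}$, not just cluster monomials) by an entirely different and much more elementary route. There, one writes $X_{t}^{\alpha}=\prod_i (P_{i;t}^{t'}/X_{t'}^{d_{i;t}^{t'}})^{\alpha_i}$, observes that $\prod_i (P_{i;t}^{t'})^{\alpha_i}$ must be a Laurent monomial, and then uses irreducibility of $P_{i;t}^{t'}$ (Theorem \ref{P is irreducible}) together with the fact that distinct non-initial cluster variables have distinct $F$-polynomials (Corollary \ref{$F$-polynomial uniquely determines cluater variable}, hence distinct irreducible numerators) to force $\alpha_i=0$ whenever $x_{i;t}\notin X_{t'}$. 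This is a factorization argument in the Laurent ring and never touches the $g$-vector fan.

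Your main route through the simplicial $g$-vector fan is mathematically sound, and your first step (homogeneity of $\rho_h$ forcing $\sum a_i g_{i;t}^{t_0}=\sum a_j' g_{j;t'}^{t_0}$) fits well with the $\rho_h$ formalism. But you correctly identify that the fan property you invoke in step two is exactly the deep \cite{GHKK} input, so as written your proof is not an independent derivation in the spirit of this paper.

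The genuine gap is in your proposed self-contained replacement. Uniqueness of a Minkowski decomposition of a lattice polytope into indecomposables is \emph{not} a general fact, and neither Corollary \ref{unique maximal and minimal term} (unique maximal/minimal vertex) nor the indecomposability of $\rho_h$ from Theorem \ref{properties in case tsss}(i) supplies such a uniqueness statement: indecomposability of the summands $N_{i;t}$ rules out further refinement of a given decomposition, but says nothing about two unrelated decompositions of the same $N_M$ having the same summands. Absent that, the matching of rays you need is not established. The shorter path is the one the paper takes for Corollary \ref{cluster laurent monomial}: pass to the Laurent expression in $X_{t'}$, use irreducibility and distinctness of the $P_{i;t}^{t'}$, and conclude by unique factorization rather than by fan combinatorics.
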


The following Corollary is a generalization of Proposition \ref{cluster monomial}.
\begin{Corollary}\label{cluster laurent monomial}
  Let $\A$ be a TSSS cluster algebra having principal coefficients, $t,t\'\in\T_{n}$ and $\alpha,\beta\in\Z^{n}$ are non-zero. If $X_{t}^{\alpha}=X_{t\'}^{\beta}$, then this is a permutation $\sigma$ of $[1,n]$ such that $x_{i;t}=x_{\sigma(i);t\'}$ and $\alpha_{i}=\beta_{\sigma(i)}$ for any $\alpha_{i}\neq 0$.
\end{Corollary}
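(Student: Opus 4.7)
My plan is to reduce the Laurent-monomial equation $X_{t}^{\alpha} = X_{t'}^{\beta}$ to an identity between positive products of cluster variables and then apply unique factorization in the Laurent polynomial ring $R = \Z\P[X_{t_{0}}^{\pm 1}]$. Writing $\alpha = \alpha^{+} - \alpha^{-}$ and $\beta = \beta^{+} - \beta^{-}$ with $\alpha^{\pm}, \beta^{\pm} \in \N^{n}$ having disjoint supports within each pair, the hypothesis becomes the positive-exponent identity
\[X_{t}^{\alpha^{+}}\, X_{t'}^{\beta^{-}} \;=\; X_{t}^{\alpha^{-}}\, X_{t'}^{\beta^{+}}\]
in $\F$, with both sides Laurent polynomials in $R$ by the Laurent phenomenon.

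Next I would record the factorization data of $R$. The ring $R$ is a UFD whose units are $\pm Y^{\delta} X^{\gamma}$. Each initial cluster variable $x_{k}$ is a unit of $R$, while for a non-initial cluster variable the decomposition $x_{l;s} = P_{l;s}\, X^{-d_{l;s}}$, together with $d_{l;s} \in \N^{n}$ (Theorem \ref{positivity of d-vectors}), shows $x_{l;s}$ is associate in $R$ to its numerator $P_{l;s}$, which is irreducible by Theorem \ref{P is irreducible}. Two non-initial cluster variables are associate in $R$ only when they are equal: any unit relation $x_{l;s} = u\cdot x_{l';s'}$, combined with positivity of the Laurent expansions (Corollary \ref{TSSS positivity}), the unique minimal vertex at the origin of $N_{l;s}$ (Corollary \ref{unique maximal and minimal term}) and coprimality of $P_{l;s}, P_{l';s'}$ with $X$, forces $u=1$ and $P_{l;s}=P_{l';s'}$; Corollary \ref{$F$-polynomial uniquely determines cluater variable} then gives $x_{l;s} = x_{l';s'}$.

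Applying unique factorization to the product identity in $R$ then matches the multiplicity of each non-initial cluster variable on both sides. Since cluster variables within a single cluster are pairwise distinct, this produces, for each $i$ with $x_{i;t}$ non-initial and $\alpha_{i} \neq 0$, a unique $\sigma(i) \in [1,n]$ with $x_{\sigma(i);t'} = x_{i;t}$ and $\beta_{\sigma(i)} = \alpha_{i}$. After cancellation of the matched non-initial factors, the remaining identity lives among Laurent monomials in the algebraically independent initial variables $x_{1}, \dots, x_{n}$, so exponent-wise comparison handles the initial indices; extending the resulting injection arbitrarily produces the required permutation $\sigma$ of $[1,n]$.

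The main obstacle is that Theorem \ref{P is irreducible} is stated only in the skew-symmetrizable case, whereas this corollary is asserted in full TSSS generality. The intended workaround is to bypass explicit irreducibility and argue at the level of $(F$-polynomial, $g$-vector$)$ invariants: principal $\Z^{n}$-homogeneity forces $\sum_{i}\alpha_{i} g_{i;t} = \sum_{j}\beta_{j} g_{j;t'}$, evaluation at $X=\mathbf{1}$ forces $\prod_{i} F_{i;t}^{\alpha_{i}} = \prod_{j} F_{j;t'}^{\beta_{j}}$ in $\Q(Y)$, and the $F$-polynomial injectivity of Corollary \ref{$F$-polynomial uniquely determines cluater variable} combined with the unique maximal and minimal vertex structure of each $N_{l;t}$ allows one to peel off matched factors inductively, recovering the non-initial matching without invoking full UFD irreducibility.
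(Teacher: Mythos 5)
Your proposal follows essentially the same route as the paper: the paper expands $X_{t}^{\alpha}$ in the cluster $X_{t'}$, observes that $\prod_{i}(P_{i;t}^{t'})^{\alpha_{i}}$ must be a Laurent monomial, and uses Corollary \ref{$F$-polynomial uniquely determines cluater variable} (distinct non-initial cluster variables have distinct numerators) to force $\alpha_{i}=0$ for every $i$ with $x_{i;t}\notin X_{t'}$ — exactly your factor-matching step, just performed in $X_{t'}$ rather than $X_{t_{0}}$. The irreducibility concern you raise is legitimate, but it applies equally to the paper's own proof, which jumps from ``$P_{i;t}\neq P_{j;t}$ for $i\neq j$'' to ``$\alpha_{i}=0$'' without invoking Theorem \ref{P is irreducible} explicitly; your proposed $F$-polynomial/$g$-vector workaround is the natural repair but is left at a comparable level of sketchiness.
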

\begin{Proof}
  \[X_{t\'}^{\beta}=X_{t}^{\alpha}=\prod\limits_{i=1}^{n}(\frac{P_{i;t}^{t\'}}{X_{t\'}^{d_{i;t}^{t\'}}})^{\alpha_{i}}\]
  is a laurent monomial in $X_{t\'}$. So $\prod\limits_{i=1}^{n}(P_{i;t}^{t\'})^{\alpha_{i}}$ is a laurent monomial. Let $I_{1}=\{i\in[1,n]\mid x_{i;t}\in X_{t\'}\}$, $I_{2}=[1,n]\setminus I_{1}$. By Corollary \ref{$F$-polynomial uniquely determines cluater variable}, $F_{i;t}^{t\'}\neq F_{j;t}^{t\'}$ , hence $P_{i;t}\neq P_{j;t}$ if $i\neq j$ and $i\in I_{2}$. So $\alpha_{i}=0$ for $i\in I_{2}$. Then there is a permutation $\sigma$ of $[1,n]$ such that $x_{i;t}=x_{\sigma(i);t\'}$ for $i\in I_{1}$ and
  \[\prod\limits_{i\in I_{1}}x_{\sigma(i);t\'}^{\alpha_{i}}=\prod\limits_{i=1}^{n}x_{i;t\'}^{\beta_{\sigma(i)}}.\]
  Therefore $\alpha_{i}=\beta_{\sigma(i)}$ for $i\in I_{1}$ and $\alpha_{i}=0$ for $i\in I_{2}$.
\end{Proof}

\begin{Theorem}\label{CL:compatible set}
  \cite{CL} \;For any skew-symmetrizable cluster algebra $\A$ and any collection $U$ of cluster variables in $\A$, if each pair in $U$ is contained in some cluster of $\A$, then there is a cluster of $\A$ containing $U$ as a subset.
\end{Theorem}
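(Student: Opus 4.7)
The plan is to first pass to the principal coefficients cluster algebra $\A_{prin}$ corresponding to $\A$: compatibility of cluster variables depends only on the underlying cluster combinatorics, and clusters of $\A$ are specializations of clusters of $\A_{prin}$, so it suffices to prove the theorem in the principal coefficients case. In $\A_{prin}$, Theorem~\ref{maps from $F$-polynomials}(iii) gives a bijection $\chi_{2}$ between $g$-vectors (with respect to a fixed initial seed) and cluster variables, and cluster monomials correspond to $g$-vector sums under this bijection by Theorem~\ref{properties in case tsss}(iv). This lets me reformulate the statement entirely in terms of $g$-vectors: a collection of cluster variables lies in a common cluster if and only if the corresponding $g$-vectors lie in a single $g$-cone.

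I would then proceed by induction on $|U|$. The case $|U|\leq 2$ is the hypothesis. For $|U|=k+1$, removing one element from $U$ and applying the inductive hypothesis produces a cluster $X_s$ containing $k$ elements of $U$; after relabeling, $U=\{x_{l;t},\,x_{j_1;s},\ldots,x_{j_k;s}\}$ for distinct $j_1,\ldots,j_k\in[1,n]$. Let $g=g_{l;t}^{s}\in\Z^n$. The pairwise compatibility hypothesis says that for each $i\in[1,k]$, $x_{l;t}\cdot x_{j_i;s}$ is a cluster monomial, so $\rho_{g+e_{j_i}}$ is a cluster monomial by the uniqueness in Theorem~\ref{properties in case tsss}(i). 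Using Corollary~\ref{cluster laurent monomial} and the face-structure of $N_{g+e_{j_i}}$ provided by Theorem~\ref{properties in case tsss}(v), I would extract coordinate-wise constraints on $g$ in the directions $\{j_1,\ldots,j_k\}$ (forcing non-negativity there) and in the complementary directions (forcing the complementary subvector to be the $g$-vector of a cluster variable in a pure sub-cluster algebra of rank $n-k$).

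The crucial technical step is to promote the $k$ pairwise statements ``$\rho_{g+e_{j_i}}$ is a cluster monomial'' into the single statement ``$\rho_{g+e_{j_1}+\cdots+e_{j_k}}$ is a cluster monomial,'' from which the existence of the desired common cluster follows by an application of $\chi_{2}^{-1}$. To do this I would analyze $N_{g+e_{j_1}+\cdots+e_{j_k}}$ via Theorem~\ref{properties in case tsss}(v) and Theorem~\ref{properties of N_h for skew-symmetrizable}: the face opposite to the edges in directions $e_{j_1},\ldots,e_{j_k}$ is isomorphic to the polytope of a pure sub-cluster algebra $\A'$ of rank $n-k$ associated to the complement, and on this face the $k$ pairwise constraints collapse to one. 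Inducting on the rank $n-k$ of the face should then complete the argument.

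The main obstacle is precisely this promotion step, which is a polyhedral/flag property of the $g$-vector fan: pairwise containment in $g$-cones implies joint containment. This is non-trivial because a priori the pairwise containments occur in different ambient cones, and one must show they can be harmonised into a single cone. I expect the polytope machinery of Section~3, especially the recursive face description of Theorem~\ref{properties in case tsss}(v), to handle this cleanly, with the skew-symmetrizability hypothesis entering through Theorem~\ref{properties of N_h for skew-symmetrizable} to identify the exchange matrix on each face as an honest sub-cluster exchange matrix. Once the flag property is established, the inductive argument closes and the theorem follows.
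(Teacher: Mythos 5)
This statement is quoted from \cite{CL} and the paper does not prove it at all: it is imported as an external result (proved by Cao and Li via the ``enough $g$-pairs'' property, which in turn rests on the scattering-diagram technology of \cite{GHKK}). So there is no internal proof to compare against, and your proposal has to stand on its own.

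It does not. The reductions at the start are fine (passing to $\A_{prin}$, translating compatibility into the statement that $\rho_{g+e_{j_i}}$ is a cluster monomial, and using Corollary \ref{cluster laurent monomial} to read constituents off a cluster monomial), but they only rephrase the theorem: after your induction on $|U|$, the entire content is the claim that the $k$ pairwise statements ``$\rho_{g+e_{j_i}}$ is a cluster monomial'' imply the single statement ``$\rho_{g+e_{j_1}+\cdots+e_{j_k}}$ is a cluster monomial.'' You name this as the crucial step and then assert that Theorem \ref{properties in case tsss}(v) and Theorem \ref{properties of N_h for skew-symmetrizable} ``should'' handle it, with no argument. They cannot be expected to: those results describe the faces of the Newton polytope of a \emph{single} $\rho_h$, whereas the promotion step is a global statement about how the cones of the $g$-vector fan fit together (that pairwise intersection of maximal cones in a common facet forces a common maximal cone --- a flagness property of the cluster complex). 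Nothing in the paper's polytope machinery compares the polytopes $N_{g+e_{j_1}},\dots,N_{g+e_{j_k}}$ to each other or to $N_{g+\sum_i e_{j_i}}$; indeed $g+\sum_i e_{j_i}$ need not even lie in $H$ a priori, and establishing that it does is equivalent to what you are trying to prove. The sentence ``on this face the $k$ pairwise constraints collapse to one'' is precisely the theorem restated, not a proof of it. As written, the proposal is a reformulation plus an unproved key lemma, so it does not constitute a proof.
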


\begin{Lemma}\label{F=1}
  For any $l\in [1,n]$ and $t\in \T_{n}$, $x_{l;t}$ is an initial cluster variable if and only if $F_{l;t}=1$.
\end{Lemma}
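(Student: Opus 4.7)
\begin{Proof}[Proof proposal]
The ``only if'' direction is immediate from the definition: if $x_{l;t}$ is an initial cluster variable, say $x_{l;t}=x_{i}$ for some $i\in[1,n]$, then $P_{l;t}=x_{i}$, hence $F_{l;t}=P_{l;t}|_{x_{j}=1,\forall j}=1$. So the substantive content is the ``if'' direction. My plan is to combine the separation formula of Theorem \ref{expression of a cluster variable} with the rigidity statement on Laurent monomial equalities between cluster monomials (Corollary \ref{cluster laurent monomial}).

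Suppose $F_{l;t}=1$. Applying Theorem \ref{expression of a cluster variable} with $t'=t_{0}$, we obtain
\[
x_{l;t}=\frac{F_{l;t}|_{\F}(\hat{y}_{1},\cdots,\hat{y}_{n})}{F_{l;t}|_{\P}(y_{1},\cdots,y_{n})}\,X^{g_{l;t}} \; = \; X^{g_{l;t}},
\]
since substituting $F_{l;t}=1$ makes both numerator and denominator equal to $1$. Thus the cluster variable $x_{l;t}$ coincides with the Laurent monomial $X^{g_{l;t}}$ in the initial cluster.

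Next I would interpret this equality as an equality between two cluster Laurent monomials: $X_{t}^{e_{l}}=X_{t_{0}}^{g_{l;t}}$. By Corollary \ref{cluster laurent monomial} (whose statement applies to TSSS cluster algebras with principal coefficients, and whose conclusion passes to arbitrary semifield via the standard specialization), there exists a permutation $\sigma$ of $[1,n]$ such that $x_{i;t}=x_{\sigma(i);t_{0}}$ and $(e_{l})_{i}=(g_{l;t})_{\sigma(i)}$ for every $i$ with $(e_{l})_{i}\neq 0$. Taking $i=l$ yields $x_{l;t}=x_{\sigma(l);t_{0}}$, which is exactly the statement that $x_{l;t}$ is an initial cluster variable.

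The only potential obstacle is verifying that Corollary \ref{cluster laurent monomial} is applicable, i.e.\ that the hypothesis ``principal coefficients'' is not essential here. This should follow from the fact that equality of cluster Laurent monomials is a statement about the underlying exchange pattern and is preserved under the coefficient specialization sending the principal coefficient cluster algebra to $\A$; in particular both $x_{l;t}$ and the initial cluster variables $x_{i}$ have unambiguous preimages in $\A_{\mathrm{prin}}$, and the equality $x_{l;t}=X^{g_{l;t}}$ lifts to the corresponding equality in $\A_{\mathrm{prin}}$ (up to a monomial in the principal $y$'s, which must be trivial since the $g$-vector grading is preserved). Apart from this routine check, the argument is essentially a one-line consequence of the separation formula together with the uniqueness of Laurent-monomial representations of cluster variables.
\end{Proof}
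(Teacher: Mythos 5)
Your proposal is correct and follows essentially the same route as the paper: apply the separation formula of Theorem \ref{expression of a cluster variable} to get $x_{l;t}=X^{g_{l;t}}$, then invoke a monomial-rigidity result to conclude $x_{l;t}$ is initial. The only difference is that the paper cites Proposition \ref{cluster monomial} (the cluster-monomial version from \cite{CL}), whereas you cite Corollary \ref{cluster laurent monomial}; your choice is if anything the more careful one, since $g_{l;t}$ need not be non-negative and the setting is TSSS rather than skew-symmetrizable.
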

\begin{proof}
  The necessity is directly from the definition of $F$-polynomials.

  By Theorem \ref{expression of a cluster variable}, we have
  \begin{equation*}
    x_{l;t}=\frac{F_{l;t}|_{\F}(\hat{y}_{1},\hat{y}_{2},\cdots,\hat{y}_{n})}{F_{l;t}|_{\P}(y_{1},y_{2},\cdots,y_{n})}X^{g_{l;t}}.
  \end{equation*}
  So when $F_{l;t}=1$, $x_{l;t}$ is  a Laurent monomial of $X_{t_{0}}$ and of $X_{t}$ respectively. According to Corollary \ref{cluster laurent monomial}, $x_{l;t}=x_{j}$ for some $j$, which shows the sufficiency.
\end{proof}

Then by Lemma \ref{F=1}, Theorem \ref{CL:compatible set} and Corollary \ref{$F$-polynomial uniquely determines cluater variable}, we have the following corollary:

\begin{Corollary}
  Let $\A$ be a skew-symmetrizable cluster algebra with $X_{t}$, $X_{t\'}$ two clusters. If $\{F_{i;t}\}_{i\in [1,n]}=\{F_{i;t'}\}_{i\in [1,n]}$, then $X_{t}=X_{t'}$.
\end{Corollary}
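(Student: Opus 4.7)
The plan is to split each cluster into its ``initial'' part (those $x_{i;t}$ with $F_{i;t}=1$) and its ``non-initial'' part, identify the non-initial parts via the bijection between non-initial cluster variables and their $F$-polynomials, and then close the gap on the initial parts using the compatibility theorem of Cao--Li. The two ingredients that make this go through are Lemma~\ref{F=1}, which characterises initial cluster variables by $F_{l;t}=1$, and Corollary~\ref{$F$-polynomial uniquely determines cluater variable}, which says non-initial cluster variables are determined by their $F$-polynomials.

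First I would interpret the equality $\{F_{i;t}\}_{i\in[1,n]}=\{F_{i;t'}\}_{i\in[1,n]}$ as equality of multisets (which is forced, since the only $F$-polynomial that can repeat within a cluster is the constant $1$, all non-trivial ones being pairwise distinct by Corollary~\ref{$F$-polynomial uniquely determines cluater variable}). Let $k$ be the common number of indices $i$ with $F_{i;t}=1$. By Lemma~\ref{F=1} this is the number of initial cluster variables in each of $X_t,X_{t'}$. Write
\[
X_t=U_t\sqcup S_t,\qquad X_{t'}=U_{t'}\sqcup S_{t'},
\]
where $S_t,S_{t'}\subseteq\{x_1,\dots,x_n\}$ are the initial parts (each of size $k$) and $U_t,U_{t'}$ the non-initial parts (each of size $n-k$).

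Next I would match each non-trivial $F_{i;t}$ with the unique $F_{j;t'}$ equal to it under the multiset equality. Corollary~\ref{$F$-polynomial uniquely determines cluater variable} then yields $x_{i;t}=x_{j;t'}$, so $U_t=U_{t'}$ setwise; call this common set $U$. It remains to show $S_t=S_{t'}$.

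For the final step I would apply Theorem~\ref{CL:compatible set} to the finite collection $U\cup S_t\cup S_{t'}$. Every pair in it lies in a common cluster: a pair inside $X_t$ lies in $X_t$, a pair inside $X_{t'}$ lies in $X_{t'}$, and a pair with one element in $S_t$ and the other in $S_{t'}$ sits inside the fixed initial cluster $X=\{x_1,\dots,x_n\}$. Hence $U\cup S_t\cup S_{t'}$ is contained in a single cluster, which has cardinality $n$. Since $U$ is disjoint from both $S_t$ and $S_{t'}$ and $|U|=n-k$, this forces $|S_t\cup S_{t'}|\le k$; combined with $|S_t|=|S_{t'}|=k$, we conclude $S_t=S_{t'}$, hence $X_t=X_{t'}$.

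I do not expect a serious obstacle here: the proof is essentially an assembly of prior results. The only place that needs care is the pairwise-compatibility verification for the cross-pairs in $S_t\times S_{t'}$, which is handled simply by noting that both factors live in the initial cluster. A minor subtlety worth recording explicitly in the write-up is that the multiset interpretation of the hypothesis is automatic, since distinct non-initial cluster variables have distinct $F$-polynomials, so no information is lost in passing from the tuple $(F_{1;t},\dots,F_{n;t})$ to the multiset.
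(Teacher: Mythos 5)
Your proof is correct and follows essentially the same route as the paper's: identify the non-initial parts of the two clusters via Corollary \ref{$F$-polynomial uniquely determines cluater variable}, then invoke Theorem \ref{CL:compatible set} together with a cardinality count to force the initial parts to coincide. The only difference is cosmetic — you make explicit the pairwise-compatibility check for the cross-pairs in $S_t\times S_{t'}$ (both factors lie in the initial cluster), a point the paper leaves implicit.
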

\begin{Proof}
  For convenience we order two sets of $F$-polynomials such that $F_{i;t}=F_{i;t\'}$ for any $i\in[1,n]$. If $F_{i;t}\neq 1$, by the last Lemma, $x_{i;t}$ is non-initial and so following Corollary \ref{$F$-polynomial uniquely determines cluater variable} we have $x_{i;t}=x_{i;t\'}$. If $F_{i;t}=1$, then $x_{i;t}$ is an initial cluster variable. Without lose of generality, we can order $F$-polynomials such that $F_{i;t}=1$ when $i\in[1,k]$ and $F_{i;t}\neq 1$ otherwise. Then it is enough to prove that given a set of cluster variables $x_{k+1;t},\cdots,x_{n;t}$, there is at most one $k$-set of initial cluster variables $x_{i_{1}},\cdots,x_{i_{k}}$ such that $(x_{i_{1}},\cdots,x_{i_{k}},x_{k+1;t},\cdots,x_{n;t})$ is a cluster.

  Assume there are two different $k$-set satisfying above condition, then by Theorem \ref{CL:compatible set} there is a cluster containing all of them. However there are totally at least $n+1$ variables. The contradiction completes the proof.
\end{Proof}

\subsection{From $g$-vectors to $F$-polynomials, cluster variables and $d$-vectors}\quad

The fact that a $g$-vector uniquely determines its corresponding cluster variable is already proved in \cite{GHKK} for skew-symmetrizable case, here in this subsection we would like to express the maps from a $g$-vector associated to a cluster variable to the $F$-polynomial and the $d$-vector associated to the same cluster variable respectively.

\begin{Theorem}\label{maps from $g$-vectors}
  Let $\A$ be a TSSS cluster algebra with principal coefficients. Then, for any $l\in [1,n]$ and $t\in \T_n$,

  (i)\; there is a surjective map
  \begin{equation*}
    \theta_{2}:\; \{\text{$g$-vectors of }\A\}\longrightarrow\{\text{$F$-polynomials of }\A\}\;\;\; \text{via}\;\;\;\theta_{2}(g_{l;t})=\rho_{g_{l;t}}\mid_{x_{i}\rightarrow 1,\forall i}.
  \end{equation*}
  which is bijective when restricted to non-initial subsets. In this case, we have $\theta_1=\theta_2^{-1}$ for the map $\theta_1$ in Theorem \ref{maps from $F$-polynomials};

  (ii)\; there is a surjective map
  \begin{equation*}
    \eta:\; \{\text{$g$-vectors of }\A\}\longrightarrow\{\text{$d$-vectors of }\A\}\;\;\; \text{via}\;\;\;\eta(g_{l;t})=\text{denominator vector of } \rho_{g_{l;t}}.
  \end{equation*}

  (iii)\; there is a bijective map
  \begin{equation*}
    \chi_{2}:\; \{\text{$g$-vectors of }\A\}\longrightarrow\{\text{cluster variables of }\A\}
  \end{equation*}
  via
  \[x_{l;t}=\chi_{2}(g_{l;t})=\rho_{g_{l;t}}.\]
\end{Theorem}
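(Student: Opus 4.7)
The plan hinges on the identity $x_{l;t}=\rho_{g_{l;t}}$ established in Theorem \ref{from general to cluster variables}, combined with the Laurent expansion $x_{l;t}=P_{l;t}/X^{d_{l;t}}$ and the positivity $d_{l;t}\in\N^{n}$ from Theorem \ref{positivity of d-vectors}. Once this identity is in hand, all three maps $\chi_{2}$, $\theta_{2}$, $\eta$ read off directly and essentially no new combinatorics is required.

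I would dispatch (iii) first. Well-definedness and surjectivity of $\chi_{2}(g_{l;t})=\rho_{g_{l;t}}=x_{l;t}$ are immediate. For injectivity, recall that under the canonical $\Z^{n}$-grading with $\deg(x_{i})=e_{i}$ and $\deg(y_{i})=-b_{i}$, one has $\deg(\hat{y}_{i})=0$, so by the construction of $\rho_{h}$ the Laurent polynomial $\rho_{h}$ is homogeneous of degree $h$. Therefore $\rho_{g_{l;t}}=\rho_{g_{l';t'}}$ forces $g_{l;t}=g_{l';t'}$ by reading off the degree, which finishes (iii).

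Next, for (i), I would compute $\theta_{2}(g_{l;t})=\rho_{g_{l;t}}\mid_{x_{i}\rightarrow1,\forall i}=x_{l;t}\mid_{x_{i}\rightarrow1,\forall i}$ using $x_{l;t}=\rho_{g_{l;t}}$. Writing $x_{l;t}=P_{l;t}/X^{d_{l;t}}$ and noting $d_{l;t}\in\N^{n}$ (so $X^{d_{l;t}}\mid_{x_{i}\rightarrow1}=1$ and $P_{l;t}\mid_{x_{i}\rightarrow1}=F_{l;t}$), this gives $\theta_{2}(g_{l;t})=F_{l;t}$. Hence $\theta_{2}$ is exactly the assignment $g_{l;t}\mapsto F_{l;t}$. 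Pairing with $\theta_{1}(F_{l;t})=g_{l;t}$ from Theorem \ref{maps from $F$-polynomials} (ii) yields $\theta_{1}\circ\theta_{2}=\mathrm{id}$ and $\theta_{2}\circ\theta_{1}=\mathrm{id}$ on the non-initial part, establishing $\theta_{2}=\theta_{1}^{-1}$. The global bijectivity extends to initial data once both collections are viewed as indexed by cluster variables, which is legitimate since $\chi_{2}$ from (iii) already puts cluster variables in bijection with $g$-vectors.

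Finally, for (ii), the denominator vector of $\rho_{g_{l;t}}=x_{l;t}$ is by definition $d_{l;t}$, so $\eta(g_{l;t})=d_{l;t}$ is well-defined, and surjectivity is immediate since every $d$-vector of $\A$ arises from some cluster variable and hence from its $g$-vector. Only surjectivity is claimed, not injectivity, because distinct cluster variables can share the same $d$-vector in general. The approach has essentially no technical obstacle beyond the earlier theorems; the only delicate point is the bookkeeping of initial $F$-polynomials (all equal to $1$) in (i), which is handled by the indexing-by-cluster-variable convention made transparent by (iii).
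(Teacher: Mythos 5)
Your proposal is correct and follows essentially the same route as the paper: (iii) is obtained from $x_{l;t}=\rho_{g_{l;t}}$ together with the homogeneity $\deg(\rho_{g_{l;t}})=g_{l;t}$ for injectivity, and (i), (ii) are then read off from (iii), Theorem \ref{maps from $F$-polynomials}, and the definitions of $F$-polynomials and $d$-vectors. Your extra care with the initial cluster variables (all having $F$-polynomial $1$) is a point the paper glosses over, and your appeal to $d_{l;t}\in\N^{n}$ when substituting $x_{i}\rightarrow 1$ is harmless but not actually needed, since any Laurent monomial evaluates to $1$ under that substitution.
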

\begin{Proof}
  According to Theorem \ref{from general to cluster variables}, we have $x_{l;t}=\rho_{g_{l;t}}$ and $deg(\rho_{g_{l;t}})=g_{l;t}$, which lead to (iii). Moreover, (i) and (ii) follow from (iii), due to Theorem \ref{maps from $F$-polynomials} (ii) and the definitions of $F$-polynomials and $d$-vectors.
\end{Proof}

\begin{Remark}
  When $\A$ is a cluster algebra over a semifield $\P$, by Theorem \ref{maps from $F$-polynomials} (iii) and Theorem \ref{maps from $g$-vectors} (iii), for a cluster variable $x_{l;t}$ in $\A$, we have
  \begin{equation}\label{cluster variables and g-vectors in general semifield}
    x_{l;t}=\frac{\psi(F_{l;t}|_{\F}(\hat{y}_{1},\hat{y}_{2},\cdots,\hat{y}_{n}))}{F_{l;t}|_{\P}(y_{1},y_{2},\cdots,y_{n})X^{\varphi(F_{l;t})}}=\frac{\rho_{g_{l;t}}}{\theta_{2}(g_{l;t})|_{\P}(y_{1},y_{2},\cdots,y_{n})},
  \end{equation}
  where $F_{l;t}$ is the $F$-polynomial associated to $x_{l;t}$ in the corresponding principal coefficients cluster algebra $\A_{prin}$  of $\mathcal A$.

  Thanks to Siyang Liu for pointing out that because of Corollary \ref{TSSS positivity}, the proof of Theorem 7.2 in \cite{N} holds for TSSS cluster algebras. Hence (\ref{cluster variables and g-vectors in general semifield}) induces a bijection between cluster variables and $g$-vectors, which confirms a conjecture in \cite{FZ} claiming that the exchange graph of a TSSS cluster algebra only depends on the exchange matrix $B$.
\end{Remark}

Thus we obtain the relations among cluster variables, $g$-vectors, $F$-polynomials and $d$-vectors as the following diagram. Note that the maps from $\{F-polynomials\}$ are restricted in the subset consisting of non-initial $F$-polynomials since all initial $F$-polynomials equal to $1$.

\begin{figure}[H]
  \centering
  \includegraphics[width=80mm]{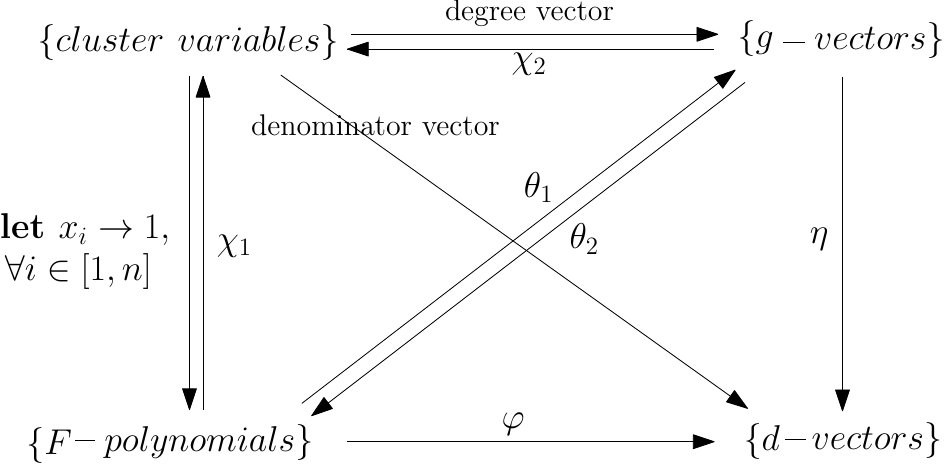}
  \caption{Relation Diagram}\label{fig7}
\end{figure}

 A natural question is that whether we can construct a map from the set of $d$-vectors to one of the other three sets in Figure \ref{fig7}.
\begin{Problem}
  In a cluster algebra $\A$, is an element in $\mathcal{P}$ uniquely determined by its denominator vector in $\Z^{n}$? In particular, is the $g$-vector $g_{l,t}$ associated to a cluster variable $x_{l;t}$ determined uniquely by its denominator vector $d_{l;t}$?
\end{Problem}
The answer is positive in some special cases. For example, when $\A$ is of rank 2, according to Proposition \ref{polytope basis equals greedy basis}, $\mathcal{P}$ is the greedy basis whose elements are parameterized by denominator vectors. However, in general this is not true even for cluster variables. We think that the polytope method might be helpful in considering this problem.
\vspace{4mm}

\section{Polytope basis for an upper cluster algebra}
Recall that in Theorem \ref{properties in case tsss}, we associate a (formal) Laurent polynomial $\rho_{h}^{pr}$ to each $h\in\Z^{n}$ in a cluster algebra $\A$ with principal coefficients. While in a cluster algebra over an arbitrary semifield $\P$, let $F_{h}=\rho^{pr}_{h}|_{x_{i}\rightarrow 1,\forall i\in[1,n]}$ and define a formal Laurent polynomial
\[\rho_{h}|_{\P}:=\frac{F_{h}|_{\F}(\hat{Y})}{F_{h}|_{\P}(Y)}X^{h}\in\N\P[[X^{\pm 1}]].\]
We denote by $\widehat{\mathcal{P}}|_{\P}$ the set consisting of all such formal Laurent polynomials $\rho_h|_{\P}$, i.e.,
\[\widehat{\mathcal{P}}|_{\P}=\{\rho_{h}|_{\P}\in\N\P[[X^{\pm 1}]]\mid h\in\Z^{n}\},\]
and $\mathcal{P}|_{\P}=\{\rho_{h}|_{\P}\in\N\P[X^{\pm 1}]\mid h\in H\}$.

In this section, the subscript of semifield is always compatible with the cluster algebra we are talking about, so we omit the subscript for convenience. We want to take further discussion about $\rho_{h}$ to construct a basis of $\mathcal{U}(\A)$ for a TSSS cluster algebra $\A$.

\begin{Lemma}\label{universally positive elements}
   For a TSSS cluster algebra $\A$ with principal coefficients, a universally positive elements $f$ in $\mathcal{U}(\A)$ can be expressed as a $\Z Trop(Y)$-linear combination of $\mathcal{P}$, that is, $f=\sum\limits_{h\in H}a_{h}\rho_{h}$ with finitely many nonzero $a_{h}\in\Z Trop(Y)$.
\end{Lemma}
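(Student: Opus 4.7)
The plan is to peel off elements of $\mathcal P$ from $f$ one at a time, using the minimality of $\rho_h$ among universally positive Laurent polynomials containing $X^h$ as a summand.

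First I will decompose $f$ into its $\Z^n$-homogeneous components under the grading of the initial seed $\deg(x_i)=e_i$, $\deg(y_j)=-b_j$, so that $\deg(\hat y_j)=0$. Write $f=\sum_{h\in\Z^n}f_{(h)}$; since $f$ is a Laurent polynomial only finitely many $f_{(h)}$ are nonzero, and because $\deg(\hat y_j)=0$ means the grading is invariant under mutation, each $f_{(h)}$ is itself universally positive (the projection onto a graded piece preserves coefficient nonnegativity cluster by cluster). Writing $f_{(h)}=X^hG_h(\hat y)$, the element $G_h\in\N[\hat y_1^{\pm1},\ldots,\hat y_n^{\pm1}]$ is a Laurent polynomial in $\hat y$ with nonnegative integer coefficients and finite support, so it suffices to decompose each $f_{(h)}$ separately.

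Next I fix a generic linear functional $\ell:\R^n\to\R$ with $\ell(e_i)<0$ for each $i$, chosen to separate the finitely many relevant lattice points; since each Newton polytope $N_{h'}$ in the principal-coefficients case lies in $\N^n$ (from Construction \ref{construction}) the origin is the unique $\ell$-maximum of every $N_{h'}$. At each iteration, let $p_0$ be the $\ell$-maximum of the support of $G_h$ and $c_0>0$ its coefficient. The element
\[
y^{p_0}\rho_{h+Bp_0}\;=\;X^h\,\hat y^{p_0}\,F_{h+Bp_0}(\hat y)
\]
is $\Z^n$-homogeneous of degree $h$, lies in $\mathcal U^+_{\geqslant 0}(\Sigma_{t_0})$ by Theorem \ref{properties in case tsss}(iv), and has $\hat y^{p_0}X^h$ as a summand with coefficient $1$ by Corollary \ref{unique maximal and minimal term} (the origin has weight $1$). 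The key claim is that $c_0\,y^{p_0}\rho_{h+Bp_0}$ is a summand of $f_{(h)}$, so that $f_{(h)}-c_0y^{p_0}\rho_{h+Bp_0}$ remains universally positive; equivalently, $c_0\hat y^{p_0}F_{h+Bp_0}(\hat y)\leqslant G_h$ coefficientwise.

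I would prove the key claim via the minimality of $\rho_{h+Bp_0}$: any element of $\mathcal U^+_{\geqslant 0}(\Sigma_{t_0})$ whose coefficient at $\hat y^{p_0}X^h$ is at least $c_0$ must contain $c_0y^{p_0}\rho_{h+Bp_0}$ as a summand. In rank $2$ this is essentially Lemma \ref{unique indecomposable in rank 2}, and in general rank it follows by induction on the dimension of $N_{h+Bp_0}$ following Construction \ref{construction}, each step of which adds only those Laurent monomials forced on any universally positive completion of $X^{h+Bp_0}$ (to clear denominators in the mutation in each direction $k$). Granted this, the subtraction strictly decreases the $\ell$-maximum of the support of $G_h$, since $\hat y^{p_0}F_{h+Bp_0}$ has support $p_0+N_{h+Bp_0}$ with $\ell$-maximum $p_0$, and the $\hat y^{p_0}$ coefficient is killed. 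The remaining support stays inside the (finite) original support of $G_h$, so the iteration terminates in finitely many steps, producing the desired finite decomposition $f_{(h)}=\sum_i c_iy^{p_i}\rho_{h_i}$ with each $h_i\in H$ (automatic, since each $\rho_{h_i}$ inherits finite support as a summand of $f_{(h)}$). The main obstacle will be making the minimality claim rigorous in general rank: it is essentially built into the construction of $\rho_h$, but a careful induction tracking how the construction adds exactly the forced complements, not merely sufficient ones, is needed.
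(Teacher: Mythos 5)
The overall architecture (reduce to homogeneous pieces, then peel off one $\rho$ at a time from an extreme point of the support) matches the paper's, but your key claim is false, and it is exactly the subtle point the paper's proof is built around. You claim that if $p_0$ is the $\ell$-maximal point of the support of $G_h$ with coefficient $c_0$, then $c_0\,y^{p_0}\rho_{h+Bp_0}$ is a summand of $f_{(h)}$, so that each greedy subtraction preserves universal positivity. If that were true, the algorithm would terminate with all coefficients in $\N Trop(Y)$, i.e.\ every universally positive element of $\mathcal U(\A)$ would be an $\N Trop(Y)$-combination of $\mathcal P$. This contradicts the rank-$2$ situation: by Proposition \ref{polytope basis equals greedy basis}, $\mathcal P$ is the greedy basis there, and by \cite{LLZ2} (cited immediately after the lemma) there are universally positive elements whose greedy expansion has negative coefficients. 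That is precisely why the lemma only asserts $a_h\in\Z Trop(Y)$ and why Problem 6.2 is posed as open.

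The reason the claim fails is that universal positivity of $f$ only forces the \emph{essential skeleton} part of $Y^{p_0}\rho_{h+Bp_0}$ to sit inside $f_{(h)}$: positivity of $L^{t_k}(f)$ forces, for each occurring monomial of negative $x_k$-degree, a complement of the form $x_k^{d}(1+\hat y_k)^{[-d]_+}m_j$, and iterating this along the sequences defining $E_{h+Bp_0}$ pins down the weights on $E_{h+Bp_0}[p_0]$. But the weights of interior points of $N_{h+Bp_0}$ are defined by a maximum over complements coming from different directions, and a general positive $f$ may realize those complements using monomials anchored at other points of its support, outside $p_0+N_{h+Bp_0}$; nothing forces $co_p(f_{(h)})\geqslant c_0\,co_{p-p_0}(N_{h+Bp_0})$ at such $p$. (Lemma \ref{unique indecomposable in rank 2} does not rescue you: it applies to \emph{indecomposable} elements of $\mathcal U^{+}_{\geqslant0}(\Sigma_{t_0})$, a strictly stronger hypothesis than universal positivity, and only when $X^{h}$ itself — the global minimum — is the summand in question.) The paper's proof therefore subtracts $co_{\hat Y^{w}X^{h(f)}}(f)\,Y^{w}\rho_{h}$ anyway, accepts that negative coefficients may appear on $(N_h\setminus E_h)[w]$, adds back a minimal correction $\sum_{u\in W_h}a_u\rho_{h(f)+uB^{\top}}$ to restore positivity, and then has to replace your simple $\ell$-max termination argument by the more delicate one that the sets of minimal and of maximal points of $N(f_i)$ strictly shrink. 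To repair your proof you would need to weaken the key claim to the skeleton statement and then supply both the correction step and a new termination argument.
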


\begin{Proof}
  Without loss of generality we can assume $f$ is universally indecomposable and it is written as a Laurent polynomial in $X_{t_{0}}$.

  The universal indecomposability of $f$ leads to the fact that for any two constant coefficient-free monomial summands $p$ and $p\'$ of $f$, there is a sequence of constant coefficient-free monomial summands $p=p_0,p_1,\cdots,p_s=p\'$ of $f$ satisfying $\frac{p_i}{p_{i-1}}= \hat{y}_{j_i}^{a_i}$ for some $j_i\in[1,n]$ and $a_i\in\Z$. Therefore, $f$ should be homogeneous under canonical $\Z^{n}$-grading since $\hat{y}_{j}$ is homogeneous with degree $0$ for any $j\in[1,n]$. Denote this degree by $h(f)$.

  We will show that we can find $a\in\Z$, $w\in\N^n$ and $h\in\Z^n$ such that $aY^w\rho_{h}$ is a summand of $f$ and the polytope corresponding to $f-aY^w\rho_{h}$ is a sub-polytope of that corresponding to $f$. Thus by iteratively finding such summands, the corresponding polytope finally becomes an empty set and at the same time the summation of these summands is a $\Z Trop(Y)$-linear combination we want.

  Let $N(f)$ be the corresponding polytope of $f$. Choose a minimal vector $w$ in $N(f)$. It corresponds to a monomial summand $\hat{Y}^{w}X^{h(f)}$ of $f$. Denote $h=h(f)+wB^{\top}$. Let $t_{k}$ be the vertex connected to $t_{0}$ by an edge labelled $k$ in $\T_{n}$ for any $k\in[1,n]$. Since $f$ is univerally positive, $L^{t_{k}}(f)$ is a positive Laurent polynomial in $X_{t_{k}}$. Then for any monomial summand $p$ of $f$, the sum of all monomial summands of $f$ having $x_{k}$-degree $deg_{x_{k}}(p)$ must be of the form
  \begin{equation}\label{equation in pf of univerally positive}
    x_{k}^{deg_{x_{k}}(p)}(1+\hat{y}_{k})^{[-deg_{x_{k}}(p)]_{+}}\sum\limits_{j}m_{j},
  \end{equation}
  where $m_{j}$ is a Laurent monomial in $X\setminus\{x_{k}\}$.

  Let $w\'\in V_{h}$. Then according to the definition of $V_{h}$, in $N_{h}$  there is a sequence $p_{0}=X^{h},p_{1},\cdots,p_{r}=\hat{Y}^{w\'}X^{h}$ and a sequence $i_{1},\cdots,i_{r}$ satisfying the conditions listed in the definition of $V_{h}$.

  Assume $p=\hat{Y}^{w+w\'}X^{h(f)}$ and $i_{r}=k$ (thus $deg_{x_{k}}(p)<0$). We then use induction on $r>0$ to prove that there is some $m_{j}$ in (\ref{equation in pf of univerally positive}) such that
  $$x_{k}^{deg_{x_{k}}(p)}\hat{y}_{k}^{-[\epsilon_{r}]_{+}deg_{x_{k}}(p)}m_{j}=\hat{Y}^{w+w\'}X^{h(f)},$$
  where $\epsilon_{r}$ is the same as that given in the definition of $V_h$.

  When $r=1$, if there is no $m_{j}$ in (\ref{equation in pf of univerally positive}) such that $x_{k}^{deg_{x_{k}}(p)}\hat{y}_{k}^{-deg_{x_{k}}(p)}m_{j}=\hat{Y}^{w+w\'}X^{h(f)}$, which is equivalent to there being no $m_{j}$ in (\ref{equation in pf of univerally positive}) such that $x_{k}^{deg_{x_{k}}(p)}m_{j}=\hat{Y}^{w}X^{h(f)}$. Then because $f$ is universally positive, there is some $m_{j}$ and $s\in[1,-deg_{x_{k}}(p)]$ such that $\hat{Y}^{w}X^{h(f)}=x_{k}^{deg_{x_{k}}(p)}\hat{y}_{k}^{s}m_{j}$ (otherwise $L^{t_k}(f)$ can not be positive), which means there is a vector $w-se_{k}$ in $N(f)$ less than $w$. This contradicts our choice of $w$. Hence there is some $m_{j}$ in  (\ref{equation in pf of univerally positive}) such that $$x_{k}^{deg_{x_{k}}(p)}\hat{y}_{k}^{-deg_{x_{k}}(p)}m_{j} = \hat{Y}^{w+w\'}X^{h(f)}.$$

  Suppose when $r<l$, there is some $m_{j}$ in  (\ref{equation in pf of univerally positive}) such that $$x_{k}^{deg_{x_{k}}(p)}\hat{y}_{k}^{-[\epsilon_{r}]_{+}deg_{x_{k}}(p)}m_{j}=\hat{Y}^{w+w\'}X^{h(f)}.$$

  When $r=l$, if there is no $m_{j}$ in  (\ref{equation in pf of univerally positive}) such that $$x_{k}^{deg_{x_{k}}(p)}\hat{y}_{k}^{-[\epsilon_{r}]_{+}deg_{x_{k}}(p)}m_{j}=\hat{Y}^{w+w\'}X^{h(f)},$$
  then according to the mutation formula, the Laurent monomial corresponding to the cross is not a summand of $L^{t_{k}}(f)$ as showed in Figure \ref{figure: universally positive}. Here the Laurent polynomials corresponding to the red line in the left-hand side correlates to the Laurent monomial corresponding to the red point in the right-hand side under the mutation in direction $k$.
  \begin{figure}[H]
    \centering
    \includegraphics[width=40mm]{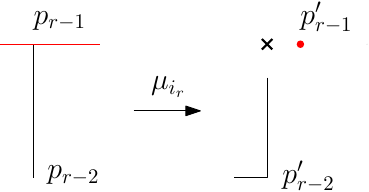}
    \caption{A part of $N(f)$ under mutation.}\label{figure: universally positive}
  \end{figure}
  Let $t\'$ be the vertex connected to $t_{k}$ by an edge labelled $i_{r-1}$ in $\T_{n}$. Since $L^{t\'}(f)$ is a positive Laurent polynomial in $X_{t\'}$, the Laurent monomial $Y^{w}p_{r-2}$ is a summand of $$x_{i_{r-1}}^{deg_{x_{i_{r-1}}}(p_{r-2})}\hat{y}_{i_{r-1}}^{-[\epsilon_{r-1}]_{+}(deg_{x_{i_{r-1}}}(p_{r-2})+s)}m_{j}$$
  for some $j$, where $s\in[1,-deg_{x_{i_{r-1}}}(p_{r-2})]$ rather than $s=0$. This contradicts the inductive assumption.

  Hence there is some $m_{j}$ in  (\ref{equation in pf of univerally positive}) such that $$x_{k}^{deg_{x_{k}}(p)}\hat{y}_{k}^{-[\epsilon_{r}]_{+}deg_{x_{k}}(p)}m_{j}=\hat{Y}^{w+w\'}X^{h(f)}.$$

  Moreover, since $f$ is universally positive, the above claim holds for $L^t(f)$ with any $t\in\T_n$. Then according to the description of edges as well as 2-dimensional faces of $N_h$ for any $h\in\Z^n$ in the proof of Theorem \ref{properties in case tsss} (v), for any $w\'\in V(N_h)$ and $k\in[1,n]$, there is $\epsilon_{w\',k}\in\{\pm 1\}$ such that
  $$m_{k}(w\')\hat{Y}^{w\'}X^{h}(1+\hat{y}_{k}^{\epsilon_{w\',k}})^{[-deg_{x_{k}}(\hat{Y}^{w\'}X^{h})]_+}$$ is a summand of $\rho_{h}$ as a complement of $m_{k}(w\')\hat{Y}^{w\'}X^{h}$ in direction $k$, and $$m_{k}(w\')\hat{Y}^{w+w\'}X^{h(f)}(1+\hat{y}_{k}^{\epsilon_{w\',k}})^{[-deg_{x_{k}}(\hat{Y}^{w\'}X^{h})]_+}$$
  is a summand of $f$.

  Then in $f-co_{\hat{Y}^{w}X^{h}}(f)Y^{w}\rho_{h}$, the vectors corresponding to Laurent monomials with negative constant coefficients are in $(N_{h}\setminus V(N_h)[w]$. Denote by $W_{h}$ the set consisting of these vectors. Since the support of $f-co_{\hat{Y}^{w}X^{h}}(f)Y^{w}\rho_{h}$ is finite, we can find a minimal Laurent polynomial $\sum\limits_{u\in W_{h}}a_{u}\rho_{h(f)+uB}$ for some $a_{u}\in\N\P$ such that
  $$f-co_{\hat{Y}^{w}X^{h(f)}}(f)Y^{w}\rho_{h}+\sum\limits_{u\in W_{h}}a_{u}\rho_{h(f)+uB^{\top}}$$
  is universally positive. Denote it by $f_{1}$ and its corresponding polytope by $N(f_{1})$. Note that
  $$co_{\hat{Y}^{w}X^{h(f)}}(f_{1})=0,$$
  and $co_{\hat{Y}^{u}X^{h(f)}}(f_{1})=0$ for any $u\in W_{h}$ with $a_{u}\neq 0$ due to the minimality of $\sum\limits_{u\in W_{h}}a_{u}\rho_{h(f)+uB^{\top}}$.

  Since the origin is the unique minimal point in $N_{h}$, $u> w$ for any $u\in W_{h}$. So any minimal point $p$ in $N(f_{1})$, is either a minimal point in $N(f)$ or $p> w$. Moreover, if there is $u\in W_{h}$ such that the maximal point $p$ in $N_{h(f)+uB^{\top}}[u]$ is a maximal point in $N(f_{1})$, then similar to our above discussion (the maximal point case is dual to the minimal point case in the above), $co_{p}(N(f_{1}))\neq 0$ for any point $p\in V(N_{h(f)+w\'B^{\top}})[w\']$, in particular $co_{\hat{Y}^{w\'}X^{h(f)}}(f_{1})\neq 0$, which contradicts the fact. Hence there is no maximal point $p\'$ in $N(f_{1})$ which is not a maximal point in $N(f)$. So the set consisting of minimal points in $N(f_{1})$ is a union of a proper subset of minimal point set of $N(f)$ and a set of points strictly lager than some minimal points in $N(f)$, while the set consisting of maximal points in $N(f_{1})$ is a subset of that of maximal points in $N(f)$.

  Therefore, replacing $f$ by $f_{1},\cdots$, we can continue the above way to iteratively produce new universally positive elements $f_{2},\cdots$, and will finally obtain 0 in finitely many times as the minimal points becoming larger while the maximal points becoming smaller. Summing these together we have an equation $f=\sum\limits_{h\in H}a_{h}\rho_{h}$ for some $a_{h}\in\Z\P$.
\end{Proof}

In Lemma \ref{universally positive elements}, $a_{h}$ is not necessarily in $\N Trop(Y)$. There is a characterization about  when $a_{h}$ is always in $\N Trop(Y)$ for a cluster algebra without coefficients of rank 2 in \cite{LLZ2}. So we may ask naturally how  it is in general case.
\begin{Problem}
  In Lemma \ref{universally positive elements}, for what kind of cluster algebras, is $a_{h}$ always in $\N Trop(Y)$?
\end{Problem}

\begin{Theorem}\label{positive}
  Let $\A$ be a TSSS cluster algebra with principal coefficients. Then $\mathcal{P}$ is a strongly positive $\Z Trop(Y)$-basis for the upper cluster algebra $\mathcal{U}(\A)$.
\end{Theorem}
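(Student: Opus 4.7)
My plan for Theorem \ref{positive} is to establish its three assertions --- $\Z Trop(Y)$-linear independence of $\mathcal{P}$, $\Z Trop(Y)$-spanning of $\mathcal{U}(\A)$, and non-negativity of the structure constants --- essentially in parallel, each built on the Newton-polytope structure already developed. For linear independence I would fix an additive total order $\preceq$ on $\Z^{n}$ (say lexicographic) and invoke Corollary \ref{unique maximal and minimal term}: the origin is the unique minimum of $N_{h}$, so the initial-cluster expansion of $\rho_{h}$ takes the form $\rho_{h} = X^{h} + \sum_{0 \neq p \in N_{h}} co_{p}(N_{h})\, \hat{Y}^{p} X^{h}$, whose $Y^{0}$-component as an element of $\Z Trop(Y)[X^{\pm 1}]$ is the single Laurent monomial $X^{h}$, while every other $Y$-exponent lies in $N_{h} \setminus \{0\}$. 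Suppose a non-trivial relation $\sum_{h \in H_{0}} a_{h} \rho_{h} = 0$ exists, and write $a_{h} = \sum_{v} c_{h,v} Y^{v}$. Let $v_{\star}$ be the $\preceq$-minimum of $\bigcup_{h} \{v : c_{h,v} \neq 0\}$; extracting the $Y^{v_{\star}}$-coefficient picks up only the $Y^{0}$-terms of the various $\rho_{h}$ and produces $\sum_{h} c_{h, v_{\star}} X^{h} = 0$. Since the $X^{h}$ for distinct $h$ are linearly independent Laurent monomials, every $c_{h, v_{\star}}$ vanishes, contradicting the minimality of $v_{\star}$.

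For strong positivity, observe that $\rho_{h_{1}} \rho_{h_{2}}$ is universally positive for any $h_{1}, h_{2} \in H$ as a product of universally positive elements of $\mathcal{U}(\A)$ by Theorem \ref{properties in case tsss}(iv). Applying Lemma \ref{universally positive elements} delivers $\rho_{h_{1}}\rho_{h_{2}} = \sum_{h} a_{h} \rho_{h}$ with $a_{h} \in \Z Trop(Y)$; inspecting that proof, where each intermediate subtraction has the form $\sum_{u} a_{u} \rho_{h(f)+uB^{\top}}$ with coefficients $a_{u} \in \N\P = \N Trop(Y)$, shows in fact that $a_{h} \in \N Trop(Y)$. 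Uniqueness of the expansion, supplied by the linear independence above, then completes strong positivity.

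For spanning I would run a reduction algorithm mirroring the linear-independence argument. Given $f \in \mathcal{U}(\A)$, expand it in the initial cluster as a finite sum $\sum c_{v, \alpha} Y^{v} X^{\alpha}$, let $v_{\star}$ be the $\preceq$-minimum $v$ for which some $c_{v, \alpha} \neq 0$, and replace $f$ by $f' := f - \sum_{h} c_{v_{\star}, h} Y^{v_{\star}} \rho_{h}$. Since each $\rho_{h} \in \mathcal{U}(\A)$, so does $f'$; the cancellation at $Y^{v_{\star}}$ removes every monomial with that $Y$-exponent, so every $Y$-exponent in the support of $f'$ is strictly $\succ v_{\star}$, and iterating produces a formal expansion $f = \sum_{h} a_{h} \rho_{h}$ with $a_{h} \in \Z Trop(Y)$. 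The main obstacle is proving termination --- that only finitely many $a_{h}$ end up nonzero. My plan is to first decompose $f$ into homogeneous components under the canonical $\Z^{n}$-grading on $\mathcal{U}(\A)$ (in which $\rho_{h}$ has degree $h$), so that all $h$ appearing in the reduction of a degree-$d$ component are confined to the affine lattice $d + \Z^{n} B^{\top}$; next to bound the propagation of the $Y$-support via the finiteness of both the initial support of $f$ and each polytope $N_{h}$; and finally to convert the requirement that $f'$ remain in $\mathcal{U}(\A)$ throughout into an algebraic constraint forcing the cascade $\tilde c_{\phi(v)} = c_{v} - \sum_{0 \neq p} \tilde c_{\phi(v-p)}\, co_{p}(N_{\phi(v-p)})$ for the expansion coefficients to vanish outside a bounded region. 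Making this termination rigorous is technically the most delicate piece of the argument; once secured, uniqueness from linear independence finishes the proof that $\mathcal{P}$ is a strongly positive $\Z Trop(Y)$-basis of $\mathcal{U}(\A)$.
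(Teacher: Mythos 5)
Your linear-independence argument is sound and is essentially the paper's (the paper picks a componentwise-minimal $Y$-exponent $w_j$ in a homogeneous relation and reads off $co_{w_j}=a_j\neq 0$; your lex-minimal $v_\star$ extraction is the same idea). The serious problem is strong positivity. Your claim that ``inspecting the proof of Lemma \ref{universally positive elements} shows $a_h\in\N Trop(Y)$'' is false, and the paper says so explicitly: immediately after that lemma it records that $a_h$ is \emph{not} necessarily in $\N Trop(Y)$ and poses the question of when it is as an open Problem. The reason is that the reduction in that proof does not only subtract positive multiples of basis elements: after subtracting $co_{\hat Y^{w}X^{h(f)}}(f)\,Y^{w}\rho_{h}$ one must \emph{add back} a correction $\sum_{u\in W_h}a_u\rho_{h(f)+uB^{\top}}$ to restore universal positivity, and those added terms are themselves subtracted against in later iterations, so the net coefficient of a given $\rho_g$ accumulates contributions of both signs. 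The paper's actual proof of strong positivity is a separate induction on the partial order of polytopes: it multiplies $\rho_h\rho_{h'}$ by $x_i$, uses the decomposition $x_i\rho_h=\sum_{N_\alpha[w]\in\bigcup_s U_h^s}Y^{w}\rho_\alpha$ of (\ref{equation: decomposition of rho_h for x_i}) (whose coefficients are manifestly in $\N[Y]$ and whose polytopes satisfy $N_\alpha<N_h$), applies the inductive hypothesis to each $\rho_\alpha\rho_{h'}$, and then regroups the resulting non-negative combination into blocks $\sum_{(r,r')\in J_\lambda}b_rc_{r'}\rho_{r'}=x_iY^{w_\lambda}\rho_{f_\lambda}$ so that division by $x_i$ is possible termwise. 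Without some argument of this kind your positivity claim does not go through.

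The spanning argument also has a gap beyond the termination issue you flag. Your reduction subtracts $c_{v_\star,\alpha}Y^{v_\star}\rho_\alpha$ for every exponent $\alpha$ occurring at the minimal $Y$-level, but nothing guarantees $\alpha\in H$: for $\alpha\notin H$ the element $\rho_\alpha$ lies only in $\widehat{\mathcal P}$ (a formal Laurent series), so the step neither stays in $\mathcal U(\A)$ nor produces an expansion in the basis $\mathcal P$. The paper sidesteps both problems at once: it first splits $f$ into homogeneous summands $f_i$ that are Laurent polynomials in every cluster and minimal with that property, then subtracts $co_p(N_i)\hat Y^{p}\rho_{h_i-pB^{\top}}$ at a minimal lattice point $p$ of the Newton polytope $N_i$ of $f_i$. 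The geometric control established in the proof of Lemma \ref{universally positive elements} --- the maximal point of $N_{h_i-pB^{\top}}[p]$ is dominated by a maximal point of $N_i$ --- is what simultaneously guarantees that the subtracted $\rho$'s are genuine Laurent polynomials and that the procedure terminates, since the set of minimal points strictly shrinks while the set of maximal points does not grow. Your proposed route to termination (grading plus a coefficient cascade) is not carried out, and as stated it would still need this kind of polytope bound to close.
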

\begin{Proof}
  First, we prove  that $\mathcal{U}(\A)$ is linearly generated by $\mathcal{P}$ over $\Z Trop(Y)$.

  Let $f$ be an element in $\mathcal{U}(\A)$ as a Laurent polynomial in $X$. Then $f$ has a decomposition $f=\sum\limits_{i=1}^{l}f_{i}$, where $f_{i}\neq 0$ is a summand of $f$ such that $f_{i}$ is indecomposable in $\mathcal{U}(\A)$. Then as we said before, $f_{i}$ is homogeneous under canonical $\Z^{n}$-grading since the exchange binomials are all homogeneous with degree $0$. Denote $h_{i}=deg(f_{i})$.

  Denote by $N_{i}$ the Newton polytope of $f_{i}$. For a minimal lattice point $p$ in $N_{i}$, according to the proof of Lemma \ref{universally positive elements}, the maximal point in $N_{h_{i}-pB^{\top}}[p]$ is not larger than a maximal point of $N_{i}$. Let $f_{i}^{(1)}=f_{i}-co_{p}(N_{i})\hat{Y}^{p}\rho_{h_{i}-pB^{\top}}$ and repeat the above process on $f_{i}^{(j)}$ to get $f_{i}^{(j+1)}$. As explained in the proof of Lemma \ref{universally positive elements} it will stop in finitely many steps. Thus we decompose $f_{i}$ as a $\Z Trop(Y)$-linear combination of $\mathcal{P}$. Alternatively, once $a_{p}$ is large enough for each $p\in N_{i}$, $f_{i}+\sum\limits_{p\in N_{i}}a_{p}Y^{p}\rho_{h_{i}-pB^{\top}}$ is universally positive. Then Lemma \ref{universally positive elements} induces a $\Z Trop(Y)$-linear combination for $f_{i}$.

  Next, we need to show that $\mathcal{P}$ is linearly independent.

  Assume $\sum\limits_{i=1}^{l}a_{i}Y^{w_{i}}\rho_{h_i}=0$ for some $0\neq a_{i}\in\Z$, $w_{i}\in\Z^{n}$ and $h_i\in H$ with
  \[Y^{w_{i}}\rho_{h_i}\neq Y^{w_{j}}\rho_{h_j} \;\; \text{  when}\;\; i\neq j.\]
  Without loss of generality, we suppose all $a_{i}Y^{w_{i}}\rho_{h_i}$ have the same degree, then $-w_{i}B^{\top}+h_{i}=-w_{j}B^{\top}+h_{j}$ for any $i,j$. Hence if $w_i=w_j$, it follows that $h_{i}=h_{j}$. So, when $i\neq j$, there must be $w_i\neq w_j$. Choose a minimal $w_{j}$ among all $w_{i}$. Then $co_{w_{j}}(\sum\limits_{i=1}^{l}a_{i}N_{i}[w_{i}])=a_{j}\neq 0$, which contradicts to $\sum\limits_{i=1}^{l}a_{i}Y^{w_{i}}\rho_{h_i}=0$. So $\mathcal{P}$ is linearly independent.

  In summary, we get that $\mathcal{P}$ is a $\Z Trop(Y)$-basis of $\mathcal{U}(\A)$.

  Finally we show that $\mathcal{P}$ is strongly positive.

  Choose arbitrary  $\rho_{h},\rho_{h\'}\in\mathcal{P}$. Then $\rho_{h}\rho_{h\'}=\sum\limits_{f}a_{f}\rho_{f}$ with $a_{f}\in\Z[Y]$. Therefore, we only need to prove that $a_{f}\in\N[Y]$.

  If $h,h\'\in\N^{n}$, then $\rho_{h}=X^{h}$ and $\rho_{h\'}=X^{h\'}$. So $\rho_{h}\rho_{h\'}=X^{h+h\'}=\rho_{h+h\'}$, the claim holds.

  Otherwise, at least one of $h,h\'$ is in $\Z^{n}\setminus\N^{n}$. Without loss of generality, suppose $h\in\Z^{n}\setminus\N^{n}$. Now we take induction on the partial order $\leqslant$ in the set of polytopes. Assume
  \begin{equation}\label{inductionpartial}
  \rho_{r}\rho_{h\'}=\sum\limits_{r\'\in\Z^{n}} c_{r\'}\rho_{r\'}
  \end{equation}
  where $c_{r\'}\in\N[Y]$ for any $\rho_{r},\rho_{h\'}\in\mathcal{P}$ with $N_{r}<N_{h}$.

  According to (\ref{equation: decomposition of rho_h for x_i}), we have a decomposition
  \[x_{i}\rho_{h}=\mathop{\sum}_{N_{\alpha}[w]\in\bigcup\limits_{s} U_{h}^{s},\atop \alpha\in \Z^n, w\in\N^n}Y^{w}\rho_{\alpha}=\sum\limits_{r\in\Z^{n}}b_{r}\rho_{r},\]
  where $b_{r}\in\N[Y]$ and there are only finitely many nonzero. Therefore,
  \begin{equation}\label{nonhyper}
  x_{i}\rho_{h}\rho_{h\'}=\sum\limits_{r\in\Z^{n}} b_{r}\rho_{r}\rho_{h\'}.
  \end{equation}
  Because $h\in\Z^{n}\backslash\N^{n}$, there is $i\in[1,n]$ such that $N_{h}$ is not in the hyperpane $z_{i}=0$. Then there must be more than one terms in the right-hand side of (\ref{nonhyper}). Then, combining (\ref{inductionpartial}) and (\ref{nonhyper}), we have
  $$x_{i}\rho_{h}\rho_{h\'}=\sum\limits_{r\in\Z^{n}}\sum\limits_{r\'\in\Z^{n}} b_{r}c_{r,r\'}\rho_{r\'}.$$

  Again due to the construction of strata, we claim that there is a decomposition $\sum\limits_{r\in\Z^{n}}\sum\limits_{r\'\in\Z^{n}} b_{r}c_{r,r\'}\rho_{r\'}=\sum\limits_{\lambda}\sum\limits_{(a_{r\'},r\')\in J_{\lambda}}a_{r\'}\rho_{r\'}$ satisfying $a_{r\'}\in\N[Y]$ and
  \begin{equation*}
   \sum\limits_{(a_{r\'},r\')\in J_{\lambda}}a_{r\'}\rho_{r\'}=x_{i}Y^{w_{\lambda}}\rho_{f_{\lambda}}
  \end{equation*}
  for some $w_{\lambda}\in\N^{n}$ and $f_{\lambda}\in\Z^{n}$. This can be done in the following way:

  Choose a minimal point $p\in N$, where $N$ is the polytope corresponding to $\rho_{h}\rho_{h\'}$, then with the help of the strata of $N_{h}$ for $x_{i}$ and inductive assumption, it can be checked that the strata of $N_{h+h\'-pB^{\top}}[p]$ for $x_{i}$ are in $N$. Hence it follows that $N_{h+h\'-pB^{\top}}[p]$ is a sub-polytope of $N$. Let $\{(N_{r\'}[w_{r\'}])^{a_{r\'}}\}$ be the set of all strata of $N_{h+h\'-pB^{\top}}[p]$ for $x_{i}$. Then let $J_{\lambda_0}$ be a subset of $J$ consisting of those $(a_{r\'}, r')$, $w_{\lambda_0}=p$ and $f_{\lambda_0}$, we have
  $$\sum\limits_{(a_{r\'},r\')\in J_{\lambda_0}}a_{r\'}\rho_{r\'}=x_{i}Y^{w_{\lambda_0}}\rho_{f_{\lambda_{0}}}.$$
  Let $N_{1}$ denote the sub-polytope of $N$ corresponding to $\sum\limits_{r\in\Z^{n}}\sum\limits_{r\'\in\Z^{n}} b_{r}c_{r,r\'}\rho_{r\'}-\sum\limits_{(a_{r\'},r\')\in J_{\lambda}}a_{r\'}\rho_{r\'}$. Repeating the above process for $N_1,\cdots$ until $N_{j}=\emptyset$ for some $j$, we can determine all $J_{\lambda}$ and thus obtain the decomposition we want.

  So in conclusion, we have $\rho_{h}\rho_{h\'}=\sum\limits_{\lambda}\sum\limits_{(a_{r\'},r\')\in J_{\lambda}}x_{i}^{-1}a_{r\'}\rho_{r\'}=\sum\limits_{\lambda}Y^{w_{\lambda}}\rho_{f_{\lambda}}$, thus $a_{f}=\sum\limits_{f_{\lambda}=f}Y^{w_{\lambda}}\in\N[Y]$ for $\rho_{h}\rho_{h\'}=\sum\limits_{f}a_{f}\rho_{f}$.
\end{Proof}

Due to Theorem \ref{positive}, we call $\mathcal{P}$ the {\bf polytope basis} of $\mathcal{U}(\A)$ since the method of polytopes is used effectively in the construction of $\mathcal{P}$.

However in general for a cluster algebra $\A$ over an arbitrary semifield $\P$, we find that $\mathcal{P}$ is a $\Z\P$-basis for a subalgebra of $\mathcal{U(A)}$ rather than $\mathcal{U(A)}$ itself. Therefore, we call the $\Z\P$-subalgebra generated by $\mathcal P$ the \textbf{intermediate cluster algebra} associated to $\A$, and denote it by $\mathcal{I_{P}(A)}$. In fact, we have:

\begin{Theorem}\label{positive over arbitray semifield}
  Let $\A$ be a cluster algebra over a semifield $\P$. Then $\mathcal{P}$ is a strongly positive $\Z\P$-basis for the intermediate cluster algebra $\mathcal{I_{P}(A)}$.
\end{Theorem}
\begin{Proof}
  First we prove that $\mathcal{P}$ is $\Z\P$-linearly independent. Let $P\in \mathcal{U}(\A)$ be a $\N\P$-linear combination of $\mathcal{P}$ as $P=\sum\limits_{i=1}^{l}a_{i}P_{i}$, where $a_{i}\in\N\P$ and $P_{i}\in\mathcal{P}$ is a universally indecomposable summand of $P$ for each $i\in[1,l]$. We claim such decomposition is unique.

  Assume on the contrary there are two decompositions $P=\sum\limits_{i=1}^{l}a_{i}P_{i}=\sum\limits_{j=1}^{l\'}a\'_{j}P\'_{j}$, where $a_{i},a\'_{j}\in\N\P$ and $P_{i}=\rho_{h_{i}},P\'_{j}=\rho_{h\'_{j}}$ for some $h_{i},h\'_{j}\in H$ for each $i\in[1,l],j\in[1,l\']$. For any $k\in[1,n]$ and any monomial summand $p$ of $P$ with constant coefficient $1$, let $P\'$ be the sum of all monomial summands of $P$ with the same $x_{k}$-degree as that of $p$. So $P\'$ is of the form $P\'=x_{k}^{deg_{x_{k}}(p)}M_{k}^{[-deg_{x_{k}}(p)]_{+}}L_{k}$, where $L_{k}$ is a Laurent polynomial in $\N\P[X^{\pm 1}]$, and there is a (not necessarily unique) monomial summand $r_{k}$ of $L_{k}$ such that $p$ is a summand of $x_{k}^{deg_{x_{k}}(p)}M_{k}^{[-deg_{x_{k}}(p)]_{+}}r_{k}$. For any $t\in\T_{n}$, denote
  \[J^{(t)}=\{p\mid p\text{ is a monomial summand of $P^{(t)}$ with constant coefficient $1$ and}\qquad\]
  \[\qquad \text{there is a summand }r_{k}\text{ of }L_{k}^{(t)}\text{ such that }p=x_{k;t}^{deg_{x_{k;t}}(p)}(\frac{\prod\limits_{s=1}^{n}x_{s;t}^{[-b^{t}_{sk}]_{+}}}{y_{k;t}\oplus 1})^{[-deg_{x_{k;t}}(p)]_{+}}r_{k}, \forall k\in[1,n]\},\]
  where $P^{(t)}$ is the Laurent expression of $P$ in $X_{t}$ and $L_{k}^{(t)}$ is similarly defined as above for $P^{(t)}$. Then Let
  \[J=\{p\in J^{(t_{0})}|\text{there is }p^{(t_{i})}\in J^{(t_i)}\text{ correlated to }p\text{ during }\mu_i\text{ for any }i\in[1,n]\},\]
  where $t_i$ is the vertex adjacent to $t_0$ by an edge labeled $i$.

  Recall that in principal coefficients case, $X^{h}$ is the only minimal monomial summand in $\rho_{h}$ and $\rho_{h}$ is universally indecomposable for any $h\in H$. Hence because of the decomposition $P=\sum\limits_{i=1}^{l}a_{i}P_{i}$, we have $J=\{\frac{a_{i}}{co_{X^{h_{i}}}(P)} X^{h_{i}}\mid i\in[1,l]\}$. On the other hand, $J=\{\frac{a\'_{j}}{co_{X^{h_{j}}}(P)} X^{h\'_{j}}\mid j\in[1,l\']\}$ due to the decomposition $P=\sum\limits_{j=1}^{l\'}a\'_{j}P\'_{j}$. Therefore the above two decompositions must be the same.

  So for any finite equation $\sum\limits_{h\in H}a_{h}\rho_{h}=0$, with $a_{h}\in\Z\P$, we get $\sum\limits_{h\in H}a\'_{h}\rho_{h}=\sum\limits_{h\in H}a^{\prime\prime}_{h}\rho_{h}$ with $a\'_{h},a^{\prime\prime}_{h}\in\N\P$. Following the above discussion, $a\'_{h}=a^{\prime\prime}_{h}$, i.e., $a_{h}=0$ for each $h\in H$. So $\mathcal{P}$ is $\Z\P$-linear independent.

  Due to $\rho_{h}|_{\P}=\frac{F_{h}|_{\F}(\hat{Y})}{F_{h}|_{\P}}X^{h}$, the strongly positivity in principal coefficients case leads to that in other semifield cases. Hence $\mathcal{P}$ is a strongly positive $\Z\P$-basis for $\mathcal{I_{P}(A)}$.
\end{Proof}

In particular when $\A=\mathcal{U(A)}$, $\mathcal{P}$ is a strongly positive $\Z\P$-basis for $\A$.

We wonder when $\mathcal{I_{P}(A)}$ coincides with $\mathcal{U(A)}$. The following corollary of Theorem \ref{positive} and Theorem \ref{positive over arbitray semifield} provides an equivalent condition of $\mathcal{I_{P}(A)=U(A)}$.

\begin{Corollary}\label{equivalent condition for arbitrary semifield}
  Let $\A$ be a cluster algebra over a semifield $\P$. Then $\mathcal{I_{P}(A)=U(A)}$ if and only if for any universally indecomposable $f\in\mathcal{U(A)}$, there is a homogenous element $\tilde{f}\in\mathcal{U}(A_{prin})$ satisfying $a\frac{F|_{\F}(\hat{Y})}{F|_{\P}(Y)}X^{g}=f$, where $a\in\N\P$, $F=L(\tilde{f})|_{x_{i}\rightarrow 1,\forall i\in[1,n]}$ and $g$ is the degree of $\tilde{f}$.

  In particular, $\mathcal{I_{P}(A)=U(A)}$ if $B$ (or $\tilde{B}$ for geometric type) has full rank.
\end{Corollary}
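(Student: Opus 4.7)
The plan rests on Proposition \ref{positive over arbitray semifield}: $\mathcal{P}$ is a strongly positive $\Z\P$-basis of $\mathcal{I_{P}(A)}$, whose elements $\rho_{h}|_{\P}$ are universally indecomposable and take exactly the shape $\frac{F_{h}|_{\F}(\hat{Y})}{F_{h}|_{\P}(Y)}X^{h}$. Since $\mathcal{I_{P}(A)}\subseteq\mathcal{U(A)}$ is automatic, the corollary reduces to asking when every element of $\mathcal{U(A)}$ lies in the $\Z\P$-span of $\mathcal{P}$; the stated form arises from $\mathcal{P}$ by choosing $\tilde{f}=\rho_{h}|_{prin}\in\mathcal{U}(\A_{prin})$, which yields $F=F_{h}$, $g=h$ and therefore $\frac{F|_{\F}(\hat{Y})}{F|_{\P}(Y)}X^{g}=\rho_{h}|_{\P}$.

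For the forward implication, I would assume $\mathcal{I_{P}(A)}=\mathcal{U(A)}$ and pick a universally indecomposable $f\in\mathcal{U(A)}$. Then $f$ is universally positive and lies in $\mathcal{I_{P}(A)}$; the uniqueness of the $\N\P$-expansion of universally positive elements established inside the proof of Proposition \ref{positive over arbitray semifield} writes $f$ as a single basis term $f=a\rho_{h}|_{\P}$ with $a\in\N\P$ and $h\in H$. The choice $\tilde{f}=\rho_{h}|_{prin}\in\mathcal{U}(\A_{prin})$ then supplies the required presentation.

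For the converse, I would assume the stated hypothesis on universally indecomposable elements, and first verify that for any homogeneous $\tilde{f}\in\mathcal{U}(\A_{prin})$ of degree $g$ the specialization $\hat{f}_{\tilde{f}}:=\frac{F|_{\F}(\hat{Y})}{F|_{\P}(Y)}X^{g}$ already lies in $\mathcal{I_{P}(A)}$. Expanding $\tilde{f}=\sum_{j}a_{j}Y^{w_{j}}\rho_{h_{j}}|_{prin}$ in the polytope basis of $\mathcal{U}(\A_{prin})$ furnished by Theorem \ref{positive} (with $g=h_{j}-w_{j}B^{\top}$), and exploiting the identities $\hat{Y}^{w}=Y^{w}X^{wB^{\top}}$ together with $F_{h_{j}}|_{\F}(\hat{Y})X^{h_{j}}=F_{h_{j}}|_{\P}(Y)\rho_{h_{j}}|_{\P}$, a direct computation rewrites $\hat{f}_{\tilde{f}}$ as a $\Z\P$-combination of the $\rho_{h_{j}}|_{\P}$ after clearing the common denominator. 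Hence every universally indecomposable $f\in\mathcal{U(A)}$ lies in $\mathcal{I_{P}(A)}$. Coupling this with the decomposition procedure in the proof of Theorem \ref{positive}, which splits any $f\in\mathcal{U(A)}$ into summands that are themselves combinations of universally positive elements (and hence of universally indecomposable ones), yields $\mathcal{U(A)}\subseteq\mathcal{I_{P}(A)}$.

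For the ``in particular'' clause, the full-rank condition on $B$ makes $\hat{y}_{1},\ldots,\hat{y}_{n}$ algebraically independent in $\F$, so every homogeneous $f\in\mathcal{U(A)}$ of degree $g$ admits a unique presentation $f=\bar{F}(\hat{Y})X^{g}$ with $\bar{F}$ a Laurent polynomial; the principal-coefficient lift $\tilde{f}:=\bar{F}(\hat{Y}_{prin})X^{g}$ lies in $\mathcal{U}(\A_{prin})$ because the divisibility criterion of Lemma \ref{generalized Laurent} transfers between $\A$ and $\A_{prin}$ under the $Y$-specialization when the $\hat{Y}$-variables are algebraically independent, and the hypothesis then holds with $a=1$. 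The main obstacle lies in the cancellation step of the converse direction: the semifield evaluation $F|_{\P}(Y)$ is not additive across the basis expansion of $\tilde{f}$, so one must verify by manipulating common denominators that $\hat{f}_{\tilde{f}}$ genuinely lands in the $\Z\P$-span of $\{\rho_{h_{j}}|_{\P}\}$ rather than in the broader fraction field.
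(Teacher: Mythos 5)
Your overall strategy matches the paper's: reduce to universally indecomposable elements, pass between the $\P$-picture and the principal-coefficients picture via $\rho_h|_{\P}=\frac{F_h|_{\F}(\hat Y)}{F_h|_{\P}(Y)}X^h$, and use the polytope basis of $\mathcal{U}(\A_{prin})$. The converse direction and the full-rank clause are essentially the paper's argument; your worry about ``clearing the common denominator'' resolves more easily than you fear, because one only needs each coefficient $c_j=\tilde c_j|_{\P}\,F_{h_j}|_{\P}(Y)/F|_{\P}(Y)$ to lie in $\Z\P$, and $\P$ is a multiplicative group, so ratios of semifield evaluations stay in $\P$; no additivity of $\oplus$ across the expansion is required.

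The genuine gap is in your forward direction. You assert that a universally indecomposable $f\in\mathcal{I_{\P}(\A)}$ is a \emph{single} basis term $a\rho_h|_{\P}$, citing the uniqueness of the $\N\P$-expansion from Proposition \ref{positive over arbitray semifield}. That uniqueness statement only applies to elements already known to admit an expansion with coefficients in $\N\P$; it does not supply one. A priori the basis expansion of $f$ has coefficients in $\Z\P$, and whether universally positive (even universally indecomposable) elements always expand with coefficients in $\N Trop(Y)$ (resp.\ $\N\P$) is exactly the open question the paper records as Problem \ref{positive} $ $-adjacent (the Problem following Lemma \ref{universally positive elements}). If some coefficient is not in $\N\P$, indecomposability gives you no contradiction and the single-term conclusion fails to follow. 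The paper sidesteps this: it keeps the full signed expansion $f=\sum_i a_i\rho_{h_i}$ with $a_i\in\Z\P$ and constructs $\tilde f=\sum_i\tilde a_i\rho_{h_i}\in\mathcal{U}(\A_{prin})$ by lifting the coefficients $a_i$ to $\Z\,Trop(Y)$, so that $\tilde f$ is a (possibly many-term) lift of $f$ rather than a single $\rho_h|_{prin}$. You should replace your single-term step with this coefficient-lifting argument (or else prove positivity of the expansion coefficients, which is not available).
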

\begin{Proof}
  (``only if''):\; Because $\mathcal{I_{P}(A)=U(A)}$, $\mathcal{P}$ is a $\Z\P$-basis of $\mathcal{U(A)}$. So for any universally indecomposable element $f\in\mathcal{U(A)}$, there is a unique $\Z\P$-linear combination $f=\sum\limits_{i} a_{i}\rho_{h_{i}}$ with finitely many $a_{i}\neq 0\in\Z\P$. Then because $f$ is universally indecomposable, we can find $\tilde{a}_{i}\in\Z Trop(Y)$ such that $\tilde{f}=\sum\limits_{i} \tilde{a}_{i}\rho_{h_{i}}\in\mathcal{U}(\A_{prin})$ is homogenous and universally indecomposable, and $a\frac{F|_{\F}(\hat{Y})}{F|_{\P}(Y)}X^{g}=f$ for some $a\in\N\P$, $F=L(\tilde{f})|_{x_{i}\rightarrow 1,\forall i\in[1,n]}$ and $g$ being the degree of $\tilde{f}$.

  (``if''):\; Because for any universally indecomposable $f\in\mathcal{U(A)}$, there is a homogenous element $\tilde{f}\in\mathcal{U}(A_{prin})$ satisfying $a\frac{F|_{\F}(\hat{Y})}{F|_{\P}(Y)}X^{g}=f$, where $a\in\N\P$, $F=L(\tilde{f})|_{x_{i}\rightarrow 1,\forall i\in[1,n]}$ and $g$ is the degree of $\tilde{f}$, so for any $f\in\mathcal{U(A)}$, similar to the proof of Theorem \ref{positive}, we can find a $\Z\P$-linear combination of $f$ in universally indecomposable elements with the help of the set analogous to $J$ in Theorem \ref{positive over arbitray semifield}. So we only need to deal with the case where $f$ is universally indecomposable. Then there is $\tilde{f}\in\mathcal{U}(A_{prin})$ satisfying $a\frac{F|_{\F}(\hat{Y})}{F|_{\P}(Y)}X^{g}=f$, where $a\in\N\P$, $F=L(\tilde{f})|_{x_{i}\rightarrow 1,\forall i\in[1,n]}$ and $g$ is the degree of $\tilde{f}$.

  Since $f$ is universally indecomposable, so is $\tilde{f}$. Then by Lemma \ref{universally positive elements} or Theorem \ref{positive}, we have a $\Z Trop(Y)$-linear combination of $\tilde{f}$ as $\tilde{f}=\sum\limits_{j}\tilde{c}_{j}\rho_{h_{j}}$ in $\mathcal{U}(\A_{prin})$ for some $\tilde{c}_{j}\in\Z Trop(Y)$. Hence $f=a\frac{F|_{\F}(\hat{Y})}{F|_{\P}(Y)}X^{g}=\sum\limits_{j}c_{j}\rho_{h_{j}}$ in $\mathcal{U(A)}$ for some $c_{j}\in\Z\P$. So $\mathcal{I_{P}(A)=U(A)}$.

  In particular, when $B$ or $\tilde{B}$ has full rank, for any universally indecomposable $f\in\mathcal{U(A)}$, there is unique $\tilde{f}\in\mathcal{U}(A_{prin})$ up to multiplying a Laurent monomial in $Y$ satisfying $a\frac{F|_{\F}(\hat{Y})}{F|_{\P}(Y)}X^{g}=f$, where $a\in\N\P$, $F=L(\tilde{f})|_{x_{i}\rightarrow 1,\forall i\in[1,n]}$ and $g$ is the degree of $\tilde{f}$. So in this case $\mathcal{I_{P}(A)=U(A)}$.
\end{Proof}

The following example given by Yan Zhou in \cite{Zh} is a counterexample where the condition in Corollary \ref{equivalent condition for arbitrary semifield} fails, thus $\mathcal{I_{P}(A)}\subsetneqq\mathcal{U(A)}$.
\begin{Example}[\cite{Zh}]
  Let $\A$ be the cluster algebra without coefficients associated to the exchange matrix
  \[B=\begin{pmatrix}
        0 & 2 & -2 \\
        -2 & 0 & 2 \\
        2 & -2 & 0
      \end{pmatrix}.\]
  Then $P:=\frac{x_{1}^{2}+x_{2}^{2}+x_{3}^{2}}{x_{1}x_{2}x_{3}}\in\mathcal{U}(\A)$ and it is universally indecomposable. But $P$ can not be written as a $\Z$-linear combination of $\mathcal{P}$. Otherwise by Corollary \ref{equivalent condition for arbitrary semifield} there must be $a_{1},a_{2},a_{3}\in\Z Trop(Y)$ satisfying $\tilde{P}=\frac{a_{1}x_{1}^{2}+a_{2}x_{2}^{2}+a_{3}x_{3}^{2}}{x_{1}x_{2}x_{3}}\in\mathcal{U}(\A_{prin})$ and $a_{i}|_{y_{j}\rightarrow 1,\forall j\in[1,n]}=1$ for $i\in\{1,2,3\}$. However, by calculating the Laurent expression of $\tilde{P}$ in $X_{t_{i}}$, where $t_{i}$ is connected with $t_{0}$ by an edge labeled $i\in\{1,2,3\}$, we get that either $a_{3}y_{2}$ is a proper summand of $a_{1}$ and at the same time $a_{1}$ is a proper summand of $a_{3}y_{2}$ or $P$ has some monomial summand other than $x_{i}^{-1}x_{j}^{-1}x_{k}$, where $\{i,j,k\}=\{1,2,3\}$, which is impossible.
\end{Example}

\begin{Corollary}\label{semigroup}
  Let $\A$ be a cluster algebra over a semifield $\P$. Then for any $h,h\'\in\Z^{n}$, $\rho_{h+h\'}$ is a summand of $\rho_{h}\rho_{h\'}$. Hence $H$ is an additive sub-monoid of $\Z^{n}$.
\end{Corollary}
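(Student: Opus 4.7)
The plan is to use the strong positivity of the polytope basis to expand $\rho_h \rho_{h'}$ as a non-negative linear combination of elements of $\mathcal P$, and then to isolate $\rho_{h+h'}$ as a summand by matching the coefficient of the Laurent monomial $X^{h+h'}$ on both sides. First I would reduce to the principal coefficients case via the formula $\rho_h|_\P = F_h|_\F(\hat Y)/F_h|_\P(Y)\, X^h$ in Remark \ref{remark after the theorem}: since $F_h$ only depends on the associated principal coefficients cluster algebra $\A_{prin}$, any decomposition $\rho_h^{prin} \rho_{h'}^{prin} = \sum_f a_f \rho_f^{prin}$ with $a_f \in \N[Y]$ specializes under $Y \to Y|_\P$ to a $\N\P$-linear decomposition of $\rho_h|_\P \cdot \rho_{h'}|_\P$, and the summand relation is preserved by non-negativity of all coefficients.

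In the principal case (assuming first $h, h' \in H$), Theorem \ref{positive} supplies a decomposition $\rho_h \rho_{h'} = \sum_f a_f \rho_f$ with $a_f \in \N[Y]$. I would then compute the coefficient of $X^{h+h'}$ (that is, the Laurent monomial with $Y$-part equal to $1$) on both sides. Expanding $\rho_h = \sum_{p \in N_h} co_p(N_h) Y^p X^{h + \sum_i p_i b_i}$ and similarly for $\rho_{h'}$, a term with $Y^0$ can only arise from the vertex $p = 0$, which by Corollary \ref{unique maximal and minimal term} is the unique minimum of $N_h$ with weight $co_0(N_h) = 1$; thus the coefficient of $X^{h+h'}$ in $\rho_h \rho_{h'}$ is exactly $1 \cdot 1 = 1$. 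On the right, writing $a_f = \sum_w c_{f,w} Y^w$, the only way to produce $Y^0 X^{h+h'}$ from $a_f \rho_f$ is to take $w = 0$ and $p = 0$ in $\rho_f$, forcing $f = h+h'$ and giving the coefficient $c_{h+h', 0}$. Matching yields $c_{h+h', 0} = 1$, so the constant $1$ is a summand of $a_{h+h'}$ and $\rho_{h+h'}$ is a summand of $\rho_h \rho_{h'}$.

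For general $h, h' \in \Z^n$ (not necessarily in $H$), I would extend the argument formally inside $\widehat{\mathcal U}^+_{\geqslant 0}(\Sigma)$: both $\rho_h$ and $\rho_{h'}$ lie there as formally indecomposable elements of $\widehat{\mathcal P}$, their product lies in $\widehat{\mathcal U}^+_{\geqslant 0}(\Sigma)$ with $X^{h+h'}$ a summand by the same $co_0 = 1$ observation, and the uniqueness clause in Theorem \ref{properties in case tsss} (i), applied formally, selects $\rho_{h+h'}$ as the unique indecomposable formal summand of $\rho_h \rho_{h'}$ containing $X^{h+h'}$. The monoid statement then follows easily: when $h, h' \in H$ the product $\rho_h \rho_{h'}$ is a genuine Laurent polynomial, so its summand $\rho_{h+h'}$ has finite support and therefore lies in $\mathcal P$, i.e. $h+h' \in H$; and $0 \in H$ because $\rho_0 = X^0 = 1$ by the base case of Construction \ref{construction}. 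The main technical obstacle I foresee is rigorously extending indecomposability and uniqueness from $\mathcal U^+_{\geqslant 0}(\Sigma)$ to the formal completion $\widehat{\mathcal U}^+_{\geqslant 0}(\Sigma)$ to cover the case $h, h' \notin H$; the coefficient-matching step itself remains unproblematic since only finitely many pairs $(p,p') \in N_h \times N_{h'}$ contribute to any given monomial degree.
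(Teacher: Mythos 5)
Your proposal is correct and follows essentially the same route as the paper: reduce to the principal coefficients case, invoke the strongly positive decomposition $\rho_h\rho_{h'}=\sum_f a_f\rho_f$ from Theorem \ref{positive}, and use the fact that $\rho_{h+h'}$ is the unique element of $\widehat{\mathcal P}$ having $X^{h+h'}$ as a summand (your coefficient-matching at $Y^0X^{h+h'}$ is just a more explicit rendering of that uniqueness step, and your formal treatment of $h,h'\notin H$ makes precise what the paper leaves implicit).
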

\begin{Proof}
  It is sufficient to deal with the principal coefficients case. Since $X^{h}$ and $X^{h\'}$ are summand of $\rho_{h}$ and $\rho_{h\'}$ respectively, $X^{h+h\'}$ is a summand of $\rho_{h}\rho_{h\'}$. Moreover, according to Theorem \ref{positive}, $\rho_{h}\rho_{h\'}=\sum\limits_{g}a_{g}\rho_{g}$ with $a_{g}\in\N[Y^{\pm}]$. Then because $\rho_{h+h\'}$ is the unique element in $\widehat{\mathcal{P}}$ having $X^{h+h\'}$ as a summand, $\rho_{h+h\'}$ is a summand of $\rho_{h}\rho_{h\'}$.
\end{Proof}

We would like to end this section with the following results showing that in many ``good" situations, $\rho_{h}$ is a Laurent polynomial for any $h\in\Z^{n}$ and hence $\mathcal{P}=\widehat{\mathcal{P}}$.

\begin{Proposition}
  Let $\A$ be a cluster algebra over a semifield $\P$. Then $\mathcal{P}=\widehat{\mathcal{P}}$ if and only if $\rho_{-e_{i}}\in\mathcal{P}$ for any $i\in[1,n]$.
\end{Proposition}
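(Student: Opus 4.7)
The plan is as follows. The forward implication is immediate from the definitions: if $\mathcal{P}=\widehat{\mathcal{P}}$, then every $\rho_h\in\widehat{\mathcal{P}}$ lies in $\mathcal{P}$, and in particular $\rho_{-e_i}\in\mathcal{P}$ for each $i\in[1,n]$. So the substantive content is the reverse direction.

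For the ``if'' direction, recall that by Remark~\ref{remark after the theorem} we have $H=\{h\in\Z^{n}\mid \rho_{h}\in\mathcal{P}\}$, and the goal is to show that the hypothesis $\{-e_i\mid i\in[1,n]\}\subseteq H$ forces $H=\Z^{n}$. First I would observe that $\N^{n}\subseteq H$: when $h\in\N^{n}$, the construction gives $\rho_{h}=X^{h}$, which is a (trivial) Laurent polynomial, hence in $\mathcal{P}$. By hypothesis also $-e_{i}\in H$ for every $i\in[1,n]$.

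Next, I would invoke Corollary~\ref{semigroup}, which asserts that $H$ is an additive sub-monoid of $\Z^{n}$ (the key input being that $\rho_{h+h'}$ is a summand of the honest Laurent polynomial $\rho_{h}\rho_{h'}$ whenever $h,h'\in H$, and a summand of a Laurent polynomial with coefficients in $\N\P$ is again a Laurent polynomial). Given any $h=(h_{1},\ldots,h_{n})\in\Z^{n}$, write
\[
h \;=\; [h]_{+} \,+\, \sum_{i:\,h_{i}<0} |h_{i}|\cdot(-e_{i}).
\]
Each summand lies in $H$: the vector $[h]_{+}\in\N^{n}\subseteq H$, and $|h_{i}|\cdot(-e_{i})$ is a positive-integer multiple of $-e_{i}\in H$, hence in $H$ by the monoid property. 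Closure under addition then gives $h\in H$, so $H=\Z^{n}$ and $\mathcal{P}=\widehat{\mathcal{P}}$.

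There is no serious obstacle here; the proposition is essentially a reformulation of the semigroup property established in Corollary~\ref{semigroup}, combined with the observation that $\Z^{n}$ is generated as a monoid by $\N^{n}\cup\{-e_{1},\ldots,-e_{n}\}$. The only thing to double-check carefully is that the summand relation from Corollary~\ref{semigroup} is genuinely honest over $\N\P$, so that membership in $\N\P[X^{\pm 1}]$ descends from $\rho_{h}\rho_{h'}$ to $\rho_{h+h'}$; this is exactly the content of the definition of summand in $\mathcal{U}^{+}_{\geqslant0}(\Sigma)$ used throughout Section~3.
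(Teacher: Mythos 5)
Your proposal is correct and follows essentially the same route as the paper: the necessity is definitional, and the sufficiency decomposes $h=\sum_i[h_i]_+e_i+\sum_i[-h_i]_+(-e_i)$ and applies the monoid property of $H$ from Corollary \ref{semigroup} together with $\N^n\subseteq H$. No substantive difference from the paper's argument.
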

\begin{Proof}
  The necessity is due to the definition of $\widehat{\mathcal{P}}$. So, we only need to show the sufficiency. It is enough to deal with the case where $\A$ has principal coefficients. Assume $\rho_{-e_{i}}\in\mathcal{P}$ for any $i\in[1,n]$. $\rho_{e_{i}}=x_{i}\in\mathcal{P}$ for any $i\in[1,n]$. For any $h\in\Z^{n}$, denote $h=(h_{1},\cdots,h_{n})$. Hence $$h=\sum\limits_{i=1}^{n}h_{i}e_{i}=\sum\limits_{i=1}^{n}[h_{i}]_{+}e_{i}+\sum\limits_{i=1}^{n}[-h_{i}]_{+}(-e_{i}).$$ So according to Corollary \ref{semigroup}, we have $h\in H$. Hence, $\rho_{h}\in\mathcal{P}$, and thus $\mathcal{P}=\widehat{\mathcal{P}}$.
\end{Proof}

Motivated by applications to non-commutative Donaldson-Thomas theory,  Keller introduced the idea of maximal green sequences and reddening sequences in \cite{K,K2}. An equivalent definition for a skew-symmetric cluster algebra to admit reddening sequences is that it has $-e_{i}$ as a $g$-vector associated to some cluster variable for any $i\in[1,n]$ (see \cite{M} for more details). Hence the following result is a direct corollary of the above proposition.
\begin{Corollary}
  Let $\A$ be a skew-symmetric cluster algebra over a semifield $\P$. If $\A$ admits reddening sequences, then $\mathcal{P}=\widehat{\mathcal{P}}$.
\end{Corollary}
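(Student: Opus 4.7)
The plan is to reduce this corollary to the preceding proposition by verifying the hypothesis $\rho_{-e_i}\in\mathcal{P}$ for each $i\in[1,n]$ using the characterization of reddening sequences via $g$-vectors.

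First, I would invoke the equivalent formulation cited from \cite{M}: a skew-symmetric cluster algebra $\A$ admits reddening sequences if and only if for each $i\in[1,n]$ the vector $-e_i$ appears as a $g$-vector $g_{l_i;t_i}$ associated to some cluster variable $x_{l_i;t_i}$ in $\A$. Fix such a pair $(l_i,t_i)$ for each $i$.

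Next, I would apply Theorem \ref{maps from $g$-vectors} (iii), which gives the bijection $\chi_2$ sending $g_{l;t}\mapsto \rho_{g_{l;t}}=x_{l;t}$. In particular, $\rho_{-e_i}=\rho_{g_{l_i;t_i}}=x_{l_i;t_i}$, which is a genuine (universally positive) Laurent polynomial in $\N\P[X^{\pm 1}]$. Hence $-e_i\in H$ and $\rho_{-e_i}\in\mathcal{P}$ for every $i\in[1,n]$.

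Finally, applying the preceding proposition directly yields $\mathcal{P}=\widehat{\mathcal{P}}$, completing the argument. The only non-routine ingredient is the cited equivalence between admitting a reddening sequence and realizing each $-e_i$ as a $g$-vector; since this is imported as a known fact (rather than proved here), there is essentially no obstacle — the corollary is a direct assembly of Theorem \ref{maps from $g$-vectors} (iii), the cited characterization, and the previous proposition.
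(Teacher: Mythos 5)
Your proposal is correct and takes essentially the same route as the paper: cite the characterization from Muller that admitting a reddening sequence is equivalent to $-e_i$ being the $g$-vector of some cluster variable for each $i$, use $\chi_2$ (equivalently, $x_{l;t}=\rho_{g_{l;t}}$) to conclude $\rho_{-e_i}$ is a genuine Laurent polynomial, and then apply the preceding proposition. The paper presents this as an immediate corollary without spelling out the intermediate step via $\chi_2$, but your filled-in version is exactly what is intended.
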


\vspace{4mm}

\section{Construction of faces of polytopes in a skew-symmetrizable cluster algebra}\quad

In this section we assume $\A$ is a skew-symmetrizable cluster algebra. Recall that in this paper we denote the skew-symmetrizer $D=diag(d_{1},\cdots,d_{n})$.
Then for the exchange matrix $B_{t}$ of $\A$ at any vertex $t$, $DB_{t}$ is skew-symmetric, which will help us to construct the proper faces of $N_{h}$ more explicitly.

When $\A$ is of rank 2, we have listed the possible shape of all $N_{h}$ of dimension 2 in Figure \ref{figure: shape for rank 2} (3)-(7). Any polytope $N_{h}$ satisfies one of the two following conditions (a) and (b) from the proof of Theorem \ref{properties in case tsss}:

(a)\;There is $t\in\T_{n}$ such that $N_{h^{t}}^{t}$ is the origin, i.e., the polytope has dimension 0.

(b)\;For any $t\in\T_{n}$, $N_{h^{t}}^{t}$ has dimension 2.

A polytope $N_{h}$ whose shape is one of (3)-(6) satisfies the condition (a) while that with shape (7) may satisfy (a) or (b). In order to determine which condition such a polytope satisfies, we need to analyze the lengths of two orthogonal edges parallel to the coordinate axis. Assume the initial exchange matrix is
\[B\'=\begin{pmatrix}
     0 & \epsilon b \\
     -\epsilon c & 0
   \end{pmatrix}\]
with $b,c\in\N,\epsilon\in\{\pm 1\}$ and the lengths of two orthogonal edges parallel to the coordinate axis in $N_{h}$ are $aw_{1}$ and $aw_{2}$ respectively with $(w_1,w_2)=1$.

A polytope satisfying (a) is mutation equivalent to $N^{t}_{\alpha}$ for some $\alpha\in\N^{2}$. So by calculating the polytope $N_{e_{i}^{t}}^{t}$ for $i=1,2$ and $t\in\T_{n}$,  a polytope with shape (7) satisfies (a) if and only if $w_{1}$ and $w_{2}$ satisfy the following Condition \ref{condition star} for five cases, where all sets are {\em multi-sets}:

\begin{Condition}\label{condition star}
  Case 1: when $bc=1$, it holds that $\{w_1, w_2\}=\{1,1\}$ and let $s=1$ or $2$;

  Case 2: when $bc=2$, it holds that $\{w_{1},w_{2}\}$ equals $\{1,1\}$ or $\{1,2\}$ and let $s$ satisfy $|b\'_{rs}|=max\{w_{1},w_{2}\}$ for some $r$;

  Case 3: when $bc=3$, it holds that $\{w_{1},w_{2}\}$ equals $\{1,1\}$, $\{1,2\}$, $\{1,3\}$ or $\{2,3\}$ and let $s$ satisfy $|b\'_{rs}|=3$ if $3\in\{w_{1},w_{2}\}$ while $|b\'_{rs}|=1$ if $3\notin\{w_{1},w_{2}\}$ for some $r$;

  Case 4: when $bc=4$,  there is $i\in\Z_{>0}$ such that $\{w_{1},w_{2}\}$ equals $\{2i-1,4i\}$, $\{2i+1,4i\}$, $\{i,i+1\}$, $\{2i-1,i\}$ or $\{2i+1,i\}$ and let $s\in[1,2]$ satisfy that either both $w_{1}$ and $w_{2}$ are odd and $|b\'_{rs}|=1$ for some $r$ or $w_{s}$ is the only odd in $\{w_{1},w_{2}\}$;

  Case 5: when $bc>4$,  there is $i,j\in\N$ and $s\neq r\in\{1,2\}$ such that $j-i\in \{0,1\}$, $$\{w_{1},w_{2}\}=\{\frac{f^{j+1}+f^{j}-k^{j}-k^{j+1}}{f-k},\frac{(f^{i+1}-k^{i+1})}{f-k}|b\'_{rs}|\}$$
  and $s$ satisfies that $w_{s}=\frac{f^{j+1}+f^{j}-k^{j}-k^{j+1}}{f-k}$, where
  $f=\frac{\sqrt{b^{2}c^{2}-4bc}+bc-2}{2}$ and $k=\frac{-\sqrt{b^{2}c^{2}-4bc}+bc-2}{2}$.
\end{Condition}

It can be checked that both $\frac{f^{j+1}+f^{j}-k^{j}-k^{j+1}}{f-k}$ and $\frac{(f^{j+1}-k^{j+1})}{f-k}|b\'_{rs}|$ increase along the increasing of $j$ when $bc>4$. So in practice Condition \ref{condition star} is not so hard to verify.

We assign a label to each edge in $E(N_{h})$. Denote $E^{(0)}=E_{h}\cap E(N_{h})$. And to each edge in $E^{(0)}$ assign a label by the way introduced in the definition of $E_{h}$. Then inductively let $E^{(r+1)}$ consist of the edges of $N_h$ not in $\bigcup\limits_{i=0}^{r}E^{(i)}$ and each of them is an edge of a 2-dimensional face $S$ of $N_{h}$ which contains an edge in $\bigcup\limits_{i=0}^{r}E^{i}$ parallel to $v$ for each vector $v$ in its lattice generating set based on its minimal vertex. According to Theorem \ref{properties in case tsss} (v), there is $h_{S}\in\Z^{ldim(S)}$, a cluster algebra $\A\'$ and a non-negative polytope projection $\tau$ from $N_{h_{S}}|_{\A\'}$ to $S$. Denote

For each edge $l\in S\cap E^{(r+1)}$, focus on the 2-dimensional face $S\'$ of $N_{h_{S}}|_{\A\'}$ such that $\tau$ maps an edges of $S\'$ not parallel to any coordinate axis to $l$ and denote the matrix obtained from the initial exchange matrix of $\A\'$ via deleting all rows and columns paralyzed by indices $k$ such that there is no edge of $S\'$ parallel to $e_k$ as
$(b\'_{uv})_{2\times2}=\begin{pmatrix}
     0 & \epsilon b \\
     -\epsilon c & 0
   \end{pmatrix}$ with $b,c\in\N,\epsilon\in\{\pm 1\}$. Assign to $l$ the label $j$ if it parallels to $e_j$, otherwise assign the label $\kappa(l)$ by labels of edges in $\bigcup\limits_{i=0}^{r}E^{(i)}$ as
\begin{equation}\label{label of an edge}
  \kappa(l)=\left\{\begin{array}{ll}
                   \text{the label assigned to the edge}\\\text{ of }S\text{ not intersecting with }l, & \text{if }S\'\text{ is of the form (3), (4) or (5) in Figure \ref{figure: shape for rank 2}}; \\\\
                   \text{the label assigned to the}&\text{if }S\'\text{ is of the form (6) or(7) in Figure \ref{figure: shape for rank 2}}\\
                   \text{ edge of }S\text{ parallel to }\tilde{\tau}(e_{s}),&  \text{and }w_{1},w_{2},s\text{ satisfy Condition \ref{condition star}};\\\\
                   0,& \text{otherwise}.
                 \end{array}
  \right.
\end{equation}
where $w_{1}$ and $w_{2}$ satisfy $\tilde{\tau}^{-1}(p-q)=w_{1}e_{1}+w_{2}e_{2}$ for arbitrary two points $p>q$ in $l$ such that there is no other point in $\overline{pq}$. Note that the edges in the right-hand side of (\ref{label of an edge}) are those in $\bigcup\limits_{i=0}^{r}E^{(i)}$, so the inductive definition is well-defined.

In the skew-symmetrizable case, the following result helps us to find the cluster algebra $\A'$ so as to construct the proper face of $N_{h}$ and hence the support of it more conveniently.

\begin{Theorem}\label{properties of N_h for skew-symmetrizable}
  In Theorem \ref{properties in case tsss} (v), if $\A$ is a skew-symmetrizable cluster algebra with principal coefficients, and denote by $B\'$ the initial exchange matrix of the cluster algebra $\A\'$, then $B\'= \overline{W}^{\top}BW$, where $W=(\tilde{\tau}(e_{1})^{\top},\cdots,\tilde{\tau}(e_{r})^{\top})$, $\overline{W}=(\overline{w}_1^{\top},\cdots,\overline{w}_r^{\top})$ are $n\times r$ integer matrices, $\overline{w}_i=\sum\limits_{j=1}^{n}\frac{d_{j}}{d_{s}}w_{ji}e_{j}$ for $\tilde{\tau}(e_{i})=\sum\limits_{j=1}^{n}w_{ji}e_{j}$, with $s\neq 0$ being the label of the edge in $S$ parallel to $\tilde{\tau}(e_{i})$ while $\overline{w}_i=\sum\limits_{j=1}^{n}d_{j}w_{ji}e_{j}$ when the label is 0.
\end{Theorem}

\begin{Proof}
  We denote $W =(w_{ji})_{n\times r}$ and $w_i$ the $i$-th column of $W$ for $i\in [1,r]$. If there is $i\in[1,r]$ such that the label of the edge in $S$ parallel to $\tilde{\tau}(e_{i})$ is 0, then there is no interior point in $N_{h\'}|_{\A\'}$ and in the construction of $N_{h\'}|_{\A\'}$, the $i$-th row of $B\'$ is not used. Hence the $i$-th row of $B\'$ can be arbitrary so long as $B\'$ is skew-symmetrizable. So we can let $\overline{w}_i=\sum\limits_{j=1}^{n}d_{j}w_{ji}e_{j}$ and then focus on proper faces of $S$. We can deal with all $i\in[1,n]$ such that the label of the edge in $S$ parallel to $\tilde{\tau}(e_{i})$ is 0 in the above way. Hence in the following we assume the label of the edge in $S$ parallel to $\tilde{\tau}(e_{i})$ is not 0 for any $i\in[1,r]$.

  According to the discussion in the proof of Theorem \ref{properties in case tsss}, for each edge $l$ in $S$ parallel to some vector in the lattice generating set based on the minimal point of $S$, there is some $t\in\T_{n}$ and a face $S^{t}$ of $N^{t}_{h^{t}}$ which correlates to $S$ with an edge $l^t$ of $S^t$ parallel to $e_i$ correlated to $l$. Then there is $h\'_{t}\in\Z^{ldim(S^t)}$ and a non-negative projection $\tau^t: N_{h\'_t}|_{\A\'}\rightarrow S^t$, where $\A\'_{t}$ is a cluster algebra associated to $B\'_{t}=(b_{ij}\')$. Let $l\'$ be another edge of $S^t$ containing a common vertex of $L^t$ with two points $p<q$ on it such that there does not exist other points in $\overline{pq}$. Assume $i$ and $k$ are the indices satisfying $\tilde{\tau}^t(e_i)$ and $\tilde{\tau}^t(e_k)$ parallel to $l^t$ and $l\'$ respectively. Then due to the definition of polytope projection and Construction \ref{construction}, we could choose $b_{ik}\'$ equal to $deg_{x_i}(q)-deg_{x_i}(p)$, which is $\sum\limits_{s=1}^n b_{is}^t\alpha_s$, where $(\alpha_1,\cdots,\alpha_n)=q-p=\tilde{\tau}^t(e_k)$. Hence the $i$-th row of the desired equation holds for $t$.

  Then it suffices to prove the aimed formula will be maintained under one step of mutations. Due to the independence of each row in the desired equation, we may assume the result holds for $t_{1}$ for any row and $t_{2}$ is connected to $t_{1}$ by an edge labeled $k$. For any face $S^{t_{2}}$ of $N^{t_{2}}_{h^{t_{2}}}$, there is a face $S^{t_{1}}$ of $N^{t_{1}}_{h^{t_{1}}}$ correlated to it according to Theorem \ref{properties in case tsss} (iii).

  If $ldim(S^{t_{1}})<ldim(S^{t_{2}})$, we can embed $N_{h\'_{t_{1}}}$ to a higher space with an extra coordinate $z_{k\'}$ and extend $\tilde{\tau}^{t_{1}}$ as well as $B\'_{t_{1}}$ by setting $\tilde{\tau}^{t_{1}}(e_{k\'})=e_{k}$ and adding the $k\'$-th row and column to $B\'_{t_{1}}$ according to the $k$-th row and column of $B_{t_{1}}$ respectively. Dually when $ldim(S^{t_{1}})>ldim(S^{t_{2}})$. So in the following we may assume $\A\'_{t_1}$ and $\A\'_{t_2}$ have the same rank.

  If there is no segment $e\in N_{h\'_{t_{1}}}|_{\A\'_{t_{1}}}$ or $e\in N_{h\'_{t_{2}}}|_{\A\'_{t_{2}}}$ satisfying $\tilde{\tau}^{t_{1}}(e)= e_{k}$ or $\tilde{\tau}^{t_{2}}(e)= e_{k}$, then according to the mutation formula (\ref{equation: mutation of x}) and (\ref{equation: mutation of y}), it can be seen that $S^{t_{2}}$ is isomorphic to $S^{t_{1}}$. Hence we can find an isomorphism $\tau^{t_{2}}$ from $N_{h\'_{t_{1}}}|_{\A\'_{t_{1}}}$ to $S^{t_{2}}$, i.e., in this case $h\'_{t_{2}}=h\'_{t_{1}}$ and $B\'_{t_{2}}=B\'_{t_{1}}$. For simplicity, assume there are correlative edges in $S^{t_1}$ and $S^{t_2}$ under $\mu_k$ parallel to $\tilde{\tau}^{t_1}(e_i)$ and $\tilde{\tau}^{t_2}(e_i)$ respectively for any $i$. Denote $B\'_{t_{1}}=(b_{ij}\')$.

  Due to the mutation formula (\ref{equation: mutation of x}) and (\ref{equation: mutation of y}), we can calculate that $w^{t_{2}}_{ji}=w^{t_{1}}_{ji}$ if $j\neq k$, while $w^{t_{1}}_{ki}=\sum\limits_{l}w^{t_{1}}_{li}[\epsilon_{k} b^{t_{1}}_{kl}]_{+}-w^{t_{1}}_{ki}$ for some $\epsilon_{k}\in\{\pm 1\}$ (here the choice of $\epsilon$ is according to that $S^{t_{2}}$ is an upper face or a bottom face with respect to the $k$-th coordinate). Hence we also have $\overline{w}^{t_{2}}_{ji}=\overline{w}^{t_{1}}_{ji}$ if $j\neq k$, while $$\overline{w}^{t_{1}}_{ki}=\sum\limits_{l}\frac{d_{k}}{d_{l}}\overline{w}^{t_{1}}_{li}[\epsilon_{k} b^{t_{1}}_{kl}]_{+}-\overline{w}^{t_{1}}_{ki}=\sum\limits_{l}\overline{w}^{t_{1}}_{li}[-\epsilon_{k} b^{t_{1}}_{lk}]_{+}-\overline{w}^{t_{1}}_{ki}.$$ Therefore, it can be checked that
  \begin{equation*}
    \begin{array}{ll}
       & \sum\limits_{l,s}\overline{w}^{t_{2}}_{li}b^{t_{2}}_{ls}w^{t_{2}}_{sj} \\
      = & \sum\limits_{l,s\neq k}\overline{w}^{t_{1}}_{li}(b^{t_{1}}_{ls}+[-\epsilon_{k}b^{t_{1}}_{lk}]_{+}b^{t_{1}}_{ks}+b^{t_{1}}_{lk}[\epsilon_{k} b^{t_{1}}_{ks}]_{+})w^{t_{1}}_{sj}\\
       &\quad-\sum\limits_{s\neq k}(\sum\limits_{u}\overline{w}^{t_{1}}_{ui}[-\epsilon_{k} b^{t_{1}}_{uk}]_{+}-\overline{w}^{t_{1}}_{ki})b^{t_{1}}_{ks}w^{t_{1}}_{sj}-\sum\limits_{l\neq k}\overline{w}^{t_{1}}_{li}b^{t_{1}}_{lk}(\sum\limits_{v}w^{t_{1}}_{vj}[\epsilon_{k} b^{t_{1}}_{kv}]_{+}-w^{t_{1}}_{kj}) \\
      = & \sum\limits_{l,s}\overline{w}^{t_{1}}_{li}b^{t_{1}}_{ls}w^{t_{1}}_{sj} \\
      = & b_{ij}\'
    \end{array}
  \end{equation*}
  So $B_{t_{2}}\'=\overline{W}_{t_{2}}^{\top}B_{t_{2}}W_{t_{2}}$.

  Otherwise we may assume there is a segment $e\in N_{h\'_{t_{1}}}|_{\A\'_{t_{1}}}$ satisfying $\tilde{\tau}^{t_{1}}(e)=e_{k}$, the other case is dual. In this case we also use $k\'$ to denote the label of $e$, then $S^{t_{2}}$ is isomorphic to $\mu_{k\'}(N_{h\'_{t_{1}}})|_{\A\'}$. Hence $B\'_{t_{2}}=\mu_{k\'}(B\'_{t_{1}})$. On the other hand, since $\tilde{\tau}^{t_{1}}(e)=e_{k}$, we have $w^{t_{1}}_{jk\'}=\delta_{jk}$. And similar to the first case, we can also calculate to see that
  \begin{equation*}
    w_{ji}^{t_{2}}=\left\{\begin{array}{ll}
                                       \sum\limits_{l}w^{t_{1}}_{li}[sgn(b_{ik\'}\') b^{t_{1}}_{kl}]_{+}-w^{t_{1}}_{ki}, & \text{if }j=k; \\
                                       w^{t_{1}}_{ji}, & \text{otherwise.}
                                     \end{array}\right.
  \end{equation*}
  and
  \begin{equation*}
    \overline{w}_{ji}^{t_{2}}=\left\{\begin{array}{ll}
                                       \sum\limits_{l}\overline{w}^{t_{1}}_{li}[-sgn(b_{ik\'}\') b^{t_{1}}_{lk}]_{+}-\overline{w}^{t_{1}}_{ki}, & \text{if }j=k;\\
                                       \overline{w}^{t_{1}}_{ji}, & \text{otherwise.}
                                     \end{array}\right.
  \end{equation*}
  Therefore, for any $i,j\in I_{h_{t_{2}}\'}\setminus\{k\'\}$,
  \begin{equation*}
    \begin{array}{rl}
       & \sum\limits_{l,s}\overline{w}^{t_{2}}_{li}b^{t_{2}}_{ls}w^{t_{2}}_{sj} \\
      = & \sum\limits_{l,s\neq k}\overline{w}^{t_{1}}_{li}(b^{t_{1}}_{ls}+[b^{t_{1}}_{lk}]_{+}b^{t_{1}}_{ks}+b^{t_{1}}_{lk}[b^{t_{1}}_{ks}]_{+})w^{t_{1}}_{sj} \\
       & \quad -\sum\limits_{s\neq k}(\sum\limits_{u}\overline{w}^{t_{1}}_{ui}[-sgn(b_{ik\'}\') b^{t_{1}}_{uk}]_{+}-\overline{w}^{t_{1}}_{ki})b^{t_{1}}_{ks}w^{t_{1}}_{sj}-\sum\limits_{l\neq k}\overline{w}^{t_{1}}_{li}b^{t_{1}}_{lk}(\sum\limits_{v}w^{t_{1}}_{vj}[sgn(b_{jk\'}\') b^{t_{1}}_{kv}]_{+}-w^{t_{1}}_{kj})\\
      = & \sum\limits_{l,s}\overline{w}^{t_{1}}_{li}b^{t_{1}}_{ls}w^{t_{1}}_{sj}\\
       & \quad+(\sum\limits_{l}\overline{w}^{t_{1}}_{li}([b^{t_{1}}_{lk}]_{+}-[-sgn(b_{ik\'}\') b^{t_{1}}_{lk}]_{+}))(\sum\limits_{s}b^{t_{1}}_{ks}w^{t_{1}}_{sj}) +(\sum\limits_{l}\overline{w}^{t_{1}}_{li}b^{t_{1}}_{lk})(\sum\limits_{s}([b^{t_{1}}_{ks}]_{+}-[-sgn(b_{k\'j}\') b^{t_{1}}_{ks}]_{+})w^{t_{1}}_{sj})\\
      = & \sum\limits_{l,s}\overline{w}^{t_{1}}_{li}b^{t_{1}}_{ls}w^{t_{1}}_{sj} +[\sum\limits_{l}\overline{w}^{t_{1}}_{li}b^{t_{1}}_{lk}]_{+}(\sum\limits_{s}b^{t_{1}}_{ks}w^{t_{1}}_{sj}) +(\sum\limits_{l}\overline{w}^{t_{1}}_{li}b^{t_{1}}_{lk})[\sum\limits_{s}b^{t_{1}}_{ks}w^{t_{1}}_{sj}]_{+}\\
     = & b_{ij}\'+[b_{ik}\']_{+}b_{kj}\'+b_{ik}\'[b_{kj}\']_{+},
    \end{array}
  \end{equation*}
  while
 $$        \sum\limits_{l,s}\overline{w}^{t_{2}}_{lk\'}b^{t_{2}}_{ls}w^{t_{2}}_{sj} \\
      =  \sum\limits_{s}b^{t_{2}}_{ks}w^{t_{2}}_{sj} \\
      =  \sum\limits_{s} -b^{t_{1}}_{ks}w^{t_{2}}_{sj}\\
      =  \sum\limits_{l,s}-\overline{w}^{t_{1}}_{lk\'}b^{t_{1}}_{ls}w^{t_{1}}_{sj}\\
      =  -b_{k\'j},$$
  and similarly $\sum\limits_{l,s}\overline{w}^{t_{2}}_{li}b^{t_{2}}_{ls}w^{t_{2}}_{sk\'}=-b_{ik\'}$. So we get $B\'_{t_{2}}=\mu_{k}(B\'_{t_{1}})=\overline{W}^{\top}_{t_{2}}B_{t_{2}}W_{t_{2}}$, which completes the proof.
\end{Proof}

According to Theorem \ref{properties in case tsss}, Theorem \ref{properties of N_h for skew-symmetrizable} and Construction \ref{construction}, when $\A$ is a skew-symmetrizable cluster algebra with principal coefficients, we can calculate the proper faces of $N_{h}$ in the following way. Here, we calculate faces with dimension 2 for example.

We define the sets $\overline E^{(i)}$, $\tilde{E}^{(i)}$ and $N^{(i)}$ recursively for $i\in \N$. First denote $\overline{E}^{(0)}=E_{h}$ and $N^{(-1)}=\emptyset$. Assign a label to each edge in $E_h$ as in the definition of $E_h$.

 Inductively assume $\overline E^{(j)}$ and $N^{(j-1)}$ has been determined for $j\in[0,t]$, while $\tilde{E}^{(j-1)}$ has been determined for $j\in[1,t]$. Then, define $\tilde{E}^{(t)}$ to be the union of $\overline{E}^{(t)}$ and the set consisting of segments $\overline{pq}$ satisfying that $p$ is a vertex in $\overline{E}^{(t)}\setminus(\overline{E}^{(t)}\cap N^{(t-1)})$, $q\notin conv(\bigcup\limits_{i=0}^{t-1}\tilde{E}^{(i)}\cup\overline{E}^{(t)})$ and
\[q=\left\{\begin{array}{ll}
           & \text{if there is a point }p\'\text{ lying in the same}\\
           p-[-deg_{x_{k}}(X^{h+pB^{\top}})]_{+}e_{k},& \text{2-dimensional face of }conv(\bigcup\limits_{i=0}^{t-1}\tilde{E}^{(i)}\cup\overline{E}^{(t)})\text{ with }\\
           &p \text{ such that }p\'-e_{k}\in conv(\bigcup\limits_{i=0}^{t-1}\tilde{E}^{(i)}\cup\overline{E}^{(t)});\\
           p+[-deg_{x_{k}}(X^{h+pB^{\top}})]_{+}e_{k}, & \text{otherwise.}
         \end{array}\right.\]
for some $k\in[1,n]$. Assign a label $\kappa(l)$ to an edge $l\in \tilde{E}^{(t)}$ if $l$ is parallel to $e_{\kappa(l)}$. Then define $N^{(t)}$ to be the convex hull of $\bigcup\limits_{i=0}^{t}\tilde{E}^{(i)}$.

For any 2-dimensional face $S\'$ of $N^{(t)}$ such that there is an edge of $S\'$ parallel to $v_i$ contained in $\bigcup\limits_{i=0}^{t}\tilde{E}^{(i)}$ for $i\in[1,r]$, where $\{v_1,\cdots,v_r\}$ is the lattice generating set of $S\'$ based on its minimal point, and at least one edge not in $\bigcup\limits_{i=0}^{t}\tilde{E}^{(i)}$, we construct a polytope $S$ of dimension 2 such that $S\'\subseteq S$ by the following conditions:

(a)\; Let $\tilde{\tau}:\;\R^r\;\rightarrow\;\R^n$ be a linear map determined by $\tilde{\tau}(e_{i})=v_{i}=\sum\limits_{j=1}^{n}w_{ji}e_{j}\in\N^{n}$ for $i\in[1,r]$.

(b)\; $\A\'$ is a cluster algebra with principal coefficients, whose initial exchange matrix $B\'=\overline{W}^{\top}BW$, where $W=(\tilde{\tau}(e_{1})^{\top},\cdots,\tilde{\tau}(e_{r})^{\top})$, $\overline{W}=(\overline{w}_1^{\top},\cdots,\overline{w}_r^{\top})$ and $\overline{w}_i=\sum\limits_{j=1}^{n}\frac{d_{j}}{d_{s}}w_{ji}e_{j}$ with $s\neq0$ the label of an edge in $S$ parallel to $\tilde{\tau}(e_{i})$ and $\overline{w}_i=\sum\limits_{j=1}^{n}d_{j}w_{ji}e_{j}$ when the label is 0.

(c)\; There is a vector $h_{S}\in\Z^{r}$ and a non-negative projection $\tau:\; N_{h_{S}}|_{\A\'}\longrightarrow S$ such that $\tilde{\tau}$ is induced by $\tau$. That is, {\em the 2-dimensional face $S$ is constructed as the image of $\tau$}.

Let $\mathbf{S}^{(t)}$ be the set consisting of all such 2-dimensional polytopes $S$. Define $\overline{E}^{(t+1)}$ to be a set consisting of edges $l$ of $S\in\mathbf{S}^{(t)}$ which is not in $\bigcup\limits_{i=0}^{t}\tilde{E}^{(i)}$. The labels are assigned according to those of edges in $N_{h_{S}}|_{\A\'}$.

According to the definition of $N_{h}$ and Theorem \ref{properties of N_h for skew-symmetrizable}, $N^{(i)}$ is a sub-polytope of $N^{(i+1)}$ and $N^{(i+1)}\setminus N^{(i)}\neq \emptyset$ until $N^{(i)}=N_{h}$. Hence $\lim\limits_{i\rightarrow+\infty} N^{(i)}=N_h$. When $N_{h}$ is a finite polytope, calculation along the above way ends in finitely many times. Otherwise the process never ends, however the polytopes are always finite locally. So we can still have such construction formally.

Similarly, we can also calculate any proper face of $N_{h}$ with higher dimension by Theorem \ref{properties of N_h for skew-symmetrizable}.

The later process to determine weights of interior points in $N_{h}$ is just as that in Construction \ref{construction}, but it is easier in calculation since we have known all proper faces now.

\begin{Example}
  In Example \ref{example of essential skeleton}, for the proper face $S$ containing $(1,1,0),(5,3,0)$ and $(5,3,3)$, we find a linear map:
  $$\tilde{\tau}:\quad\R^{2}\quad\longrightarrow\quad\R^{3}$$
  such that $\tilde\tau{(1,0)}=(2,1,0)$ and $\tilde\tau{(0,1)}=(0,0,1)$. It can be checked that there is an isomorphism
  $$\tau:\quad N_{(-2,1)}|_{\A\'}\quad\longrightarrow\quad S$$
  which can induce $\tilde{\tau}$, where the initial exchange matrix $B\'$ of the cluster algebra $\A\'$ is
  \[B\'=\begin{pmatrix}
          0 & 1 \\
          -2 & 0
        \end{pmatrix}=\overline{W}^{\top}BW,\]
  as
  \[W=\begin{pmatrix}
        2 & 0 \\
        1 & 0 \\
        0 & 1
      \end{pmatrix},\qquad\overline{W}=\begin{pmatrix}
                                         1 & 0 \\
                                         1 & 0 \\
                                         0 & 1
                                       \end{pmatrix}\]
\end{Example}
\vspace{4mm}

{\bf Acknowledgements:}\;  {\em This project is supported by the National Natural Science Foundation of China(No.12071422 and No.12131015).}

\emph{We thank Jiarui Fei, Y. Gyoda, Siyang Liu, Fan Qin and R. Schiffler for their advice and pointing out faults in our previous version. Also we thank Peigen Cao and Fan Qin for recommending us the example in \cite{Zh}.}


\begin{thebibliography}{99}
\bibitem{BFZ} A. Berenstein, S. Fomin and A.Zelevinsky, Cluster algebras, III. Upper bounds and double Bruhat cells, Duke Math. J. 126 (2005),1-52.

\bibitem{BZ} A. Berenstein, A. Zelevinsky, Quantum cluster algebras. Adv. in Mathematics, 195 (2005): 405-455.

\bibitem{CGMMRSW} Man Wai Cheung, M. Gross, G. Muller, G. Musiker, D. Rupel, S. Stella and H. Williams, The greedy basis equals the theta basis: A rank two haiku, Journal of Combinatorial Theory, Series A, 145 (2017), 150-171.

\bibitem{CL} Peigen Cao and Fang Li, The enough $g$-pairs property and denominator vectors of cluster algebras, Mathematische Annalen. 377 (2020), 1547-1572.

\bibitem{F} Jiarui Fei, Combinatorics of $F$-Polynomials, IMRN, 2023 (2013), 7578-7615.

\bibitem{FK}  Changjian Fu and B. Keller, On cluster algebras with coeffcients and 2-Calabi-Yau categories, Trans. Amer. Math. Soc. 362 (2010) no. 2, 859-895.

\bibitem{FZ} S. Fomin and A. Zelevinsky, Cluster algebras: Notes for the CDM-03 conference, in: CDM 2003: Current Developments in Mathematics, International Press, 2004.

\bibitem{FZ1} S. Fomin and A. Zelevinsky, Cluster algebras. I. Foundations, J. Amer. Math. Soc. 15 (2002),
no. 2, 497-529 (electronic).

\bibitem{FZ2} S. Fomin and A. Zelevinsky, Cluster algebras, II. Finite type classification. Invent. Math., 154 (2003),no. 1, 63-121.

\bibitem{FZ4} S. Fomin and A. Zelevinsky, Cluster algebras, IV. Coefficients. Compos. Math., 143 (2007), 112-164.

\bibitem{GLS} C. Geiss, B. Leclerc and J. Schr\"{o}er: Factorial cluster algebras. Doc. Math., 18 (2013), 249-274.

\bibitem{GHKK} M. Gross, P. Hacking, S. Keel and M. Kontsevich, Canonical bases for cluster algebras, J. Amer. Math. Soc. 31 (2018), 497-608.

\bibitem{HL} Ming Huang and Fang Li, Unfolding of acyclic sign-skew-symmetric cluster algebras and applications to positivity and F-polynomials, Advances
     in Mathematics 340 (2018): 221-283.


\bibitem{K} B. Keller, On cluster theory and quantum dilogarithm identities, In Representations of algebras and related topics, EMS Ser. Congr. Rep., (2011), 85-116.

\bibitem{K2} B. Keller, Cluster algebras and derived categories, in Derived Categories in Algebraic Geometry, EMS Ser.
Congr. Rep., Eur. Math. Soc., Z\"{u}rich, 2012, 123-183

\bibitem{LLS} K. Lee, L. Li, R. Schiffler, Newton polytopes of rank 3 cluster variables, arXiv:1910.14372.

\bibitem{LLZ} K. Lee, L. Li, A. Zelevinsky, Greedy elements in rank 2 cluster algebras, Selecta Mathematica. New Series, 20 (2012), no. 1, 57-82.

\bibitem{LLZ2} K. Lee, L. Li, A. Zelevinsky, Positivity and tameness in rank 2 cluster algebras, J Algebr Comb 40(2014), 823-840.

\bibitem{M} G. Muller, The existence of a maximal green sequence is not invariant under quiver mutation, Electron. J. Combin. 23 (2016), no. 2, P2.47.

\bibitem{N} T. Nakanishi, Cluster algebras and scattering diagrams, part II. Cluster patterns and scattering diagrams, arXiv:2103.16309.

\bibitem{P} Jie Pan, Polytope realization of cluster structures, arXiv:2312.15327.

\bibitem{Z} G. Ziegler, Lectures on polytopes. In Graduate texts in mathematics, Vol. 152. Berlin: Springer, 1995.

\bibitem{Zh} Yan Zhou, Cluster Structures and Subfans in Scattering Diagrams. Symmetry Integrability and Geometry-methods and Applications 16 (2020): 013.
\end{thebibliography}
\end{document}